\newtheorem{theorem}[equation]{Theorem}
\newtheorem{lemma}[equation]{Lemma}
\theoremstyle{definition}
\newtheorem{definition}[equation]{Definition}
\newtheorem{remark}[equation]{Remark}
\numberwithin{equation}{section} 
\def\diam{\operatorname{diam}}
\def\div{\operatorname{div}}
\def\dist{\operatorname{dist}}
\def\supp{\operatorname{supp}}
\def\BMO{\operatorname{BMO}}
\def\loc{\operatorname{loc}}
\def\inter{\operatorname{int}}
\def\osc{\operatornamewithlimits{osc}}
\def\Tr{\operatorname{Tr}}
\def\N{\mathcal{N}}
\def\D{\mathbb{D}}
\def\F{\mathcal{F}}
\def\A{\mathcal{A}}
\def\G{\mathcal{G}}
\def\H{\mathcal{H}}
\def\Z{\mathbb{Z}}
\def\S{\mathcal{S}}
\def\R{\mathbb{R}}
\def\pom{\partial\Omega}
\def\Rn{\mathbb{R}^n}
\def\ree{\R^{n+1}}
\def\W{\mathcal{W}}
\def\w{\omega} 
\def\m{\mathfrak{m}}
\renewcommand{\emptyset}{\mbox{\textup{\O}}}
\newcommand{\tinyemptyset}{\mbox{\tiny \textup{\O}}}
\newcommand\restr[2]{\ensuremath{\left.#1\right|_{#2}}}
\def\Xint#1{\mathchoice
{\XXint\displaystyle\textstyle{#1}}%
{\XXint\textstyle\scriptstyle{#1}}%
{\XXint\scriptstyle\scriptscriptstyle{#1}}%
{\XXint\scriptscriptstyle\scriptscriptstyle{#1}}%
\!\int}
\def\XXint#1#2#3{{\setbox0=\hbox{$#1{#2#3}{\int}$ }
\vcenter{\hbox{$#2#3$ }}\kern-.585\wd0}}
\def\barint{\Xint-}
\newcommand{\bariint}{\barint\mkern-11.5mu\barint}
\newcommand{\normmm}[1]{{\left\vert\kern-0.25ex\left\vert\kern-0.25ex\left\vert #1
		\right\vert\kern-0.25ex\right\vert\kern-0.25ex\right\vert}}
\begin{document}
\allowdisplaybreaks

\title[Absolute continuity of degenerate elliptic measure]
{Absolute continuity of degenerate elliptic measure}

\author{Mingming Cao}
\address{Mingming Cao\\
Instituto de Ciencias Matem\'aticas CSIC-UAM-UC3M-UCM\\
Con\-se\-jo Superior de Investigaciones Cient{\'\i}ficas\\
C/ Nicol\'as Cabrera, 13-15\\ 
E-28049 Ma\-drid, Spain} 
\email{mingming.cao@icmat.es}

\author{K\^{o}z\^{o} Yabuta}
\address{K\^{o}z\^{o} Yabuta\\ 
Research Center for Mathematics and Data Science\\
Kwansei Gakuin University\\
Gakuen 2-1, Sanda 669-1337\\
Japan}\email{kyabuta3@kwansei.ac.jp}

\thanks{The first author acknowledges financial support from Spanish Ministry of Science and Innovation through the Ram\'{o}n y Cajal 2021 (RYC2021-032600-I), through the ``Severo Ochoa Programme for Centres of Excellence in R\&D'' (CEX2019-000904-S), and through PID2019-107914GB-I00, and from the Spanish National Research Council through the ``Ayuda extraordinaria a Centros de Excelencia Severo Ochoa'' (20205CEX001).}

\year=2024 \month=09 \day=16

\date{\today}

\subjclass[2020]{35J25, 42B37, 31B05, 35J08}


\keywords{Degenerate elliptic operators, 
Harmonic measure, 
Absolute continuity, 
Carleson measure estimates}

\begin{abstract}
Let $\Omega \subset \ree$ be an open set whose boundary may be composed of pieces of different dimensions. Assume that $\Omega$ satisfies the quantitative openness and connectedness, and there exist doubling measures $m$ on $\Omega$ and $\mu$ on $\pom$ with appropriate size conditions. Let $Lu=-\div(A\nabla u)$ be a real (not necessarily symmetric) degenerate elliptic operator in $\Omega$. Write $\w_L$ for the associated degenerate elliptic measure. We establish the equivalence between the following properties: (i) $\w_L \in A_{\infty}(\mu)$, (ii) the Dirichlet problem for $L$ is solvable in $L^p(\mu)$ for some $p \in (1, \infty)$, (iii) every bounded null solution of $L$ satisfies Carleson measure estimates with respect to $\mu$, (iv) the conical square function is controlled by the non-tangential maximal function in $L^q(\mu)$ for all $q \in (0, \infty)$ for any null solution of $L$, and (v) the Dirichlet problem for $L$ is solvable in $\BMO(\mu)$. On the other hand, we obtain a qualitative analogy of the previous equivalence. Indeed, we characterize the absolute continuity of $\w_L$ with respect to $\mu$ in terms of local $L^2(\mu)$ estimates of the truncated conical square function for any bounded null solution of $L$. This is also equivalent to the finiteness $\mu$-almost everywhere of the truncated conical square function for any bounded null solution of $L$.  
\end{abstract}

\maketitle


\section{Introduction} 
\subsection{Background}  
Given a domain $\Omega \subset \ree$, consider the Dirichlet boundary value problem for an elliptic operator $L$ in $\Omega$:  
\begin{equation}\label{eq:D}
\begin{cases}
L u =0 & \quad\text{in} \quad \Omega, \\
u =f & \quad\text{on}\quad \pom. 
\end{cases}
\end{equation}
In the last fifty years, it has been of great interest to study this problem on domains with rough boundaries. For this purpose, one typically investigates the properties of the corresponding elliptic measure, which provides a representation formula for solutions to the Dirichlet problem \eqref{eq:D} with continuous data on the boundary. The techniques from harmonic analysis and geometric measure theory have allowed us to study the regularity of elliptic measures and hence understand this subject well (cf. \cite{K}). Roughly speaking, the solvability of the Dirichlet problem is equivalent to quantitatively absolute continuity of the elliptic measure with respect to the boundary surface measure. On the other hand, the good properties of elliptic measures enable us to effectively use the machinery from these fields to obtain information about the topological properties and the regularity of the domains. These promote great advances in the understanding of the connections between harmonic analysis, partial differential equations, and geometric measure theory.  

The study of absolute continuity of elliptic measure originated from the classical result of F. and M. Riesz \cite{RR}, which proved that for a simply connected planar domain with a rectifiable boundary, harmonic measure $\w_{-\Delta}$ is mutually absolutely continuous with respect to the surface measure $\sigma$. In dimensions greater than 2, a quantitative analogue ($\w_{-\Delta} \in A_{\infty}(\sigma)$, cf.  Definition \ref{def:Ainfty}) was established by Dahlberg \cite{D77} in Lipschitz domains, by Jerison and Kenig \cite{JK} in $\BMO_1$ domains, by David and Jerison \cite{DJ} in chord-arc domains, and by Azzam et al. \cite{AHMNT, HM, HMU} in $1$-sided chord-arc domains. This was culminated in the recent results of \cite{AHMMT} under some optimal background hypothesis. Beyond that, for the general uniformly elliptic operators satisfying the Dahlberg--Kenig--Pipher condition, Hofmann, Martell, Mayboroda, Toro, and Zhao \cite{HMMTZ} showed the absolute continuity of the elliptic measure with respect to the surface measure ($\w_L \in A_{\infty}(\sigma)$), which extended the previous results in \cite{HMT, KP}. 

The absolute continuity of elliptic measure is also closely linked with the solvability of the Dirichlet problem \eqref{eq:D}.  The importance of the quantitative absolute continuity of the elliptic measure with respect to the surface measure comes from the fact that $\w_L \in RH_q(\sigma)$ (the Reverse H\"{o}lder class with respect to $\sigma$) is equivalent to the $L^{q'}(\sigma)$-solvability of the Dirichlet problem (cf. \cite[Theorem~1.7.3]{K}) for the uniformly elliptic operators. Furthermore, the $\BMO$-solvability of \eqref{eq:D} can be characterized by $\w_L \in A_{\infty}(\sigma)$. This was shown in \cite{FS} for the Laplacian in $\R^{n+1}_+$, and extended to uniformly elliptic operators in Lipschitz domains \cite{DKP} and $1$-sided chord-arc domains \cite{Zhao}. In the non-connected case, Hofmann and Le \cite{HL} provided some criterion. A main difficulty in the proof of $\BMO$-solvability is the $\S<\N$ estimate (the conical square function is controlled by the non-tangential maximal function in $L^q(\mu)$). The latter goes back to the work of Dahlberg, Jerison, and Kenig \cite{DJK} in Lipschitz domains. 

On the other hand, the Carleson measure estimate (cf. Definition \ref{def:several}) can be viewed as a local version of the $\S<\N$ estimate. Kenig et al. \cite{KKiPT} first established the connection between the Carleson measure estimate and solvability of uniformly elliptic operators in bounded Lipschitz domains, which was extended to the degenerate setting by Hofmann, Le, and Morris \cite{HLM} in the upper-half space. A more general conclusion was obtained in \cite{CHMT} in $1$-sided chord-arc domains, which asserted that $\w_L \in A_{\infty}(\sigma)$ if and only if every bounded weak solution of \eqref{eq:D} satisfies the Carleson measure estimate. In another direction, the Carleson measure estimate is also correlated with the approximability property of harmonic functions. The equivalence between them was recently given in \cite{GMT, HMM} for the Laplacian and in \cite{AGMT} for elliptic operators whose matrix satisfies the very strong assumption that $|\nabla A(X)| \, dX$ is the density of a Carleson measure. It is worth mentioning that all of them worked in open sets with poor connectivity properties satisfying the corkscrew condition with ADR boundary. All the results aforementioned are established in domains with boundaries of co-dimension 1. 

However, when the boundary has co-dimension larger than 1, the study of absolute continuity of elliptic measure is severed, which essentially owe to deep dimensional constraints on the support of the elliptic measure. To this end, David, Feneuil and Mayboroda \cite{DFM1, DFM2, DFM3} started a continuous program to characterize the geometry of boundaries of high co-dimension via the theory of certain degenerate elliptic operators. In \cite{DFM3}, they formulated a general framework to study the  degenerate elliptic operators, including classical elliptic operators, Caffarelli--Silvestre fractional operators \cite{CS}, $t$-independent degenerate elliptic operators \cite{HLM} and so on. Also, it allows the boundary of domains to be composed of pieces of different dimensions, for examples, sawtooth domains (cf. \cite{HM, MP}), and balls in $\R^3$ deprived of one diameter. Recently, the authors in \cite{FP} formulated a general framework to establish Carleson perturbation for elliptic operators.

\subsection{Main results}
Based on the theory of degenerate elliptic operators in \cite{DFM3}, we will investigate the absolute continuity of the degenerate elliptic measure. To set the stage, we introduce the general machinery. Let $\Omega \subset \R^{n+1}$ be an open set equipped with a Borel measure $m$, and let $\mu$ be a Borel measure on $\pom$. Assume that $(\Omega, m, \mu)$ satisfies the following hypotheses: 
\begin{list}{(\theenumi)}{\usecounter{enumi}\leftmargin=1cm \labelwidth=1cm \itemsep=0.2cm \topsep=.2cm \renewcommand{\theenumi}{H\arabic{enumi}}}
 
\item \label{H1} There exists $c_1>0$ such that for any $x \in \pom$ and any $0<r<\diam(\Omega)$, there exists a point $X \in B(x, r)$ such that $B(X, c_1 r) \subset \Omega$. 

\item \label{H2} There exists a positive integer $c_2 =N+1$ such that if $X, Y \in \Omega$ satisfy $\min\{\delta(X), \delta(Y)\}>r$ and $|X-Y| \le 7c_1^{-1}r$, then there exist $N+1$ points $Z_0 :=X, Z_1,\ldots, Z_N:=Y$ such that for any $i \in \{0, \ldots, N-1\}$, $|Z_i - Z_{i+1}| < \delta(Z_i)/2$. 

\item \label{H3} The support of $\mu$ is $\pom$ and $\mu$ is positive, locally finite and doubling, i.e., there exists $c_3>1$ such that 
\[
0 < \mu(B(x, 2r)) \le c_3 \, \mu(B(x, r)) < \infty, \quad \forall x \in \pom, \, r>0. 
\]

\item \label{H4} The measure $m$ is mutually absolutely continuous with respect to the Lebesgue measure; that is, there exists a weight $w \in L^1_{\loc}(\Omega)$ such that 
\[
m(A) = \iint_A w(X) \, dX, \quad\text{ for any Borel set } A \subset \Omega
\]
and such that $w(X)>0$ for (Lebesgue) almost every $X \in \Omega$. In addition, $m$ is doubling, i.e. there exists $c_4 \ge 1$  such that
\[
m(B(X, 2r) \cap \Omega) \le c_4 \, m(B(X, r) \cap \Omega), \quad \forall X \in \overline{\Omega}, \, r>0. 
\]

\item \label{H5} The quantity $\rho$ given by 
\[
\rho(x, r) := \frac{m(B(x, r) \cap \Omega)}{r \mu(B(x, r))},\quad x \in \pom, \, r>0, 
\]
satisfies 
\[
\frac{\rho(x, r)}{\rho(x, s)} \le c_5 \bigg(\frac{r}{s}\bigg)^{1-\varepsilon}, 
\]
for some constant $c_5>0$ and $\varepsilon:=c_5^{-1}$. 

\item \label{H6} If $D$ is compactly contained in $\Omega$ and $u_j \in \mathscr{C}^{\infty}(\overline{D})$ is a sequence of functions such that $\iint_D |u_j| \, dm \to 0$ and $\iint_{D} |\nabla u_j - v|^2 \, dm \to 0$ as $j \to \infty$, where $v$ is a vector-valued function in $L^2(D, dm)$, then $v \equiv 0$. In addition, there exists $c_6$ such that for any ball $B$ satisfying $2B \subset \Omega$ and any function $u \in \mathscr{C}^{\infty}(\overline{B})$, one has 
\begin{align*}
\bariint_B |u-u_B| \, dm \lesssim \, r(B) \, \bigg(\bariint_B |\nabla u|^2 \, dm\bigg)^{\frac12}, 
\quad\text{where $u_B := \bariint_B u \, dm$}.
\end{align*} 
\end{list}

Some comments on the conditions \eqref{H1}--\eqref{H6} are in order. The assumption \eqref{H1} is known as the corkscrew condition, and can be seen as quantitative openness. The assumption \eqref{H2} is a condition of quantitative connectedness, and is a slightly weaker way to state the usual Harnack chain condition. In particular, together with \eqref{H1}, the condition \eqref{H2} encodes a self-improvement, see \cite[Proposition 2.18]{DFM3}. Moreover, it is easy to prove that the assumption \eqref{H3} is equivalent to the apparently stronger following condition: there exist $d_{\mu} > 0$ and $C_1 > 0$ (both depending only on $c_3$) such that 
\begin{align*}
\mu(B(x, \lambda r)) 
\le C_1 \, \lambda^{d_{\mu}} \mu(B(x, r)), \quad \forall \lambda \ge 1, \, x \in \pom, \, r>0. 
\end{align*}
Analogously, the assumption \eqref{H4} is equivalent to that 
\begin{align*}
m(B(X, \lambda r) \cap \Omega) 
\le C_2 \, \lambda^{d_m} \mu(B(X, r) \cap \Omega), \quad \forall \lambda \ge 1, \, X \in \overline{\Omega}, \, r>0. 
\end{align*}
The condition \eqref{H5} states the compared growths of $m$ and $\mu$. Specifically, $m$ and $\mu$ need to be intertwined, and it requires that $m(B(x, r) \cap \Omega)$ does not grow much faster than $\mu(B(x, r))$, with a precise limitation on the exponent. This is needed to establish a trace theorem, see \cite[Section 6]{DFM3}. Finally, the condition \eqref{H6} means that the measure $m$ is regular enough and satisfies a weak Poincar\'{e} inequality. It is also used to get the Poincar\'{e} inequality at the boundary, which is crucial to the boundary De Giorgi-Nash-Moser estimates (cf. \cite[Section 11]{DFM3}). The condition \eqref{H6} will be sometimes replaced by the much stronger condition:  
\begin{list}{(\theenumi)}{\usecounter{enumi}\leftmargin=1cm \labelwidth=1cm \itemsep=0.2cm \topsep=.2cm \renewcommand{\theenumi}{H6$'$}}

\item\label{H6'} 
$\qquad\qquad\qquad \sup\limits_{B} w \le c'_6 \inf\limits_B w$, \quad for any ball $B \subset \ree: 2B \subset \Omega$. 
\end{list}

In the sequel, we study the elliptic operator $Lu=-\div(A\nabla u)$ in $\Omega$. Suppose that $A$ is a real (not necessarily symmetric) $(n+1) \times (n+1)$ matrix on $\Omega$ such that the degenerate ellipticity condition holds:  
\begin{align}\label{eq:elli} 
\Lambda^{-1} w(X) |\xi|^2 \leq A(X) \xi \cdot \xi \quad\text{ and } \quad 
A(X) \xi \cdot \eta \leq \Lambda \, w(X) |\xi| |\eta|, 
\end{align}
for all $\xi, \eta \in \R^{n+1}$ and almost every $X \in \Omega$, where $\Lambda \ge 1$ and $w$ is given in \eqref{H4}. If $w \equiv 1$, \eqref{eq:elli} is called the uniformly elliptic condition. Let $\w_L$ be the degenerate elliptic measure associated with $L$ (cf. Section \ref{sec:PDE}). Our main goal is to investigate absolute continuity of $\w_L$ with respect to $\mu$.

The purpose of the current article is two-fold. First, considering those estimates above, we try to establish the equivalence between $\w_L \in A_{\infty}(\mu)$ and $L^p(\mu)$-solvability of the Dirichlet problem \eqref{eq:D} for the degenerate elliptic operators. This is also expected to be equivalent to $\BMO(\mu)$-solvability of \eqref{eq:D}, $\mathcal{S}<\mathcal{N}$ estimates for $L$ in $L^q(\mu)$, and Carleson measure estimate for $L$ with respect to $\mu$. Second, by means of the conical square function, we seek characterizations of absolute continuity of $\mu$ with respect to $\w_L$, which is a qualitative version of the preceding results.

Our first objective is to establish the estimates aforementioned in the degenerate case. More precisely, we prove the following equivalence.

\begin{theorem}\label{thm:main}
Let $(\Omega, m, \mu)$ satisfy \eqref{H1}--\eqref{H6} and let $Lu=-\div(A\nabla u)$ be a real (not necessarily symmetric) degenerate elliptic operator with the coefficients matrix $A$ satisfying \eqref{eq:elli}. Then the following statements are equivalent:

\begin{list}{\rm{(\theenumi)}}{\usecounter{enumi}\leftmargin=1.2cm \labelwidth=1.2cm \itemsep=0.2cm \topsep=.2cm \renewcommand{\theenumi}{\alph{enumi}}}
 	
\item\label{list-Ai} $\w_L \in A_{\infty}(\mu)$ $($cf.~Definition $\ref{def:Ainfty})$.
	
\item\label{list-Lp} The Dirichlet problem \eqref{eq:D} for $L$ is solvable in $L^p(\mu)$ for some $p\in (1,\infty)$ (cf. Definition $\ref{def:several}$).
	
\item\label{list-CME} Every bounded weak solution of $Lu = 0$ satisfies Carleson measure estimates (cf. Definition $\ref{def:several}$).  
		
\item\label{list-SN} Every weak solution of $Lu = 0$ satisfies $\S<\N$ estimates in $L^q(\mu)$ for all $q \in (0, \infty)$ (cf. Definition $\ref{def:several}$). 

\item\label{list-BMO}  The Dirichlet problem \eqref{eq:D} for $L$ is solvable in $\BMO(\mu)$ (cf. Definition $\ref{def:several}$). 
\end{list} 
\end{theorem}

Our next goal is to formulate a qualitative version of Theorem~\ref{thm:main} in the spirit of \cite{CMO}. The $A_\infty$ condition will be replaced by absolute continuity. The qualitative analog of $\mathcal{S}<\mathcal{N}$ estimates turns into local $L^2$ estimates for conical square functions on the boundary, which are the qualitative version of Carleson measure estimates. Eventually, we will see that all these are equivalent to the fact that the truncated conical square function $\S^{\alpha}_r$ (cf. Definition \ref{def:SN}) is finite almost everywhere with respect to the weight $\mu$.

\begin{theorem}\label{thm:abs}
Let $(\Omega, m, \mu)$ satisfy \eqref{H1}--\eqref{H6}. Then there exists $\alpha_0>0$ (depending only on $n$) such that for each fixed $\alpha \geq \alpha_0$ and for every real (not necessarily symmetric) degenerate elliptic operator $Lu=-\div(A\nabla u)$ with the coefficients matrix $A$ satisfying \eqref{eq:elli}, the following statements are equivalent:
\begin{list}{\rm{(\theenumi)}}{\usecounter{enumi}\leftmargin=1cm \labelwidth=1cm \itemsep=0.2cm
			\topsep=.2cm \renewcommand{\theenumi}{\roman{enumi}}}
		
\item\label{list:abs-1} $\mu \ll \omega_L$ on $\pom$.
		
\item\label{list:abs-2} $\pom=\bigcup_{N \geq 0} F_N$, where $\mu(F_0)=0$, for each $N \geq 1$, $F_N=\pom \cap \pom_N$ for some bounded $1$-sided NTA domain (cf. Definition $\ref{def:NTA}$) $\Omega_N \subset \Omega$,  and $\S^{\alpha}_r u \in L^2(F_N, \mu)$ for every weak solution $u \in W_r(\Omega) \cap L^{\infty}(\Omega)$ of $Lu=0$ in $\Omega$, and for all (or for some) $r>0$.
				
\item\label{list:abs-3} $\mathcal{S}^{\alpha}_r u(x)<\infty$ for $\mu$-a.e.~$x \in \pom$, for every weak solution $u \in W_r(\Omega) \cap L^{\infty}(\Omega)$ of  $Lu=0$ in $\Omega$ and for all (or for some) $r>0$.
				
\item\label{list:abs-4} For every weak solution $u \in W_r(\Omega)\cap L^{\infty}(\Omega)$ of $Lu=0$ in $\Omega$ and for $\mu$-a.e.~$x \in \pom$ there exists $r_x>0$ such that $\S^{\alpha}_{r_x} u(x)<\infty$. 
					
\end{list}
\end{theorem}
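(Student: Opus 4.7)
The plan is a cyclic argument (a) $\Rightarrow$ (b) $\Rightarrow$ (c) $\Rightarrow$ (d) $\Rightarrow$ (a); the middle two are formal, while (a) $\Rightarrow$ (b) and (d) $\Rightarrow$ (a) are both reduced, through a sawtooth decomposition, to the quantitative equivalence of Theorem \ref{thm:main}. For (b) $\Rightarrow$ (c), the assumption $\mathcal{S}^\alpha_r u \in L^2(F_N,\mu)$ forces $\mathcal{S}^\alpha_r u < \infty$ $\mu$-a.e.~on each $F_N$, and hence on $\pom \setminus F_0$, which is $\mu$-full. For (c) $\Rightarrow$ (d) take $r_x \equiv r$.

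For (a) $\Rightarrow$ (b), I would exhaust $\Omega$ by bounded $1$-sided NTA sawtooth subdomains $\Omega_N$ attached to a dyadic grid on $\pom$, following a HMU/HMMTZ-type construction adapted to the framework of \cite{DFM3} under \eqref{H1}--\eqref{H6}. Each $\Omega_N$ inherits those hypotheses with constants uniform in $N$. Writing $F_N := \pom \cap \partial\Omega_N$, a projection/change-of-pole argument combined with $\mu \ll \omega_L$ yields $\omega_L^{\Omega_N} \in A_\infty(\mu|_{F_N})$. Applying the (bounded-domain analogue of the) equivalence (a)$\Leftrightarrow$(d) from Theorem \ref{thm:main} on $\Omega_N$ gives the $\mathcal{S} < \mathcal{N}$ estimate in $L^2(\mu|_{F_N})$; for a bounded null solution $u$ on $\Omega$ one has $\mathcal{N}u \lesssim \|u\|_{L^\infty(\Omega)}$ uniformly, and $\mathcal{S}^\alpha_r u \in L^2(F_N,\mu)$ follows as soon as the threshold $\alpha_0$ is large enough that the cone in $\Omega$ swallows the corresponding cone in $\Omega_N$.

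For (d) $\Rightarrow$ (a), which is the substantive step, I would adapt the strategy of \cite{CMO}. For each bounded null solution $u$ and each $M,k \in \mathbb{N}$ set
\[
E^u_{M,k} := \bigl\{x \in \pom : \mathcal{S}^\alpha_{2^{-k}} u(x) \leq M \bigr\};
\]
by (d), $\pom \setminus \bigcup_{M,k} E^u_{M,k}$ is $\mu$-null, so it suffices to prove $\mu|_{E^u_{M,k}} \ll \omega_L$ for every triple $(u,M,k)$. Running a stopping-time/corona procedure on a dyadic grid of $\pom$, select essentially disjoint cubes $Q_j \subset E^u_{M,k}$ and attach to each a bounded $1$-sided NTA sawtooth subdomain $\Omega_{Q_j} \subset \Omega$ with $\mu(\partial\Omega_{Q_j} \cap E^u_{M,k}) > 0$. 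A Whitney-based decomposition of $\Omega_{Q_j}$ converts the uniform pointwise bound $\mathcal{S}^\alpha_{2^{-k}} u \leq M$ on $\partial\Omega_{Q_j} \cap E^u_{M,k}$ into a Carleson measure bound for bounded null solutions of $L$ in $\Omega_{Q_j}$. The implication (c) $\Rightarrow$ (a) of Theorem \ref{thm:main} applied in $\Omega_{Q_j}$ then yields $\omega_L^{\Omega_{Q_j}} \in A_\infty(\mu|_{\partial\Omega_{Q_j} \cap E^u_{M,k}})$, and a change-of-pole/Harnack-chain argument transfers this absolute continuity back to $\omega_L$ on $\Omega$. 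Taking unions over $j,M,k$ delivers $\mu \ll \omega_L$.

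The principal obstacle is the sawtooth transfer underlying (d) $\Rightarrow$ (a) in the degenerate higher-codimension framework of \cite{DFM3}: one must match aperture constants (which dictates the admissible $\alpha_0$), verify that each sawtooth $\Omega_{Q_j}$ inherits hypotheses \eqref{H1}--\eqref{H6} with uniform quantitative constants (most delicately the exponent $\varepsilon$ in \eqref{H5}), and compare $\omega_L^{\Omega_{Q_j}}$ with $\omega_L^\Omega$ via a degenerate CFMS-type estimate. Once these tools are available, the qualitative good-$\lambda$ control of $\mathcal{S}^\alpha_{2^{-k}} u$ on $E^u_{M,k}$ can be upgraded to the quantitative Carleson bound required to feed back into Theorem \ref{thm:main}.
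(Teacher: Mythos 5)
Your formal implications are fine, but both substantive steps deviate from the paper's argument in ways that leave genuine gaps. For (a) $\Rightarrow$ (b), the sets $F_N$ cannot come from a generic exhaustion of $\Omega$ by sawtooths: qualitative absolute continuity $\mu\ll\w_L$ carries no quantitative $A_\infty$ information on prescribed pieces, so your claim that ``a projection/change-of-pole argument combined with $\mu\ll\w_L$ yields $\w_L^{\Omega_N}\in A_\infty(\mu|_{F_N})$'' does not follow. The paper instead runs a stopping time on the density: with $\w=C_0\mu(Q_0)\w_L^{X_0}$ it extracts the maximal cubes where $\w(Q)/\mu(Q)\notin[1/N,N]$, and absolute continuity is used only to show that the residual set $E_0$ is $\mu$-null. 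On the resulting sawtooth $\Omega_{\F_N,Q_0}$ the stopping condition forces $\G(X)\rho(X)/\delta(X)\simeq_N 1$, and the $L^2$ bound for $\S_{Q_0}u$ then comes directly from the integration-by-parts estimate of Lemma \ref{lem:uGNQ} in the \emph{original} domain $\Omega$. No application of Theorem \ref{thm:main} on subdomains is needed, so the obstacles you flag (verifying \eqref{H1}--\eqref{H6} on sawtooths with uniform constants, comparing $\w_L^{\Omega_N}$ with $\w_L^{\Omega}$) never arise; in your route they are load-bearing and unresolved.

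Your (d) $\Rightarrow$ (a) has a more serious defect. The Carleson measure estimate that Theorem \ref{thm:main} requires on $\Omega_{Q_j}$ concerns bounded null solutions of $L$ \emph{in the subdomain} (in particular $u=\w_L^{X}(S)$ with $\w_L$ the elliptic measure of $\Omega_{Q_j}$); such functions are not restrictions of solutions in $\Omega$, so pointwise bounds on $\S^{\alpha}_{2^{-k}}u$ for solutions in $\Omega$ cannot produce that hypothesis. Moreover, transferring absolute continuity from $\w_L^{\Omega_{Q_j}}$ back to $\w_L^{\Omega}$ is a projection-lemma statement that is not established in this degenerate mixed-dimension framework. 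The paper avoids all of this: it disposes of (d) $\Rightarrow$ (c) by Lemma \ref{lem:two-trunc} (the two truncations differ by a quantity bounded on bounded sets), and proves (c) $\Rightarrow$ (a) by contraposition using Lemma \ref{lem:sq-lower} with $\beta=0$: if $\w_L^{X_{Q_0}}(F)=0$, a good $\varepsilon_0$-cover of infinite length produces a single bounded solution $u=\w_L^{\cdot}(S)$ with $\S_{Q_0}^{\eta}u\equiv\infty$ on $F$, and since $\Gamma_{Q_0}^{\eta}(x)\subset\Gamma^{\alpha}_r(x)$ for $\alpha\ge\alpha_0$, hypothesis (c) forces $\mu(F)=0$. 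You should replace the corona construction by this direct argument (or else supply the missing transference theory for sawtooth subdomains).
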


\subsection{Applications} 
Let us present some applications of Theorem \ref{thm:main}. 

\vspace{0.2cm}\noindent{$\bullet$ \textbf{Application 1: Uniformly elliptic operators.}} 
We start with the uniformly elliptic operators $Lu=-\div(A\nabla u)$, where $A(X)$ satisfies the standard uniform ellipticity conditions, that is, \eqref{eq:elli} holds for $w \equiv 1$. We require that $\Omega \subset \ree$ satisfies the 1-sided NTA conditions \eqref{H1} and \eqref{H2}, and $\pom$ is $n$-ADR. Write $\mu:=\restr{\H^n}{\pom}$. Then by definition, we see that $\mu$ is a doubling measure on $\pom$ that satisfies \eqref{H5} with $\rho \simeq 1$. Also, \eqref{H6'} is trivially satisfied.

\begin{theorem}\label{thm:CAD}
Let $\Omega \subset \ree$, $n \ge 2$, be a 1-sided CAD (cf. Definition $\ref{def:NTA}$), and let $Lu=-\div(A\nabla u)$ be a real  (not necessarily symmetric) uniformly elliptic operator in $\Omega$. Then all the properties \eqref{list-Ai}--\eqref{list-BMO} in Theorem $\ref{thm:main}$ are equivalent for $\mu=\restr{\H^n}{\pom}$. 
\end{theorem}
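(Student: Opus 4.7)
The plan is to deduce Theorem~\ref{thm:CAD} as a direct corollary of Theorem~\ref{thm:main} by verifying that the triple $(\Omega,m,\mu)$---with $m$ Lebesgue measure on $\Omega$, weight $w\equiv 1$, and $\mu=\restr{\mathcal{H}^n}{\pom}$---satisfies the standing hypotheses \eqref{H1}--\eqref{H6}, in fact with the stronger variant \eqref{H6'}. Once all six are in hand, the equivalence of \ref{list-Ai}--\ref{list-BMO} is inherited verbatim from Theorem~\ref{thm:main} in the case $\pom$ unbounded; the bounded case is handled analogously by standard localization to surface balls of radius $\lesssim\diam(\pom)$.

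Hypotheses \eqref{H1} and \eqref{H2} are, respectively, the corkscrew and Harnack-chain conditions that by definition constitute a 1-sided NTA domain, so they are free. Hypothesis \eqref{H3} is immediate from the $n$-ADR identity $\mathcal{H}^n(B(x,r)\cap\pom)\simeq r^n$. For \eqref{H4}, the weight $w\equiv 1$ lies trivially in $L^1_{\loc}(\Omega)$ and is strictly positive, while Lebesgue measure on $\Omega$ is doubling on balls centered on $\overline{\Omega}$ (using the corkscrew condition to prevent collapse of $m(B(X,r)\cap\Omega)$ relative to $m(B(X,2r)\cap\Omega)$ when $X$ is near the boundary). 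Condition \eqref{H6'} holds trivially since $w\equiv 1$, and in turn implies \eqref{H6}: the oscillation inequality reduces to the classical $L^1$-Poincaré inequality $\bariint_B|u-u_B|\,dx \lesssim r(B)\,\bariint_B|\nabla u|\,dx$, which by Cauchy--Schwarz dominates the right-hand side of \eqref{H6}, and the closability requirement for the weak gradient is standard in the Lebesgue setting.

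The only point with any content is \eqref{H5}. Here the key observation is that $\rho(x,r)\simeq 1$ uniformly in $x\in\pom$ and $0<r<\diam(\pom)$: the corkscrew condition furnishes $X\in B(x,r)$ with $B(X,c_1 r)\subset\Omega$, whence $m(B(x,r)\cap\Omega)\gtrsim r^{n+1}$; combined with the trivial upper bound $m(B(x,r)\cap\Omega)\le |B(x,r)|\simeq r^{n+1}$ and the ADR estimate $\mu(B(x,r))\simeq r^n$, this yields $\rho(x,r)\simeq 1$, and hence $\rho(x,r)/\rho(x,s)$ is controlled by an absolute constant. Thus \eqref{H5} holds with any $\varepsilon\in(0,1)$.

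No real obstacle appears: the entire substance of Theorem~\ref{thm:CAD} is housed in Theorem~\ref{thm:main}, and the proof is a mechanical check that the classical uniformly elliptic $n$-ADR case fits the abstract framework. The only step requiring even a short calculation is the uniform estimate $\rho\simeq 1$ above, which is precisely what makes the uniformly elliptic theory a non-degenerate specialization of the $(m,\mu)$-framework.
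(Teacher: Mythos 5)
Your proposal is correct and follows essentially the same route as the paper: the paper likewise observes that the 1-sided NTA conditions give \eqref{H1}--\eqref{H2}, that $n$-ADR gives \eqref{H3} and \eqref{H5} with $\rho\simeq 1$, and that \eqref{H6'} holds trivially for $w\equiv 1$, and then invokes Theorem~\ref{thm:main}. The only cosmetic difference is that the paper also cites the known equivalence \eqref{list-Ai}$\iff$\eqref{list-CME} from \cite{CHMT} in this setting, whereas you (reasonably) derive everything from Theorem~\ref{thm:main} alone, supplying the routine verifications the paper leaves implicit.
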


Indeed, under the background hypotheses of Theorem \ref{thm:CAD}, the equivalences $\eqref{list-Ai} \iff \eqref{list-CME}$ and $\eqref{list-Ai} \iff \eqref{list-BMO}$ were respectively shown in \cite{CHMT} and \cite{Zhao}, while the implications $\eqref{list-Ai} \Longrightarrow \eqref{list-Lp}$ and $\eqref{list-Ai} \Longrightarrow \eqref{list-SN}$ are essentially contained in Theorems 1.4.13 and 1.5.10 in \cite{K} respectively. Then, these along with Theorem \ref{thm:main} imply Theorem \ref{thm:CAD} as desired. We should point out that among all equivalences above, $\eqref{list-Ai} \iff \eqref{list-CME}$ seems to be the most difficult one, because once it is established other equivalences are easier to show.

Recall that $\BMO_1$ domains in $\ree$ are domains whose boundaries are given locally in some $\mathscr{C}^{\infty}$ coordinate system as the graph of a function $\phi$ with $\nabla \phi \in \BMO$ (cf. \cite[Definition 2.6]{JK}). As observed in \cite{JK}, Lipschitz domains are a subclass of $\BMO_1$ domains, which turn out to be a subclass of NTA domains (cf. Definition \ref{def:NTA}). 

It was proved that the harmonic measure $\w$ belongs to $A_{\infty}(\sigma)$ in Lipschitz domains by \cite{D77, D79}, and in $\BMO_1$ domains by \cite[Theorem 10.1]{JK}. Thus, in both scenarios, thanks to Theorem \ref{thm:CAD}, it yields properties \eqref{list-Lp}--\eqref{list-BMO} for the Laplacian. On the other hand, we recover the results in Lipschitz domains \cite{DKP} for the second order divergence form elliptic operators $L$, which established the equivalence between $\w_L \in A_{\infty}(\sigma)$ and the $\BMO$-solvability of $L$.

\vspace{0.2cm}\noindent{$\bullet$ \textbf{Application 2: $t$-independent elliptic operators.}}  
Let $\mu: \Rn \to \R^+$ be any $A_2$-weight on $\Rn$ and then define a weight $w$ on $\Omega=\R^{n+1}_+$ by 
$w(x, t) = \mu(x)$. We put the measures $dm(x, t) = w(x, t)dxdt$ on $\Omega$ and $d\mu = \mu(x)dx$ on $\pom=\Rn$ respectively. Note that $m$ is a doubling measure on $\R^{n+1}_+$ because $d\mu$ is doubling on $\Rn$. Then we easily see that the conditions \eqref{H1}--\eqref{H5} are satisfied since $\mu$ is doubling and $\rho \simeq 1$ in \eqref{H5}. Besides, \eqref{H6} can be deduced from \cite[Theorem 1.5]{FKS}  and the hypothesis $\mu \in A_2$.

\begin{theorem}\label{thm:t-independent}
Let $n \ge 2$, and let $Lu=-\div(A\nabla u)$ be a real elliptic operator in $\R^{n+1}_+$ with the $t$-independent coefficients matrix $A(x, t)=A(x)$ satisfying \eqref{eq:elli} for $w(x, t) = \mu(x)$, where $\mu \in A_2(\Rn)$. Then, all the properties \eqref{list-Ai}--\eqref{list-BMO} in Theorem $\ref{thm:main}$ hold. 
\end{theorem}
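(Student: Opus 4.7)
The plan is to reduce Theorem \ref{thm:t-independent} to Theorem \ref{thm:main} by verifying the structural hypotheses \eqref{H1}--\eqref{H6} on $(\R^{n+1}_+, m, \mu)$, and then invoking a known result on $t$-independent degenerate elliptic operators with $A_2$ weight to establish any single one of the equivalent conditions (a)--(e).

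First I would dispatch \eqref{H1}--\eqref{H6}. The geometric conditions \eqref{H1} and \eqref{H2} are immediate for the half-space: the corkscrew point is $X=(x,r)$, and any two non-tangential points with $\delta(\cdot)>r$ at distance $\lesssim r$ may be chained through a bounded number of Whitney-type balls. The doubling of $\mu \in A_2(\Rn)$ is \eqref{H3}. Since $dm(x,t)=\mu(x)\,dx\,dt$ is a tensor product with one-dimensional Lebesgue measure in the transverse variable, $m$ inherits doubling from $\mu$ on $\overline{\R^{n+1}_+}$, giving \eqref{H4}. A direct Fubini computation yields $m(B(x,r)\cap\Omega)\simeq r\,\mu(B(x,r))$, so $\rho\simeq 1$ in \eqref{H5} and the scale-invariance required there is trivial for any $\varepsilon\in(0,1)$. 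Finally, \eqref{H6} reduces to a weighted Poincaré inequality together with the closability of the gradient on $L^2(dm)$, and both are supplied by the weighted Sobolev theory of Fabes-Kenig-Serapioni \cite[Theorem 1.5]{FKS} for $A_2$ weights, applied on any Euclidean ball compactly contained in $\Omega$.

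Once \eqref{H1}--\eqref{H6} are in place, Theorem \ref{thm:main} applies and reduces the problem to verifying any single one of (a)--(e). I would verify (a), $\w_L \in A_\infty(\mu)$, by appealing to the work of Hofmann, Le, and Martell \cite{HLM}, which precisely addresses this $t$-independent degenerate setting on $\R^{n+1}_+$ with $A_2$ weight, for real, not necessarily symmetric, coefficients. Combining this input with Theorem \ref{thm:main} then yields the full chain (b)--(e) and completes the proof.

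The only substantive issue is a matter of bookkeeping: one must confirm that the elliptic measure produced by the general framework of \cite{DFM3} used throughout the paper coincides (up to standard identifications) with the one studied in \cite{HLM}, and that the two $A_\infty$ classes are defined with respect to the same boundary measure $d\mu=\mu(x)\,dx$. There is no mathematical obstacle here; once this compatibility is recorded, the theorem follows immediately. I expect the hypothesis checks for \eqref{H5} and \eqref{H6} to be the most delicate aspects technically, but both are essentially already in the literature.
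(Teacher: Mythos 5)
Your proposal is correct and follows essentially the same route as the paper: verify \eqref{H1}--\eqref{H6} for $(\R^{n+1}_+, m, \mu)$ (with $\rho\simeq1$ in \eqref{H5} and \eqref{H6} supplied by \cite[Theorem~1.5]{FKS}), then import one of the equivalent properties from \cite{HLM} and invoke Theorem~\ref{thm:main}. The only immaterial difference is the entry point --- the paper cites \cite[Theorem~1.3]{HLM} for the Carleson measure estimate, i.e.\ property \eqref{list-CME}, whereas you enter through \eqref{list-Ai}; note also that \cite{HLM} is Hofmann--Le--Morris, not Hofmann--Le--Martell.
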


In this scenario, it was shown in \cite{HLM} that the Dirichlet problem \eqref{eq:D} for $L$ is solvable in $L^p(\mu)$ for some $p \in (1, \infty)$, and that every bounded weak solution of $L$ satisfies Carleson measure estimates. Therefore, this and  Theorem~\ref{thm:main} imply Theorem \ref{thm:t-independent}.  On the other hand, if we take $\mu \equiv 1$ above, then \eqref{H6'} is verified and it will recover Theorems 1.7, 1.10, and 1.23 in \cite{HKMP}.

\vspace{0.2cm}\noindent{$\bullet$ \textbf{Application 3: Degenerate elliptic operators on lower-dimensional sets.}}
The next example is $\Omega=\ree \setminus \Gamma$, where $\Gamma$ is a $d$-ADR set. Set $\mu=\restr{\H^d}{\Gamma}$. We assume that $\Omega$ satisfies 1-sided NTA conditions \eqref{H1} and \eqref{H2}, which hold automatically when $d<n$ (cf. \cite{DFM2}). The simplest option is to take $w(X) =\dist(X, \Gamma)^{-\gamma}$ for some $\gamma \in (n-d-1, n-d+1)$. Then $w$ is locally integrable by a simple estimate on the measure of the $\varepsilon$-neighborhoods of $\pom$. The same estimates yield that $m(B(X, r)) \simeq r^{n+1-\gamma}$ when $\delta(X) < 4r$ and $m(B(X,r)) \simeq r^{n+1} \delta(X)^{-\gamma}$ when $\delta(X) > 2r$. This implies that $m$ is doubling, then both \eqref{H4} and \eqref{H5} hold as soon as $\gamma>n-d-1$. Beyond that, it is easy to check that \eqref{H6'} holds.  

Now we are ready to apply our main Theorem \ref{thm:main} to the current setting. The first one is established in the complement of a Lipschitz graph with small enough constant.  
\begin{theorem}\label{thm:lower-1}
 Let $\Gamma \subset \ree$ be the graph of a Lipschitz function $\varphi: \R^d \to \R^{n-d+1}$, where $d<n$. Define $Lu=-\div(A \nabla u)$ in $\ree \setminus \Gamma$ with 
\[
A(X)=\bigg(\int_{\Gamma} \frac{d\mu(y)}{|X-y|^{d+\alpha}} \bigg)^{-\frac{1}{\alpha}}, \quad\text{ for some } \alpha>0. 
\] 
If the Lipschitz constant of $\varphi$ is small enough, then all the properties \eqref{list-Ai}--\eqref{list-BMO} in Theorem $\ref{thm:main}$ hold for $\mu=\restr{\H^d}{\Gamma}$. 
\end{theorem}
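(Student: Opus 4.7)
The plan is to reduce Theorem~\ref{thm:lower-1} to Theorem~\ref{thm:main} by verifying its hypotheses and then invoking the higher-codimensional Dahlberg-type $A_\infty$ theorem of David--Feneuil--Mayboroda to obtain \eqref{list-Ai}; the remaining equivalences \eqref{list-Lp}--\eqref{list-BMO} are then free.

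First I would verify the hypotheses \eqref{H1}--\eqref{H5} and \eqref{H6'} for $\Omega=\ree\setminus\Gamma$ and $\mu=\restr{\H^d}{\Gamma}$, with weight $w$ chosen so that the ellipticity \eqref{eq:elli} holds for the given $A$. Most of the verification is already carried out in the discussion preceding Theorem~\ref{thm:lower-1}: since $d<n$, the $1$-sided NTA hypotheses \eqref{H1}--\eqref{H2} are automatic by \cite{DFM2}; the $d$-ADR property of $\Gamma$ together with the fact that $\Gamma$ is the graph of a globally Lipschitz function (hence unbounded) gives \eqref{H3} and the ``$\pom$ unbounded'' requirement of Theorem~\ref{thm:main}; and a standard annular decomposition of $\Gamma$ against the kernel $|X-y|^{-d-\alpha}$ shows that the scalar $A(X)$ is bi-Lipschitz equivalent to a pure power of $\dist(X,\Gamma)$, placing $A$ within the family of weights treated in the preliminary discussion. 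Consequently \eqref{H4}, \eqref{H5}, and \eqref{H6'} reduce to the computations already outlined for $w=\dist(\cdot,\Gamma)^{-\gamma}$, and \eqref{eq:elli} holds with an absolute constant since $A$ is a positive scalar (viewed as $A(X)\,I$) and $w\simeq A$ by construction.

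Next I would invoke the $A_\infty$ theorem of \cite{DFM1, DFM2, DFM3}: for the canonical degenerate elliptic operator associated with the kernel $D_\alpha(X):=(\int_\Gamma |X-y|^{-d-\alpha}\,d\mu(y))^{-1/\alpha}$, and with $\Gamma$ the graph of a Lipschitz function of sufficiently small Lipschitz constant, one has $\w_L\in A_\infty(\mu)$. The argument in \cite{DFM1, DFM2, DFM3} begins from the flat case $\Gamma=\R^d\times\{0\}$, where the translational and rotational symmetries of $D_\alpha$ render the Green function explicit and force the elliptic measure on $\Gamma$ to equal $\mu$ up to a dimensional constant; a Dahlberg/Rellich-type perturbation argument then propagates $A_\infty$ to small Lipschitz perturbations.

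Once \eqref{list-Ai} is established, the equivalences proved in Theorem~\ref{thm:main} immediately yield \eqref{list-Lp}--\eqref{list-BMO}, completing the proof. The main obstacle in the overall argument is the DFM $A_\infty$ theorem itself, which is bypassed here by citation: in higher codimension the standard codimension-one tools (harmonic measure on intervals, explicit conformal maps, and so on) are unavailable, and one must exploit the precise algebraic structure of the kernel $D_\alpha$ to obtain the sharp Green function estimates on the flat model that power the Dahlberg-type perturbation. Everything else amounts to routine bookkeeping against the $(\Omega,m,\mu)$ framework already developed for Theorem~\ref{thm:main}.
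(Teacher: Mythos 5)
Your proposal is correct and follows essentially the same route as the paper: verify \eqref{H1}--\eqref{H5} and \eqref{H6'} for $\Omega=\ree\setminus\Gamma$ using the $d$-ADR property of the Lipschitz graph and the comparability $A(X)\simeq \dist(X,\Gamma)^{d-n}$ (the paper cites \cite[Lemma~5.1]{DFM1} for this), then quote \cite[Theorem~1.18]{DFM1} for $\w_L\in A_\infty(\mu)$ and conclude via Theorem~\ref{thm:main}. The only caveat is that your phrase ``a pure power of $\dist(X,\Gamma)$'' should be pinned down to the exponent $d-n$ so that the weight genuinely falls in the range $\gamma\in(n-d-1,n-d+1)$ required by the preliminary discussion, but this is exactly the level of detail at which the paper itself operates.
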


Indeed, under the background hypotheses above, we observe that $\Gamma$ is $d$-ADR, and that $A(X) \simeq \dist(X, \Gamma)^{d-n}$ uniformly for $X \in \ree \setminus \Gamma$ (cf. \cite[Lemma~5.1]{DFM1}). The authors in \cite[Theorem 1.18]{DFM1} showed that $\w_L \in A_{\infty}(\mu)$, which together with Theorem \ref{thm:main} and the preceding argument yields at once Theorem \ref{thm:lower-1}. We should mention that \cite[Theorem 1.27]{DFM1} has already proved Carleson measure estimates imply $\w_L \in A_{\infty}$ (even without smallness of the Lipschitz constant). 

Moreover, based on Theorem \ref{thm:main} and \cite[Corollary~8.11]{DFM1}, we obtain the following. 

\begin{theorem}\label{thm:lower-2}
Let $\mathcal{A}$ be a $(n+1)\times (n+1)$ matrix on $\ree$ satisfying the uniform elliptic conditions. Assume that $\mathcal{A}$ has the following structure:
\begin{align*}
\mathcal{A}(X) := 
\bigg(
 \begin{matrix}
\mathcal{A}_1 & \mathcal{A}_2 \\
\mathcal{A}_3 & bI_{n-d+1} + \mathcal{A}_4 \\
\end{matrix}
\bigg), 
\end{align*} 
where $d<n$, $\mathcal{A}_1$ and $\mathcal{A}_2$ can be any matrix valued measurable functions in respectively $\mathcal{M}_{d \times d}$ and $\mathcal{M}_{d \times (n-d+1)}$, $I_{n-d+1} \in \mathcal{M}_{(n-d+1) \times (n-d+1)}$ denotes the identity matrix, and $\lambda^{-1} \le b \le \lambda$ for some constant $\lambda>0$, 
\begin{align*} 
\sup_{x \in \R^d,\, r>0} \frac{1}{r^d} \iint_{B(x, r) \cap \ree} |\nu(y, t)|^2 \, \frac{dydt}{|t|^{n+1-d}} < \infty, 
\qquad\forall \nu \in \{|t| \nabla b, \, \mathcal{A}_3, \mathcal{A}_4\}. 
\end{align*} 
Let $Lu=-\div(A\nabla u)$ with $A(X)=\dist(X, \R^d)^{d-n} \mathcal{A}(X)$. Then all the properties \eqref{list-Ai}--\eqref{list-BMO} in Theorem $\ref{thm:main}$ hold, where $\mu$ is the Lebesgue measure on $\R^d$. 
\end{theorem}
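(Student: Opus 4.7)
The plan is to recognize Theorem \ref{thm:lower-2} as a direct application of Theorem \ref{thm:main} once the setting is placed in the framework of Application 3 and once the $A_{\infty}$ condition is imported from \cite[Corollary~8.11]{DFM1}.

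First, I would fix $\Omega = \ree \setminus \R^d$, $w(X) = \dist(X, \R^d)^{d-n}$, $dm = w \, dX$, and take $\mu$ to be Lebesgue measure on $\R^d$. This corresponds to the choice $\gamma = n - d$ in the discussion preceding Theorem \ref{thm:lower-1}, and since $n - d \in (n-d-1, n-d+1)$, the argument given there shows that $(\Omega, m, \mu)$ satisfies hypotheses \eqref{H1}--\eqref{H5} together with \eqref{H6'}, hence \eqref{H6}. Also, $\pom = \R^d$ is unbounded, as required by Theorem \ref{thm:main}. The degenerate ellipticity \eqref{eq:elli} for the matrix $A(X) = w(X) \mathcal{A}(X)$ is immediate from the uniform ellipticity of $\mathcal{A}$: for all $\xi, \eta \in \R^{n+1}$ and a.e.\ $X \in \Omega$,
\begin{align*}
A(X) \xi \cdot \xi \ge \lambda^{-1} w(X) |\xi|^2, \qquad |A(X) \xi \cdot \eta| \le \lambda \, w(X) |\xi| |\eta|.
\end{align*}

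Next, I would invoke \cite[Corollary~8.11]{DFM1}: under the block decomposition of $\mathcal{A}$ and the Carleson measure conditions on $|t|\nabla b$, $\mathcal{A}_3$, and $\mathcal{A}_4$ assumed here, that result yields $\omega_L \in A_{\infty}(\mu)$. This verifies property \eqref{list-Ai} of Theorem \ref{thm:main}, and so applying that theorem immediately produces the full chain of equivalences \eqref{list-Ai}--\eqref{list-BMO}, which is the conclusion of Theorem \ref{thm:lower-2}.

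There is no substantive obstacle in this argument: the heavy lifting was carried out in the proof of Theorem \ref{thm:main} and in \cite[Corollary~8.11]{DFM1}. The only item requiring care is that the Carleson norms and the weight normalization used in \cite{DFM1} match those of the present setting; this is purely a matter of bookkeeping, since the weight $w(X) = \dist(X, \R^d)^{d-n}$ is exactly the one for which the degenerate operators of \cite{DFM1} are designed and the block-Carleson hypotheses stated in Theorem \ref{thm:lower-2} are transcribed verbatim from \cite[Corollary~8.11]{DFM1}.
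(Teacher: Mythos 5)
Your proposal is correct and follows exactly the paper's (very terse) argument: verify hypotheses \eqref{H1}--\eqref{H6'} via the Application~3 discussion with $\gamma=n-d\in(n-d-1,n-d+1)$, import $\omega_L\in A_\infty(\mu)$ from \cite[Corollary~8.11]{DFM1}, and conclude by Theorem~\ref{thm:main}. The only cosmetic slip is writing the ellipticity constant of $\mathcal{A}$ as $\lambda$ (which in the statement only bounds $b$); it should be a generic uniform ellipticity constant.
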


This paper is organized as follows. Section~\ref{sec:prelim} contains some preliminaries, definitions, and tools that will be used throughout. Also, in Section~\ref{sec:PDE}, we collect the general degenerate elliptic theory concerning elliptic measures and Green functions which can be found in \cite{DFM3, FKJ, FKS}. Section \ref{sec:aux} presents some auxiliary results and key estimates to show our mains theorems. The proof of Theorem~\ref{thm:main} is given in Section~\ref{sec:main}.  Section \ref{sec:abs} is devoted to proving Theorem \ref{thm:abs}.

We should point out that the related work \cite{FP} has some overlap with our results because our framework taken from \cite{DFM3} falls inside the definition of PDE friendly domains in \cite{FP}. By $\rm{(\ref{list-CME}')}$, we mean that Carleson measure estimates hold for all solutions of the form $u(X)=\w_L^X(S)$ with $S \subset \pom$ being an arbitrary Borel set. Thus, if \cite[Corollary~1.32]{FP} is restricted to the setting of the current paper, it corresponds to $\eqref{list-Ai} \iff \rm{(\ref{list-CME}')}$ in Theorem \ref{thm:main}. To show $\rm{(\ref{list-CME}')}\ \Longrightarrow\ \eqref{list-Ai}$, both arguments borrowed the ideas from \cite{KKiPT, KKoPT} to establish a lower bound for dyadic square function. However, for the proof of  $\eqref{list-Ai} \Longrightarrow \rm{(\ref{list-CME}')}$, the authors in \cite{FP} used the local $\S<\N$ estimate and the transference of Carleson measure estimates between two elliptic/surface measures which are quantitatively mutually absolutely continuous, while our strategy is to prove $\eqref{list-Ai} \Longrightarrow \eqref{list-BMO} \Longrightarrow \rm{(\ref{list-CME}')}$.

\section{Preliminaries}\label{sec:prelim}

\subsection{Notation}\label{sec:notation}
Let us introduce some notation used throughout this article. 
\begin{list}{$\bullet$}{\leftmargin=0.8cm  \itemsep=0.2cm}
	
\item  Our ambient space is $\ree$, $n\ge 1$.

\item Given $E \subset \ree$ we write $\diam(E)=\sup_{x,y \in E}|x-y|$ to denote its diameter.
	
\item Given an open set $\Omega \subset \ree$, we shall use lower case letters $x, y, z$, etc., to denote points on $\pom$, and capital letters $X, Y, Z$, etc., to denote generic points in $\ree$ (especially those in $\ree \setminus \pom$). 
	
\item The open $(n+1)$-dimensional Euclidean ball of radius $r$ will be denoted $B(x,r)$ when the center $x$ lies on $\pom$, or $B(X, r)$ when the center $X \in \ree \setminus \pom$.  A \textit{surface ball} is denoted $\Delta(x, r) := B(x,r) \cap \pom$, and unless otherwise specified it is implicitly assumed that $x \in \pom$.
	
\item If $\pom$ is bounded, it is always understood (unless otherwise specified) that all surface balls have radii controlled by the diameter of $\pom$, that is, if $\Delta=\Delta(x,r)$ then $r\lesssim \diam(\pom)$. Note that in this way $\Delta=\pom$ if $\diam(\pom)<r\lesssim \diam(\pom)$. 
	
\item For $X \in \ree$, we set $\delta(X):= \dist(X,\partial\Omega)$.
	
\item We let $\H^d$ denote the $d$-dimensional Hausdorff measure. 
	
\item For a Borel subset $A \subset \pom$ with $0<\mu(A)<\infty$, we set $\fint_A f d\mu := \mu(A)^{-1} \int_A f d\mu$.
	
\item We shall use the letter $I$ (and sometimes $J$) to denote a closed $(n+1)$-dimensional Euclidean cube with sides parallel to the coordinate axes, and we let $\ell(I)$ denote the side length of $I$. We use $Q$ to denote  dyadic ``cubes'' on $\pom$.  The latter exist as a consequence of Lemma \ref{lem:dyadic} below.

\item We use the letters $c,C$ to denote harmless positive constants, not necessarily the same at each occurrence, which depend only on dimension and the constants appearing in the hypotheses of the theorems (which we refer to as the ``allowable parameters'').  We shall also sometimes write $a\lesssim b$ and $a \simeq b$ to mean, respectively, that $a \leq C b$ and $0< c \leq a/b\leq C$, where the constants $c$ and $C$ are as above, unless explicitly noted to the contrary.   Unless otherwise specified upper case constants are greater than $1$  and lower case constants are smaller than $1$. In some occasions it is important to keep track of the dependence on a given parameter $\gamma$, in that case we write $a\lesssim_\gamma b$ or $a \simeq_\gamma b$ to emphasize  that the implicit constants in the inequalities depend on $\gamma$.

\end{list}

\subsection{Geometric concepts}\label{sec:defs}

\begin{definition}
Let $\mu$ be a nonnegative locally integrable function on $\Rn$. Given $1 \le p<\infty$, we say that $\mu \in A_p(\Rn)$ if 
\[
[\mu]_{A_p(\Rn)} := \sup_{Q} \bigg(\fint_Q \mu\, dx \bigg) \bigg(\fint_Q \mu^{1-p'}\, dx \bigg)^{p-1} < \infty, 
\]
where the supremum is taken over all cubes $Q \subset \Rn$. 
\end{definition}

Observe that all $A_p$ weights with $1\le p<\infty$ enjoy the doubling property (cf. \cite[p. 504]{G}): 
\begin{align*}
\mu(\lambda Q) \le [\mu]_{A_p} \lambda^{np} \mu(Q), \quad \forall \mu \in A_p(\Rn),  \, \lambda>1, 
\end{align*}
for all cubes $Q \subset \Rn$.

\begin{definition}[\bf Ahlfors-David regular]\label{def:ADR}
Let $0<d \le n+1$ and let $E \subset \ree $ be a closed set. We say that $E$ is $d$-dimensional \textit{Ahlfors-David regular} ($d$-ADR for short) if there is some uniform constant $C \ge 1$ such that 
\begin{equation*}
C^{-1}\, r^d \leq \H^d(E \cap B(x,r)) \leq C\, r^d,\qquad \forall x \in E, \quad 0<r<\diam(E). 
\end{equation*} 
\end{definition}

\begin{definition}[\bf Corkscrew condition]\label{def:cks}
We say that a domain $\Omega\subset \ree$ satisfies the \textit{corkscrew condition} if for some uniform constant $0<c_0<1$ and for every surface ball $\Delta:=\Delta(x,r)$ with $x\in \partial\Omega$ and $0<r<\diam(\partial\Omega)$, there is a ball $B(X_\Delta, c_0 r) \subset B(x,r) \cap \Omega$. The point $X_\Delta \subset \Omega$ is called a \textit{corkscrew point relative to} $\Delta$. We note that  we may allow $r<C\diam(\pom)$ for any fixed $C$, simply by adjusting the constant $c_0$.
\end{definition}

\begin{definition}[\bf Harnack chain condition]\label{def1.hc}
We say that a domain $\Omega \subset \ree$ satisfies the \textit{Harnack chain condition} if there are uniform constants $C_1,C_2>1$ such that for every pair of points $X, Y\in \Omega$ there is a chain of balls $B_1, B_2, \dots, B_N\subset \Omega$ with $N \leq  C_1(2+\log_2^+ \Pi)$, where $\Pi:=\frac{|X-Y|}{\min\{\delta(X), \delta(Y)\}}$ such that $X\in B_1$, $Y \in B_N$, $B_k\cap B_{k+1}\neq\varnothing$ and $C_2^{-1} \diam(B_k) \leq \dist(B_k,\partial\Omega) \leq C_2 \diam(B_k)$ for every $1\le k\le N$. The chain of balls is called a \textit{Harnack chain}.
\end{definition}

\begin{definition}[\bf 1-sided NTA, CAD]\label{def:NTA}
We say that $\Omega \subset \ree$ is a $1$-sided NTA (non-tangentially accessible) domain if  $\Omega$ satisfies both the corkscrew and Harnack Chain conditions. Furthermore, we say that $\Omega$ is an NTA domain if it is a $1$-sided NTA domain and if, in addition, $\R^{n+1} \setminus \overline{\Omega}$ satisfies the Corkscrew condition. If a $1$-sided NTA domain, or an NTA domain, has an ADR boundary, then it is called a 1-sided CAD (chord-arc domain) or a CAD, respectively.
\end{definition}

\subsection{Dyadic grids and sawtooths}\label{sec:dyadic} 
In what follows, we assume that $\Omega$ satisfies \eqref{H1}-\eqref{H2} and that the measures $\mu$ and $m$ satisfy \eqref{H3} and \eqref{H4}. We present a lemma describing the dyadic decomposition of $(\pom, \mu)$.  

\begin{lemma}[\cite{Ch, DFM3}]\label{lem:dyadic}
There exist constants $a_0 \in (0, 1]$, $A_0 \in [1,\infty)$, $\gamma \in (0,1)$, depending only on $c_3$, such that for each $k \in \Z$ there is a collection of Borel sets  (called ``dyadic cubes'')
\[
\D_k:=\big\{Q_j^k\subset \pom:\ j\in\mathfrak{J}_k\big\},
\]
where $\mathfrak{J}_k$ denotes some (possibly finite) index set depending on $k$ satisfying:
\begin{list}{\rm{(\theenumi)}}{\usecounter{enumi}\leftmargin=1cm \labelwidth=1cm \itemsep=0.2cm \topsep=.2cm \renewcommand{\theenumi}{\alph{enumi}}}
\item\label{eq:dyadic-1} $\pom=\bigcup_{j\in \mathfrak{J}_k} Q_j^k$ for each $k \in \Z$. 
\item\label{eq:dyadic-2} If $m\le k$ then either $Q_j^k \subset Q_i^m$ or $Q_i^m\cap Q_j^k=\emptyset$.
\item\label{eq:dyadic-3} For each $k \in \mathbb{Z}$, $j\in\mathfrak{J}_k$, and $m<k$, there is a unique $i\in\mathfrak{J}_m $ such that $Q_j^k \subset Q_i^m$. 
\item\label{eq:dyadic-4} $\diam(Q_j^k)\le A_0 2^{-k}$.
\item\label{eq:dyadic-5} Each $Q_j^k$ contains some surface ball $\Delta(x_j^k, a_0 2^{-k}) :=B(x_j^k, a_0 2^{-k})\cap \pom$. 
\item\label{eq:dyadic-6} $\mu\big(\big\{x\in Q_j^k:\,\dist(x, \pom \setminus Q_j^k) \leq \tau 2^{-k}\big\}\big)\leq A_0 \tau^{\gamma} \mu(Q_j^k)$, for all $j, k \in \Z$, and for some constant $\tau>0$.
\end{list}
\end{lemma}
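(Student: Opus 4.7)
The plan is to follow the classical construction of Christ \cite{Ch}, which was designed precisely for a space of homogeneous type. Since \eqref{H3} makes $(\pom,\mu)$ such a space, his construction applies without essential modification; I sketch how I would set it up.

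First, for each $k \in \Z$ I would select, via a maximal argument (or Zorn's lemma), a maximal $2^{-k}$-separated subset $\{x_j^k\}_{j \in \mathfrak{J}_k} \subset \pom$, i.e.\ $|x_j^k - x_i^k| \ge 2^{-k}$ for $j \ne i$ and by maximality $\pom = \bigcup_j \Delta(x_j^k, 2^{-k})$. The doubling property \eqref{H3} bounds the number of such centers in any ball of radius $C 2^{-k}$ by a uniform constant. Next I would organize these centers into a tree: fix a well-ordering of the index sets and assign each $x_j^k$ a unique parent $x_{\pi_k(j)}^{k-1}$ with $|x_j^k - x_{\pi_k(j)}^{k-1}| < 2^{-(k-1)}$ (such a center exists by the covering property at scale $k-1$, and ties are broken by the well-order). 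Iterating this map defines an ancestor/descendant relation between centers at all scales.

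With the tree in hand, the \emph{preliminary} dyadic cubes $\widetilde Q_j^k$ would be defined, following Christ, as the union of a carefully chosen ``territory'' in $\pom$: every $y \in \pom$ is assigned to the index $j \in \mathfrak{J}_k$ for which $x_j^k$ is the ``nearest ancestor'' at scale $k$, using the nesting under the parent map to enforce consistency across scales. Properties \eqref{eq:dyadic-1}, \eqref{eq:dyadic-2}, \eqref{eq:dyadic-3} are then built into the construction, while \eqref{eq:dyadic-4} follows from the triangle inequality together with the bound $|x_j^k - x_{\pi_k(j)}^{k-1}| < 2^{-(k-1)}$ summed geometrically, and \eqref{eq:dyadic-5} is immediate from separation: $\Delta(x_j^k, 2^{-k-1})$ is contained in $\widetilde Q_j^k$, so we may set $a_0 = 2^{-1}$ up to a small adjustment.

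The main obstacle will be the small-boundary property \eqref{eq:dyadic-6}, which is false for the preliminary cubes $\widetilde Q_j^k$ in general and requires a further refinement. I would follow Christ's iterative/probabilistic scheme: at each scale $k$, for every pair $(j,j')$ with $x_j^k$ near the interface of $\widetilde Q_i^{k-1}$ and $\widetilde Q_{i'}^{k-1}$, one reassigns the ``ambiguous'' territory between cubes so as to push the effective boundary inward geometrically. Iterating over all scales, the doubling property yields an estimate of the form $\mu(\{x \in Q_j^k : \dist(x, \pom \setminus Q_j^k) \le \tau 2^{-k}\}) \le A_0 \tau^\gamma \mu(Q_j^k)$ with $\gamma \in (0,1)$ depending only on the doubling constant $c_3$. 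This is the delicate part; I would present it by closely following Christ's argument in \cite{Ch}, noting that no geometry of $\pom$ beyond \eqref{H3} is used. Since nothing in steps one through three depends on whether $\pom$ has bounded diameter or not, the whole construction runs uniformly over $k \in \Z$, yielding the stated grid.
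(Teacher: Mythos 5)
Your proposal is correct and matches the paper's approach: the paper offers no proof of this lemma, simply invoking Christ's construction for spaces of homogeneous type, which applies verbatim since \eqref{H3} makes $(\pom,\mu)$ doubling. Your sketch (maximal separated nets, parent tree, preliminary cubes, then the small-boundary estimate) is precisely Christ's argument, so there is nothing to add beyond the citation.
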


A few remarks are in order concerning this lemma. In the setting of a general space of homogeneous type, this lemma was proved by Christ \cite[Theorem 11]{Ch}, with the dyadic parameter $1/2$ replaced by some constant $\delta \in (0, 1)$. In the current scenario, this result first appeared in \cite[Proposition 5.1]{DFM3}. 

Let us observe that if $\pom$ is bounded and $k \in \Z$ is such that $\diam(\pom) < C^{-1} 2^{-k}$, then there cannot be two distinct cubes in $\D_k$. Thus, $\D_k=\{Q^k\}$ with $Q^k=\pom$. Therefore, we are going to ignore those $k \in \Z$ such that $2^{-k} \gtrsim \diam(\pom)$. Hence, we shall denote by $\D=\D(\pom)$ the collection of all relevant $Q_j^k$, i.e.,
\[
\D=\D(\pom):=\bigcup_{k \in \Z} \D_k, 
\]
where, if $\diam(\pom)$ is finite, the union runs over those $k \in\mathbb{Z}$ such that $2^{-k} \lesssim \diam(\pom)$.  An element of $\D$ is given by a subset $Q$ of $\pom$ and a generation $k$. Indeed, if we only know the set $Q$, contrary to dyadic cubes in $\Rn$, we cannot be sure of the generation. Despite this, we shall abuse notation and use the term $Q$ for both an element of $\D$ and the corresponding subset of $\pom$. We write $k(Q)$ when we want to refer to the "dyadic generation`` of the cube $Q \in \D$, that is the only integer $k$ such that $Q \in \D_k$. The length of a dyadic cube is $\ell(Q)=2^{-k(Q)}$. It follows from \eqref{eq:dyadic-4} that $\diam(Q) \lesssim \ell(Q)$.

Properties \eqref{eq:dyadic-4} and \eqref{eq:dyadic-5} imply that for each cube $Q \in \D$, there exists $x_Q \in \pom$ such that 
\begin{equation}\label{deltaQ}
\Delta(x_Q, a_0 \ell(Q)) \subset Q \subset \Delta(x_Q, A_0 \ell(Q)). 
\end{equation}
We shall denote these balls and surface balls by
\begin{align}
\label{deltaQ2} B_Q:=B(x_Q, a_0 \ell(Q)), \qquad & \Delta_Q:=\Delta(x_Q, a_0 \ell(Q)), 
\\ 
\label{deltaQ3} \widetilde{B}_Q:=B(x_Q, A_0 \ell(Q)),\qquad &\widetilde{\Delta}_Q:=\Delta(x_Q, A_0 \ell(Q)),
\end{align}
and we shall refer to the point $x_Q$ as the ``center'' of $Q$. Given $Q \in \D$, we define the ``Corkscrew point relative to $Q$" as $X_Q :=X_{\Delta_Q}$ (cf. \eqref{H1} for the existence). Then, we have 
\[
\delta(X_Q)\simeq \dist(X_Q,Q) \simeq \ell(Q).
\]

We next introduce the ``discretized Carleson region'' relative to $Q\in \D$, $\D_{Q}=\{Q'\in \D: Q' \subset Q\}$. Let $\F=\{Q_j\}\subset \D$ be a family of pairwise disjoint cubes. The ``global discretized sawtooth'' relative to $\F$ is the collection of cubes $Q \in \D$ that are not contained in any $Q_j \in \F$, that is,
\[
\D_{\F} := \D \setminus \bigcup_{Q_j \in \F} \D_{Q_j}.
\]
For a given $Q \in \D$, the ``local discretized sawtooth'' relative to $\F$ is the collection of cubes in $\D_Q$ that are not contained in any $Q_j \in \F$ or, equivalently,
\[
\D_{\F, Q} := \D_Q \setminus \bigcup_{Q_j \in \F} \D_{Q_j} = \D_{\F} \cap \D_Q.
\]
We also allow $\F$ to be the null set in which case $\D_{\tinyemptyset}=\D(E)$ and $\D_{\tinyemptyset,Q}=\D_Q$.

We also introduce the ``geometric'' Carleson regions and sawtooths, which originated in \cite{HM} and were widely used to study elliptic operators (cf. \cite{AHMMT, CDMT, CHMT, HMM, HMMTZ}).  Given $Q\in\D$ we want to define some associated regions which inherit the good properties of $\Omega$. Let $\W=\W(\Omega)$ denote a collection of (closed) dyadic Whitney cubes of $\Omega \subset \ree$, so that the cubes in $\W$ form a covering of $\Omega$ with non-overlapping interiors, and satisfy
\begin{equation*}
4\diam(I) \leq \dist(4I, \pom) \leq \dist(I, \pom) \leq 40 \diam(I), \qquad\forall I \in \W,
\end{equation*}
and
\[
\diam(I_1)/4 \le \diam(I_2) \le 4 \diam(I_1),\,\text{ whenever } \partial I_1 \cap \partial I_2 \neq \emptyset. 
\] 
Let $X_I$ denote the center of $I$, let $\ell(I)$ denote the side length of $I$, and write $k(I)=k$ if $\ell(I)=2^{-k}$.

Given $0<\lambda<1$ and $I\in\mathcal{W}$ we write $I^*=(1+\lambda)I$ for the ``fattening'' of $I$. By taking $\lambda$ small enough, we can arrange matters, so that, first, $\dist(I^*,J^*)\approx\dist(I,J)$ for every $I,J\in\mathcal{W}$. Secondly, $I^*$ meets $J^*$ if and only if $\partial I$ meets $\partial J$ (the fattening thus ensures overlap of $I^*$ and $J^*$ for any pair $I,J\in\mathcal{W}$ whose boundaries touch, so that the Harnack Chain property then holds locally in $I^*\cup J^*$, with constants depending upon $\lambda$). By picking $\lambda$ sufficiently small, say $0<\lambda<\lambda_0$, we may also suppose that there is $\tau\in(\frac12,1)$ such that for distinct $I,J\in\mathcal{W}$, we have that $\tau J\cap I^*=\emptyset$. In what follows we will need to work with dilations $I^{**}=(1+2\lambda)I$ or $I^{***}=(1+4\lambda)I$, and in order to ensure that the same properties hold we further assume that $0<\lambda<\lambda_0/4$.

Given $\vartheta\in\mathbb{N}$,  for every cube $Q \in \D$ we set
\begin{equation}\label{eq:WQ}
\W_Q :=\big\{I \in \W: 2^{-\vartheta} \ell(Q) \leq \ell(I) \leq 2^\vartheta\ell(Q), 
\text { and } \dist(I, Q) \leq 2^\vartheta \ell(Q) \big\}.
\end{equation}
We will choose $\vartheta \ge \vartheta_0$,  with $\vartheta_0$ large enough depending only on $d$, $n$ and the constants of the dyadic cube construction (cf. Lemma \ref{lem:dyadic}), so that $X_Q \in I$ for some $I \in \W_Q$, and for each proper child $Q^j$ of $Q$, the respective corkscrew points $X_{Q^j} \in I^j$ for some $I^j  \in \W_Q$. 

For each $I \in \W_Q$, by \cite[Proposition~2.18]{DFM3}, we can form a Harnack chain from the center $X_I$ to the corkscrew point $X_Q$ and call it $\H_I$. We now let $\W_{Q}^*$ denote the collection of all Whitney cubes which meet at least one ball in the Harnack chain $\H_I$ with $I \in \W_Q$, that is,
\begin{equation*}
\W_{Q}^*:=\{J \in \W: \text{ there exists } I \in \W_Q \text{ such that } \H_I \cap J \neq \emptyset\}.
\end{equation*}
We also define
\begin{equation*}
U_{Q} :=\bigcup_{I \in \W_{Q}^*}(1+\lambda) I=: \bigcup_{I \in \W_{Q}^*} I^{*}.
\end{equation*}
By construction, we then have that
\begin{equation*}
\W_Q  \subset \W_Q^* \subset \W \quad \text{and}\quad X_Q \in U_Q, \quad X_{Q^j} \in U_Q,
\end{equation*}
for each child $Q^j$ of $Q$. It is also clear that there is a uniform constant $k^*$ (depending only on the allowable constants and $\vartheta$) such that
\begin{align*}
2^{-k^*} \ell(Q) \leq \ell(I) \leq 2^{k^*}\ell(Q), &\quad \forall\,I \in \W_Q^*,
\\ 
X_I \rightarrow_{U_Q} X_Q, &\quad \forall\,I \in \W_Q^*,
\\ 
\dist(I, Q) \leq 2^{k^*} \ell(Q), &\quad \forall\,I \in \W_Q^*.
\end{align*}
Here, $X_I \to_{U_Q} X_Q$ means that the interior of $U_Q$ contains all balls in Harnack chain (in $\Omega$) connecting $X_I$ to $X_Q$, and moreover, for any point $Z$ contained in any ball in the Harnack chain, we have
$\dist(Z, \partial \Omega) \simeq \dist(Z, \Omega \setminus U_Q)$ with uniform control of implicit constants. The
constant $k^*$ and the implicit constants in the condition $X(I) \to_{U_Q} X_Q$, depend at most on the allowable
parameters, on $\lambda$, and on $\vartheta$. Moreover, given $I \in \W$ we have that $I \in \W^*_{Q_I}$, where $Q_I \in \D$ satisfies $\ell(Q_I)=\ell(I)$, and contains any fixed $\widehat{y} \in \pom$ such that $\dist(I, \pom) = \dist(I, \widehat{y})$. The reader is referred to \cite{HM, MP} for full details, where the parameter $\vartheta$ is fixed. Here we need to allow $\vartheta$ to depend on the aperture of the cones and hence it is convenient to include the superindex $\vartheta$.

For a given $Q\in\D$, the ``Carleson box'' relative to $Q$ is defined by
\[
T_Q:=\inter\bigg(\bigcup_{Q' \in \D_Q} U_{Q'} \bigg).
\] 
For a given family $\F=\{Q_i\} \subset \D$ of pairwise disjoint cubes and a given $Q \in \D$, we define the ``local sawtooth region'' relative to $\F$ by
\begin{equation*}
\Omega_{\F, Q} := \inter\bigg(\bigcup_{Q' \in \D_{\F, Q}} U_{Q'} \bigg)
=\inter\bigg(\bigcup_{I\in\mathcal{W}_{\F, Q}} I^* \bigg),
\end{equation*}
where $\W_{\F, Q} := \bigcup_{Q' \in \D_{\F, Q}} \W_{Q'}^*$. Note that in the previous definition we may allow $\F$ to be empty in which case clearly $\Omega_{\tinyemptyset ,Q}=T_Q$. Similarly, the ``global sawtooth region'' relative to $\F$ is defined as
\begin{equation*}
\Omega_{\F} := \inter\bigg(\bigcup_{Q' \in \D_{\F}} U_{Q'} \bigg)
=\inter\bigg(\bigcup_{I \in \W_{\F}} I^*\bigg),
\end{equation*}
where $\W_{\F}:=\bigcup_{Q' \in \D_{\F}} \W_{Q'}^*$. If $\F$ is the empty set clearly $\Omega_{\tinyemptyset}=\Omega$.
For a given $Q \in \D$  and $x \in \pom$ let us introduce the dyadic cone and its truncated version 
\[
\Gamma_{\D}(x) := \bigcup_{x \in Q' \in \D}  U_{Q'} \quad\text{ and }\quad 
\Gamma_Q(x) := \bigcup_{x\in Q' \in \D_Q}  U_{Q'},
\]
where it is understood that $\Gamma_Q(x)=\emptyset$ if $x \notin Q$. Analogously, we can slightly fatten the Whitney boxes and use $I^{**}$ to define new fattened Whitney regions and sawtooth domains. More precisely, for every $Q \in \D$,
\[
T_Q^* := \inter\bigg(\bigcup_{Q' \in \D_Q} U_{Q'}^* \bigg),\quad 
\Omega^*_{\F, Q} := \inter\bigg(\bigcup_{Q' \in \D_{\F, Q}} U_{Q'}^* \bigg), \quad
\Gamma^*_Q(x) := \bigcup_{x\in Q'\in\D_{Q_0}}  U_{Q'}^*,
\]
where $U_Q^* := \bigcup_{I \in \W_Q^*} I^{**}$. Similarly, we can define $T_Q^{**}$, $\Omega^{**}_{\F, Q}$, $\Gamma_Q^{**}(x)$, and $U^{**}_{Q}$ by using $I^{***}$ in place of $I^{**}$.

To define  the ``Carleson box'' $T_{\Delta}$ associated with a surface ball $\Delta=\Delta(x,r)$, let $k(\Delta)$ denote the unique $k \in \mathbb{Z}$ such that $2^{-k-1}<200r\leq 2^{-k}$, and set
\begin{equation*}
\D^{\Delta}:=\big\{Q \in \D_{k(\Delta)}: \: Q \cap 2 \Delta \neq \emptyset \big\}.
\end{equation*}
We then define
\begin{equation*}
T_{\Delta} := \inter\bigg(\bigcup_{Q \in \D^{\Delta}} \overline{T_Q} \bigg).
\end{equation*}
We can also consider fattened versions of $T_{\Delta}$ given by
\begin{align*}
T_{\Delta}^* := \inter\bigg(\bigcup_{Q \in \D^{\Delta}} \overline{T_Q^{*}} \bigg) 
\quad \text{ and }\quad 
T_{\Delta}^{**} := \inter\bigg(\bigcup_{Q \in \D^{\Delta}} \overline{T_Q^{**}} \bigg).
\end{align*}

Following \cite{HM, HMT:book}, one can easily see that there exist constants $0<\kappa_1<1$ and $\kappa_0\geq 16A_0/a_0$ (with the constants $a_0$ and $A_0$ in Lemma \ref{lem:dyadic}), depending only on the allowable parameters and on $\vartheta$, so that
\begin{gather}\label{eq:kappa-1}
\kappa_1 B_Q \cap \Omega \subset T_Q \subset T_Q^* \subset T_Q^{**} \subset \overline{T_Q^{**}} 
\subset \kappa_0 B_Q \cap \overline{\Omega}=:\tfrac{1}{2}B_Q^* \cap \overline{\Omega},
\\[6pt]
\label{eq:kappa-2}
\tfrac{5}{4}B_\Delta \cap \Omega \subset T_{\Delta} \subset T_{\Delta}^* \subset T_{\Delta}^{**}
\subset \overline{T_{\Delta}^{**}} \subset \kappa_0 B_\Delta \cap \overline{\Omega} 
=: \tfrac{1}{2} B_\Delta^* \cap \overline{\Omega},
\end{gather}
and also
\begin{equation}\label{propQ0}
Q \subset \kappa_0 B_\Delta \cap \pom = \tfrac{1}{2}B_\Delta^* \cap \pom 
=:\tfrac{1}{2} \Delta^*, \qquad \forall\, Q \in \D^{\Delta},
\end{equation}
where $B_Q$ is defined as in \eqref{deltaQ2}, $\Delta=\Delta(x,r)$ with $x\in\Gamma$, $r>0$, and $B_{\Delta}=B(x, r)$ is so that $\Delta=B_\Delta \cap \Gamma$. From our choice of the parameters one also has that $B_Q^*\subset B_{Q'}^*$ whenever $Q \subset Q'$.

\subsection{Elliptic measure and Green function}\label{sec:PDE}
Let us recall the elliptic theory of degenerate elliptic operators in \cite{DFM3}, which generalizes the elliptic theory developed in \cite{DFM2}. The latter focused on domains $\Omega \subset \R^{n+1}$ whose boundary $\Gamma = \pom$ is ADR of dimension $d<n$. Such theory has found many applications, for example, \cite{DLM, MP, MZ}.

As before, we assume that $\Omega$ satisfies \eqref{H1}--\eqref{H6}.  Throughout we consider the degenerate elliptic operators $L$ of the form $Lu=-\div(A\nabla u)$ with $A(X)=(a_{i,j}(X))_{i,j=1}^{n+1}$ being a real (not necessarily symmetric) matrix such that the following ellipticity condition holds: 
\begin{align}\label{eq:elliptic} 
\Lambda^{-1} w(X) |\xi|^2 \leq A(X) \xi \cdot \xi 
\quad \text{ and }\quad 
A(X) \xi \cdot \eta \leq \Lambda w(X) |\xi| |\eta|, 
\end{align}
for all $\xi, \eta \in\ree$ and for almost every $X \in \Omega$, where $w$ is the weight associated with the measure $m$ given in \eqref{H4}. We write $L^\top$ to denote the transpose of $L$, or, in other words, $L^\top u = -\div(A^\top \nabla u)$ with $A^\top$ being the transpose matrix of $A$.

Now we define the suitable function spaces. A function $u$ belongs to $W(\Omega)$ if $u \in L^1_{\loc}(\Omega, m)$ and there exists a vector valued function $v \in L^2(\Omega, m)$ such that for some sequence $\{\varphi_j\}_{j \in \mathbb{N}} \subset \mathscr{C}^{\infty}(\overline{\Omega})$, one has 
\begin{align*}
&\sup_{j \in \mathbb{N}} \iint_{\Omega} |\nabla u_j|^2 \, dm < \infty, \quad 
\lim_{j \to \infty} \iint_{\Omega} |\nabla \varphi_j - v|^2 \, dm =0, 
\\
&\lim_{j \to \infty} \iint_B |\varphi_j-u|\, dm =0, \quad\text{ for any ball } B: 2B \subset \Omega, 
\end{align*}
Observe that in view of \eqref{H6}, if $u \in W(\Omega)$, then the vector $v$ from the definition is unique. In what follows, if $u \in W(\Omega)$, we use the notation $\nabla u$ for the unique vector valued function $v$ given by the definition above. In particular, we can equip $W(\Omega)$ with the semi-norm 
\begin{equation*}
\|u\|_{W(\Omega)} := \bigg(\iint_{\Omega} |\nabla u|^2\, dm \bigg)^{\frac12} \quad\text{ for any } u \in W(\Omega). 
\end{equation*}
Under the hypotheses \eqref{H1}, \eqref{H2} and \eqref{H6}, one can see that $\|u\|_{W(\Omega)}=0$ if and only if $u$ is $m$-a.e. equal to a constant function. 

We also define a local version of $W(\Omega)$ as follows: let $E' \subset \Rn$ be an open set, write $E=E' \cap \overline{\Omega}$, and define 
\begin{equation*}
W_r(E) := \{u \in L^1_{\loc}(E \cap \Omega, m): 
\varphi u \in W(\Omega) \text{ for all } \varphi \in \mathscr{C}_c^{\infty}(E')\}. 
\end{equation*}
It follows from \cite[Section~10]{DFM3} that 
\begin{align}\label{eq:WrE}
W_r(E) =\{u \in L^1_{\loc}(E, m): \nabla u \in L^2_{\loc}(E, m)\}. 
\end{align}

Given an open set $E \subset \Omega$, we say that $u \in W_r(E)$ is a weak solution to $Lu=0$ in $E$ if 
\[
\iint_{\Omega} A\nabla u \cdot \nabla\varphi \, dX=0,  \quad\forall \varphi\in \mathscr{C}^{\infty}_{c}(E).
\]
Similarly, $u \in W_r(E)$ is a subsolution (respectively supersolution) to $Lu=0$ in $\Omega$ when 
\[
\iint_{\Omega} A\nabla u \cdot \nabla \varphi \, dX \le 0 \, (\text{resp.} \ge 0),  \quad\forall \varphi\in \mathscr{C}^{\infty}_{c}(E): \varphi \ge 0.
\]

As shown in \cite[Corollary 7.9]{DFM3}, the Poincar\'{e} inequality in \eqref{H6} can be extended to the following: 

\begin{lemma}\label{lem:Poin}
There exists $p_0 \in (1, 2)$ and $k>1$ such that for any $p \in [p_0, 2]$ and $u \in W(\Omega)$, 
\begin{align*}
\bigg(\bariint_{B} |u-u_B|^{pk} \, dm \bigg)^{\frac{1}{pk}} 
\le C \, r(B) \bigg(\bariint_{B} |\nabla u|^p \, dm \bigg)^{\frac1p},  
\end{align*}
where $u_B := \bariint_{B} u \, dm$. 
\end{lemma}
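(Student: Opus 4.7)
The plan is to treat $(\Omega,|\cdot|,m)$ as a doubling metric measure space (the doubling is \eqref{H4}) that already supports the weak $(1,2)$-Poincaré inequality built into \eqref{H6}, and then apply two well-known self-improvement theorems from the theory of Poincaré spaces: the Keith--Zhong openness theorem to lower the exponent on the gradient side, and the Hajłasz--Koskela Sobolev--Poincaré theorem to raise the exponent on the oscillation side. All balls below are understood to satisfy $2B\subset\Omega$, as in \eqref{H6}, so that doubling and the Poincaré inequality propagate uniformly.

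First, I would observe that \eqref{H6}, together with Jensen's inequality, gives the $(1,2)$-Poincaré inequality
\begin{equation*}
\bariint_B |u-u_B|\,dm \;\lesssim\; r(B)\,\Bigl(\bariint_B |\nabla u|^2\,dm\Bigr)^{1/2},
\end{equation*}
for every $u\in W(\Omega)$ and every admissible ball $B$. Since $(\Omega,m)$ is doubling by \eqref{H4}, the pair $(u,|\nabla u|)$ satisfies the hypotheses of the Keith--Zhong open-ended theorem in the form stated for doubling metric measure spaces (see Keith--Zhong, \emph{Ann.~of Math.}~(2) 167 (2008)). This produces $p_0\in(1,2)$ and a constant $C$ such that
\begin{equation*}
\bariint_B |u-u_B|\,dm \;\le\; C\,r(B)\,\Bigl(\bariint_{\sigma B} |\nabla u|^{p_0}\,dm\Bigr)^{1/p_0},
\end{equation*}
for some fixed dilation factor $\sigma\ge 1$ (which can be removed by standard covering arguments, or absorbed into the constants, thanks to doubling). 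For any $p\in[p_0,2]$ Hölder's inequality then yields the $(1,p)$-Poincaré inequality with the same structure.

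Second, I would invoke the Sobolev--Poincaré inequality of Hajłasz--Koskela (\emph{Mem.~Amer.~Math.~Soc.}~145 (2000)), which, starting from any $(1,p)$-Poincaré inequality on a doubling space, produces a $(pk,p)$-Poincaré inequality with $k=k(p)>1$; the gain $k>1$ depends only on the doubling constant of $m$ and on $p$, and is uniform for $p$ in a compact subinterval of $(1,\infty)$. Choosing $k:=\min_{p\in[p_0,2]} k(p)>1$ finishes the argument and gives the claimed estimate.

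The main obstacle I anticipate is verifying that Keith--Zhong and Hajłasz--Koskela apply cleanly to our setting, in which $m$ is only defined on $\Omega$ and the Poincaré inequality is restricted to balls with $2B\subset\Omega$. The doubling of $m$ from \eqref{H4}, combined with the uniqueness-of-gradient statement in \eqref{H6} (needed to make $\nabla u$ a well-defined ``upper gradient'' in the PI-space sense) and the Whitney decomposition of $\Omega$, should allow both theorems to be applied locally inside $\Omega$; the possible dilation of the ball on the right-hand side of the intermediate inequalities can be removed by the usual Boman chain argument, again using doubling.
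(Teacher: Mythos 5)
The paper itself gives no proof of Lemma \ref{lem:Poin}: it is stated as part of the elliptic framework imported from \cite{DFM3} (cf.\ the introduction to Section \ref{sec:PDE}), where the self-improvement is obtained by exactly the two ingredients you name, namely the Keith--Zhong open-endedness of the Poincar\'e inequality on the gradient side and the Haj\l{}asz--Koskela Sobolev gain on the oscillation side. So your architecture is the right one, and the second step is fine as written: from a $(1,p)$-Poincar\'e inequality on a doubling space one gets the exponent $pk(p)$ with $k(p)=s/(s-p)$ ($s$ the doubling exponent of $m$ from \eqref{H4}), which is minimized over $p\in[p_0,2]$ at $p=p_0$, so a single $k>1$ works.

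The genuine gap is in the Keith--Zhong step. That theorem requires a \emph{complete} (equivalently, proper) doubling metric measure space supporting the Poincar\'e inequality for all metric balls and all function/upper-gradient pairs; completeness is not a removable technicality, since self-improvement is known to fail on non-complete spaces, and $\Omega$ equipped with \eqref{H6} only on balls with $2B\subset\Omega$ is not such a space. A Whitney decomposition of $\Omega$ does not repair this, and ``applying the theorem locally inside $\Omega$'' is precisely what is not licensed. The correct localization is to fix an admissible ball $B$ and apply Keith--Zhong to the compact convex space $(\overline{B},|\cdot|,m|_{\overline{B}})$, checking with constants uniform in $B$: (i) $m|_{\overline{B}}$ is doubling, because by convexity each metric ball $B(X,s)\cap\overline{B}$ contains a Euclidean ball $B(X',s/4)$ with $2B(X',s/4)\subset 2B\subset\Omega$, to which \eqref{H4} applies; (ii) each metric ball of $\overline{B}$ is convex, hence a John domain with uniform constants, so a Boman chain of Euclidean balls $B'$ with $2B'\subset 2B$ transfers \eqref{H6} to a $(1,2)$-Poincar\'e inequality on all metric balls of $\overline{B}$; (iii) $|\nabla\varphi|$ is an upper gradient of $\varphi\in\mathscr{C}^\infty(\overline{B})$ on $\overline{B}$. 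Keith--Zhong then yields $p_0<2$ depending only on $c_4$, $c_6$ and the dimension, and Haj\l{}asz--Koskela on the same space gives the left-hand gain; convexity of $B$ removes all dilations. Finally, your sketch omits the approximation step: since $\nabla u$ for $u\in W(\Omega)$ is defined only through smooth approximants $\varphi_j$, the inequality must first be proved for $\varphi_j$ and then passed to the limit using the $L^1(B,m)$ convergence $\varphi_j\to u$, the $L^2$ convergence $\nabla\varphi_j\to\nabla u$, and Fatou's lemma on the left-hand side.
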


The following result records properties of solutions (cf. \cite[Section 11]{DFM3}). 

\begin{lemma}\label{lem:PDE}
Let $B$ be a ball such that $2B \subset \Omega$. Suppose that $u$ is a non-negative subsolution of $Lu=-\div(A \nabla u)$ in $2B$, where $A$ satisfies \eqref{eq:elliptic}. Then there exists a constant $C$ (depending only on the allowable parameters) so that the following estimates hold: 

\begin{list}{\rm{(\theenumi)}}{\usecounter{enumi}\leftmargin=1cm \labelwidth=1cm \itemsep=0.1cm \topsep=.2cm \renewcommand{\theenumi}{\alph{enumi}}}

\item\label{list:Cacci} Caccioppoli inequality: 
\[
\iint_{B} |\nabla u|^2\, dm \le C \, r(B)^{-2} \iint_{2B} u^2\, dm. 
\]

\item\label{list:Moser} Moser estimate: 
\[
\sup_B u \le C \bigg(\bariint_{2B} u^p\, dm \bigg)^{\frac1p},\qquad 0<p<\infty. 
\]

\item\label{list:Harnack} Harnack inequality: 
\[
\sup_B u \le C \inf_B u, \quad\text{ whenever  $Lu=0$ in $2B$}. 
\]

\item\label{list:Holder} H\"{o}lder regularity: Let $B(X, 2R) \subset \Omega$ with $X \in \Omega$ and $R>0$, and let $u \in W_r(B(X, 2R))$ be a solution in $B(X, 2R)$. Write $\osc_B u:=\sup_B u - \inf_B u$. Then there exist $\varrho \in (0, 1]$ and $C>0$ such that for any $0<r<R$, 
\begin{align*}
\osc_{B(X, r)} u \le C \bigg(\frac{r}{R}\bigg)^{\varrho} \bigg(\bariint_{B(X, R)} u^2\, dm\bigg)^{\frac12}. 
\end{align*}
Moreover, the boundary H\"{o}lder continuity holds as well. That is, the same estimate above is true for balls $B$ centered on $\pom$. 
\end{list}
\end{lemma}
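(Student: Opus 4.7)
The plan is to run the classical Moser/De~Giorgi machinery in the degenerate setting, with the measure $m$ playing the role of Lebesgue measure and the weighted Poincar\'e--Sobolev inequality of Lemma~\ref{lem:Poin} supplying the self-improvement needed for iteration. All four items follow the same template: write the weak form of $Lu=0$ against a suitable cutoff, exploit \eqref{eq:elliptic} to convert it into an estimate with respect to $dm$, and then use the doubling of $m$ from \eqref{H4} together with the Poincar\'e gain $k>1$ from Lemma~\ref{lem:Poin} to iterate.

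For \eqref{list:Cacci}, I would fix $\varphi \in \mathscr{C}_c^{\infty}(2B)$ with $\varphi\equiv 1$ on $B$ and $|\nabla\varphi|\lesssim r(B)^{-1}$, and test the subsolution inequality against $u\varphi^2$. Ellipticity \eqref{eq:elliptic} yields $\iint A\nabla u\cdot\nabla u\,\varphi^2 \gtrsim \iint |\nabla u|^2\varphi^2 \,dm$, while the cross term $\iint A\nabla u\cdot\nabla\varphi\, u\varphi$ is absorbed by Cauchy--Schwarz at the cost of $\iint u^2|\nabla\varphi|^2\,dm$. This is exactly the standard computation of \cite{FKS}.

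For \eqref{list:Moser}, since $u\ge 0$ is a subsolution, so is $u^q$ for every $q\ge 1$. Combining Caccioppoli on $u^q$ with Lemma~\ref{lem:Poin} gives
\[
\bigg(\bariint_{B_i} u^{q p_0 k}\,dm\bigg)^{\frac{1}{qp_0 k}} \lesssim \frac{1}{r(B_i)-r(B_{i+1})}\bigg(\bariint_{B_{i+1}} u^{q p_0}\,dm\bigg)^{\frac{1}{q p_0}}
\]
on a nested sequence $B_{i+1}\subset B_i\subset 2B$ shrinking to $B$; telescoping with exponents $q_j=(k)^j$ and summing the resulting geometric series produces $\sup_B u\lesssim (\bariint_{2B}u^{p_0}\,dm)^{1/p_0}$. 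The passage from $p_0$ down to any $0<p<\infty$ is the standard Bombieri--Giusti interpolation using that the map $p\mapsto (\bariint u^p dm)^{1/p}$ is increasing, followed by a reabsorption argument. Here doubling of $m$ is crucial to keep the product of the iterated constants bounded.

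For \eqref{list:Harnack}, the additional ingredient is a logarithmic estimate. Since $u>0$ solves $Lu=0$, the function $v=-\log u$ is a subsolution (up to a cutoff argument to legitimize the nonlinear test function $u^{-1}\varphi^2$), and Caccioppoli applied to $v$ together with Poincar\'e yields $v\in\BMO(2B,dm)$ with uniform constant. John--Nirenberg on the space of homogeneous type $(\overline{\Omega},|\cdot|,m)$ then provides $p>0$ and $c\in\R$ with $\bariint_{2B}e^{p|v-c|}\,dm\le C$, so that $(\bariint_{2B}u^p\,dm)^{1/p}\cdot(\bariint_{2B}u^{-p}\,dm)^{1/p}\lesssim 1$. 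Applying \eqref{list:Moser} to $u$ and, since $u^{-1}$ is a subsolution for the equation $-\div((u^{-2}A)\nabla u^{-1})=0$ (or, more cleanly, by repeating the same Moser iteration directly for $u^{-1}$), the two-sided comparison $\sup_B u\lesssim\inf_B u$ follows. For \eqref{list:Holder}, Harnack applied to $M_R-u$ and $u-m_R$ on balls $B(X,\sigma r)\subset B(X,R)$ with $\sigma>1$ fixed gives the geometric decay $\osc_{B(X,r)}u\le\theta\,\osc_{B(X,\sigma r)}u$ for some $\theta\in(0,1)$, which iterates to the claimed H\"older exponent $\varrho$. The boundary version is obtained by the same scheme applied to a cutoff extension, using \eqref{H3} and \eqref{H5} to ensure that the boundary surface ball carries enough $\mu$-mass that the Poincar\'e--Sobolev inequality remains valid up to $\pom$.

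The main obstacle is not any single step in isolation (each is classical once the analytic tools are in place) but rather verifying that the axiomatic framework \eqref{H1}--\eqref{H6} supplies all of doubling, Poincar\'e--Sobolev with gain, Harnack chains, and the boundary Poincar\'e inequality simultaneously and uniformly. Since essentially these arguments are carried out in \cite{FKS} for $A_2$-weights on $\R^{n+1}$ and extended to the present axiomatic setting in \cite{DFM3}, the cleanest route is to cite those sources for \eqref{list:Cacci}--\eqref{list:Holder} and only indicate the specific statements from \eqref{H1}--\eqref{H6} used at each step; the genuinely delicate point where one must do some extra work is the boundary H\"older continuity, where the degenerate geometry of $\pom$ (possibly of mixed dimension) interacts with the choice of cutoff.
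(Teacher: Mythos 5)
Your proposal is correct and matches the paper's treatment: the paper offers no proof of this lemma, simply collecting it as part of the known degenerate elliptic theory from \cite{DFM3, FKS, FKJ}, and your sketch is precisely the Moser/De~Giorgi iteration (Caccioppoli plus the Sobolev--Poincar\'e gain of Lemma \ref{lem:Poin}, the logarithmic estimate with John--Nirenberg for Harnack, and oscillation decay for H\"older continuity) that those references carry out. Your closing recommendation to cite those sources while flagging the boundary H\"older estimate as the delicate point is exactly the route the paper takes.
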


Next, let us introduce the degenerate elliptic measure associated with $L$. Associated with $L$ one can construct a family of regular probability measures $\{\w_L^X\}_{X \in \Omega}$, called the degenerate elliptic measure (see \cite{DFM3} for full details), such that for a given $f\in \mathscr{C}_c(\pom)$, if we define 
\[
u(X):=\int_{\pom} f(z) \, d\w^{X}_{L}(z), \quad \, X \in \Omega, 
\]
and $u:=f$ on $\pom$, then $u \in W_r(\Omega) \cap \mathscr{C}(\Omega)$ is a weak solution of $Lu=0$ in $\Omega$.

To proceed, we recall the Green function constructed in \cite{DFM3}, which generalized the results from \cite{FJK, FKJ} in the bounded domains.  
\begin{lemma}\label{lem:Green}
Given $L=-\div(A\nabla)$ with a coefficients matrix $A$ satisfying \eqref{eq:elliptic}, there exists a function $G_L: \Omega \times \Omega \to [0, \infty]$  such that  
\begin{list}{\textup{(\theenumi)}}{\usecounter{enumi}\leftmargin=1.2cm \labelwidth=1.2cm \itemsep=0.2cm \topsep=.2cm \renewcommand{\theenumi}{\roman{enumi}}} 

\item For any $Y \in \Omega$, $G_L(\cdot, Y) \in W_r(\overline{\Omega} \setminus \{Y\}) \subset L^1_{\loc}(\overline{\Omega} \setminus \{Y\}, dm)$ and $\Tr G(\cdot, Y)=0$ on $\pom$. 

\item $G_L(X,Y)=G_{L^\top}(Y, X) \ge 0$, for all $X\neq Y$.

\item For all $X, Y \in \Omega$ with $|X-Y|<\frac12 \delta(Y)$, 
\[
G_L(X,Y) \simeq \int_{|X-Y|}^{\delta(Y)} \frac{t^2}{m(B(Y, t))} \frac{dt}{t}, 
\] 
where the implicit constant depends only on the allowable parameters.  
\item For all $X, Y \in \Omega$ with $|X-Y|>\frac{1}{10} \delta(Y)$, 
\[
G_L(X,Y) \lesssim \frac{|X-Y|^2}{\mu(B(Y, |X-Y|) \cap \Omega)}, 
\] 
where the implicit constant depends only on the allowable parameters.  
\item For any $Y \in \Omega$, 
\begin{equation*}
\iint_{\Omega} A(X) \nabla_X G_L(X, Y) \cdot \nabla \Phi(X)\,dX 
=\Phi(Y), \quad\forall\, \Phi \in  \mathscr{C}_c^{\infty}(\Omega). 
\end{equation*}
In particular, $G_L(\cdot,Y)$ is a weak solution to $L G_L(\cdot,Y)=0$ in the open set $\Omega \setminus \{Y\}$.
\end{list}
\end{lemma}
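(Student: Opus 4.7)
The plan is to construct $G_L$ by the classical exhaustion method adapted to the degenerate setting. First, I would choose a nested sequence of bounded open subdomains $\{\Omega_k\}_{k \geq 1}$ with $\overline{\Omega_k} \subset \Omega_{k+1}$ and $\bigcup_k \Omega_k = \Omega$. On each $\Omega_k$, the weighted Lax-Milgram theorem applied to the bilinear form $\mathfrak{a}(u,v) = \iint_{\Omega_k} A\nabla u \cdot \nabla v\, dX$ on the weighted Sobolev space associated to $m$---together with the approximation arguments of Fabes-Jerison-Kenig \cite{FJK} and Fabes-Kenig-Jodeit-type reproducing formulas \cite{FKJ}---yields a nonnegative Green function $G_L^k(\cdot,Y)$ with vanishing trace on $\partial\Omega_k$ and satisfying $\iint_{\Omega_k} A \nabla G_L^k(\cdot,Y) \cdot \nabla \Phi \, dX = \Phi(Y)$ for every $\Phi \in \mathscr{C}_c^{\infty}(\Omega_k)$.

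Next, I would derive the pointwise bounds (iii)-(iv) uniformly in $k$. The far-field upper estimate (iv) follows from testing the equation against a suitable cutoff of $G_L^k$, invoking the Moser estimate Lemma~\ref{lem:PDE}\eqref{list:Moser} and the doubling of $m$ together with the profile relation \eqref{H5}. The two-sided estimate (iii) is obtained by chaining the interior Harnack inequality Lemma~\ref{lem:PDE}\eqref{list:Harnack} along Whitney annuli centered at $Y$ and integrating the resulting radial bound from $|X-Y|$ to $\delta(Y)$; the integrand $t^2/m(B(Y,t))$ is the natural weighted analog of the fundamental-solution profile, with the $t^2$ reflecting the scaling of a second-order equation and $m(B(Y,t))$ accounting for the degeneracy through the weight $w$.

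With the uniform bounds in hand, the maximum principle yields monotonicity $0 \leq G_L^k \leq G_L^{k+1}$, so the pointwise limit $G_L(X,Y) := \lim_k G_L^k(X,Y)$ exists and inherits (iii)-(iv). Property (i) follows by combining the Caccioppoli inequality Lemma~\ref{lem:PDE}\eqref{list:Cacci} with (iv) on balls disjoint from $Y$ (giving $G_L(\cdot,Y) \in W_r(\overline{\Omega}\setminus\{Y\})$), while the zero trace is a consequence of the boundary H\"older continuity Lemma~\ref{lem:PDE}\eqref{list:Holder} passed to the limit. For (ii), mutual testing of $G_L^k(\cdot, Y)$ against $G_{L^\top}^k(\cdot, X)$ in the approximate problem on $\Omega_k$ gives $G_L^k(X,Y)=G_{L^\top}^k(Y,X)$, and one sends $k \to \infty$. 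The distributional identity (v) transfers directly in the limit since any $\Phi \in \mathscr{C}_c^{\infty}(\Omega)$ is supported in some $\Omega_k$.

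The main obstacle is keeping every estimate quantitatively independent of the exhaustion index $k$ in the degenerate setting, and here the full strength of \eqref{H1}-\eqref{H6} is needed: \eqref{H2} supplies the Harnack chains driving the lower bound in (iii); \eqref{H4} and \eqref{H5} give the correctly-scaled weighted volume estimates; and \eqref{H6}, through Lemma~\ref{lem:Poin}, underpins the weighted Sobolev-Poincar\'e inequalities behind the De Giorgi-Nash-Moser theory encoded in Lemma~\ref{lem:PDE}. A secondary subtlety is verifying that the approximate Green functions exist with the right size control when $Y$ is close to $\partial\Omega_k$, which is handled by localization and the boundary H\"older estimate. All of this bookkeeping has been carried out in detail in \cite{DFM3}, so in practice the proof reduces to a citation of that construction.
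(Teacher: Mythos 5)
Your proposal is correct and ends where the paper does: the paper gives no proof of this lemma, simply recalling the Green function construction from \cite{DFM3} (which generalizes \cite{FJK, FKJ}), and your argument likewise reduces to citing that construction. Your sketch of the route---exhaustion by bounded subdomains, Lax--Milgram for the weighted form, uniform interior/far-field bounds via Moser, Harnack chains and \eqref{H5}, monotone passage to the limit, and symmetry by mutual testing against $G_{L^\top}^k$---is a faithful outline of what is actually carried out in \cite{DFM3}, so there is nothing to compare beyond the citation.
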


The following result lists a number of properties (cf. \cite{DFM3, FKJ}) which will be used below. 

\begin{lemma}\label{lem:wL-G}
Let $L=-\div(A\nabla)$ with a coefficients matrix $A$ satisfying \eqref{eq:elliptic}. Let $B=B(x_0, r)$ with $x_0 \in \pom$, $0<r<\diam(\pom)$, and $\Delta=B \cap \pom$. Then the following hold:

\begin{list}{\rm{(\theenumi)}}{\usecounter{enumi}\leftmargin=1cm \labelwidth=1cm \itemsep=0.1cm \topsep=.2cm \renewcommand{\theenumi}{\alph{enumi}}}
		
\item\label{list-1} For any $\lambda>1$, 
\[
\w_L^X(\Delta) \ge C_{\lambda}^{-1}, \quad\forall X \in B(x_0, r/\lambda) \cup B(X_{\Delta}, \delta(X_{\Delta})/\lambda). 
\]
		
\item\label{list-2} For any $X \in \Omega \setminus 2B$, 
\[
\w_L^X(\Delta)  \simeq  r^{-2} \, m(B \cap \Omega) \, G_L(X, X_{\Delta}) .
\]
			
\item\label{list-3}  For any $\lambda>1$ and $X \in \Omega \setminus 2 \lambda B$, 
\[
\w_L^X(2\Delta) \leq C_{\lambda} \, \w_L^X(\Delta). 
\]
		
\item\label{list-4} Let $E, F \subset \Delta$ be two Borel sets of $\pom$ such that $\w_L^{X_{\Delta}}(E)>0$ and $\w_L^{X_{\Delta}}(F)>0$. Then 
\[
\frac{\w_L^X(E)}{\w_L^X(F)} \simeq  \frac{\w_L^{X_{\Delta}}(E)}{\w_L^{X_{\Delta}}(F)}, \quad\forall X \in \Omega \setminus 2B. 
\]
In particular, 
\[
\frac{\w_L^X(E)}{\w_L^X(\Delta)} \simeq  \w_L^{X_{\Delta}}(E), \quad\forall X \in \Omega \setminus 2B. 
\]

\end{list}
\end{lemma}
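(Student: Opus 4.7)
All four claims are degenerate analogues of classical facts from NTA theory; the strategy is to deduce them by combining the Harnack chain condition \eqref{H2}, the PDE estimates of Lemma~\ref{lem:PDE}, and the Green function description of Lemma~\ref{lem:Green}, following the outlines of \cite{JK} as adapted to the degenerate setting in \cite{DFM3}.

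For \eqref{list-1}, set $u(X):=1-\omega_L^X(\Delta)$, a nonnegative $L$-solution in $\Omega$ whose boundary trace vanishes on $\Delta$. The boundary H\"{o}lder estimate in Lemma~\ref{lem:PDE}\eqref{list:Holder} applied at $x_0$ on the scale $r$ gives $u(X_\Delta)\le 1/2$, hence $\omega_L^{X_\Delta}(\Delta)\gtrsim 1$. I then propagate this lower bound to any $X\in B(x_0,r/\lambda)\cup B(X_\Delta,\delta(X_\Delta)/\lambda)$ via a Harnack chain of length depending only on $\lambda$, made available by \eqref{H2}, together with Lemma~\ref{lem:PDE}\eqref{list:Harnack}.

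For \eqref{list-2}, the two nonnegative $L$-solutions $X\mapsto \omega_L^X(\Delta)$ and $X\mapsto G_L(X,X_\Delta)$ in $\Omega\setminus 2B$ both vanish on $\pom\setminus 2\Delta$, so a standard ratio argument (building on Lemma~\ref{lem:PDE}\eqref{list:Holder} and \eqref{list:Harnack}) forces them to be comparable up to a multiplicative constant. That constant is computed by evaluating at a point $X'$ with $\delta(X')\simeq r$ just outside $2B$: \eqref{list-1} yields $\omega_L^{X'}(\Delta)\simeq 1$, while Lemma~\ref{lem:Green}(iii) combined with \eqref{H5} produces $G_L(X',X_\Delta)\simeq r^2/m(B\cap\Omega)$. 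Part \eqref{list-3} is then a short consequence: write both $\omega_L^X(2\Delta)$ and $\omega_L^X(\Delta)$ via \eqref{list-2}, use doubling of $m$ from \eqref{H4}, and transfer between $G_L(X,X_\Delta)$ and $G_L(X,X_{2\Delta})$ using Harnack on the $L^\top$-solution $G_L(X,\cdot)$ along a chain of bounded length joining $X_\Delta$ to $X_{2\Delta}$ (both have $\delta\simeq r$ and are $\lesssim r$ apart, so \eqref{H2} applies).

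The main obstacle is \eqref{list-4}, the change-of-pole formula; since dividing the two instances reduces it to the single statement $\omega_L^X(E)/\omega_L^X(\Delta)\simeq \omega_L^{X_\Delta}(E)$ for $X\in\Omega\setminus 2B$, I focus there. The plan is to first verify it for surface balls $E=\Delta'\subset\Delta$ by applying \eqref{list-2} to both $\Delta$ and $\Delta'$: the $G_L$ ratios then read $G_L(X,X_{\Delta'})/G_L(X,X_\Delta)$ and $G_L(X_\Delta,X_{\Delta'})/G_L(X_\Delta,X_\Delta)$, which can be compared via the comparison principle for nonnegative $L^\top$-solutions vanishing on the common boundary portion $\pom\setminus 2\Delta'$ (the degenerate boundary Harnack principle established in \cite{DFM3}), with the scale factors $r^{-2}m(B\cap\Omega)$ and $r'^{-2}m(B'\cap\Omega)$ cancelling between numerator and denominator on each side up to uniformly controlled doubling constants from \eqref{H4} and \eqref{list-3}. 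I then extend to finite disjoint unions of surface balls by additivity, to open subsets of $\Delta$ by a Vitali covering using Lemma~\ref{lem:dyadic}, and finally to Borel sets by inner/outer regularity of $\omega_L^X$ and $\omega_L^{X_\Delta}$. The delicate point is uniform control of the implicit constants throughout this approximation, which is ensured because every comparison involves Harnack chains of bounded length and doubling constants depending only on the allowable parameters.
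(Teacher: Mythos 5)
The paper does not prove this lemma at all: it is stated as a compilation of known results with a citation to \cite{DFM3, FKJ}, so there is no in-paper argument to compare against. Judged on its own, your sketch follows the classical outline but has two genuine gaps. First, in part \eqref{list-1}, Harnack chains cannot deliver the lower bound at every $X\in B(x_0,r/\lambda)$: the Harnack Chain condition \eqref{H2} only connects points whose distances to $\pom$ are bounded below by a fixed fraction of $r$, so points of $B(x_0,r/\lambda)$ close to $\pom$ are unreachable by a chain of bounded length. For those points you must instead apply the boundary H\"older estimate to $1-\w_L^{(\cdot)}(\Delta)$ centered at a nearby boundary point $\widehat{x}$, at a scale for which the surface ball around $\widehat{x}$ is still contained in $\Delta$ (this is where the dependence on $\lambda$ enters); the Harnack chain is only used for the points with $\delta(X)\gtrsim_\lambda r$. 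Relatedly, the H\"older estimate at $x_0$ gives smallness of $u$ only on $B(x_0,cr)$ for a small $c$, not directly at $X_\Delta$, so an additional chain from $X_{\Delta(x_0,cr)}$ to $X_\Delta$ is needed there as well.

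Second, and more seriously, your proof of part \eqref{list-2} is circular. The assertion that two nonnegative solutions vanishing on a common boundary portion are comparable up to a multiplicative constant is the boundary Harnack (comparison) principle; it does not follow from interior Harnack plus boundary H\"older continuity, and in the degenerate framework of \cite{DFM3} it is itself \emph{deduced from} the CFMS-type estimate \eqref{list-2} you are trying to prove. The standard non-circular route is: for $G_L(X,X_\Delta)\lesssim r^2\,m(B\cap\Omega)^{-1}\w_L^X(\Delta)$, use the maximum principle in $\Omega\setminus B(X_\Delta,\delta(X_\Delta)/2)$ with the Green function bounds of Lemma \ref{lem:Green} and part \eqref{list-1} supplying the comparison on $\partial B(X_\Delta,\delta(X_\Delta)/2)$; for the reverse inequality, test the identity in Lemma \ref{lem:Green}(v) against a cutoff adapted to $2B$ and control the resulting integral by Caccioppoli on Whitney boxes plus Harnack to move the pole to $X_\Delta$. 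The same circularity infects your treatment of \eqref{list-4}, where you again invoke the comparison principle; there the cleaner argument is to bound $Y\mapsto\w_L^Y(E)$ for Borel $E\subset\Delta$ directly by $C\,\w_L^{X_\Delta}(E)\,r^{-2}m(B\cap\Omega)\,G_L(Y,X_\Delta)$ via the maximum principle and a covering of $\Delta$, which handles all Borel sets at once and avoids the Vitali approximation step. Finally, in \eqref{list-3} note that applying \eqref{list-2} to $2\Delta$ requires $X\notin 4B$, whereas the statement allows $X\in\Omega\setminus 2\lambda B$ with $\lambda$ close to $1$; a short covering-plus-Harnack step is needed to close that range.
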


\begin{definition}[Reverse H\"older and $A_\infty$ classes]\label{def:Ainfty}
Fix a cube $\Delta_0=B_0 \cap \pom$ where $B_0=B_0(x_0, r_0)$ with $x_0 \in \pom$ and $0<r<\diam(\pom)$. Given $p \in (1, \infty)$, we say that the degenerate elliptic measure $\w_L \in RH_p(\Delta_0, \mu)$, provided that $\w_L^{X_{\Delta_0}} \ll \mu$ on $\Delta_0$, and there exists $C \geq 1$ such that
\begin{align}\label{eqn:def:RHp}
\bigg(\fint_{\Delta} (k_L)^p \, d\mu \bigg)^{\frac1p} \leq C \fint_{\Delta} k_L \, d\mu,
\end{align}
for all surface balls $\Delta \subset \Delta_0$, where $k_L:=d\w_L^{X_{\Delta_0}}/d\mu$ denotes the Radon-Nikodym derivative. The infimum of the constants $C$ as above is denoted by $[\w_L]_{RH_p(\Delta_0, \mu)}$.
	
Similarly, we say that $\w_L \in RH_p(\mu)$ provided that for every surface ball $\Delta_0 \subset \pom$ one has $\w_L \in RH_p(\Delta_0, \mu)$ uniformly on $\Delta_0$, that is,
\[
[\w_L]_{RH_p(\mu)} :=\sup_{\Delta_0} [\w_L]_{RH_p(\Delta_0, \mu)}<\infty.
\]
Finally, we define 
\[
A_{\infty}(\Delta_0, \mu) := \bigcup_{p>1} RH_p(\Delta_0, \mu) \quad\text{ and }\quad
A_\infty(\mu) := \bigcup_{p>1} RH_p(\mu). 
\]
\end{definition}

Let us record some properties of the $A_{\infty}$ class. For our setting, the following results have appeared in \cite{CF, GR}.
\begin{lemma}\label{lem:Ainfity}
Let $L=-\div(A\nabla)$ with a coefficients matrix $A$ satisfying \eqref{eq:elliptic}. Then the following are equivalent: 
\begin{enumerate}[{\rm (1)}]
\item $\w_L \in A_{\infty}(\mu)$. 
\vspace{0.2cm}
\item For any $\varepsilon \in (0, 1)$, there exists $\delta \in (0, 1)$ such that given an arbitrary surface ball $\Delta_0 \subset \pom$, for every surface ball $\Delta \subset \Delta_0$, and for every Borel set $E \subset \Delta$, we have 
\begin{align*}
\w_L^{X_{\Delta_0}}(E) \le \delta \w_L^{X_{\Delta_0}}(\Delta) 
 \quad\Longrightarrow\quad \mu(E) \le \varepsilon \mu(\Delta). 
\end{align*}
\vspace{0.2cm}
\item There exist $C, \theta>0$ such that for every surface ball $\Delta_0 \subset \pom$, for every surface ball $\Delta \subset \Delta_0$, and for every Borel set $E \subset \Delta$, 
\[
\frac{\w_L^{X_{\Delta_0}}(E)}{\w_L^{X_{\Delta_0}}(\Delta)} \le C \bigg(\frac{\mu(E)}{\mu(\Delta)}\bigg)^{\theta}. 
\]
\end{enumerate}

\end{lemma}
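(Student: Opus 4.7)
This lemma is the classical characterization of $A_\infty$ weights, adapted to the space of homogeneous type $(\partial\Omega, |\cdot|, \mu)$. The three ingredients I would use are: (i) the doubling of $\mu$ from \eqref{H3}; (ii) the doubling of $\omega_L^{X_{\Delta_0}}$ on $\Delta_0$, which follows from Lemma \ref{lem:wL-G}\eqref{list-3} together with the change-of-pole formula \eqref{list-4}; and (iii) the dyadic grid on $\partial\Omega$ from Lemma \ref{lem:dyadic}. The plan is to establish the cycle $(1)\Rightarrow(3)\Rightarrow(1)$ first, then close the loop with $(1)\Leftrightarrow(2)$.

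For $(1)\Rightarrow(3)$, fix $\Delta_0$ and write $k_L=d\omega_L^{X_{\Delta_0}}/d\mu$. Hölder's inequality applied to the reverse Hölder bound \eqref{eqn:def:RHp} gives, for every $E\subset\Delta\subset\Delta_0$,
\begin{align*}
\omega_L^{X_{\Delta_0}}(E)=\int_E k_L\,d\mu\le\Big(\int_\Delta k_L^p\,d\mu\Big)^{\frac{1}{p}}\mu(E)^{\frac{1}{p'}}\le[\omega_L]_{RH_p(\mu)}\,\omega_L^{X_{\Delta_0}}(\Delta)\Big(\tfrac{\mu(E)}{\mu(\Delta)}\Big)^{\frac{1}{p'}},
\end{align*}
which is (3) with $\theta=1/p'$. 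The converse $(3)\Rightarrow(1)$ is a Gehring-type self-improvement: for $\lambda>\lambda_0:=\omega_L^{X_{\Delta_0}}(\Delta_0)/\mu(\Delta_0)$, a Calderón--Zygmund stopping argument on the dyadic cubes of Lemma \ref{lem:dyadic} produces disjoint cubes $\{Q_j^\lambda\}$ with $\lambda<\fint_{Q_j^\lambda}k_L\,d\mu\lesssim\lambda$ (using doubling of $\mu$) and with $k_L\le\lambda$ $\mu$-a.e.\ outside $\bigcup_j Q_j^\lambda$. Applying (3) on $Q_j^\lambda$ to $E_j:=\{k_L>K\lambda\}\cap Q_j^\lambda$ yields $K\lambda\mu(E_j)\le\omega_L^{X_{\Delta_0}}(E_j)\lesssim\lambda\,\mu(Q_j^\lambda)^{1-\theta}\mu(E_j)^\theta$, hence $\mu(E_j)\le(C/K)^{1/(1-\theta)}\mu(Q_j^\lambda)\le\tfrac12\mu(Q_j^\lambda)$ once $K$ is chosen large enough; iterating then gives $\mu(\{k_L>K^n\lambda_0\})\le 2^{-n}\mu(\Delta_0)$, so $k_L\in L^p(\Delta_0,\mu)$ for every $p\in(1,\infty)$, and in particular $\omega_L\in RH_p(\Delta_0,\mu)$.

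The equivalence with (2) closes the loop. For $(1)\Rightarrow(2)$, I would observe that \eqref{eqn:def:RHp} implies $\log k_L\in\BMO(\Delta_0,\mu)$ with uniformly controlled seminorm (a Coifman--Rochberg-type consequence of the reverse Jensen inequality $\exp\big(\fint_\Delta\log k_L\,d\mu\big)\lesssim\fint_\Delta k_L\,d\mu$, itself a quick corollary of (3) obtained above), and then John--Nirenberg produces the symmetric power decay $\mu(E)/\mu(\Delta)\lesssim(\omega_L^{X_{\Delta_0}}(E)/\omega_L^{X_{\Delta_0}}(\Delta))^{\theta'}$, from which (2) is read off with $\delta=(\varepsilon/C)^{1/\theta'}$. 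The reverse $(2)\Rightarrow(1)$ is another Gehring-type self-improvement, this time performed in $\omega_L^{X_{\Delta_0}}$-measure: using the doubling of $\omega_L^{X_{\Delta_0}}$ on $\Delta_0$ to run a stopping-time decomposition on the dyadic grid of Lemma \ref{lem:dyadic}, one obtains exponential decay of $\omega_L^{X_{\Delta_0}}(\{k_L^{-1}>\lambda\})$; repeating the Hölder calculation from $(1)\Rightarrow(3)$ with the roles of $\mu$ and $\omega_L^{X_{\Delta_0}}$ interchanged then delivers reverse Hölder for $k_L$ against $\mu$. The main technical obstacle throughout is porting the Euclidean Calderón--Zygmund machinery to the dyadic system of Lemma \ref{lem:dyadic}, but the doubling hypotheses on $\mu$ and $\omega_L^{X_{\Delta_0}}$ make this step routine.
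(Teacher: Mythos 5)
The paper does not actually prove this lemma: it records it as a known property of the $A_\infty$ class in spaces of homogeneous type and cites \cite{CF, GR}. So what you are really being asked to judge is whether your reconstruction of the classical Coifman--Fefferman theory is sound. The architecture is right, and the easy pieces are fine: your H\"older computation for $(1)\Rightarrow(3)$ is correct, and the Calder\'on--Zygmund/Gehring self-improvement for $(3)\Rightarrow(1)$ works on the dyadic grid of Lemma \ref{lem:dyadic} once you replace cubes by comparable surface balls via \eqref{deltaQ} and use the doubling of $\w_L^{X_{\Delta_0}}$ from Lemma \ref{lem:wL-G}. (One slip there: the exponential level-set decay $\mu(\{k_L>K^n\lambda_0\})\le 2^{-n}\mu(\Delta_0)$ gives $k_L\in L^{1+\epsilon}$ only for \emph{small} $\epsilon$, not ``for every $p\in(1,\infty)$''; that is all you need, but the stronger claim is false.)

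The genuine gap is in $(1)\Rightarrow(2)$, which is the only place where the nontrivial content of the lemma --- the symmetry of the $A_\infty$ relation between the two doubling measures $\mu$ and $\w_L^{X_{\Delta_0}}$ --- actually lives. You derive $\log k_L\in\BMO(\mu)$ from the reverse Jensen inequality $\fint_\Delta k_L\,d\mu\lesssim\exp\big(\fint_\Delta\log k_L\,d\mu\big)$ and assert that this is ``a quick corollary of (3).'' It is not: (3) bounds $\w_L^{X_{\Delta_0}}(E)$ in terms of $\mu(E)$, whereas reverse Jensen requires controlling $\mu(\{x\in\Delta: k_L(x)<\gamma\,\w_L^{X_{\Delta_0}}(\Delta)/\mu(\Delta)\})$, i.e.\ bounding a $\mu$-measure by an $\w_L$-measure --- which is precisely statement (2). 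As written, the step is circular. The standard repair is to obtain the reversed implication first: from (3), pass to complements to get a single-pair condition of the form ``$\mu(F)\ge\alpha\mu(\Delta)\Rightarrow\w_L^{X_{\Delta_0}}(F)\ge\beta\,\w_L^{X_{\Delta_0}}(\Delta)$,'' and then run the stopping-time self-improvement \emph{with respect to $\w_L^{X_{\Delta_0}}$} (exactly the mechanism of the paper's Lemma \ref{lem:Ai-Ai} and Lemma \ref{lem:FQj}, with the roles of the two measures exchanged) to upgrade it to the full $(\varepsilon,\delta)$ statement (2). Relatedly, in your $(2)\Rightarrow(1)$ the concluding sentence glosses over a real step: interchanging the roles of $\mu$ and $\w_L^{X_{\Delta_0}}$ in the Gehring argument yields a reverse H\"older inequality for $d\mu/d\w_L^{X_{\Delta_0}}$ against $\w_L^{X_{\Delta_0}}$, not for $k_L$ against $\mu$; to finish you need either the duality $k_L\in RH_p(\mu)\Leftrightarrow k_L^{-1}\in A_{p'}(\w_L^{X_{\Delta_0}})$ or another pass through the complement-and-self-improve argument.
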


Given $\alpha>0$ and $x \in \pom$ we introduce the cone with vertex at $x$ and aperture $\alpha$ defined as $\Gamma^{\alpha}(x) = \{Y \in \Omega: |Y-x| < (1+\alpha) \delta(Y)\}$. One can also introduce the ``truncated cone'', for every $x \in \pom$ and $r>0$ we set  $\Gamma_r^{\alpha}(x) = B(x, r) \cap \Gamma^{\alpha}(x)$.

\begin{definition}\label{def:SN}
Let $\alpha>0$. The conical square function and  the non-tangential maximal function are defined respectively as
\begin{equation*}
\S^{\alpha} u(x):=\bigg(\iint_{\Gamma^{\alpha}(x)} |\nabla u(X)|^2 \delta(X)^2 \, \frac{dm(X)}{m(B(x, \delta(X)))}  \bigg)^{\frac12}, \quad 
\N^{\alpha} u (x) := \sup_{X \in \Gamma^{\alpha}(x)} |u(X)|,
\end{equation*}
for every $x \in \pom$, $u\in W_r(\Omega)$ and $u \in \mathscr{C}(\Omega)$ respectively. Analogously, we write $\S_{\D}$ and $\N_{\D}$ to denote the dyadic non-tangential maximal function and square function respectively. Additionally, for any $r>0$ and $Q \in \D$, $\S^{\alpha}_r$, $\N^{\alpha}_r$, $\S_Q$, and $\N_Q$ stand for the corresponding objects associated to the truncated cone $\Gamma^{\alpha}_r(x)$ and $\Gamma_Q(x)$.
\end{definition}

\begin{definition}[$\BMO$ space]
We say that $f\in \BMO(\mu)$ if 
\begin{equation*}
\|f\|_{\BMO(\mu)} = \sup_{\Delta \subset \pom} \inf_{c \in \R} \fint_{\Delta} |f-c| \, d\mu<\infty, 
\end{equation*}
where the supremum is taken over all surface balls $\Delta \subset \pom$.
\end{definition}

\begin{remark}\label{rem:JN}
By the assumption \eqref{H3}, $\mu$ is doubling. Then $f \in \BMO(\mu)$ implies John-Nirenberg's inequality. Thus, for every $1 \le q<\infty$, 
\begin{align*}
\|f\|_{\BMO(\mu)} \simeq \sup_{\Delta \subset \pom} \inf_{c \in \R} \bigg(\fint_{\Delta} |f-c|^q \, d\mu \bigg)^{\frac1q}
\simeq \sup_{\Delta \subset \pom} \bigg(\fint_{\Delta} |f-f_{\Delta}|^q \, d\mu \bigg)^{\frac1q} < \infty,
\end{align*}
where the supremum is taken over all surface balls $\Delta \subset \pom$ and $f_{\Delta}:=\fint_{\Delta} f\, d\mu$. Note that the implicit constants depend only on the allowable parameters.  
\end{remark}

Given $X \in \Omega$, we pick $\widehat{x} \in \pom$ such that $|X-\widehat{x}|=\delta(X)$. Then recalling the quantity $\rho$ in \eqref{H5}, we set 
\begin{align}\label{RHO}
\rho(X) := \rho(\widehat{x}, \delta(X)) 
= \frac{m(B(\widehat{x}, \delta(X)) \cap \Omega)}{\delta(X) \mu(B(\widehat{x}, \delta(X)))}. 
\end{align}

\begin{definition}[Solvability, CME, $\mathcal{S}<\mathcal{N}$] \label{def:several}
Let $L=-\div(A\nabla)$ with a coefficients matrix $A$ satisfying \eqref{eq:elliptic}. Let $\rho$ be the function defined in \eqref{RHO}. 
\begin{enumerate}

\item[{\rm (i)}] Given $p \in (1,\infty)$, we say that \textit{the Dirichlet problem \eqref{eq:D} for $L$ is solvable in $L^p(\mu)$} if for a given $\alpha>0$ there exists $C \ge 1$  (depending only on the allowable parameters, $\alpha$ and $p$) such that for every $f \in \mathscr{C}(\pom)$, the solution $u$ of \eqref{eq:D} is given by 
\begin{equation}\label{eq:u-sol}
u(X):=\int_{\pom} f\, d\w_L^X, \qquad X \in \Omega,  
\end{equation}
and satisfies 
\begin{equation*}
\|\N^{\alpha} u\|_{L^p(\pom, \mu)} \leq C \|f\|_{L^p(\pom, \mu)} .
\end{equation*}

\item[{\rm (ii)}] We say that \textit{the Dirichlet problem \eqref{eq:D} for $L$ is solvable in $\BMO(\mu)$} if there exists $C\ge 1$ (depending only on the allowable parameters, $\alpha$ and $p$) such that for every $f \in \mathscr{C}(\pom)$, the solution $u$ of \eqref{eq:D} is given by \eqref{eq:u-sol} and satisfies 
\begin{equation*}
\sup_{\substack{x \in \pom \\ 0<r<\infty}} \frac{1}{\mu(B(x, r) \cap \pom)} 
\iint_{B(x, r) \cap \Omega} |\nabla u(X)|^2 \delta(X) \, \rho(X)^{-1} \, dm(X)
\leq C \|f\|_{\BMO(\mu)}^2. 
\end{equation*}

\item[{\rm (iii)}] We say that \textit{every bounded weak solution of $Lu=0$ satisfies Carleson measure estimates}, if there exists $C\ge 1$ (depending only on the allowable parameters) such that every bounded weak solution $u \in W_r(\Omega) \cap L^{\infty}(\Omega)$ of $Lu=0$ in $\Omega$ satisfies 
\begin{equation*}
\sup_{\substack{x \in \pom \\ 0<r<\infty}} \frac{1}{\mu(B(x, r) \cap \pom)} 
\iint_{B(x, r) \cap \Omega} |\nabla u(X)|^2 \delta(X) \, \rho(X)^{-1} \, dm(X)
\leq C \|u\|_{L^{\infty}(\Omega)}^2. 
\end{equation*}

\item[{\rm (iv)}]  Given $q \in (0, \infty)$, we say that \textit{every weak solution of $Lu=0$ satisfies $\S<\N$ estimates in $L^q(\mu)$} if for some given $\alpha>0$, there exists $C \ge 1$ (depending only on the allowable parameters, $\alpha$, and $q$) such that every weak solution $u \in W_r(\Omega)$ of $Lu=0$ in $\Omega$ satisfies  
\begin{equation*}
\|\S^{\alpha} u\|_{L^q(\pom, \mu)} \leq C \|\N^{\alpha} u\|_{L^q(\pom, \mu)}.
\end{equation*}	
\end{enumerate} 
\end{definition}

\section{Auxiliary results}\label{sec:aux}
The goal of this section is to present some auxiliary lemmas which will be used to show our main Theorems \ref{thm:main} and \ref{thm:abs}. In what follows, we always assume that $(\Omega, m, \mu)$ satisfies the hypotheses \eqref{H1}--\eqref{H6}.

\subsection{Changing the apertures of cones} 
Recall the truncated conical square function $\mathcal{S}_r^{\alpha}$ in Definition \ref{def:SN}. This section is devoted to presenting estimates between $\mathcal{S}_r^{\alpha_1}$ and $\mathcal{S}_r^{\alpha_2}$ in order to change the apertures of cones conveniently.

Given $0<\gamma<1$ and a closed set $E \subset \pom$, we say that a point $ x\in \pom$ has global $\gamma$-density with respect to $E$, if $\mu(E \cap \Delta(x, r)) \ge \gamma \mu(\Delta(x, r))$ for all balls $\Delta(x, r) \subset \pom$. For any $\alpha>0$, we denote $\mathscr{R}_{\alpha}(E):=\bigcup_{x \in E} \Gamma^{\alpha}(x)$. 

\begin{lemma}\label{lem:FRF}
Let $0<\alpha<\beta<\infty$. There exists $\gamma \in (0, 1)$ sufficiently close to $1$ so that for any nonnegative function $\Phi$ on $\Omega$ and for any closed set $E \subset \pom$ with $\mu(E^c)<\infty$, 
\begin{align}\label{eq:FRF} 
\int_{E} \iint_{\Gamma^{\alpha}(x)} \frac{\Phi(Y) \, dm(Y)}{m(B(x, \delta(Y)))} d\mu(x) 
\le C_{\alpha} \iint_{\mathscr{R}_{\alpha}(E)}  
\frac{\Phi(Y) \mu(\Delta(\widehat{y}, \delta(Y)))}{m(B(\widehat{y}, \delta(Y)))} \, dm(Y), 
\end{align}
and 
\begin{align}\label{eq:RFF} 
\iint_{\mathscr{R}_{\beta}(E^*)} 
\frac{\Phi(Y) \mu(\Delta(\widehat{y}, \delta(Y)))}{m(B(\widehat{y}, \delta(Y)))} \, dm(Y) 
\le C_{\alpha,\beta,\gamma}  \int_{E} \iint_{\Gamma^{\alpha}(x)} \frac{\Phi(Y) \, dm(Y)}{m(B(x, \delta(Y)))} d\mu(x),  
\end{align}
where $E^*$ is the set of all points of global $\gamma$-density with respect to $E$, and $\widehat{y} \in \pom$ such that $\delta(Y)=|Y-\widehat{y}|$. 
\end{lemma}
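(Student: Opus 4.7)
Both inequalities are Fubini estimates of the Coifman--Meyer--Stein type and rest on two elementary geometric observations. First, for $Y\in\Omega$ and $\widehat y\in\pom$ realising $|Y-\widehat y|=\delta(Y)$,
\begin{equation*}
\Delta\!\bigl(\widehat y,\alpha\delta(Y)\bigr)\subset \{x\in\pom:\, Y\in\Gamma^\alpha(x)\}= B\!\bigl(Y,(1+\alpha)\delta(Y)\bigr)\cap\pom\subset \Delta\!\bigl(\widehat y,(2+\alpha)\delta(Y)\bigr).
\end{equation*}
Second, by \eqref{H3}, \eqref{H4} and the doubling of $\mu$ and $m$, $\mu(\Delta(\widehat y,s\delta(Y)))\simeq_s \mu(\Delta(\widehat y,\delta(Y)))$ and $m(B(x,\delta(Y)))\simeq m(B(\widehat y,\delta(Y)))$ whenever $|x-\widehat y|\lesssim \delta(Y)$.

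To prove \eqref{eq:FRF} I would swap the order of integration. Non-vanishing of the inner indicator as $x$ ranges over $E$ forces $Y\in\mathscr{R}_\alpha(E)$, producing the desired restriction of the $Y$-integral. For each such $Y$, the set of admissible $x\in E$ is contained in $\Delta(\widehat y,(2+\alpha)\delta(Y))$ and hence has $\mu$-measure at most $C_\alpha\mu(\Delta(\widehat y,\delta(Y)))$; meanwhile $m(B(x,\delta(Y)))^{-1}\lesssim m(B(\widehat y,\delta(Y)))^{-1}$ by doubling. These two bounds multiply into the factor $\mu(\Delta(\widehat y,\delta(Y)))/m(B(\widehat y,\delta(Y)))$ that appears on the right-hand side.

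The content of \eqref{eq:RFF} lies in choosing $\gamma$ close enough to $1$. Given $Y\in\mathscr{R}_\beta(E^*)$, pick $x_0\in E^*$ with $Y\in\Gamma^\beta(x_0)$. I claim that $F_Y:=\{x\in E:\, Y\in\Gamma^\alpha(x)\}$ satisfies $\mu(F_Y)\gtrsim_{\alpha,\beta,\gamma}\mu(\Delta(\widehat y,\delta(Y)))$. Since $\Delta(\widehat y,\alpha\delta(Y))\subset\{x:\,Y\in\Gamma^\alpha(x)\}$, it suffices to fill a definite proportion of $\Delta(\widehat y,\alpha\delta(Y))$ with $E$. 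By the triangle inequality $|x_0-\widehat y|<(2+\beta)\delta(Y)$, so $\Delta(\widehat y,\alpha\delta(Y))\subset \Delta(x_0,R)$ with $R:=(2+\alpha+\beta)\delta(Y)$, and doubling yields $\mu(\Delta(x_0,R))\le C_{\alpha,\beta}\,\mu(\Delta(\widehat y,\alpha\delta(Y)))$. The global $\gamma$-density of $E$ at $x_0$ then gives
\begin{equation*}
\mu\bigl(E^c\cap \Delta(\widehat y,\alpha\delta(Y))\bigr)\le \mu\bigl(E^c\cap \Delta(x_0,R)\bigr)\le (1-\gamma)\,\mu(\Delta(x_0,R))\le (1-\gamma)C_{\alpha,\beta}\,\mu\bigl(\Delta(\widehat y,\alpha\delta(Y))\bigr).
\end{equation*}
Fixing $\gamma$ so that $(1-\gamma)C_{\alpha,\beta}\le\tfrac12$ yields $\mu(E\cap \Delta(\widehat y,\alpha\delta(Y)))\ge\tfrac12\mu(\Delta(\widehat y,\alpha\delta(Y)))\simeq \mu(\Delta(\widehat y,\delta(Y)))$. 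Combining this lower bound on $\mu(F_Y)$ with $m(B(x,\delta(Y)))\lesssim m(B(\widehat y,\delta(Y)))$ for $x\in F_Y$, and applying Fubini to the left-hand side of \eqref{eq:RFF}, the estimate follows.

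The main obstacle is the density comparison in the last paragraph: the $\gamma$-density hypothesis is centred at $x_0$, but I need $E$ to fill a \emph{smaller} surface ball $\Delta(\widehat y,\alpha\delta(Y))$ whose points automatically see $Y$ in the \emph{tighter} cone $\Gamma^\alpha$. This is where the strict inequality $\alpha<\beta$ is essential: it leaves enough geometric room, quantified by the doubling constant of $\mu$, to transfer density from $x_0$ to $\widehat y$ at the price of requiring $\gamma$ to be close to $1$ in terms of $\alpha$, $\beta$ and $c_3$.
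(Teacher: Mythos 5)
Your proposal is correct and follows essentially the same route as the paper: both inequalities are Fubini estimates using the inclusions $\Delta(\widehat y,\alpha\delta(Y))\subset\{x:Y\in\Gamma^\alpha(x)\}\subset\Delta(\widehat y,(2+\alpha)\delta(Y))$ together with the doubling of $\mu$ and $m$, and \eqref{eq:RFF} reduces in both cases to showing that a definite $\mu$-fraction of $\Delta(\widehat y,\alpha\delta(Y))$ lies in $E$ by exploiting the global $\gamma$-density at a point of $E^*$ within non-tangential reach of $Y$. The only (cosmetic) difference is that the paper subtracts the complement of $\Delta_Y^\alpha$ from the large ball centred at the density point, while you bound $\mu(E^c\cap\Delta(\widehat y,\alpha\delta(Y)))$ directly; the resulting constraint on $\gamma$ is the same.
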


\begin{proof}
Let us first show \eqref{eq:FRF}. Write $\Delta_Y^{\alpha} :=B(Y, (1+\alpha)\delta(Y)) \cap \pom$ for any $Y \in \Omega$. Then $\Delta(\widehat{y}, \alpha \delta(Y)) \subset \Delta_Y^{\alpha} \subset \Delta(\widehat{y}, (2+\alpha) \delta(Y))$, and 
$B(\widehat{y}, \delta(Y)) \subset B(x, (3+\alpha) \delta(Y))$ for all $Y \in \Gamma^{\alpha}(x)$. By the doubling property of $\mu$, \eqref{H4}, we obtain  
\begin{align*}
&\int_{E} \iint_{\Gamma^{\alpha}(x)} \Phi(Y) \frac{dm(Y)}{m(B(x, \delta(Y)))} d\mu(x) 
\\
&=\iint_{\mathscr{R}_{\alpha}(E)} \bigg(\int_{E \cap \Delta_Y^{\alpha}} 
\frac{d\mu(x)}{m(B(x, \delta(Y)))} \bigg)  \Phi(Y) dm(Y) 
\\
&\lesssim \iint_{\mathscr{R}_{\alpha}(E)} \bigg(\int_{\Delta(\widehat{y}, (2+\alpha)\delta(Y))} \frac{d\mu(x)}{m(B(\widehat{y}, \delta(Y)))}  \bigg) \Phi(Y) dm(Y) 
\\
&\lesssim \iint_{\mathscr{R}_{\alpha}(E)} \Phi(Y) 
\frac{\mu(\Delta(\widehat{y}, \delta(Y)))}{m(B(\widehat{y}, \delta(Y)))} dm(Y). 
\end{align*}

To obtain \eqref{eq:RFF}, we see that 
\begin{align*}
&\int_{E} \iint_{\Gamma^{\alpha}(x)} \Phi(Y) \frac{dm(Y)}{m(B(x, \delta(Y)))} d\mu(x) 
\\ 
&=\iint_{\Omega} \bigg(\int_{E \cap \Delta_Y^{\alpha}} \frac{d\mu(x)}{m(B(x, \delta(Y)))}\bigg)  \Phi(Y) dm(Y) 
\\
&\ge \iint_{\mathscr{R}_{\beta}(E^*)} \bigg(\int_{E \cap \Delta_Y^{\alpha}} \frac{d\mu(x)}{m(B(x, \delta(Y)))} \bigg) \Phi(Y) dm(Y). 
\end{align*}
It suffices to prove that there exists $\gamma \in (0, 1)$ sufficiently close to $1$ such that 
\begin{equation}\label{eq:RE}
\int_{E \cap \Delta_Y^{\alpha}} \frac{d\mu(x)}{m(B(x, \delta(Y)))} 
\gtrsim \frac{\mu(\Delta(\widehat{y}, \delta(Y)))}{m(B(\widehat{y}, \delta(Y)))}, 
\qquad\forall \, Y \in \mathscr{R}_{\beta}(E^*).
\end{equation}
Now let $Y \in \mathscr{R}_{\beta}(E^*)$. Then there exists $z \in E^*$ so that $|Y-z|<(1+\beta) \delta(Y)$, and hence, 
\begin{equation}\label{eq:uBuB}
\mu(\Delta(z, (1+\beta) \delta(Y)) \setminus \Delta_Y^{\alpha}) \le c \, \mu(\Delta(z, (1+\beta) \delta(Y))), 
\end{equation} 
where $c=c(\alpha, \beta) \in (0, 1)$. Note that 
\begin{equation}\label{eq:del}
\Delta(\widehat{y}, \delta(Y)) \subset \Delta(z, (3+\beta)\delta(Y)), \qquad\forall x \in \Delta_Y^{\alpha}. 
\end{equation}
By the global $\gamma$-density property, 
\begin{align}\label{eq:uFB}
\mu(E \cap \Delta(z, (1+\beta) \delta(Y))) \ge \gamma \mu(\Delta(z, (1+\beta) \delta(Y))). 
\end{align}
Now since 
\begin{align*}
E \cap \Delta(z, (1+\beta) \delta(Y)) \subset \big(E \cap \Delta_Y^{\alpha}\big) 
\cup \big(\Delta(z, (1+\beta) \delta(Y)) \setminus \Delta_Y^{\alpha} \big), 
\end{align*}
we obtain by \eqref{eq:uBuB}--\eqref{eq:uFB} 
\begin{align*}
\mu(E \cap \Delta_Y^{\alpha}) 
& \ge \mu(E \cap \Delta(z, (1+\beta) \delta(Y))\big) - \mu(\Delta(z, (1+\beta) \delta(Y)) \setminus \Delta_Y^{\alpha} \big)
\\
&\ge (\gamma-c) \mu(\Delta(z, (1+\beta) \delta(Y))) 
\gtrsim \mu(\Delta(\widehat{y}, \delta(Y))), 
\end{align*}
where we take $\gamma$ close enough to $1$ so that $\gamma>c$. This shows \eqref{eq:RE}. 
\end{proof}

For the following result we need to introduce some notation:
\begin{equation*}
\A_{\alpha} F(x):=\bigg(\iint_{\Gamma_{\alpha}(x)} |F(Y)|^2 
\frac{dm(Y)}{m(B(x, \delta(Y)))} \bigg)^{\frac12},
\qquad x \in \pom. 	
\end{equation*}

We are going to extend \cite[Proposition~4]{CMS} to our setting. 

\begin{lemma}\label{lem:AANN}
Let $0<p<\infty$ and $0<\alpha \le \beta < \infty$. Then the following hold: 
\begin{align}
\label{eq:AFAF} & \|\A_{\beta} F\|_{L^p(\pom, \mu)} 
\le C_{\alpha, \beta, p}  \|\A_{\alpha} F\|_{L^p(\pom, \mu)}, 
\\
\label{eq:NFNF}& \|\N_{\beta} F\|_{L^p(\pom, \mu)} 
\le C_{\alpha, \beta, p} \|\N_{\alpha} F\|_{L^p(\pom, \mu)}. 
\end{align}
\end{lemma}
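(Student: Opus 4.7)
My plan is the classical two-step argument of Coifman--Meyer--Stein adapted to the doubling metric measure space $(\pom,\mu)$ that underlies $(\Omega,m)$.

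For the non-tangential bound \eqref{eq:NFNF}, I will establish the distributional inequality
\begin{equation*}
\mu\bigl(\{\N_\beta F>\lambda\}\bigr)\leq C\,\mu\bigl(\{\N_\alpha F>\lambda\}\bigr),\qquad \lambda>0.
\end{equation*}
Given $x$ with $\N_\beta F(x)>\lambda$, pick $Y\in\Gamma^\beta(x)$ with $|F(Y)|>\lambda$ and choose $\widehat{y}\in\pom$ realizing $\delta(Y)=|Y-\widehat{y}|$. For every $x'\in\Delta(\widehat{y},\alpha\delta(Y))$ one has $|Y-x'|<(1+\alpha)\delta(Y)$, hence $Y\in\Gamma^\alpha(x')$ and therefore $\Delta(\widehat{y},\alpha\delta(Y))\subset\{\N_\alpha F>\lambda\}$. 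Since this surface ball is contained in $\Delta(x,(1+\alpha+\beta)\delta(Y))$, the doubling property \eqref{H3} of $\mu$ gives $M_\mu\bigl(\mathbf{1}_{\{\N_\alpha F>\lambda\}}\bigr)(x)\gtrsim 1$, and the weak $(1,1)$ boundedness of the Hardy--Littlewood maximal function on $(\pom,\mu)$ yields the desired bound. Integrating $p\lambda^{p-1}$ in $\lambda\in(0,\infty)$ gives \eqref{eq:NFNF}.

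For \eqref{eq:AFAF} the key tool is Lemma \ref{lem:FRF}. Fix $\lambda>0$ and set $E_\lambda:=\{\A_\alpha F\leq\lambda\}$; upon passing to a closed modification (or to the sublevel set of a lower semicontinuous majorant) I may assume $E_\lambda$ is closed, and I may assume $\mu(\pom\setminus E_\lambda)<\infty$ since otherwise \eqref{eq:AFAF} is trivial. Let $E_\lambda^*$ denote the set of global $\gamma$-density points of $E_\lambda$, with $\gamma\in(0,1)$ chosen as in the conclusion of Lemma \ref{lem:FRF} for the pair $(\alpha,\beta)$. The standard weak-$(1,1)$ argument for $M_\mu$ applied to $\mathbf{1}_{\pom\setminus E_\lambda}$ yields
\begin{equation*}
\mu(\pom\setminus E_\lambda^*)\leq C_\gamma\,\mu\bigl(\{\A_\alpha F>\lambda\}\bigr).
\end{equation*}
Successively applying \eqref{eq:FRF} with aperture $\beta$ and set $E_\lambda^*$ to $\Phi:=|F|^2$, and then \eqref{eq:RFF} with aperture $\alpha$ and set $E_\lambda$ to the same $\Phi$, I obtain the fundamental estimate
\begin{equation*}
\int_{E_\lambda^*}\A_\beta F(x)^2\,d\mu(x)\leq C_\beta\iint_{\mathscr{R}_\beta(E_\lambda^*)}|F|^2\,\frac{\mu(\Delta(\widehat{y},\delta(Y)))}{m(B(\widehat{y},\delta(Y)))}\,dm(Y)\leq C_{\alpha,\beta,\gamma}\int_{E_\lambda}\A_\alpha F(x)^2\,d\mu(x).
\end{equation*}
Sending $\lambda\to\infty$ with monotone convergence already delivers \eqref{eq:AFAF} at $p=2$. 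For general $0<p<\infty$, combining the previous two displays via Chebyshev on $E_\lambda^*$ provides
\begin{equation*}
\mu\bigl(\{\A_\beta F>2\lambda\}\bigr)\leq C\,\mu\bigl(\{\A_\alpha F>\lambda\}\bigr)+\frac{C}{\lambda^2}\int_{\{\A_\alpha F\leq\lambda\}}\A_\alpha F(x)^2\,d\mu(x),
\end{equation*}
which, by multiplying by $p\lambda^{p-1}$ and exchanging orders of integration, directly gives \eqref{eq:AFAF} for $0<p<2$. To cover the range $p\geq 2$ I would upgrade the preceding bound to a Fefferman--Stein good-$\lambda$ inequality
\begin{equation*}
\mu\bigl(\{\A_\beta F>2\lambda,\ \A_\alpha F\leq\gamma'\lambda\}\bigr)\leq \eta(\gamma')\,\mu\bigl(\{\A_\beta F>\lambda\}\bigr),\qquad \eta(\gamma')\to 0\text{ as }\gamma'\to 0,
\end{equation*}
obtained by Whitney-decomposing the open set $\{\A_\beta F>\lambda\}$ in $(\pom,\mu)$, splitting $\A_\beta F$ into a local part (with $Y$ close to the Whitney ball) and a far part controlled by $\lambda$ via the Whitney property, and applying the localized version of the displayed $L^2$ estimate to the local part. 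The usual iteration of this good-$\lambda$ inequality then produces \eqref{eq:AFAF} in the full range $0<p<\infty$.

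The main obstacle I anticipate is the good-$\lambda$ step for $p\geq 2$: making the local/far decomposition of $\A_\beta F$ precise on Whitney balls of $(\pom,\mu)$, and verifying that the localized version of the two-step Lemma \ref{lem:FRF} argument still produces the required $L^2$ estimate with the correct dependence on the radius of the Whitney ball. The remainder of the argument is bookkeeping once the parameter $\gamma$ in Lemma \ref{lem:FRF} is fixed uniformly for the pair $(\alpha,\beta)$.
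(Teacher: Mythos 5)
Your treatment of \eqref{eq:NFNF} and of \eqref{eq:AFAF} in the range $0<p\le 2$ is sound and essentially coincides with the paper's: the maximal-function/density argument for the non-tangential function, the distributional inequality obtained from Lemma \ref{lem:FRF} plus the weak-type bound for $M_\mu$ when $0<p<2$, and your monotone-convergence passage $\lambda\to\infty$ in the fundamental $L^2$ estimate over $E_\lambda^*$ is a legitimate (slightly different) way to get the $p=2$ endpoint, which the paper instead obtains by a direct Fubini computation comparing $\int_{\Delta_Y^{\beta}}\frac{d\mu(x)}{m(B(x,\delta(Y)))}$ with $\int_{\Delta_Y^{\alpha}}\frac{d\mu(x)}{m(B(x,\delta(Y)))}$.

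The genuine gap is the case $2<p<\infty$ of \eqref{eq:AFAF}, which you leave as a good-$\lambda$ sketch, and the sketch as written would fail. In the proposed Whitney decomposition of $O_\lambda=\{\A_\beta F>\lambda\}$, for $x$ in a Whitney piece $Q_j$ and $x_j\notin O_\lambda$ a nearby point with $\dist(x_j,Q_j)\simeq\ell(Q_j)$, the ``far'' portion of the cone, $\Gamma^{\beta}(x)\cap\{\delta(Y)\gtrsim\ell(Q_j)\}$, is contained only in a cone of \emph{strictly larger} aperture $\beta'>\beta$ with vertex $x_j$ (since $|Y-x_j|\le|Y-x|+C\ell(Q_j)\le(1+\beta')\delta(Y)$). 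Thus the far part is bounded by $\A_{\beta'}F(x_j)^2$, not by $\A_{\beta}F(x_j)^2\le\lambda^2$; bounding $\A_{\beta'}$ by $\A_{\beta}$ pointwise is false, and bounding it in measure or in norm is exactly the statement you are trying to prove, so the argument is circular. (There are additional unaddressed prerequisites --- openness and finite measure of $O_\lambda$, and an a priori qualitative finiteness of $\|\A_\beta F\|_{L^p(\mu)}$ needed to absorb terms after iterating the good-$\lambda$ inequality --- but the far-part control is the substantive obstruction.) The paper avoids all of this by dualizing: for $p>2$ one pairs $(\A_\beta F)^2$ against $g\in L^{(p/2)'}(\mu)$, uses Fubini together with the pointwise comparison
\begin{equation*}
\int_{\Delta_Y^{\beta}} \frac{g(x)}{m(B(x, \delta(Y)))}\, d\mu(x)
\lesssim \int_{\Delta_Y^{\alpha}} \frac{M_{\mu} g(x)}{m(B(x, \delta(Y)))}\, d\mu(x),
\end{equation*}
and concludes from the $L^{(p/2)'}(\mu)$-boundedness of $M_\mu$. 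You should replace your good-$\lambda$ step with this duality argument (or supply a genuinely different, complete proof for $p>2$).
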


\begin{proof}
The proof is similar in spirit to that of \cite[Proposition 4]{CMS}. 
We begin with showing \eqref{eq:AFAF}. We may assume that $\|\A_{\alpha}F\|_{L^p(\pom, \mu)}<\infty$ and show the case $0<p<2$. Fix $\lambda>0$, let $E:=\{x \in \pom: \A_{\alpha}F(x) \le \lambda\}$ and $\Omega:=E^c$. We let $E^* \subset E$ be the set of all points of global $\gamma$-density of $E$, and write $\Omega^*=(E^*)^c$. Then, writing $\Omega^*=\{x \in \pom: M_{\mu} \mathbf{1}_{\Omega}(x)>1-\gamma\}$, 
where 
\[
M_{\mu} f(x) := \sup_{r>0} \fint_{B(x, r)} |f| \, d\mu, \quad x \in \pom. 
\]
we have by the doubling property of $\mu$, 
\begin{align}\label{eq:OO}
\mu(\Omega^*) \lesssim \mu(\Omega). 
\end{align}
Lemma \ref{lem:FRF} applied to $\Phi(Y)=|F(Y)|^2$ implies 
\begin{align*}
\int_{E^*} (\A_{\beta}F(x))^2 \, d\mu(x) 
\lesssim \int_{E} (\A_{\alpha}F(x))^2 \, d\mu(x),  
\end{align*}
which together with \eqref{eq:OO} further gives 
\begin{align*}
&\mu(\{x \in \pom: \A_{\beta}F(x)>\lambda\}) 
\lesssim \mu(\Omega^*) + \frac{1}{\lambda^2} \int_E \A_{\alpha}F(x)^2 \, d\mu(x)
\\
&\lesssim \mu(\{x \in \pom: \A_{\alpha}F(x)>\lambda\}) + \frac{1}{\lambda^2} \int_E \A_{\alpha}F(x)^2 \, d\mu(x). 
\end{align*}
Using this estimate and $0<p<2$, we conclude 
\begin{align*}
\|\A_{\beta}F\|_{L^p(\pom, \mu)}^p 
&=p\int_{0}^{\infty} \lambda^{p-1} \mu(\{x \in \pom: \A_{\beta}F(x)>\lambda\}) d\lambda
\\ 
&\lesssim p \int_{0}^{\infty} \lambda^{p-1} \mu(\{x \in \pom: \A_{\alpha}F(x)>\lambda\}) d\lambda 
\\
&\qquad + \int_{0}^{\infty} \lambda^{p-3} \int_{\{\A_{\alpha} F(x)<\lambda\}} \A_{\alpha}F(x)^2 \, d\mu(x) d\lambda 
\\
& \lesssim \|\A_{\alpha}F\|_{L^p(\pom, \mu)}^p.  
\end{align*}

Let us consider the case $p=2$. Write $\Delta_Y^{\alpha} :=B(Y, (1+\alpha)\delta(Y)) \cap \pom$. Note that for any $Y \in \Omega$,  
\begin{equation*}
B(x_0, \delta(Y)) \subset B(x, (3+\alpha+\beta)\delta(Y)), \quad \forall \, x_0 \in \Delta_Y^{\alpha}, \, x \in \Delta_Y^{\beta}, 
\end{equation*}
which gives that for any $x_0 \in \Delta_Y^{\alpha}$, 
\begin{align*}
\int_{\Delta_Y^{\beta}} \frac{d\mu(x)}{m(B(x, \delta(Y)))}  
\lesssim \frac{\mu(\Delta_Y^{\beta})}{m(B(x_0, \delta(Y)))}
\simeq \frac{\mu(\Delta_Y^{\alpha})}{m(B(x_0, \delta(Y)))} 
\lesssim \int_{\Delta_Y^{\alpha}} \frac{d\mu(x)}{m(B(x, \delta(Y)))}. 
\end{align*}
This leads to 
\begin{align*}
\|\A_{\beta} F\|_{L^2(\mu)}^2 &= \iint_{\Omega} \bigg( 
\int_{\Delta_Y^{\beta}} \frac{d\mu(x)}{m(B(x, \delta(Y)))} \bigg) |F(Y)|^2 dm(Y)
\\
&\lesssim \int_{\Omega}  
\bigg(\int_{\Delta_Y^{ \alpha}} \frac{d\mu(x)}{m(B(x, \delta(Y)))} \bigg) |F(Y)|^2 dm(Y) 
= \|\A_{\alpha} F\|_{L^2(\mu)}^2. 
\end{align*}

For the case $2<p<\infty$, we pick a nonnegative function $g \in L^{(p/2)'}(\mu)$ with $\|g\|_{L^{(p/2)'}(\mu)} \le 1$. Observe that for all $x, x_0 \in \Delta_Y^{\beta}$, 
\begin{align*}
\Delta_Y^{\beta} \subset \Delta(x_0, 2(1+\beta) \delta(Y)) \quad\text{and}\quad 
B(x_0, \delta(Y)) \subset B(x, (3+2\beta)\delta(Y)). 
\end{align*}
By this and the doubling property of $m$, we have for any $x_0 \in \Delta_Y^{\alpha}$, 
\begin{align*}
&\int_{\Delta_Y^{\beta}} \frac{g(x)}{m(B(x, \delta(Y)))} d\mu(x) 
\lesssim \int_{\Delta(x_0, (3+2\beta) \delta(Y)))} \frac{g(x)}{m(B(x_0, \delta(Y)))} d\mu(x)
\nonumber \\
&\lesssim \frac{\mu(\Delta(x_0, \delta(Y)))}{m(B(x_0, \delta(Y)))} \fint_{\Delta(x_0, (3+2\beta) \delta(Y)))}g \, d\mu
\le \frac{\mu(\Delta(x_0, \delta(Y)))}{m(B(x_0, \delta(Y)))} M_{\mu} g(x_0). 
\end{align*}
Noting that $\Delta(x, \delta(Y)) \subset \Delta_Y^{1+\alpha}$ for all $x \in \Delta_Y^{\alpha}$, we obtain 
\begin{align*}
\int_{\Delta_Y^{\beta}} \frac{g(x) \, d\mu(x)}{m(B(x, \delta(Y)))} 
\lesssim \fint_{\Delta_Y^{\alpha}} \frac{\mu(\Delta(x, \delta(Y)))}{m(B(x, \delta(Y)))} M_{\mu} g(x) \, d\mu(x)   
\lesssim \int_{\Delta_Y^{\alpha}} \frac{M_{\mu} g(x) \, d\mu(x)}{m(B(x, \delta(Y)))}.  
\end{align*}
Along with Fubini theorem, this implies 
\begin{align*}
\int_{\pom} \A_{\beta} F(x)^2 g(x) \, d\mu(x) 
&=\iint_{\Omega} \bigg(\int_{\Delta_Y^{\beta}} \frac{g(x)}{m(B(x, \delta(Y)))} \bigg) |F(Y)|^2 dm(Y)
\\
&\lesssim \iint_{\Omega} \bigg(\int_{\Delta_Y^{\alpha}} \frac{M_{\mu} g(x)}{m(B(x, \delta(Y)))} 
\, d\mu(x)\bigg) |F(Y)|^2 dm(Y)
\\
&= \int_{\pom} \A_{\alpha} F(x)^2 M_{\mu} g(x) \, d\mu(x) 
\\ 
&\le \|(\A_{\alpha} F)^2\|_{L^{p/2}(\mu)} \|M_{\mu} g\|_{L^{(p/2)'}(\mu)} 
\lesssim \|\A_{\alpha} F\|_{L^p(\mu)}^2, 
\end{align*}
where we used that $\|M_{\mu}\|_{L^q(\mu) \to L^q(\mu)} \le C_q$ for any $1<q<\infty$. This and the duality immediately gives \eqref{eq:AFAF} for the case $2<p<\infty$. 

Next, we turn our attention to the proof of \eqref{eq:NFNF}. It suffices to consider the case $\alpha=1$. For any $\lambda>0$, we set 
\begin{align*}
E_{\lambda} := \{x \in \pom: \N^1 F(x)>\lambda\} \quad\text{and}\quad
E_{\lambda, \beta} := \{x \in \pom: M_{\mu}(\mathbf{1}_{E_{\lambda}})(x)>1/(2C_{\beta})\}, 
\end{align*}
where $C_{\beta}$ is given by that $\mu(\Delta(\widehat{y}, (4+3\beta) \delta(Y))) \le C_{\beta} \mu(\Delta(\widehat{y}, \delta(Y)))$, and $\widehat{y} \in \pom$ such that $\delta(Y)=|Y-\widehat{y}|$. Since $M_{\mu}$ is bounded from $L^r(\mu)$ to $L^{r,\infty}(\mu)$ for any $1\le r<\infty$, we get 
\begin{align}\label{eq:muEE}
\mu(E_{\lambda, \beta}) \lesssim \mu(E_{\lambda}). 
\end{align}
On the other hand, there holds 
\begin{align}\label{eq:NFEb}
\{x \in \pom: \N_{\beta}F(x)>\lambda\} \subset E_{\lambda, \beta}. 
\end{align}
Indeed, let $x \not\in E_{\lambda, \beta}$ and pick $Y \in \Gamma_{\beta}(x)$. Then $\Delta_Y^1 \not\subset E_{\lambda}$, otherwise, noting that 
\[
\Delta(\widehat{y}, \delta(Y)) \subset \Delta_Y^1 \subset \Delta_Y^{\beta} 
\subset \Delta(x, 2(1+\beta)\delta(Y)) \subset \Delta(\widehat{y}, (4+3\beta)\delta(Y)), 
\]
we have 
\begin{align*}
M_{\mu}(\mathbf{1}_{E_{\lambda}})(x) 
& \ge \frac{\mu(E_{\lambda} \cap \Delta(x, 2(1+\beta)\delta(Y)))}{\mu(\Delta(x, 2(1+\beta)\delta(Y)))} 
\\
&\ge \frac{\mu(\Delta_Y^1)}{\mu(\Delta(x, 2(1+\beta)\delta(Y)))} 
\ge \frac{\mu(\Delta(\widehat{y}, \delta(Y)))}{\mu(\Delta(\widehat{y}, (4+3\beta)\delta(Y)))} 
\ge \frac{1}{C_{\beta}}, 
\end{align*} 
which gives $x \in E_{\lambda, \beta}$ and is a contradiction. Thus, there exists $z \in B(Y, 2\delta(Y))$ such that $\N^1F(z) \le \lambda$. The latter further implies $F(Y) \le \N^1 F(z) \le \lambda$, so $\N_{\beta}F(x) \le \lambda$. This shows \eqref{eq:NFEb}. Using \eqref{eq:muEE} and \eqref{eq:NFEb}, we conclude that 
\begin{align*}
&\int_{\pom} (\N^{\beta}F)^p \, d\mu 
= p \int_{0}^{\infty} \lambda^{p-1} \mu(\{x \in \pom: \N^{\beta} F(x)>\lambda \}) \, d\lambda
\\
&\le p \int_{0}^{\infty} \lambda^{p-1} \mu(E_{\lambda, \beta}) \, d\lambda 
\lesssim  p \int_{0}^{\infty} \lambda^{p-1} \mu(E_{\lambda}) \, d\lambda 
= \int_{\pom} (\N^1 F)^p \, d\mu. 
\end{align*}
The proof is complete. 
\end{proof}

The result below asserts that for the same aperture $\alpha$, the finiteness of $\S^{\alpha}_{r_1}u$ is equivalent to that of $\S^{\alpha}_{r_2}u$ for all $r_1, r_2 \in (0, \infty)$.

\begin{lemma}\label{lem:two-trunc}
Let $Lu=-\div(A\nabla u)$ be a real degenerate elliptic operator with the coefficients matrix $A$ satisfying \eqref{eq:elliptic}, and let $u$ be a bounded weak solution of $Lu=0$ in $\Omega$. Then for every $\alpha>0$ and $0<r_1<r_2<\infty$, 
\begin{equation*}
\sup_{x\in F} \big[\S^{\alpha}_{r_2}u(x)^2 - \S^{\alpha}_{r_1}u(x)^2 \big] <\infty.  
\end{equation*}
whenever $F \subset \pom$ is a bounded Borel set. 
\end{lemma}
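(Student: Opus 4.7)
The plan is to interpret the difference as an integral over the annular ``shell cone''
\[
A_x := \Gamma^{\alpha}_{r_2}(x) \setminus \Gamma^{\alpha}_{r_1}(x)
= \{X \in \Omega : r_1 \le |X-x| < r_2,\ |X-x| < (1+\alpha)\delta(X)\},
\]
so that
\[
\S^{\alpha}_{r_2}u(x)^2 - \S^{\alpha}_{r_1}u(x)^2
= \iint_{A_x} |\nabla u(X)|^2 \, \delta(X)^2 \, \frac{dm(X)}{m(B(x,\delta(X)))},
\]
and then to control each of the three ingredients $\delta(X)^2$, $m(B(x,\delta(X)))^{-1}$, and $\iint_{A_x} |\nabla u|^2\, dm$ by a constant independent of $x \in F$.

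The first observation is that $X \in A_x$ forces $\delta(X) > r_1/(1+\alpha) =: \rho_1$, so $\delta(X)^2 \le r_2^2$ and $\delta(X) \ge \rho_1$ uniformly. Since $F$ is bounded, fix $x_0 \in \pom$ and $R>0$ such that $F \subset \Delta(x_0, R)$; for every $x \in F$ and every $X \in A_x$ one then has $X \in K := \overline{B(x_0, R + r_2)} \cap \{Y \in \Omega : \delta(Y) \ge \rho_1\}$, which is a compact subset of $\Omega$ independent of $x$. Thus $A_x \subset K$ for all $x \in F$.

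Next I would establish the uniform lower bound $\inf_{x\in F,\, X\in A_x} m(B(x,\delta(X))) \ge c_F > 0$. Since $\delta(X) \ge \rho_1$, it is enough to bound $m(B(x,\rho_1) \cap \Omega)$ from below for $x \in F$. Hypothesis \eqref{H1} supplies $X_x' \in B(x,\rho_1)$ with $B(X_x', c_1\rho_1) \subset \Omega$, so $m(B(x,\rho_1)\cap\Omega) \ge m(B(X_x', c_1\rho_1))$. A finite net argument applied in the compact set $\overline{B(x_0,R+\rho_1)} \cap \{\delta \ge c_1\rho_1\}$, combined with local positivity of $m$ on any ball strictly inside $\Omega$ (coming from \eqref{H4}) and doubling, then produces a single $c_F > 0$ that works for every $x \in F$.

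It remains to bound $\iint_K |\nabla u|^2\, dm$ by a constant depending only on $K$ and $\|u\|_{L^\infty(\Omega)}$. Because $K$ is compactly contained in $\Omega$ with $\dist(K,\pom) \ge \rho_1$, I would cover $K$ by finitely many Whitney-type balls $B_j = B(Z_j, \delta(Z_j)/4)$, $j=1,\dots,N$, so that $2B_j \subset \Omega$, and apply the Caccioppoli inequality from Lemma~\ref{lem:PDE}\eqref{list:Cacci} on each:
\[
\iint_{B_j} |\nabla u|^2\, dm \lesssim \delta(Z_j)^{-2} \iint_{2B_j} |u|^2\, dm \lesssim \rho_1^{-2}\, \|u\|_{L^\infty(\Omega)}^2\, m(2B_j).
\]
Summing over the finite cover and combining with the two ingredients above yields $\S^{\alpha}_{r_2}u(x)^2 - \S^{\alpha}_{r_1}u(x)^2 \lesssim (r_2^2/c_F)\, C(K,\rho_1) \, \|u\|_{L^\infty(\Omega)}^2$ for every $x \in F$, which is the required uniform bound. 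The principal technical point is the uniform lower bound on $m(B(x,\delta(X)))$, which is handled via \eqref{H1} together with compactness of $\overline{F}$; every other step is either a direct geometric observation or a standard Caccioppoli/$L^\infty$ estimate.
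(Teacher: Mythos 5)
Your proposal is correct and follows essentially the same route as the paper: both reduce to the annular cone $\Gamma^{\alpha}_{r_2}(x)\setminus\Gamma^{\alpha}_{r_1}(x)$, observe that $\delta$ is bounded above by $r_2$ and below by $r_1/(1+\alpha)$ there so that the whole region sits in a fixed compact subset of $\Omega$ determined by $F$, bound the kernel $\delta(Y)^2/m(B(x,\delta(Y)))$ uniformly in $x\in F$, and control the gradient integral over that compact set. The only differences are cosmetic: the paper obtains the uniform lower bound on $m(B(x,\delta(Y)))$ directly from the doubling of $m$ in \eqref{H4} (comparing with $m(B(x_0,r_0))$ via the inclusion $B(x_0,r_0)\subset B(x,C_{\alpha,r_0,r_1}\delta(Y))$) rather than through corkscrew points and a finite-net compactness argument, and it simply invokes $\nabla u\in L^2_{\loc}$ from $u\in W_r(\Omega)$ (see \eqref{eq:WrE}) instead of running Caccioppoli on a Whitney cover, so the boundedness of $u$ is not actually needed for this step.
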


\begin{proof} 
The proof follows the same scheme as that of \cite[Remark 3.1]{CMO}. Fix $\alpha>0$ and $0<r_1<r_2<\infty$. Pick $x_0 \in \pom$ and $r_0>\max\{r_1, r_2\}$ so that $F \subset \Delta(x_0, r_0)$. Let $x \in F$. By definition, for any $Y \in \Gamma^{\alpha}_{r_2}(x) \backslash \Gamma^{\alpha}_{r_1}(x)$, we have 
\[
r_1 \le |Y-x|<(1+\alpha) \delta(Y), \qquad \delta(Y) \le |Y-x|<r_2, 
\]
which implies that 
\[
\Gamma^{\alpha}_{r_2}(x) \backslash \Gamma^{\alpha}_{r_1}(x) 
\subset \{Y \in \Omega: r_1/(1+\alpha) \le \delta(Y) \le r_2\} =: K, 
\]
and 
\[
B(x_0, r_0) \subset B(x, 2r_0 (1+\alpha)r_1^{-1} \delta(Y)), \qquad 
B(x, \delta(Y)) \subset B(x_0, 2r_0). 
\]
Since $u \in W_r(\Omega)$, it follows from \eqref{eq:WrE} that 
\begin{align*}
\sup_{x\in F} \big[\S^{\alpha}_{r_2}u(x)^2 - \S^{\alpha}_{r_1}u(x)^2\big] 
&=\sup_{x \in F} \iint_{\Gamma^{\alpha}_{r_2}(x) \setminus \Gamma^{\alpha}_{r_1}(x)} 
|\nabla u(Y)|^2 \delta(Y)^2 \frac{dm(Y)}{m(B(x, \delta(Y)))} 
\\
&\leq \sup_{x \in F} \frac{C_{\alpha, r_0, r_1} r_2^2}{m(B(x_0, r_0))} \iint_{K}  |\nabla u|^2 dm 
<\infty. 
\end{align*}
This yields the desired estimate. 
\end{proof}

\subsection{Non-tangential maximal functions estimates} 
In this section, let us record some estimates for the non-tangential maximal functions. The first one say that the local dyadic non-tangential maximal function is pointwise controlled by the local Hardy-Littlewood maximal function.

\begin{lemma}\label{lem:NQuf}
Given $Q_0 \in \D$ and $f \in \mathscr{C}(\pom)$ with $\supp f \subset 2 \widetilde{\Delta}_{Q_0}$, let 
\[
u(X)=\int_{\pom} f\, d\w_L^X, \quad X \in \Omega. 
\]
Then we have 
\begin{equation*}
\N_{Q_0} u(x) \lesssim \sup_{\substack{\Delta \ni x \\ 0 < r_{\Delta} < 4A_0 \ell(Q_0)}} 
\fint_{\Delta} |f| \, d \w_L^{X_{Q_0}}, \quad x \in Q_0. 
\end{equation*}
\end{lemma}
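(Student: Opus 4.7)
The plan is to bound $u(X)$ for $X \in \Gamma_{Q_0}(x)$ by a dyadic decomposition of $\int|f|\,d\w_L^X$ around $x$, then combine the change-of-pole formula (Lemma~\ref{lem:wL-G}(d)) with boundary H\"older regularity (Lemma~\ref{lem:PDE}(d)) to extract summable geometric decay on the annuli away from $x$.

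\textbf{Setup and decomposition.} I may assume $f \ge 0$. Fix $x \in Q_0$ and $X \in \Gamma_{Q_0}(x)$; pick $Q' \in \D_{Q_0}$ with $x \in Q'$ and $X \in U_{Q'}$, and set $r := \ell(Q')$, so $\delta(X) \simeq r$ and $|X-x| \lesssim r$. Define $\Delta_k := \Delta(x, 2^k r)$ and $B_k := B(x, 2^k r)$ for $k=0,1,\ldots,N$, where $N$ is the smallest integer with $2\widetilde{\Delta}_{Q_0} \subset \Delta_N$ (so $2^N r \lesssim \ell(Q_0)$). Since $\supp f \subset 2\widetilde{\Delta}_{Q_0}$,
\[
u(X) \le \int_{\Delta_0} f\,d\w_L^X + \sum_{k=1}^{N} \int_{\Delta_k \setminus \Delta_{k-1}} f\,d\w_L^X.
\]

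\textbf{Key kernel estimate.} The crux is to prove that for $k \ge 1$ and every Borel $E \subset \Delta_k \setminus \Delta_{k-1}$,
\[
\w_L^X(E) \le C\, 2^{-\varrho k}\, \frac{\w_L^{X_{Q_0}}(E)}{\w_L^{X_{Q_0}}(\Delta_k)},
\]
where $\varrho > 0$ is the H\"older exponent from Lemma~\ref{lem:PDE}(d). Consider $V(Y) := \w_L^Y(E)$: this is a bounded nonnegative $L$-harmonic function on $\Omega$ with trace $\mathbf{1}_E$, and hence $V \equiv 0$ on $\Delta_{k-1}$ and extends continuously to $0$ throughout the interior of $\Delta_{k-1}$. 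Applying the boundary version of Lemma~\ref{lem:PDE}(d) at $x$ with outer radius $\simeq 2^{k-2} r$ (on which $V$ vanishes on the boundary) and inner radius $|X-x| \lesssim r$ yields
\[
V(X) \le C\, 2^{-\varrho k} \bigg(\bariint_{B(x, 2^{k-2}r) \cap \Omega} V^2\, dm \bigg)^{1/2} \lesssim 2^{-\varrho k}\, V(X^*),
\]
where $X^*$ is a corkscrew point for $\Delta(x, 2^{k-2} r)$, and the final step combines the trivial bound $V \le 1$, Moser's estimate (Lemma~\ref{lem:PDE}(b)), and a bounded Harnack chain (Lemma~\ref{lem:PDE}(c)) joining an interior point of $B(x, 2^{k-2} r) \cap \Omega$ to $X^*$. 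Since $X^*$ and $X_{\Delta_k}$ are themselves linked by a bounded Harnack chain, $V(X^*) \simeq \w_L^{X_{\Delta_k}}(E)$. Finally, because $|X_{Q_0} - x| \simeq \ell(Q_0) \ge 2 \cdot 2^k r$ for all $k \le N$ (absorbing a fixed constant into $N$), Lemma~\ref{lem:wL-G}(d) applied at $\Delta_k$ with pole $X_{Q_0}$ gives
\[
\w_L^{X_{\Delta_k}}(E) \simeq \frac{\w_L^{X_{Q_0}}(E)}{\w_L^{X_{Q_0}}(\Delta_k)},
\]
and the kernel estimate follows.

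\textbf{Local piece and summation.} The case $k = 0$ is handled directly: $X$, $X_{Q'}$, and $X_{\Delta_0}$ are pairwise linked by bounded Harnack chains, so $\w_L^X(E) \simeq \w_L^{X_{\Delta_0}}(E)$ for $E \subset \pom$; a single application of Lemma~\ref{lem:wL-G}(d) at $\Delta_0$ with pole $X_{Q_0}$ then produces $\int_{\Delta_0} f\,d\w_L^X \lesssim \fint_{\Delta_0} f\,d\w_L^{X_{Q_0}}$. Summing the geometric series,
\[
u(X) \lesssim \sum_{k=0}^{N} 2^{-\varrho k} \fint_{\Delta_k} f\,d\w_L^{X_{Q_0}} \lesssim \sup_{\substack{\Delta \ni x \\ 0 < r_\Delta < 4 A_0 \ell(Q_0)}} \fint_{\Delta} f\,d\w_L^{X_{Q_0}},
\]
and taking the supremum over $X \in \Gamma_{Q_0}(x)$ yields the stated pointwise bound on $\N_{Q_0} u(x)$. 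The principal technical obstacle is the kernel estimate itself: one must combine the boundary-ball form of Lemma~\ref{lem:PDE}(d) for the nonnegative solution $V$ vanishing on $\Delta_{k-1}$ with a Carleson-type sup-to-corkscrew bound (obtained from Moser's estimate and the Harnack chain structure built into \eqref{H2}) in order to genuinely extract the geometric decay factor $2^{-\varrho k}$, which is what makes the dyadic sum collapse to a single supremum.
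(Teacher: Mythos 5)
Your argument is the standard one for this lemma (the paper itself does not prove it but cites \cite[Proposition~2.57]{AHMT}, whose proof is exactly this dyadic annular decomposition plus boundary decay plus change of pole), and the overall structure is correct. One step, however, is under-justified, and it is precisely the step you flag as the crux: the bound $\big(\bariint_{B(x,2^{k-2}r)\cap\Omega}V^2\,dm\big)^{1/2}\lesssim V(X^*)$ is the \emph{Carleson estimate} for nonnegative solutions vanishing on a surface ball, and it does \emph{not} follow from Moser's estimate together with ``a bounded Harnack chain'' or ``the Harnack chain structure built into \eqref{H2}''. Points of $B(x,2^{k-2}r)\cap\Omega$ at distance $t\ll 2^{k}r$ from $\pom$ are joined to the corkscrew point $X^*$ only by chains of length $\simeq\log(2^{k}r/t)$, so the Harnack constant degenerates like a power of $2^{k}r/t$, and the $L^2(dm)$ averaging does not compensate for this. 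The correct justification is the Carleson estimate from the degenerate elliptic theory of \cite{DFM3} (whose own proof iterates the boundary H\"older estimate of Lemma~\ref{lem:PDE}\eqref{list:Holder} toward the boundary and only then uses Harnack chains); you should cite it as such rather than derive it on the fly.

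Two smaller points. First, the change-of-pole step via Lemma~\ref{lem:wL-G}\eqref{list-4} requires $X_{Q_0}\in\Omega\setminus 2B(x,2^kr)$, which fails for the top boundedly many values of $k$ since $|X_{Q_0}-x|\simeq\ell(Q_0)$ may be smaller than $2^{k+1}r$ there; ``absorbing a constant into $N$'' does not fix this (shrinking $N$ leaves part of $\supp f$ uncovered). For those scales one instead notes that $X_{\Delta_k}$ and $X_{Q_0}$ are joined by a bounded Harnack chain and that $\w_L^{X_{Q_0}}(\Delta_k)\gtrsim 1$ by Lemma~\ref{lem:wL-G}\eqref{list-1}, which gives the same conclusion directly. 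Second, the largest ball $\Delta_N$ needed to cover $\supp f\subset 2\widetilde\Delta_{Q_0}$ may have radius slightly exceeding $4A_0\ell(Q_0)$; this is repaired by replacing $\fint_{\Delta_N}$ with the average over a ball of admissible radius at the cost of the doubling constant of $\w_L^{X_{Q_0}}$ (Lemma~\ref{lem:wL-G}\eqref{list-3}, after moving the pole by Harnack). Both are routine, but they should be said.
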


\begin{proof} 
The proof is essentially contained in that of \cite[Proposition 2.57]{AHMT}. Indeed,  one can see that to prove \cite[Proposition 2.57]{AHMT}, it just needs Harnack's inequality, Bourgain's estimate, ``change of pole" formulas, and the doubling property of $\w_L$. In our current scenario, all of these estimates have been presented in Lemmas \ref{lem:PDE} and \ref{lem:wL-G}. This provides what we need to conclude the proof. We omit the detailed proof.  
\end{proof}

The following inequality allows us to dominate the truncated non-tangential maximal function by its dyadic version, which along with Lemma \ref{lem:NQuf} will be used to show $\w_L \in A_{\infty}(\mu)$ implies the $L^p$-solvability of \eqref{eq:D} for $L$ (cf. Section \ref{sec:Lp}).  

\begin{lemma}\label{lem:NrNQ}
For any $\alpha>0$ and any surface ball $\Delta_0=\Delta(x_0, r_0)$ with $x_0 \in \pom$ and $0<r_0<\diam(\pom)$, 
\begin{equation*}
\N_{r_0}^{\alpha} u (x) 
\lesssim_\alpha \sup_{0<r \leq c_\alpha r_0} \fint_{\Delta(x,r)}  
\sup_{Q \in \F_{\Delta_0}} \N_Q u \, d\mu, \qquad x \in \Delta_0.
\end{equation*} 
where $\F_{\Delta_0} := \{Q \in \D: Q \cap 4A_0 \Delta_0 \neq \varnothing, \, 5A_0 r_0 < \ell(Q) \le 10 A_0 r_0\}$ with $A_0$  given in \eqref{deltaQ}. 
\end{lemma}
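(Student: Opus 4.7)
The plan is to prove the lemma by dominating, for each $Y \in \Gamma^{\alpha}_{r_0}(x)$, the value $|u(Y)|$ by the dyadic non-tangential maximal function $\N_Q u$ for a suitable $Q \in \F_{\Delta_0}$, evaluated at a point of a much smaller dyadic descendant of $Q$, and then averaging this pointwise bound against $\mu$ over a surface ball centered at $x$ with radius comparable to $\delta(Y)$.

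For a fixed $Y\in \Gamma^{\alpha}_{r_0}(x)$, I would first select a Whitney cube $I\in \W(\Omega)$ containing $Y$ with $\ell(I)\simeq \delta(Y)$, and then produce the dyadic cube $Q':=Q_I$ described just after \eqref{eq:WQ}: a cube with $\ell(Q')=\ell(I)$ such that $I\in \W^*_{Q'}$ and $Q'$ contains a boundary point $\widehat{y}$ realizing $\dist(I,\pom)$. This immediately gives $Y\in I\subset U_{Q'}$. The cone condition $|Y-x|<(1+\alpha)\delta(Y)<(1+\alpha)r_0$ together with $\delta(Y)\le|Y-x|$ force $|\widehat{y}-x|\le |\widehat{y}-Y|+|Y-x|\le 2|Y-x|<2r_0$. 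Next I would place $Q'$ inside a cube $Q\in\F_{\Delta_0}$ by taking $Q$ to be the unique ancestor of $Q'$ of generation $k$ with $5A_0 r_0<2^{-k}\le 10A_0 r_0$. Then $Q'\in\D_Q$, $\ell(Q)\simeq r_0$, and $\widehat{y}\in Q\cap\Delta(x_0,3r_0)\subset Q\cap 4A_0\Delta_0$, so indeed $Q\in\F_{\Delta_0}$. Since $Y\in U_{Q'}$ and $Q'\in\D_Q$, for every $z\in Q'$ we have $Y\in U_{Q'}\subset \Gamma_Q(z)$, hence
\begin{equation*}
|u(Y)|\le \N_Q u(z)\le \sup_{\widetilde{Q}\in\F_{\Delta_0}}\N_{\widetilde{Q}}u(z),\qquad \forall\, z\in Q'.
\end{equation*}

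Averaging this over $Q'$ and comparing $Q'$ with a surface ball centered at $x$ is the last step. From $|\widehat{y}-x|\le 2\delta(Y)$, $\ell(Q')\simeq\delta(Y)$ and $Q'\subset\widetilde{\Delta}_{Q'}$, every $z\in Q'$ satisfies $|z-x|\lesssim_\alpha \ell(Q')$; set $r:=C_\alpha\ell(Q')$ with $C_\alpha$ chosen so that $Q'\subset \Delta(x,r)$. Since $\Delta_{Q'}\subset \Delta(x,r)$ and $\mu$ is doubling via \eqref{H3}, we obtain $\mu(\Delta(x,r))\lesssim_\alpha\mu(\Delta_{Q'})\le \mu(Q')$. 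Averaging the pointwise estimate over $Q'$ and enlarging the domain of integration to $\Delta(x,r)$ therefore gives
\begin{equation*}
|u(Y)|\le \fint_{Q'}\sup_{\widetilde{Q}\in\F_{\Delta_0}}\N_{\widetilde{Q}}u\,d\mu\lesssim_\alpha \fint_{\Delta(x,r)}\sup_{\widetilde{Q}\in\F_{\Delta_0}}\N_{\widetilde{Q}}u\,d\mu.
\end{equation*}
Because $\delta(Y)\le|Y-x|<r_0$, the radius $r$ satisfies $r\le c_\alpha r_0$ for a constant depending only on $\alpha$, $A_0$ and the Whitney parameters. Taking the supremum over $Y\in \Gamma^{\alpha}_{r_0}(x)$ yields the claimed inequality.

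I expect the main (and essentially only) obstacle to be bookkeeping of constants. One must ensure that the parameter $\vartheta$ hidden in \eqref{eq:WQ} is large enough, as a function of $\alpha$, so that the Whitney cube containing $Y$ really belongs to $\W^*_{Q_I}$ and hence $Y\in U_{Q_I}$; and that all comparability constants linking $Q'$ with the Euclidean ball $\Delta(x,r)$ (via diameter bounds and the doubling of $\mu$) collapse into the single constant $c_\alpha$. No PDE ingredient is needed beyond the definitions; the geometry of $(\Omega,m,\mu)$ provided by \eqref{H1}--\eqref{H4} is already packaged in Lemma \ref{lem:dyadic} and the Whitney/sawtooth construction recalled in Section \ref{sec:dyadic}.
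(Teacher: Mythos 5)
Your proposal is correct and follows essentially the same route as the paper's proof: choose the Whitney cube $I\ni Y$ and the associated dyadic cube $Q_I$ at scale $\ell(I)$ containing the nearest boundary point, verify it sits inside $4A_0\Delta_0\cap\Delta(x,c_\alpha\delta(Y))$, pass to its unique ancestor in $\F_{\Delta_0}$, bound $|u(Y)|$ by $\N_{\widetilde Q}u(z)$ for $z$ in the small cube, and average using the doubling of $\mu$. The only deviations are harmless constant bookkeeping (e.g.\ $|\widehat{y}-Y|\le\tfrac54\delta(Y)$ rather than $\le|Y-x|$), and your worry about $\vartheta$ depending on $\alpha$ is unnecessary here since $I\in\W^*_{Q_I}$ holds for any $\vartheta\ge\vartheta_0$ independent of the aperture.
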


\begin{proof} 
Fix $x \in \Delta_0$ and $Y \in \Gamma_{r_0}^{\alpha}(x)$. Let $I \in \mathcal{W}$ be such that $Y \in I$. Take $z \in \pom$  such that $\dist(I, \pom) = \dist(I, z)$ and let $Q_Y \in \D$ be the unique dyadic cube satisfying $\ell(Q_Y) = \ell(I)$ and $z \in Q_Y$, which implies that $I \subset U_{Q_Y}$. We claim that 
\begin{align}\label{eq:QYQ}
Q_Y \subset 4A_0 \Delta_0 \cap \Delta(x, c_{\alpha} \delta(Y)). 
\end{align}
Indeed, by the properties of the Whitney cubes, we have 
\[
|z-Y| \le \dist(z, I) + \diam(I) \le \textstyle{\frac54} \dist(I, \pom), 
\]
and
\begin{align*}
4\ell(Q_Y) = 4\ell(I) \le \dist(I, \pom) \le \delta(Y) \le |Y-x|<r_0. 
\end{align*} 
In view of \eqref{deltaQ}, for every $z' \in Q_Y$, 
\begin{align*}
|z'-x_0| & \le |z'-z| + |z-Y| + |Y-x| + |x-x_0| 
\\
&< 2A_0 \ell(Q_Y)+ \textstyle{\frac54} r_0 + r_0 +r_0 
\le \textstyle{\frac12} (A_0+7) r_0 
\le 4A_0 r_0, 
\end{align*}
and 
\begin{align*}
|z'-x| \le |z'-z| + |z-Y| + |Y-x| \le 2A_0 \ell(Q_Y) + \textstyle{\frac54} \delta(Y) + (1+\alpha) \delta(Y) < c_{\alpha} \delta(Y),
\end{align*}
This shows \eqref{eq:QYQ}.  

By \eqref{eq:QYQ} and definition of $\F_{\Delta_0}$, there exists a unique $\widetilde{Q}_Y \in \F_{\Delta_0}$ such that $Q_Y \subsetneq \widetilde{Q}_Y$. In particular,  $Y \in I \subset U_{Q_Y}\subset \Gamma_{\widetilde{Q}_Y}(z)$ for all $z \in Q_Y$ and
\begin{equation*}
|u(Y)| \leq \N_{\widetilde{Q}_Y} u (z), \qquad \forall z \in Q_Y.
\end{equation*}
Taking the average over $Q_Y$ with respect to $\mu$, we obtain 
\begin{align*}
|u(Y)| &\leq \fint_{Q_Y} \N_{\widetilde{Q}_Y} u \, d\mu
\le \fint_{Q_Y} \sup_{Q \in \mathcal{F}_{\Delta_0}} \N_Q u \, d\mu
\\
&\lesssim \fint_{\Delta(x, c_\alpha t)} \sup_{Q \in \F_{\Delta_0}} \N_Q u \, d\mu 
\le \sup_{0<r \leq c_\alpha r_0}\fint_{ \Delta(x,r)}  \sup_{Q \in \F_{\Delta_0}} \N_Q u \, d\mu. 
\end{align*}
The immediately gives the desired result. 
\end{proof}

We establish the relationship between the boundary data and the related non-tangential maximal function as follows. 

\begin{lemma}\label{lem:fNu}
For every surface ball $\Delta=\Delta(x, r)$ with $x \in \pom$ and $0<r<\diam(\pom)$, and for every $ f \in \mathscr{C}(\pom)$ with $\supp f \subset \Delta$, 
\begin{equation*}
\bigg|\int_{\Delta} f \, d\w_L^{X_{\Delta}} \bigg|  
\lesssim_{\alpha} \mu(\Delta)^{-\frac1p} \|\N^{\alpha}_r u\|_{L^p(\Delta, \mu)}, 
\end{equation*}
where $u$ is defined in \eqref{eq:u-sol}.
\end{lemma}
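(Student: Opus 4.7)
The statement is a non-tangential replacement at the corkscrew point: it should bound the value of the solution $u(X)=\int f\,d\w_L^X$ at the interior point $X_\Delta$ by an $L^p$-average of its non-tangential maximal function on a comparable portion of the boundary. My strategy is to exhibit a surface sub-ball $\Delta^{\ast}\subset\Delta$ with $\mu(\Delta^{\ast})\gtrsim_{\alpha}\mu(\Delta)$ on which the pointwise bound $|u(X_\Delta)|\lesssim_{\alpha}\N^{\alpha}_r u(y)$ holds for every $y\in\Delta^{\ast}$. Once this pointwise estimate is in place, averaging over $\Delta^{\ast}$ and invoking H\"older's inequality yields
\[
|u(X_\Delta)|\lesssim_{\alpha}\fint_{\Delta^{\ast}}\N^{\alpha}_r u\,d\mu
\le\mu(\Delta^{\ast})^{-1/p}\,\|\N^{\alpha}_r u\|_{L^p(\Delta,\mu)}
\lesssim_{\alpha}\mu(\Delta)^{-1/p}\|\N^{\alpha}_r u\|_{L^p(\Delta,\mu)},
\]
which is the required inequality.

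To construct $\Delta^{\ast}$, the condition $X_\Delta\in\Gamma^{\alpha}_r(y)$ amounts to the two inequalities $|y-X_\Delta|<r$ and $|y-X_\Delta|<(1+\alpha)\delta(X_\Delta)$. Using only the corkscrew bounds $|X_\Delta-x|<(1-c_1)r$ and $\delta(X_\Delta)\ge c_1 r$, one can verify by the triangle inequality that both inequalities hold for every $y\in\Delta(x,\rho r)$ with a suitable $\rho=\rho(\alpha,c_1)>0$, provided $\alpha$ exceeds an explicit threshold $\alpha_0=\alpha_0(c_1)$. When $\alpha\le\alpha_0$, I would instead apply the corkscrew condition \eqref{H1} to the smaller surface ball $\Delta(x,\beta r)$ for $\beta=\beta(\alpha,c_1)\in(0,1)$ chosen so that $(2+\alpha)\beta<c_1$, obtaining a lower-positioned auxiliary point $X^{\ast}$ with $c_1\beta r\le\delta(X^{\ast})\le\beta r$, and set $\Delta^{\ast}:=\Delta(\widehat{X^{\ast}},\alpha\delta(X^{\ast})/2)$. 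Elementary triangle-inequality computations then show that $\Delta^{\ast}\subset\Delta$, that $X^{\ast}\in\Gamma^{\alpha}_r(y)$ for every $y\in\Delta^{\ast}$ (since $|y-X^{\ast}|\le(1+\alpha/2)\delta(X^{\ast})$ is simultaneously $<(1+\alpha)\delta(X^{\ast})$ and $<r$), and that $\mu(\Delta^{\ast})\gtrsim_{\alpha}\mu(\Delta)$ by iterating the doubling hypothesis \eqref{H3}.

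The main obstacle is the small-$\alpha$ regime: in that case the cone-containment estimate is for the auxiliary point $X^{\ast}$ rather than for $X_\Delta$, and one must transfer the inequality back to the prescribed corkscrew. Since $\min\{\delta(X_\Delta),\delta(X^{\ast})\}\gtrsim_{\alpha}r$ and $|X_\Delta-X^{\ast}|\lesssim r$, by iteratively invoking the Harnack chain condition \eqref{H2} a controlled number of times (depending on $\alpha$ and $c_1$) and applying the Harnack inequality (Lemma~\ref{lem:PDE}) to the non-negative solution $X\mapsto\w_L^X(E)$ for every Borel $E\subset\pom$, one obtains the quantitative measure-level comparison $\w_L^{X_\Delta}(E)\simeq_{\alpha}\w_L^{X^{\ast}}(E)$. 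The subtle point is that $u$ itself need not have a fixed sign, so this comparison of measures does not directly translate into a comparison of the signed integrals $\int f\,d\w_L^{X_\Delta}$ and $\int f\,d\w_L^{X^{\ast}}$. I plan to handle this by splitting $f=f^+-f^-$ and applying the Harnack comparison separately to the non-negative solutions $u_{\pm}(X):=\int f^{\pm}\,d\w_L^X$, combined with Moser's local boundedness (Lemma~\ref{lem:PDE}) applied to the subsolution $|u|$, to relate $u_+(X^{\ast})+u_-(X^{\ast})$ back to $\N^{\alpha}_r u$ on $\Delta^{\ast}$ up to an $\alpha$-dependent constant.
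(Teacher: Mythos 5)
Your geometric construction is essentially the paper's proof. The paper takes $X_1:=X_{(2+\alpha)^{-1}\Delta}$ (a corkscrew point relative to the shrunken ball $(2+\alpha)^{-1}\Delta$, which is exactly your auxiliary point $X^{*}$ with $\beta=(2+\alpha)^{-1}$), shows $\Delta(\widehat{x}_1,\alpha\delta(X_1))\subset\Delta$ and $X_1\in\Gamma^{\alpha}_r(z)$ for every $z$ in that sub-ball, compares the values at $X_\Delta$ and $X_1$ by Harnack, and finishes by doubling and H\"older exactly as in your first display. So the skeleton is identical; the paper does not even bother with your large-$\alpha$ dichotomy, since the lowered point works for all $\alpha$.

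The one step of your plan that does not work as written is the sign-handling at the end. After Harnack you are holding $u_+(X^{*})+u_-(X^{*})$, i.e.\ the value at $X^{*}$ of the solution with data $|f|$, whereas $\N^{\alpha}_r u(y)$ only controls $\sup_{\Gamma^{\alpha}_r(y)}|u_+-u_-|$. No local elliptic estimate recovers $u_++u_-$ from $|u_+-u_-|$: Moser applied to the subsolution $|u|$ bounds $\sup|u|$ by averages of $|u|$, and if $f$ oscillates so that $u_+\approx u_-$ throughout a neighborhood of $X^{*}$, then $|u|$ is small there while $u_++u_-$ is large, so the claimed transfer "up to an $\alpha$-dependent constant" is false. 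The paper sidesteps this entirely by running the whole argument for the non-negative solution $v(X):=\int_{\pom}|f|\,d\w_L^X$: one has $\big|\int_\Delta f\,d\w_L^{X_\Delta}\big|\le v(X_\Delta)\simeq_\alpha v(X^{*})\le\N^{\alpha}_r v(y)$ for $y$ in the sub-ball, and the lemma is stated and used with this $v$ (harmless in the applications, where either $f\ge0$ or the $L^p(\mu)$-solvability bound $\|\N^{\alpha}v\|_{L^p(\mu)}\lesssim\||f|\|_{L^p(\mu)}=\|f\|_{L^p(\mu)}$ applies equally well to the data $|f|$). Replace your last paragraph by this positivization and the argument closes.
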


\begin{proof}
Let $u$ be a solution of $Lu=0$ with boundary data $|f|$, that is, 
\[
u(X) := \int_{\pom} |f| \, d\w_L^X, \qquad X \in \Omega. 
\] 
Write $X_0:=X_{\Delta}$ and $X_1:=X_{(2+\alpha)^{-1}\Delta}$. It is easy to see that $|X_0-X_1| \lesssim r \simeq \delta(X_0) \simeq \delta(X_1)$. Thus, $u(X_0) \simeq_\alpha u(X_1)$ by Harnack's inequality.   If we set $\widehat{x}_1 \in \pom$ so that  $\delta(X_1)=|X_1-\widehat{x}_1|$, then 
\begin{align}\label{eq:DQQ}
\Delta(\widehat{x}_1, \alpha \delta(X_1)) \subset \Delta \subset \Delta(\widehat{x}_1, 2r). 
\end{align}
Indeed, for any $y \in \Delta(\widehat{x}_1, \alpha \delta(X_1))$, 
\begin{align*}
|y-x| &\le |y-\widehat{x}_1| + |\widehat{x}_1-X_1| + |X_1-x| 
\\
&< \alpha \delta(X_1) + \delta(X_1) + |X_1-x| 
\le (2+\alpha) |X_1-x| < r, 
\end{align*}
and for any $z \in \Delta$, 
\begin{align*}
|z-\widehat{x}_1| & \le |z-x| + |x-X_1| + |x_1-\widehat{x}_1|  
\\
&< r + |x-X_1| + \delta(X_1) \le r + 2|x-X_1| 
\le \Big(1 + \frac{2}{2+\alpha}\Big)r < 2r.  
\end{align*}
On the other hand, there holds 
\begin{align*}
X_1 \in \Gamma_r^{\alpha}(z), \qquad \forall z \in \Delta(\widehat{x}_1, \alpha \delta(X_1)), 
\end{align*}
since 
\[
|X_1-z| \le |X_1-\widehat{x}_1| + |\widehat{x}_1-z| < (1+\alpha) \delta(X_1) \le \frac{1+\alpha}{2+\alpha} r < r. 
\]
Thus, we get 
\begin{equation}\label{eq:uuu}
\N^{\alpha}_r u(z) \ge u(X_1) \simeq u(X_0), \qquad\forall z\in \Delta(\widehat{x}_1, \alpha \delta(X_1)).  
\end{equation}
Using \eqref{eq:DQQ}, \eqref{eq:uuu}, and the doubling property of $\mu$, we conclude that 
\begin{align*}
\mu(\Delta)^{\frac1p} \bigg|\int_{\Delta} f\,d\w_L^{X_{\Delta}}\bigg|
\le u(X_0) \mu(\Delta)^{\frac1p} & \simeq u(X_1) \mu(\Delta(\widehat{x}_1, \alpha \delta(X_1)))^{\frac1p}
\le \|\N^{\alpha}_r u\|_{L^p(\Delta, \mu)}.
\end{align*}
This readily gives the desired estimate.  
\end{proof}

\subsection{Localization of Carleson and elliptic measures} 
Let $\{\alpha_Q\}_{Q \in \D}$ be a sequence of non-negative numbers indexed on the dyadic cubes. For any collection $\D' \subset \D$, we define
\begin{equation}\label{eq:ma}
\m_{\alpha}(\D') := \sum_{Q \in \D'} \alpha_Q. 
\end{equation} 
Given $Q_0 \in \D$, we say that $\m_{\alpha}$ is a discrete \lq\lq Carleson measure\rq\rq \ on $\D_{Q_0}$ (with respect to $\mu$), and we write $\m \in \mathcal{C}(Q_0)$, if  
\begin{equation*}
\|\m_{\alpha}\|_{\mathcal{C}(Q_0)} :=\sup _{Q \in \D_{Q_0}} \frac{\m_{\alpha}(\D_Q)}{\mu(Q)} < \infty. 
\end{equation*} 
We also write 
\begin{equation*}
\|\m_{\alpha}\|_{\mathcal{C}(\pom)} :=\sup _{Q \in \D} \frac{\m_{\alpha}(\D_Q)}{\mu(Q)} < \infty 
\end{equation*} 
to denote the \lq\lq global\rq\rq \ Carleson norm on $\D$. 

Since $\mu$ is doubling, modifying the proof of \cite[Lemmas 3.5]{HMT}, we obtain stopping time cubes as follows.  
\begin{lemma}\label{lem:FQj}
Let $Q_0 \in \D$ and let $\w$ be a non-negative regular Borel measure on $Q_0$. Assume that $\w \ll \mu$ on $Q_0$ and write $k=d\w/d\mu$. Assume also that there exist $K_0 \geq 1$, 
$\theta>0$ such that 
\begin{align*}
1 \leq \frac{\w(Q_0)}{\mu(Q_0)} \leq K_0 \quad \text{and} \quad 
\frac{\w(F)}{\mu(Q_0)} \leq K_0 \bigg(\frac{\mu(F)}{\mu(Q_0)} \bigg)^{\theta}, \quad \forall F \subset Q_0. 
\end{align*} 
Then there exists a pairwise disjoint family $\F=\{Q_j\}_j \subset \D_{Q_0} \setminus\{Q_0\}$ such that 
\begin{align*}
\mu\bigg(Q_0 \setminus \bigcup_{Q_j \in \F}Q_j \bigg) \geq K_1^{-1} \mu(Q_0) 
\quad \text{and} \quad \frac12 \leq \frac{\w(Q)}{\mu(Q)} \leq K_0 K_1, \quad \forall Q \in \D_{\F, Q_0}.  
\end{align*}
\end{lemma}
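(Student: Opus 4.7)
The plan is a two-sided Calder\'on--Zygmund stopping-time argument on $Q_0$. Fix a constant $K_1 \ge 1$ to be chosen at the end (depending only on $K_0$ and $\theta$), and declare a cube $Q \in \D_{Q_0}$ a \emph{stopping cube} if it is maximal with the property that either $\w(Q)/\mu(Q) < 1/2$ or $\w(Q)/\mu(Q) > K_0 K_1$. Let $\F$ be the resulting pairwise disjoint family, split as $\F = \F_{\mathrm{lo}} \cup \F_{\mathrm{hi}}$ according to which inequality triggers the stop, and set $E_{\mathrm{lo}} := \bigcup_{Q_j \in \F_{\mathrm{lo}}} Q_j$, $E_{\mathrm{hi}} := \bigcup_{Q_j \in \F_{\mathrm{hi}}} Q_j$, and $G := Q_0 \setminus (E_{\mathrm{lo}} \cup E_{\mathrm{hi}})$. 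The hypothesis $\w(Q_0)/\mu(Q_0) \in [1, K_0] \subset [1/2, K_0 K_1]$ guarantees $Q_0 \notin \F$, and by maximality every $Q \in \D_{\F, Q_0}$ was subdivided rather than stopped, so the two-sided density bound $\tfrac12 \le \w(Q)/\mu(Q) \le K_0 K_1$ in the conclusion follows automatically.

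It remains to show $\mu(G) \ge K_1^{-1}\mu(Q_0)$ by separately controlling $\mu(E_{\mathrm{hi}})$ and $\mu(E_{\mathrm{lo}})$. For the high-density stops, on each $Q_j \in \F_{\mathrm{hi}}$ we have $\mu(Q_j) < (K_0 K_1)^{-1} \w(Q_j)$; summing over $j$ and feeding $F = E_{\mathrm{hi}}$ into the $\theta$-power hypothesis gives
\[
\frac{\mu(E_{\mathrm{hi}})}{\mu(Q_0)} < \frac{\w(E_{\mathrm{hi}})}{K_0 K_1\,\mu(Q_0)} \le \frac{1}{K_1}\left(\frac{\mu(E_{\mathrm{hi}})}{\mu(Q_0)}\right)^{\theta},
\]
which forces $\mu(E_{\mathrm{hi}})/\mu(Q_0) \le K_1^{-1/(1-\theta)}$. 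The low-density stops are then handled indirectly by pairing the lower bound $\w(Q_0) \ge \mu(Q_0)$ with the partition $Q_0 = G \cup E_{\mathrm{lo}} \cup E_{\mathrm{hi}}$: since $\w(E_{\mathrm{lo}}) < \tfrac12\mu(E_{\mathrm{lo}}) \le \tfrac12\mu(Q_0)$ follows from the stopping condition, and the $\theta$-power hypothesis applied to both $G$ and $E_{\mathrm{hi}}$ (together with the previous bound on $\mu(E_{\mathrm{hi}})$) gives
\[
\mu(Q_0) \le \w(G) + \w(E_{\mathrm{lo}}) + \w(E_{\mathrm{hi}}) \le K_0\,\mu(Q_0)^{1-\theta}\mu(G)^{\theta} + \tfrac12\mu(Q_0) + K_0 K_1^{-\theta/(1-\theta)}\mu(Q_0).
\]

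Choosing $K_1$ large enough that $K_0 K_1^{-\theta/(1-\theta)} \le 1/4$, the previous inequality rearranges to $\tfrac14\mu(Q_0) \le K_0\,\mu(Q_0)^{1-\theta}\mu(G)^{\theta}$, whence $\mu(G) \ge (4K_0)^{-1/\theta}\mu(Q_0)$. Taking $K_1 := (4K_0)^{1/\theta}$ (enlarged if necessary to also dominate $(4K_0)^{(1-\theta)/\theta}$) meets both thresholds simultaneously and yields the claimed lower bound $\mu(G) \ge K_1^{-1}\mu(Q_0)$. The only delicate point is the blow-up of $K_1$ through the exponent $1/(1-\theta)$ as $\theta \uparrow 1$, which constrains the quantitative dependence but presents no structural obstacle; the whole argument is a standard two-sided stopping-time decomposition tailored to the $\theta$-H\"older density bound of the hypothesis.
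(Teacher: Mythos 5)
Your proof is correct and follows the same two-sided stopping-time scheme as the argument the paper invokes by reference (Lemmas 3.5 and 3.12 of \cite{HMT}): stop at the maximal cubes where $\w(Q)/\mu(Q)$ leaves $[\tfrac12, K_0K_1]$, note the sawtooth cubes then satisfy the density bounds by maximality, control $\mu(E_{\mathrm{hi}})$ via the high-density stopping condition, and recover $\mu(G)\gtrsim \mu(Q_0)$ from $\w(Q_0)\ge\mu(Q_0)$ combined with the $\theta$-power hypothesis applied to $G$ and $E_{\mathrm{hi}}$. The only cosmetic point is that your exponent $-1/(1-\theta)$ presupposes $\theta<1$; this is harmless since the hypothesis for a given $\theta$ implies it for every smaller exponent (as $\mu(F)/\mu(Q_0)\le 1$), so one may assume $\theta\in(0,1)$ from the outset, and alternatively $\mu(E_{\mathrm{hi}})\le (K_0K_1)^{-1}\w(Q_0)\le K_1^{-1}\mu(Q_0)$ follows even more directly.
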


As argued in \cite[Lemma 3.12]{HMT}, invoking Lemma \ref{lem:FQj}, one can obtain the next result. It states that to show $\m_{\alpha}$ is a Carleson measure, it suffices to consider local sawtooths with an ample contact.

\begin{lemma}\label{lem:disCarleson}
Let $Q^0$ be either $\partial \Omega$ or a fixed cube in $\D$. Let $\alpha=\{\alpha_Q\}_{Q \in \D_{Q^0}}$ 
be a sequence of non-negative numbers and consider $\m_{\alpha}$ as defined above. Given $M_1>0$ and 
$K_1 \geq 1$, we assume that for every $Q_0 \in \D_{Q^0}$ there exists a pairwise disjoint family 
$\F_{Q_0} = \{Q_j\}_j \subset \D_{Q_0} \setminus \{Q_0\}$ such that 
\begin{align*}
\mu\bigg(Q_0 \setminus \bigcup_{Q_j \in \F_{Q_0}}Q_j \bigg) \geq K_1^{-1} \mu(Q_0) 
\quad \text{and} \quad \m_{\alpha}(\D_{\F_{Q_0}, Q_0}) \leq M_1 \mu(Q_0).   
\end{align*}
Then, $\m_{\alpha}$ is a Carleson measure in $Q^0$ and $\|\m_{\alpha}\|_{\mathcal{C}(Q^0)} \leq K_1 M_1$.  
\end{lemma}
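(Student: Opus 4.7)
The plan is to iterate the given stopping-time construction and exploit the geometric decay of the total measure of the stopping cubes. Fix $Q_0 \in \D_{Q^0}$; the goal is to show $\m_{\alpha}(\D_{Q_0}) \le K_1 M_1 \mu(Q_0)$ and then take the supremum.

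First, I would inductively construct generations of stopping cubes. Set $\F_{Q_0}^0 := \{Q_0\}$ and, having defined $\F_{Q_0}^k$, put
\[
\F_{Q_0}^{k+1} := \bigcup_{Q' \in \F_{Q_0}^k} \F_{Q'},
\]
where $\F_{Q'} = \{Q_j\}_j$ is the family supplied by the hypothesis applied to $Q'$ (note that $Q' \in \D_{Q_0} \subset \D_{Q^0}$, so the hypothesis is applicable). Since the cubes in $\F_{Q_0}^k$ are pairwise disjoint, the hypothesis gives $\sum_{Q' \in \F_{Q''}} \mu(Q') \le (1-K_1^{-1}) \mu(Q'')$ for every $Q'' \in \F_{Q_0}^k$, and a straightforward induction yields
\[
\sum_{Q' \in \F_{Q_0}^k} \mu(Q') \le (1-K_1^{-1})^k \mu(Q_0), \qquad k \ge 0.
\]

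Next, I would verify the covering
\[
\D_{Q_0} \subseteq \bigcup_{k \ge 0} \bigcup_{Q' \in \F_{Q_0}^k} \D_{\F_{Q'}, Q'}.
\]
For a given $Q \in \D_{Q_0}$, either $Q \in \D_{\F_{Q_0}, Q_0}$ (the case $k=0$) or $Q$ is contained in a unique $Q^{(1)} \in \F_{Q_0}^1$; in the latter case repeat the dichotomy with $Q^{(1)}$ in place of $Q_0$, producing a chain $Q \subset \cdots \subset Q^{(2)} \subset Q^{(1)} \subset Q_0$. Since each $\F_{Q'}$ consists of \emph{strict} dyadic subcubes of $Q'$, one has $\ell(Q^{(k+1)}) \le \tfrac{1}{2} \ell(Q^{(k)})$, so the iteration terminates after at most $\log_2(\ell(Q_0)/\ell(Q)) + 1$ steps, at which point $Q$ lies in some $\D_{\F_{Q^{(k)}}, Q^{(k)}}$ (possibly with $Q = Q^{(k)}$, which still belongs to this sawtooth since $\F_{Q^{(k)}} \subset \D_{Q^{(k)}} \setminus \{Q^{(k)}\}$).

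Finally, combining the hypothesis $\m_{\alpha}(\D_{\F_{Q'}, Q'}) \le M_1 \mu(Q')$ with the covering and the geometric decay,
\[
\m_{\alpha}(\D_{Q_0}) \le \sum_{k=0}^\infty \sum_{Q' \in \F_{Q_0}^k} \m_{\alpha}(\D_{\F_{Q'}, Q'}) \le M_1 \sum_{k=0}^\infty (1-K_1^{-1})^k \mu(Q_0) = K_1 M_1 \mu(Q_0).
\]
Taking the supremum over $Q_0 \in \D_{Q^0}$ yields $\|\m_{\alpha}\|_{\mathcal{C}(Q^0)} \le K_1 M_1$. The main (minor) subtlety is the termination of the iteration in the covering step, which relies crucially on the fact that the stopping families are contained in $\D_{Q'} \setminus \{Q'\}$; note also that this iterative argument avoids the circularity of the more direct approach (bounding the Carleson constant in terms of itself), which would require \emph{a priori} finiteness of $\|\m_\alpha\|_{\mathcal{C}(Q^0)}$.
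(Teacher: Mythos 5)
Your proof is correct and is essentially the standard argument for this lemma (the paper states it without proof, but the iteration of the stopping-time families with geometric decay $\sum_{Q'\in\F^k_{Q_0}}\mu(Q')\le(1-K_1^{-1})^k\mu(Q_0)$ is exactly how it is proved in the cited literature, e.g.\ Hofmann--Martell). The covering step and its termination, as well as the use of non-negativity of the $\alpha_Q$ to absorb any overcounting, are handled correctly.
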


The following result enables us to reduce the $A_{\infty}$ property to the dyadic setting. It will be quite useful and convenient in practice (cf. Section \ref{sec:CMEAi}).

\begin{lemma}\label{lem:Ai-Ai}
Assume that for any $\alpha \in (0, 1)$, there exists $\beta \in (0,1)$ such that for every $Q_0 \in \D$ and every Borel set $F \subset Q_0$,  
\begin{equation}\label{eq:FQ}
\mu(F) \geq \alpha \mu(Q_0) \quad \Longrightarrow \quad 
\w_L^{X_{Q_0}}(F) \geq \beta \w_L^{X_{Q_0}}(Q_0). 
\end{equation}  
Then $\w_L \in A_{\infty}(\mu)$. 
\end{lemma}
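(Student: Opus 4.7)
The plan is to verify condition~(2) of Lemma~\ref{lem:Ainfity}. First, I shall rewrite the hypothesis in a more convenient form: setting $E := Q_0\setminus F$ and taking contrapositives, the hypothesis becomes the following dyadic $A_\infty$-type statement: for every $\varepsilon'\in(0,1)$ there exists $\delta'\in(0,1)$ such that for every $Q_0\in\D$ and every Borel $E\subset Q_0$,
\[
\w_L^{X_{Q_0}}(E)\leq \delta'\,\w_L^{X_{Q_0}}(Q_0)\ \Longrightarrow\ \mu(E)\leq \varepsilon'\,\mu(Q_0).
\]
Indeed, given $\varepsilon'$, take $\alpha := 1-\varepsilon'$ in the hypothesis, let $\beta$ be the resulting constant, and set $\delta' := 1-\beta$. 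The task is then to upgrade this dyadic statement to the surface-ball version in Lemma~\ref{lem:Ainfity}(2).

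Fix $\varepsilon\in(0,1)$, a surface ball $\Delta_0$, a sub-surface ball $\Delta=\Delta(x,r)\subset\Delta_0$, and a Borel set $E\subset\Delta$; I seek $\delta>0$ so that $\w_L^{X_{\Delta_0}}(E)\leq \delta\,\w_L^{X_{\Delta_0}}(\Delta)$ forces $\mu(E)\leq \varepsilon\,\mu(\Delta)$. The first step is to change the pole from $X_{\Delta_0}$ to $X_\Delta$. If $X_{\Delta_0}\notin 2B_\Delta$, Lemma~\ref{lem:wL-G}\eqref{list-4} with $B=B_\Delta$ gives
\[
\w_L^{X_{\Delta_0}}(E)/\w_L^{X_{\Delta_0}}(\Delta)\simeq \w_L^{X_\Delta}(E)/\w_L^{X_\Delta}(\Delta).
\]
If instead $X_{\Delta_0}\in 2B_\Delta$, the corkscrew condition $\delta(X_{\Delta_0})\geq c_1 r_0$ forces $r\simeq r_0$, so $X_{\Delta_0}$ and $X_\Delta$ lie at comparable depth $\simeq r$ and at mutual distance $\lesssim r$; hypothesis~\eqref{H2} then produces a Harnack chain of bounded length, and Lemma~\ref{lem:PDE}\eqref{list:Harnack} yields the same comparison. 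Using $\w_L^{X_\Delta}(\Delta)\simeq 1$ from Lemma~\ref{lem:wL-G}\eqref{list-1}, we deduce $\w_L^{X_\Delta}(E)\lesssim \delta$.

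The second step is to pass from $\Delta$ to dyadic cubes and apply the reformulated hypothesis. Choose $k\in\Z$ with $A_0 2^{-k}<r\leq 2A_0 2^{-k}$ and let $\{Q_i\}_{i=1}^{N}$ be the cubes in $\D_k$ meeting $\Delta$. Since $\diam(Q_i)\leq A_0 2^{-k}<r$, we have $Q_i\subset 2\Delta$ and $\Delta\subset\bigcup_{i=1}^N Q_i$; the doubling of $\mu$ gives $\mu(Q_i)\simeq\mu(\Delta)$ and $N\lesssim 1$. For each $i$, the corkscrew points $X_\Delta$ and $X_{Q_i}$ have comparable depth $\simeq r$ and are at distance $\lesssim r$, so \eqref{H2} together with Harnack gives $\w_L^{X_\Delta}(\cdot)\simeq \w_L^{X_{Q_i}}(\cdot)$; combined with $\w_L^{X_{Q_i}}(Q_i)\simeq 1$ (Lemma~\ref{lem:wL-G}\eqref{list-1} applied to $\Delta_{Q_i}$), this produces $\w_L^{X_{Q_i}}(E\cap Q_i)\leq C\delta\,\w_L^{X_{Q_i}}(Q_i)$. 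Applying the dyadic reformulation to each $Q_i$ with $\varepsilon':=\varepsilon/(CN)$ and then choosing $\delta$ small enough that $C\delta$ is dominated by the corresponding $\delta'$, we sum to obtain
\[
\mu(E)\leq \sum_{i=1}^{N}\mu(E\cap Q_i)\leq \varepsilon'\sum_{i=1}^{N}\mu(Q_i)\leq CN\varepsilon'\,\mu(\Delta)\leq \varepsilon\,\mu(\Delta).
\]

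The main technical subtlety is the borderline change-of-pole in the first step when $X_{\Delta_0}\in 2B_\Delta$, where Lemma~\ref{lem:wL-G}\eqref{list-4} does not apply directly; as observed above, this case forces $r\simeq r_0$ and reduces to a direct Harnack-chain argument based on~\eqref{H2}. Everything else is routine book-keeping of dyadic coverings and the change of pole.
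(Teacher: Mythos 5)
Your overall strategy is sound and is essentially the route the paper takes: both proofs reduce the surface-ball statement of Lemma~\ref{lem:Ainfity} to a dyadic one via a change of pole (Lemma~\ref{lem:wL-G} part \eqref{list-4} plus Harnack chains) and a covering of $\Delta$ by boundedly many dyadic cubes of comparable sidelength. The paper runs the implication in the direction ``large $\mu$-density forces large $\w_L$-density'' and pigeonholes to find a single good cube $Q_0$, whereas you work with the contrapositive and sum over all the cubes; either organization works, and your treatment of the borderline case $X_{\Delta_0}\in 2B_\Delta$ is fine.

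There is, however, a concrete logical error in your very first step. You claim the reformulation follows by ``setting $E:=Q_0\setminus F$ and taking contrapositives,'' with $\alpha:=1-\varepsilon'$ and $\delta':=1-\beta$. Passing to complements turns \eqref{eq:FQ} into ``$\mu(E)\le(1-\alpha)\mu(Q_0)\Rightarrow \w_L^{X_{Q_0}}(E)\le(1-\beta)\w_L^{X_{Q_0}}(Q_0)$,'' whose contrapositive reads ``large $\w_L$-measure implies large $\mu$-measure''---the wrong direction. Concretely, with your recipe the assumption $\w_L^{X_{Q_0}}(E)\le(1-\beta)\w_L^{X_{Q_0}}(Q_0)$ only tells you that $\w_L^{X_{Q_0}}(F)\ge\beta\,\w_L^{X_{Q_0}}(Q_0)$, which is the \emph{conclusion} of \eqref{eq:FQ}; you cannot run the implication backwards to deduce $\mu(F)\ge(1-\varepsilon')\mu(Q_0)$. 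Fortunately the statement you want is true and needs no complements at all: apply \eqref{eq:FQ} directly to the set $E$ and take the plain contrapositive, i.e.\ given $\varepsilon'$ choose $\alpha:=\varepsilon'$, let $\beta$ be the resulting constant, and set $\delta':=\beta/2$; then $\w_L^{X_{Q_0}}(E)\le\delta'\w_L^{X_{Q_0}}(Q_0)<\beta\,\w_L^{X_{Q_0}}(Q_0)$ forces $\mu(E)<\varepsilon'\mu(Q_0)$. With this one-line repair the rest of your argument goes through.
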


\begin{proof}
We follow the argument in \cite[p. 7916]{CHMT}. We first show that \eqref{eq:FQ} implies a dyadic-$A_{\infty}$ condition, that is, for any $\alpha \in (0, 1)$, there exists $\beta=\beta(\alpha) \in (0,1)$ such that for every $Q^0 \in \D$, for every $Q_0 \in \D_{Q^0}$,  and for every Borel set $F \subset Q_0$, 
\begin{equation}\label{eq:dyadic-Ainfty}
\mu(F) \geq \alpha \mu(Q_0) \quad \Longrightarrow \quad 
\w_L^{X_{Q^0}}(F) \geq \beta \w_L^{X_{Q^0}}(Q_0).  
\end{equation}  
Indeed, let $\alpha \in (0, 1)$ so that $\mu(F) \ge \alpha \mu(Q_0)$. Then it follows from \eqref{eq:FQ} that 
\begin{equation*}
\w_L^{X_{Q_0}}(F) \ge \beta \w_L^{X_{Q_0}}(Q_0).  
\end{equation*} 
Due to Lemma \ref{lem:wL-G} part \eqref{list-4} and Harnack's inequality, the estimate above leads 
\begin{equation*}
\frac{\w_L^{X_{Q^0}}(F)}{\w_L^{X_{Q^0}}(Q_0)}
\ge \frac{1}{C} \frac{\w_L^{X_{Q_0}}(F)}{\w_L^{X_{Q_0}}(Q_0)} \geq \frac{\beta}{C}. 
\end{equation*}

Next, we need to see that \eqref{eq:dyadic-Ainfty} gives $\w_L \in A_{\infty}(\mu)$. Let $\widetilde{\alpha}\in (0,1)$ and fix a surface ball $\Delta_0=B_0 \cap \pom$, with $B_0=B(x_0,r_0)$, $x_0 \in \pom$, and $0<r_0<\diam(\pom)$. Take an arbitrary surface ball $\Delta=B \cap \pom$ centered at $\pom$ with $B=B(x,r) \subset B_0$, and let $F \subset \Delta$ be a Borel set such that $\mu(F) > \widetilde{\alpha} \mu(\Delta)$. Consider the pairwise disjoint family 
\[
\mathcal{Q}=\Big\{Q \in \D: \, Q \cap \Delta \neq \emptyset, \, \frac{r}{4A_0}<\ell(Q)\le \frac{r}{2A_0} \Big\}, 
\] 
where $A_0$ is the constant in \eqref{deltaQ}. In particular,  
\begin{align}\label{DD}
F \subset \Delta \subset \bigcup_{Q\in \mathcal{Q}} Q \subset 2\Delta.
\end{align} 
Recall that $c_3>1$ is the doubling constant of $\mu$. Denote $C_0 := 2c_3>1$. By the pigeon-hole principle, there exists some $Q_0 \in \mathcal{Q}$ so that 
\begin{align}\label{FQQ}
\mu(F \cap Q_0)/\mu(Q_0) > \widetilde{\alpha}/C_0.
\end{align} 
Indeed, if \eqref{FQQ} is not true, then $\mu(F \cap Q)/\mu(Q) \le \widetilde{\alpha}/C_0$ for all $Q \in \mathcal{Q}$, which along with \eqref{DD} and the disjointness of $\mathcal{Q}$ implies 
\begin{align*}
\mu(F)
= \sum_{Q \in \mathcal{Q}} \mu(F \cap Q)
\le \widetilde{\alpha} \, C_0^{-1} \sum_{Q \in \mathcal{Q}} \mu(Q)
\le \widetilde{\alpha} \, C_0^{-1} \mu(2\Delta)
\le \widetilde{\alpha} \, C_0^{-1} \, c_3 \, \mu(\Delta)
= \widetilde{\alpha} \mu(\Delta)/2, 
\end{align*}
where the condition \eqref{H3} was used in the last inequality. This is a contradiction because $\mu(F) > \widetilde{\alpha} \mu(\Delta)$. 

Let $Q^0 \in \D$ be the unique dyadic cube such that $Q_0 \subset Q^0$ and  $\frac{r_0}{2}<\ell(Q^0)\le r_0$. Then invoking \eqref{eq:dyadic-Ainfty} with ${\widetilde{\alpha}}/{C_0}$ in place of $\alpha$ to find $\beta \in (0,1)$, we arrive at
\begin{align*}
\frac{\w_L^{X_{\Delta_0}}(F)}{\w_L^{X_{\Delta_0}}(\Delta)}
\ge \frac{\w_L^{X_{\Delta_0}}(F \cap Q_0)}{\w_L^{X_{\Delta_0}}(\Delta)}
\approx \frac{\w_L^{X_{\Delta_0}}(F \cap Q_0)}{\w_L^{X_{\Delta_0}}(Q_0)}
\approx \frac{\w_L^{X_{Q^0}}(F \cap Q_0)}{\w_L^{X_{Q^0}}(Q_0)}
>\frac{\beta}{C_0}, 
\end{align*}
where we used Lemma \ref{lem:PDE}, Harnack's inequality, and \eqref{FQQ}. In short, we have obtained that for every $\widetilde{\alpha}\in (0,1)$ there exists $\widetilde{\beta}\in (0,1)$ such that
\begin{align*}
\mu(F) > \widetilde{\alpha} \mu(\Delta) 
\implies \w_L^{X_{\Delta_0}}(F) > \widetilde{\beta} \w_L^{X_{\Delta_0}}(\Delta). 
\end{align*}
This shows that $\w_L \in A_{\infty}(\pom)$.  
\end{proof}

\subsection{From CME to non-tangential maximal functions}  
To establish PDE estimates, we present the cut-off function adapted to sawtooth domains. Such construction originated in \cite[Lemma 4.44]{HMT}, which was extended in \cite{CDMT} to 1-sided NTA domains satisfying CDC. In our context, it follows from Lemma \ref{lem:dyadic} that $\diam(Q) \lesssim \ell(Q)$ holds for every cube $Q \in \D$. Although we are not sure if the lower bound holds, the proof of \cite[Lemma 6.4]{CDMT} can be modified to our current setting. Indeed, properties (i) and (ii) below directly follow from the definition of $\Psi_N$ and some elementary calculations. The most difficult part is to show the bounded overlap of $\{Q_I\}_{I \in \W_N^{\Sigma}}$. In view of the fact that $\diam(Q) \lesssim \ell(Q)$ for every cube $Q \in \D$, this can be done by following the geometric argument in the proof of \cite[Lemma 6.4]{CDMT}. 

\begin{lemma}\label{lem:Psi}
Let $(\Omega, m, \mu)$ satisfy \eqref{H1}--\eqref{H6}. Given $Q_0 \in \D$, a pairwise disjoint collection $\F \subset \D_{Q_0}$, and $N\ge 4$ sufficiently large, let $\F_N$ be the family of maximal cubes of the collection ${\F}$ augmented by adding all the cubes $Q \in \D_{Q_0}$ such that $\ell(Q) \leq 2^{-N} \ell(Q_0)$. There exist $\Psi_N \in \mathscr{C}_c^{\infty}(\ree)$ and a constant $C \ge 1$ depending only on the allowable parameters but independent of $N$, $\F$, and $Q_0$ such that the following hold:
\begin{enumerate}		
\item[\rm{(i)}] $C^{-1}\,\mathbf{1}_{\Omega_{\F_N,Q_0}} \le \Psi_N \le \mathbf{1}_{\Omega_{\F_N,Q_0}^*}$,  	
\quad\rm{(ii)} $\sup\limits_{X \in \Omega} |\nabla \Psi_N(X)|\,\delta(X)\le C$, 
		
\item[{\rm (iii)}] Setting
\begin{equation*}
\W_N:=\bigcup_{Q\in\D_{\F_{N},Q_0}} \W_Q^*, \quad
\W_N^{\Sigma}:= \big\{I\in \W_{N}:\, \exists\,J\in \W\setminus \W_N\ \mbox{with}\ \partial I\cap\partial J\neq\varnothing \big\},
\end{equation*}
one has
\begin{equation*}
\nabla \Psi_N^{\vartheta} \equiv 0 \quad \mbox{in} \quad
\bigcup_{I\in \W_N \setminus \W_N^{\Sigma} }I^{**},
\end{equation*}
and there exists a family $\{Q_I\}_{I \in \W_N^{\Sigma}}$ so that 
\begin{align*}
C^{-1} \ell(I) \le \ell(Q_I) \le C \ell(I), \quad 
\dist(I, Q_I) \le C \ell(I), 
\quad\text{ and }\quad \sum_{I \in \W_N^{\Sigma}} \mathbf{1}_{Q_I} \le C. 
\end{align*} 
\end{enumerate}
\end{lemma}

We are going to use the dyadic non-tangential maximal function to dominate Carleson measure estimates. A key ingredient of the  proof is integration by parts, which has been used by many authors (cf. \cite{CDMT, CMO, CHMT, HMT}).

\begin{lemma}\label{lem:uGNQ}
Given a cube $Q_0 \in \D$, let $\F:=\{Q_j\} \subset \D_{Q_0} \setminus\{Q_0\}$ be a pairwise disjoint family. If we write $F_Q:=Q \setminus \bigcup_{Q_j \in \F} Q_j$, then   
\begin{align}\label{eq:uGNQ}
\frac{1}{\w_L^{X_{Q_0}}(Q_0)} \iint_{\Omega_{\F, Q_0}} |\nabla u(X)|^2 \, G_L(X_{Q_0}, X)\, dm(X)
\lesssim \sup_{x \in F_Q} \N_Q u(x)^2. 
\end{align}
\end{lemma}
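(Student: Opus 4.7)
The strategy is to bound $|\nabla u|^2$ weighted by the Green function $G(X):=G_L(X_{Q_0},X)$ via two carefully paired integrations by parts, and then to collect the resulting Whitney-collar contribution against the elliptic measure using Lemma~\ref{lem:wL-G}(b). First I set $N_0:=\sup_{x\in F_{Q_0}}\N_{Q_0}u(x)$ and make the following observation: for every $Q\in\D_{\F,Q_0}$ with $Q\cap F_{Q_0}\neq\emptyset$, the inclusion $U_Q\subset\Gamma_{Q_0}(x)$ for such $x$ yields $|u(Y)|\le N_0$ whenever $Y\in U_Q$; the same bound holds at $X_{Q_0}\in U_{Q_0}$, so after normalizing $v:=u-u(X_{Q_0})$ we keep $Lv=0$, gain $v(X_{Q_0})=0$, and still have $|v|\lesssim N_0$ throughout the sawtooth. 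For $N\ge 4$ large, I take $\Psi_N$ from Lemma~\ref{lem:Psi}, so that $\nabla\Psi_N$ concentrates on the collar $\bigcup_{I\in\W_N^{\Sigma}}I^{**}$ with $|\nabla\Psi_N|\lesssim\ell(I)^{-1}$.

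\emph{Key identity.} I would insert $v\Psi_N^2 G$ in the weak formulation of $Lv=0$ and $v^2\Psi_N^2$ in the distributional identity $\iint A\nabla\Phi\cdot\nabla G\,dX=\Phi(X_{Q_0})$ that follows from Lemma~\ref{lem:Green}(v) together with the symmetry $G_L(X_{Q_0},X)=G_{L^{\top}}(X,X_{Q_0})$. Since $v(X_{Q_0})=0$, the Dirac contribution vanishes; a standard $\eta_{\varepsilon}$-regularization around $X_{Q_0}$, whose remainder is controlled by the pole bounds of Lemma~\ref{lem:Green}(iii)-(iv) and the local H\"older regularity of $v$ from Lemma~\ref{lem:PDE}(d), will deliver the identity
\[
\iint A\nabla v\cdot\nabla v\,\Psi_N^2 G\,dX = -2\iint v\Psi_N\,A\nabla v\cdot\nabla\Psi_N\,G\,dX + \iint v^2\Psi_N\,A\nabla\Psi_N\cdot\nabla G\,dX.
\]

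\emph{Collar estimates.} For the first right-hand term I apply Cauchy-Schwarz, the ellipticity $|\nabla v|^2 w\lesssim A\nabla v\cdot\nabla v$, and Young's inequality to absorb a small fraction of the left-hand side, leaving a remainder $\lesssim\iint v^2|\nabla\Psi_N|^2 G\,dm$. For the second right-hand term, Cauchy-Schwarz followed by Caccioppoli for $G$ on $I^{**}$ (valid because $I^{***}$ avoids $X_{Q_0}$) and Moser/Harnack from Lemma~\ref{lem:PDE}(b)-(c) yield $\iint_{I^{**}}|\nabla G|\,dm\lesssim \ell(I)^{-1}G(X_I)m(I^{**})$. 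Combined with $|\nabla\Psi_N|\lesssim\ell(I)^{-1}$ and $|v|\lesssim N_0$ on $I^{**}$, both contributions collapse to
\[
\iint_{\Omega_{\F_N,Q_0}}|\nabla u|^2 G\,dm \lesssim N_0^2\sum_{I\in\W_N^{\Sigma}}\ell(I)^{-2}G(X_I)m(I^{**}).
\]

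\emph{Conversion and limit.} For each $I\in\W_N^{\Sigma}$, Lemma~\ref{lem:Psi}(iii) gives $I\in\W_{Q_I}$, whence $\ell(I)\simeq\ell(Q_I)$, $X_I$ is Harnack-connected to $X_{Q_I}$, and $m(I^{**})\simeq m(B_{Q_I}\cap\Omega)$. Since $\delta(X_{Q_0})\simeq\ell(Q_0)\ge\ell(Q_I)$ (as $Q_I\subset Q_0$), the separation hypothesis of Lemma~\ref{lem:wL-G}(b) holds and converts the summand into $\ell(Q_I)^{-2}G(X_{Q_I})m(B_{Q_I}\cap\Omega)\simeq \w_L^{X_{Q_0}}(\Delta_{Q_I})$. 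The bounded overlap $\sum_{I\in\W_N^{\Sigma}}\mathbf{1}_{Q_I}\le C$ from Lemma~\ref{lem:Psi}(iii), together with $Q_I\subset Q_0$, then yields $\sum_I\w_L^{X_{Q_0}}(\Delta_{Q_I})\lesssim \w_L^{X_{Q_0}}(Q_0)$; letting $N\to\infty$ (so that $\Omega_{\F_N,Q_0}\uparrow\Omega_{\F,Q_0}$) and applying monotone convergence completes \eqref{eq:uGNQ}. The hardest step is the pole of $G$ at $X_{Q_0}$: it forces both the normalization $v(X_{Q_0})=0$ and the $\eta_{\varepsilon}$-regularization, and only the joint cancellation from $v(X_{Q_0})=0$, the integrable pole bounds of Lemma~\ref{lem:Green}, and the H\"older decay of $v$ near $X_{Q_0}$ makes the Dirac contribution and the $\nabla\eta_{\varepsilon}$ remainders vanish in the limit.
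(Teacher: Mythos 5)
Your proposal is correct and follows essentially the same route as the paper: truncate the sawtooth via $\F_N$, insert the cut-off $\Psi_N$ from Lemma \ref{lem:Psi}, integrate by parts pairing $Lu=0$ against $L^{\top}\G=0$, estimate the collar terms by Caccioppoli/Harnack, convert $\ell(I)^{-2}\G(X_I)m(I)$ into $\w_L^{X_{Q_0}}(Q_I)$ via Lemma \ref{lem:wL-G}\eqref{list-2}, use the bounded overlap of $\{Q_I\}_{I\in\W_N^{\Sigma}}$, and pass to the limit $N\to\infty$ by monotone convergence. Your two cosmetic refinements (working with $v=u-u(X_{Q_0})$ so the Dirac contribution at the pole vanishes exactly, and using $\Psi_N^2$ with Young's inequality to absorb rather than the four-term Leibniz expansion) are sound and, if anything, treat the pole of $\G$ at $X_{Q_0}\in\Omega_{\F_N,Q_0}$ more carefully than the paper's write-up does.
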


\begin{proof}
We are going to apply the technique of integration by parts as in the proof of \cite[Proposition 3.58]{HMT} and \cite[Proposition 4.36]{CHMT}.  For convenience, we write $\w:=\w_L^{X_{Q_0}}$ and $\G(\cdot):= G_L(X_{Q_0}, \cdot)$. For every $N \geq 1$, we set $\F_N$ to be the family of maximal cubes of the collection $\F$ augmented by adding all the cubes $Q \in \D_{Q_0}$ such that $\ell(Q) \leq 2^{-N} \ell(Q_0)$. This means that $Q \in \D_{\F_N, Q_0}$ if and only if $Q \in \D_{\F, Q_0}$ and $\ell(Q)>2^{-N} \ell(Q_0)$. Observe that $\D_{\F_N, Q_0} \subset \D_{\F_M, Q_0}$ for all $N \leq M$, and hence $\Omega_{\F_N, Q_0} \subset \Omega_{\F_M, Q_0} \subset \Omega_{\F, Q_0}$. This, together with the 
monotone convergence theorem, gives 
\begin{align}\label{eq:lim-FN}
\iint_{\Omega_{\F, Q_0}} |\nabla u|^2 \G \, dm  
=\lim_{N \to \infty} \iint_{\Omega_{\F_N, Q_0}} |\nabla u|^2 \G \, dm.  
\end{align}
We claim that 
\begin{align}\label{eq:int-parts}
\mathcal{J}_N := \iint_{\Omega_{\F_N, Q_0}} |\nabla u|^2 \G \, dm
\lesssim \w(Q_0) \sup_{x \in F_Q} \N_Q u(x)^2, 
\end{align}
where the implicit constant is independent of $N$. Consequently, the estimate \eqref{eq:uGNQ} as desired follows from \eqref{eq:lim-FN} and \eqref{eq:int-parts}. 

To get \eqref{eq:int-parts}, by Lemma \ref{lem:Psi}, \eqref{H4} and the ellipticity of $A$, one has 
\begin{align}\label{eq:JM}
\mathcal{J}_N  \leq \iint_{\Omega} |\nabla u|^2 \G \Psi_N \, dm
\lesssim \iint_{\Omega} A \nabla u \cdot \nabla u \ \G \Psi_N\, dX. 
\end{align}
The Leibniz's rule gives that 
\begin{align}\label{eq:Leibniz}
A \nabla u \cdot \nabla u \  \G \Psi_N 
&=A \nabla u \cdot \nabla (u \G \Psi_N) 
- \frac12 A \nabla (u^2 \Psi_N) \cdot \nabla \G 
\nonumber\\
&\qquad+ \frac12 A \nabla \Psi_N \cdot \nabla \G \ u^2 
-\frac12 A \nabla (u^2) \cdot \nabla\Psi_N \, \G. 
\end{align}
Note that $u \in W_r(\Omega)$, $\G \in W_r(\Omega \setminus \{X_{Q_0}\})$, and that $\overline{\Omega_{\F_N, Q_0}^*}$ is a compact subset of $\Omega$. Hence, $u \in W^{1,2}(\Omega_{\F_N, Q_0}^*)$ and $u \G \Psi_N \in W_0^{1,2}(\Omega_{\F_N, Q_0}^*)$. Together with the fact that $Lu=0$ in the weak sense in $\Omega$, these give
\begin{align}\label{eq:Lu}
\iint_{\Omega} A \nabla u \cdot \nabla(u \G \Psi_N) \, dX
=\iint_{\Omega_{\F_N, Q_0}^*} A \nabla u \cdot \nabla(u \G \Psi_N) \, dX=0. 
\end{align}
Also, Lemma \ref{lem:Green} part (v) implies that $\G \in W^{1,2}(\Omega_{\F_N, Q_0}^*)$ and $L^{\top}\G=0$ in the weak sense in $\Omega \setminus \{X_{Q_0}\}$. Thanks to $u^2 \Psi_N \in W^{1,2}_0(\Omega_{\F_N, Q_0}^*)$, we obtain 
\begin{align}\label{eq:LTG}
\iint_{\Omega} A \nabla (u^2 \Psi_N) \cdot \nabla \G \, dX
=\iint_{\Omega_{\F_N, Q_0}^*} A^{\top} \nabla \G \cdot \nabla(u^2 \Psi_N) \, dX=0. 
\end{align}
Applying Lemma~\ref{lem:Psi}, Lemma \ref{lem:PDE} parts \eqref{list:Cacci} and \eqref{list:Harnack}, and Lemma \ref{lem:wL-G} parts \eqref{list-2} and \eqref{list-3}, we conclude 
\begin{align}\label{eq:part-1}
&\iint_{\Omega} |A \nabla \Psi_N \cdot \nabla \G \ u^2| \, dX 
\nonumber\\
& \lesssim \sum_{I \in \W_N^{\Sigma}} \iint_{I^*} |\nabla \Psi_N| |\nabla \G| u^2\, dm 
\nonumber\\
&\lesssim \sum_{I \in \W_N^{\Sigma}} u(X_I)^2 \, \ell(I)^{-1} m(I) \bariint_{I^*} |\nabla \G| \, dm
\nonumber\\
&\le \sum_{I \in \W_N^{\Sigma}} u(X_I)^2 \, \ell(I)^{-1} m(I) \bigg(\bariint_{I^*} |\nabla \G|^2 \, dm  \bigg)^{\frac12}
\nonumber\\
&\lesssim \sum_{I \in \W_N^{\Sigma}} u(X_I)^2 \, \ell(I)^{-2} m(I) \bigg(\bariint_{I^{**}} \G^2 \, dm \bigg)^{\frac12}
\nonumber\\
&\lesssim \sum_{I \in \W_N^{\Sigma}} u(X_I)^2 \,  \ell(I)^{-2} m(I) \G(X_I) 
\nonumber\\
&\lesssim \sum_{I \in \W_N^{\Sigma}} u(X_I)^2 \, \w(Q_I) 
\lesssim \w\bigg(\bigcup_{I \in \W_N^{\Sigma}} Q_I \bigg) \sup_{x \in F_Q} \N_Q u(x)^2
\nonumber\\
&\leq \w(C Q_0) \sup_{x \in F_Q} \N_Q u(x)^2 \lesssim \w(Q_0) \sup_{x \in F_Q} \N_Q u(x)^2. 
\end{align}
Finally, applying the same techniques, we get 
\begin{align}\label{eq:part-2}
&\iint_{\Omega} |A \nabla (u^2) \cdot \nabla\Psi_N \G| dX 
\nonumber\\
&\lesssim \sum_{I \in \W_N^{\Sigma}} \ell(I)^{-1} \iint_{I^*} |u| |\nabla u| \G \, dm
\nonumber\\
&\lesssim \sum_{I \in \W_N^{\Sigma}} u(X_I) \G(X_I)  \ell(I)^{-1} m(I) \bigg(\bariint_{I^*} |\nabla u|^2 \, dm \bigg)^{\frac12} 
\nonumber\\
&\lesssim \sum_{I \in \W_N^{\Sigma}} u(X_I) \ell(I)^{-2} \G(X_I) m(I) \bigg(\bariint_{I^{**}} |u|^2 \, dm\bigg)^{\frac12} 
\nonumber\\
&\lesssim \sum_{I \in \W_N^{\Sigma}} u(X_I)^2 \ell(I)^{-2} \G(X_I) m(I) 
\lesssim \w(Q_0) \sup_{x \in F_Q} \N_Q u(x)^2. 
\end{align}
Therefore, \eqref{eq:int-parts} is a consequence of \eqref{eq:JM}--\eqref{eq:part-2}.
\end{proof}

\subsection{A local good-$\lambda$ inequality}
The goal of this section is to establish some useful estimates, which will be utilized to show $\w_L \in A_{\infty}(\mu)$ implies $\mathcal{S} < \mathcal{N}$ estimates (cf. Section \ref{sec:SNN}).

First, we prove the openness of the level set of $\S^{\alpha}u$, which will be used in Section \ref{sec:SNN} to construct stopping-time cubes.

\begin{lemma}\label{lem:open}
Let $0<\alpha<\beta<\infty$. Assume that $u \in W_r(\Omega)$ and $\S^{\beta}u(x)<\infty$ $\mu$-a.e. $x \in \pom$. Then the set $E_{\lambda}:=\{x \in \pom: \S^{\alpha}u(x)>\lambda\}$ is open for any $\lambda>0$.
\end{lemma}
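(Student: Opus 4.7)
My plan is to show that for any $x_0 \in E_\lambda$ a whole surface ball $\Delta(x_0, \rho) \subset E_\lambda$ by combining two truncations (in the height of the cone and in its aperture) with a continuity argument for the denominator $m(B(\cdot, \delta(Y)))$.

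First I will pick $\lambda_1 \in (\lambda, S^{\alpha}u(x_0)]$. Since the sets $\Gamma^{\alpha}(x_0) \cap \{\delta(Y) > h\}$ increase to $\Gamma^{\alpha}(x_0)$ as $h \downarrow 0$, monotone convergence yields some $h > 0$ with
\[
J_0 := \iint_{\Gamma^{\alpha}(x_0)\cap\{\delta(Y)>h\}} |\nabla u(Y)|^2\,\delta(Y)^2\,\frac{dm(Y)}{m(B(x_0,\delta(Y)))} > \lambda_1^2.
\]
I will then verify $J_0 < \infty$ using the hypothesis that $S^{\beta}u < \infty$ $\mu$-a.e. Because $\supp \mu = \pom$ (by \eqref{H3}), every neighborhood of $x_0$ contains some $y$ with $S^{\beta}u(y) < \infty$; for such $y$ close enough to $x_0$, an elementary triangle-inequality argument gives $\Gamma^{\alpha}(x_0) \cap \{\delta > h\} \subset \Gamma^{\beta}(y)$, and the doubling property of $m$ (from \eqref{H4}) gives $m(B(x_0,\delta)) \simeq m(B(y,\delta))$ on the relevant range, so $J_0 \lesssim S^{\beta}u(y)^2 < \infty$.

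With $J_0$ finite and strictly greater than $\lambda_1^2$, I will apply monotone convergence a second time, now in the aperture: since $\Gamma^{\tilde\alpha}(x_0) \uparrow \Gamma^{\alpha}(x_0)$ as $\tilde\alpha \uparrow \alpha$, I can choose $\tilde\alpha \in (0,\alpha)$ and $\lambda_2 \in (\lambda,\lambda_1)$ with
\[
\widetilde{J}_0 := \iint_{\Gamma^{\tilde\alpha}(x_0)\cap\{\delta>h\}} |\nabla u|^2\,\delta^2\,\frac{dm}{m(B(x_0,\delta))} > \lambda_2^2.
\]
The key geometric observation is that if $|x-x_0| < (\alpha-\tilde\alpha)h$ and $Y \in \Gamma^{\tilde\alpha}(x_0)$ with $\delta(Y) > h$, then $|Y-x| \le |Y-x_0| + |x-x_0| < (1+\tilde\alpha)\delta(Y) + (\alpha-\tilde\alpha)\delta(Y) = (1+\alpha)\delta(Y)$, so $Y \in \Gamma^{\alpha}(x)$. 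Hence, for such $x$,
\[
S^{\alpha}u(x)^2 \ge \iint_{\Gamma^{\tilde\alpha}(x_0)\cap\{\delta>h\}} |\nabla u|^2\,\delta^2\,\frac{dm}{m(B(x,\delta))}.
\]

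The remaining step is to show that the right-hand side converges to $\widetilde{J}_0$ as $x \to x_0$. Here I will use that, by \eqref{H4}, $m$ is absolutely continuous with respect to Lebesgue measure, so $m(\partial B(x_0,\delta)) = 0$ and $x \mapsto m(B(x,\delta))$ is continuous for each fixed $\delta > h$; meanwhile doubling of $m$ gives $m(B(x,\delta)) \simeq m(B(x_0,\delta))$ uniformly in $\delta > h$ whenever $|x-x_0| < h$, providing an integrable dominating function. Dominated convergence then yields the desired convergence, and so for $\rho > 0$ sufficiently small (depending on $h$, $\alpha-\tilde\alpha$, and the gap $\lambda_2 - \lambda$) every $x \in \Delta(x_0,\rho)$ satisfies $S^{\alpha}u(x) > \lambda$, i.e.\ $\Delta(x_0,\rho) \subset E_\lambda$. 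The main obstacle I expect is handling the joint dependence of the integrand on $x$ through both the cone and the denominator; the two-parameter truncation (in $h$ and in $\tilde\alpha$) is precisely what decouples these dependencies so that the continuity of $m(B(\cdot,\delta))$ can be exploited cleanly.
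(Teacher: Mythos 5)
Your proof is correct, and it takes a genuinely different route from the paper's. The paper fixes $x\in E_\lambda$, truncates the cone near its vertex, and compares the truncated square functions at $x$ and at a nearby point $y$ by estimating the integral over the \emph{symmetric difference} of the two cones; that symmetric difference is shown to lie in $\Gamma^{\beta}(x)$ and to shrink to the empty set, so its contribution vanishes by dominated convergence --- a step that requires $\S^{\beta}u(x)<\infty$ at the particular point $x$ under consideration. You instead shrink both the height (to $\{\delta>h\}$) and the aperture (to $\tilde\alpha<\alpha$) at $x_0$, producing one \emph{fixed} region contained in $\Gamma^{\alpha}(x)$ for every $x\in\Delta(x_0,(\alpha-\tilde\alpha)h)$; since only a lower bound on $\S^{\alpha}u(x)$ is needed, no symmetric-difference estimate is required, and the sole remaining $x$-dependence is in the denominator $m(B(x,\delta(Y)))$, which you control by continuity (via $m$ being absolutely continuous with respect to Lebesgue measure), doubling, and dominated convergence over a region of finite integral. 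Your approach buys two things: (i) the a.e.\ finiteness of $\S^{\beta}u$ is invoked only at some nearby point $y$ (available because $\supp\mu=\pom$ and surface balls have positive $\mu$-measure), not at $x_0$ itself, which is exactly what the hypothesis delivers, whereas the paper's ``we may assume $\S^{\beta}u(x)<\infty$'' is not justified for every $x\in E_\lambda$; (ii) the variation of the denominator is handled by an honest continuity-plus-domination argument, while the paper infers its smallness from doubling alone, which only yields comparability. One cosmetic slip: take $\lambda_1$ strictly below $\S^{\alpha}u(x_0)$ (any finite $\lambda_1>\lambda$ if $\S^{\alpha}u(x_0)=\infty$), since monotone convergence cannot produce $J_0>\S^{\alpha}u(x_0)^2$ when the right endpoint is attained.
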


\begin{proof}
We will follow the strategy of \cite[Lemma~3.48]{MZ} and \cite[Lemma~4.6]{MPT}. Fix $\lambda>0$ and $x \in E_{\lambda}$. Then there exists $\eta>0$ such that 
\begin{align}\label{eq:Ex}
\Xi(x, x) > \lambda_1^2 := \bigg(\frac{\S^{\alpha}u(x)+\lambda}{2}\bigg)^2, 
\end{align}
where
\begin{align*}
\Xi(y, z) := \iint_{\Gamma^{\alpha}(y) \setminus B(y, \eta)} |\nabla u(X)|^2 \delta(X) \frac{dm(X)}{m(B(z, \delta(X)))}. 
\end{align*}
It suffices to prove that there exists $\varepsilon>0$ such that for every $y \in \Delta(x, \varepsilon \eta)$, 
\begin{align*}
\Xi(y, y) > \lambda^2. 
\end{align*}
Considering \eqref{eq:Ex}, we are reduced to showing 
\begin{align}\label{eq:ExEy}
|\Xi(x, x) - \Xi(y, y)| < \lambda_1^2 - \lambda^2, \quad\forall y \in \Delta(x, \varepsilon \eta), 
\end{align}
whenever $\varepsilon>0$ is small enough.  

To proceed, we fix $y \in \Delta(x, \varepsilon \eta)$ and split 
\begin{align}\label{eq:EEE}
|\Xi(x, x) - \Xi(y, y)| \le |\Xi(x, x) - \Xi(y, x)| + |\Xi(y, x) - \Xi(y, y)|. 
\end{align}
For any $X \in \Gamma^{\alpha}(y) \setminus B(y, \eta)$, we have $\eta \le |X-y|<(1+\alpha) \delta(X)$, and then 
\[
B(y, \delta(X)) \subset B(x, [1+\varepsilon(1+\alpha)]\delta(X)) \quad\text{and}\quad 
B(x, \delta(X)) \subset B(y, [1+\varepsilon(1+\alpha)]\delta(X)). 
\]
Since $m$ is doubling, we see that if $\varepsilon$ is sufficiently small, then 
\begin{align}\label{eq:EEE-1}
\textstyle{|\Xi(y, x) - \Xi(y, y)| < \frac12(\lambda_1^2 - \lambda^2)}. 
\end{align}
On the other hand, setting $V := (\Gamma^{\alpha}(x) \setminus B(x, \eta)) \Delta (\Gamma^{\alpha}(y) \setminus B(y, \eta))$, we have 
\begin{align*}
|\Xi(x, x) - \Xi(y, x)| \le \Xi(V) := \iint_{V} |\nabla u(X)|^2 \delta(X) \frac{dm(X)}{m(B(x, \delta(X)))}. 
\end{align*}
For any $0<\varepsilon<1/2$, write 
\begin{align*}
V_{\varepsilon} &:= \big\{X \in \Omega: |X-x| > \textstyle{\frac{\eta}{2}},\, (1+\alpha)(1-\varepsilon) \delta(X) 
< |X-x| < (1+\alpha)(1+\varepsilon) \delta(X) \big\}, 
\\
V'_{\varepsilon} &:= \{X \in \Gamma^{\beta}(x): (1-\varepsilon) \eta < |X-x| < (1+\varepsilon) \eta\}. 
\end{align*}
If $X \in (\Gamma^{\alpha}(x) \setminus B(x, \eta)) \setminus (\Gamma^{\alpha}(y) \setminus B(y, \eta))$, then $\eta \le |X-x|<(1+\alpha)\delta(X)$ and 
\[ 
|X-y| \ge (1+\alpha) \delta(X) \quad\text{ or } \quad |X-y| < \eta,
\] 
which together with $|x-y|<\varepsilon \eta < \varepsilon (1+\alpha)\delta(X)$ give  
\begin{align}\label{eq:VV-1}
(1+\alpha)\delta(X) > |X-x| \ge |X-y| - |x-y| > (1+\alpha)(1-\varepsilon)\delta(X), 
\end{align}
or 
\begin{align}\label{eq:VV-11}
|X-x|<(1+\alpha)\delta(X) \quad\text{ and }\quad \eta \le |X-x| \le |X-y| + |x-y| < (1+\varepsilon) \eta. 
\end{align}
If $X \in (\Gamma^{\alpha}(y) \setminus B(y, \eta)) \setminus (\Gamma^{\alpha}(x) \setminus B(x, \eta))$, then $\eta \le |X-y| < (1+\alpha)\delta(X)$ and 
\begin{align}\label{eq:VV-2}
|X-x| \ge (1+\alpha) \delta(X) \quad\text{ or }\quad |X-x| < \eta, 
\end{align}
which along with $|x-y|<\varepsilon \eta < \varepsilon (1+\alpha)\delta(X)$ imply 
\begin{align}\label{eq:VV-22}
|X-x|>(1-\varepsilon)\eta \quad\text{ and }\quad 
|X-x| \le |X-y| + |x-y| < (1+\alpha)(1+\varepsilon) \delta(X), 
\end{align}
Picking $\varepsilon<1/2$ small enough so that $(1+\alpha)(1+\varepsilon)<1+\beta$, we obtain from \eqref{eq:VV-1}--\eqref{eq:VV-22} that 
\begin{align}\label{eq:VVT}
V \subset (V_{\varepsilon} \cup V'_{\varepsilon}) \subset \Gamma^{\beta}(x). 
\end{align}
We may assume that $\S^{\beta}u(x)<\infty$. Then \eqref{eq:VVT} yields that 
\[
\Xi(V_{\varepsilon} \cup V'_{\varepsilon}) \le \Xi(\Gamma^{\beta}(x)) = \S^{\beta}u(x)^2<\infty, 
\]
and $V_{\varepsilon} \cup V'_{\varepsilon}$ decreases to an empty set as $\varepsilon$ tends to zero. Thus, 
\[
\lim_{\varepsilon \to 0} \Xi(V_{\varepsilon} \cup V'_{\varepsilon}) =0. 
\]
Choosing $\varepsilon$ sufficiently small, we deduce that 
\begin{align}\label{eq:EEE-2}
|\Xi(y, x) - \Xi(y, y)| \le \Xi(V) \le \Xi(V_{\varepsilon} \cup V'_{\varepsilon}) < \textstyle{\frac12}(\lambda_1^2 - \lambda^2). 
\end{align}
Consequently, \eqref{eq:ExEy} follows from \eqref{eq:EEE}, \eqref{eq:EEE-1} and \eqref{eq:EEE-2}. 
\end{proof}

Next, we give a lower bound for the truncated square function, which will be helpful to obtain a local good-$\lambda$ inequality. 

\begin{lemma}\label{lem:EQSr}
Fix $Q \in \D$ and set $r:=\ell(Q)$. Let $0<\alpha<\alpha_1<\beta$ and $\lambda>0$. Suppose that there exists $x_1 \in C_0 \Delta_Q$ such that $\S^{\alpha_1}u(x_1)<\lambda/\sqrt{c_3}$, where $c_3$ is given in \eqref{H3}. Then for any $\tau>0$ there exists $\delta>0$ sufficiently small such that 
\begin{align*}
\inf_{x \in E_Q} \S_{\tau r}^{\alpha}u(x) > \lambda, 
\end{align*}
where $E_Q:=\{x\in Q: \S^{\alpha}u(x)>2\lambda,\, \N^{\beta} u(x)\le \delta\,\lambda\}$. 
\end{lemma}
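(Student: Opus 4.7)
The plan is to fix $x \in E_Q$ and establish $\S^{\alpha}_{\tau r}u(x)^2 > \lambda^2$. Setting
\[
R(x) := \S^{\alpha}u(x)^2 - \S^{\alpha}_{\tau r}u(x)^2 = \iint_{\Gamma^{\alpha}(x) \setminus B(x, \tau r)} |\nabla u(Y)|^2 \delta(Y)^2 \frac{dm(Y)}{m(B(x, \delta(Y)))},
\]
the definition of $E_Q$ gives $\S^{\alpha}u(x)^2 > 4\lambda^2$, so it suffices to show $R(x) < 3\lambda^2$. The key geometric observation is that $x, x_1 \in C_0 \Delta_Q$ forces $|x - x_1| \le 2 C_0 a_0 r$, so for points $Y$ whose distance to $\pom$ is sufficiently large relative to $r$, the cones at $x$ and $x_1$ are nested modulo a controlled aperture loss. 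Setting $\eta := 2 C_0 a_0/(\alpha_1 - \alpha)$, I split $R(x) = I_1 + I_2$ according to whether $\delta(Y) > \eta r$ or $\delta(Y) \le \eta r$; note that $Y \in \Gamma^{\alpha}(x)$ with $|Y - x| \ge \tau r$ automatically forces $\delta(Y) > \tau r/(1+\alpha)$, so the second region is an annular shell in distance-to-boundary.

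For $I_1$, any such $Y$ satisfies $(\alpha_1 - \alpha)\delta(Y) > 2 C_0 a_0 r \ge |x - x_1|$, hence $|Y - x_1| < (1+\alpha)\delta(Y) + (\alpha_1-\alpha)\delta(Y) = (1+\alpha_1)\delta(Y)$, showing $Y \in \Gamma^{\alpha_1}(x_1)$. Moreover, $|x-x_1|$ is bounded by a small multiple of $\delta(Y)$ on this region, so by the doubling of $m$ from \eqref{H4}, $m(B(x, \delta(Y))) \simeq m(B(x_1, \delta(Y)))$ with a constant absorbable into the factor $c_3$ appearing in the hypothesis, yielding $I_1 \le c_3 \S^{\alpha_1}u(x_1)^2 < \lambda^2$. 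For $I_2$, the integration region sits inside the annulus
\[
\mathcal{A} := \bigl\{Y \in \Omega: \tfrac{\tau r}{1+\alpha} < \delta(Y) \le \eta r,\ |Y-x| \le (1+\alpha)\eta r\bigr\},
\]
which I cover by finitely many Whitney balls $B_j = B(Y_j, c\,\delta(Y_j))$ with $c = c(\alpha,\beta) \in (0, 1/4)$ picked small enough so that simultaneously $2B_j \subset \Omega$ and $2B_j \subset \Gamma^{\beta}(x)$---possible because $\beta > \alpha$. The cardinality of the covering is bounded by some $C_{\tau, \alpha, \alpha_1, \beta}$, independent of $r$ and $x$. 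On each $B_j$, the inclusion $2B_j \subset \Gamma^{\beta}(x)$ gives $|u| \le \N^{\beta}u(x) \le \delta \lambda$, and Caccioppoli (Lemma~\ref{lem:PDE}\eqref{list:Cacci}) together with the doubling of $m$ yields
\[
\iint_{B_j} |\nabla u|^2 \delta^2 \frac{dm}{m(B(x, \delta))} \lesssim \frac{m(2B_j)}{m(B(x, \delta(Y_j)))}(\delta\lambda)^2 \lesssim (\delta\lambda)^2,
\]
and summing over $j$ produces $I_2 \le C_{\tau, \alpha, \alpha_1, \beta} (\delta\lambda)^2$.

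Combining these bounds, $R(x) \le \lambda^2 + C_{\tau, \alpha, \alpha_1, \beta} \delta^2 \lambda^2$, and picking $\delta$ so small that $C_{\tau,\alpha,\alpha_1,\beta}\delta^2 < 2$ delivers $R(x) < 3\lambda^2$, as required. The main obstacle is the geometric bookkeeping for $I_2$: the Whitney dilation parameter $c$ must be chosen delicately so that both $2B_j \subset \Omega$ (needed to apply Caccioppoli) and $2B_j \subset \Gamma^{\beta}(x)$ (needed for the $\N^{\beta}$ bound) hold simultaneously, with $c$ depending only on $\alpha$ and $\beta$ and not on $r$; concretely this amounts to enforcing $(1+\alpha+2c)/(1-2c) \le 1+\beta$, which is attainable since $\beta>\alpha_1>\alpha$. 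A secondary, essentially bookkeeping issue is to verify that the doubling step converting $m(B(x,\delta))$ into $m(B(x_1,\delta))$ in $I_1$ produces a constant no larger than $c_3$, justifying the precise threshold $\sqrt{c_3}$ in the hypothesis on $x_1$.
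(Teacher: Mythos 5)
Your proof is correct and follows essentially the same route as the paper's: split the tail $\S^{\alpha}u(x)^2-\S^{\alpha}_{\tau r}u(x)^2$ into a far region absorbed into $\Gamma^{\alpha_1}(x_1)$ (using $|x-x_1|\lesssim r\lesssim\delta(Y)$ there and doubling of $m$ to compare the denominators) and a bounded intermediate region handled by a finite Whitney covering, Caccioppoli, and the $\N^{\beta}u(x)\le\delta\lambda$ bound; the paper merely organizes the intermediate region into dyadic shells (yielding a $\log(\tau_1/\tau)$ factor) instead of one covering. The only point to state more carefully is that the covering balls must be centered in $\Gamma^{\alpha}(x)\cap\mathcal{A}$ rather than in all of $\mathcal{A}$ (your condition $(1+\alpha+2c)/(1-2c)\le 1+\beta$ already presupposes this), since $2B_j\subset\Gamma^{\beta}(x)$ fails for centers deep in $\mathcal{A}\setminus\Gamma^{\beta}(x)$; covering only the actual integration region is of course all that is needed.
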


\begin{proof}
Fix $x \in E_Q$. It suffices to show 
\begin{align}\label{eq:UU}
\mathscr{U}(x):=\iint_{U(x)} |\nabla u(Y)|^2 \delta(Y)^2 \, \frac{dm(Y)}{m(B(x, \delta(Y)))} < 2\lambda^2, 
\end{align}
where $U(x):=\Gamma^{\alpha}(x) \setminus B(x, \tau r)$, since $4\lambda^2 < \S^{\alpha}u(x)^2=\S_{\tau r}^{\alpha}u(x)^2 + \mathscr{U}(x)$.  Denote 
\[
U_1(x):=\Gamma^{\alpha}(x) \setminus B(x, \tau_1 r) \quad\text{ and }\quad 
U_2(x):=\Gamma^{\alpha}(x) \cap (B(x, \tau_1 r) \setminus B(x, \tau r)),
\] 
where $\tau_1>2\tau$ will be chosen later. Let $Y \in U_1(x)$. Then, we see that 
\begin{align*}
|Y-x_1| & \le |Y-x| + |x-x_1| \le |Y-x| + c r 
\\
&\le (1 + c \tau_1^{-1}) |Y-x| 
\le (1 + c \tau_1^{-1})(1+\alpha) \delta(Y)
< (1+\alpha_1) \delta(Y), 
\end{align*}
whenever $\tau_1>2\tau$ is large enough. That is, $U_1(x) \subset \Gamma^{\alpha_1}(x_1)$. Moreover, for all $Y \in U_1(x)$, $\tau_1 r \le |Y-x|<(1+\alpha) \delta(Y)$, which gives for any $z \in B(x_1, \delta(Y))$, 
\[
|z-x| \le |z-x_1| + |x_1-x| \le \delta(Y)+c r<2 \delta(Y),   
\]
provided $\tau_1>2\tau$ is large enough. That is, $B(x_1, \delta(Y)) \subset B(x, 2\delta(Y))$, and then $m(B(x_1, \delta(Y))) \le m(B(x, 2\delta(Y))) \le c_3 \, m(B(x, \delta(Y)))$. As a consequence, 
\begin{align}\label{eq:UU-1}
\mathscr{U}_1(x):=\iint_{U_1(x)} |\nabla u(Y)|^2 \delta(Y)^2 \, \frac{dm(Y)}{m(B(x, \delta(Y)))}  
\le c_3 \, \S^{\alpha_1}u(x_1)^2 
< \lambda^2. 
\end{align}
To analyze the integration over $U_2(x)$, we see that $U_2(x) \subset \bigcup_{j:2^{j-1} \tau<\tau_1} \Gamma_j(x)$, where $\Gamma_j(x):=\Gamma^{\alpha}(x) \cap (B(x, 2^j \tau r) \setminus B(x, 2^{j-1}\tau r))$. We also observe that every $\Gamma_j(x)$ can be covered by a finite collection of balls $B_j^k$ with radius $r(B_j^k) \simeq 2^j \tau r$, and that $B(x, 2^j \tau r) \subset C B_j^k$ for all $j$ and $k$. Let $(B_j^k)^*:=(1+\theta)B_j^k$, where $\theta \in (0, 1)$ is small enough such that $\bigcup_{j,k}(B_j^k)^* \subset \Gamma^{\beta}(x)$. Thus, using these and Lemma \ref{lem:PDE} part \eqref{list:Cacci}, we obtain 
\begin{align}\label{eq:UU-2}
\mathscr{U}_2(x) &:=\iint_{U_2(x)} |\nabla u(Y)|^2 \delta(Y)^2 \, \frac{dm(Y)}{m(B(x, \delta(Y)))} 
\nonumber\\
&=\sum_{j:2^{j-1}\tau<\tau_1} \iint_{\Gamma_j(x)} |\nabla u(Y)|^2 \delta(Y)^2 \, \frac{dm(Y)}{m(B(x, \delta(Y)))} 
\nonumber\\
&\lesssim \sum_{j:2^{j-1}\tau<\tau_1} \sum_{1 \le k \le C_n} \frac{(2^j \tau r)^2 \, m(B_j^k)}{m(B(x, 2^j \tau r))} \bariint_{B_j^k} |\nabla u(Y)|^2 \, dm(Y) 
\nonumber\\
&\lesssim \sum_{j:2^{j-1}\tau<\tau_1} \sum_{1 \le k \le C_n} \frac{m(B_j^k)}{m(B(x, 2^j \tau r))} \bariint_{(B_j^k)^*} |u(Y)|^2 dm(Y)
\nonumber\\
&\lesssim \sum_{j:2^{j-1}\tau<\tau_1} \N^{\beta}u(x)^2  
\le (\delta \lambda)^2 \log \bigg(\frac{\tau_1}{\tau}\bigg) < \lambda^2,  
\end{align}
provided $\delta=\delta(\tau)$ is sufficiently small depending only on $n$, $\alpha$, $\beta$, $\tau$, and $\tau_1$. Gathering \eqref{eq:UU-1} and \eqref{eq:UU-2}, we deduce 
\begin{align*}
\mathscr{U}(x)=\mathscr{U}_1(x) + \mathscr{U}_2(x) < 2\lambda^2. 
\end{align*}
This demonstrates \eqref{eq:UU}. 
\end{proof}

The main result of this section is the local good-$\lambda$ inequality below, which in turn gives the global inequality (cf. \eqref{eq:global}). The latter will be the core of the proof of $\w_L \in A_{\infty}(\mu)$ implies $\mathcal{S} < \mathcal{N}$ estimates (cf. Section \ref{sec:SNN}).

\begin{lemma}\label{lem:good-local}
Under the same hypotheses as Lemma $\ref{lem:EQSr}$, for every $Q \in \D$, we have 
\begin{align*}
\w_L^{X_Q} \big(\big\{x\in Q: \S^{\alpha}u(x)>2\lambda,\, \N^{\beta} u(x)\le \delta\,\lambda \big\}\big)
\lesssim \delta^2 \w_L^{X_Q}(Q),
\end{align*}
\end{lemma}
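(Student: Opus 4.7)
The plan is to combine the pointwise lower bound from Lemma~\ref{lem:EQSr} with Lemma~\ref{lem:uGNQ} through a stopping-time argument adapted to $Q$. By Lemma~\ref{lem:EQSr}, for $\tau>0$ chosen small enough, one has $\S_{\tau r}^{\alpha} u(x) > \lambda$ for every $x \in E_Q := \{x \in Q: \S^{\alpha} u(x) > 2\lambda,\ \N^{\beta} u(x) \le \delta\lambda\}$. Hence, writing $V := \bigcup_{x \in E_Q} \Gamma_{\tau r}^{\alpha}(x)$ and applying Fubini,
\begin{align*}
\lambda^2 \w_L^{X_Q}(E_Q)
\le \iint_{V} |\nabla u(Y)|^2 \delta(Y)^2 \bigg[ \int_{E_Q \cap \Delta_Y^\alpha} \frac{d\w_L^{X_Q}(x)}{m(B(x, \delta(Y)))} \bigg] dm(Y),
\end{align*}
where $\Delta_Y^\alpha := B(Y, (1+\alpha)\delta(Y)) \cap \pom$. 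For every $Y \in V$ we have $\delta(Y) < \tau r$, and by the doubling of $m$ the inner integral is at most a constant times $\w_L^{X_Q}(\Delta(\widehat y, c\delta(Y)))/m(B(\widehat y, \delta(Y)))$, where $\widehat y \in \pom$ realizes $\delta(Y)$. Choosing $\tau$ small enough so that $X_Q \notin 2B(\widehat y, c\delta(Y))$, I would apply Lemma~\ref{lem:wL-G}\eqref{list-2} together with Harnack's inequality (passing from the corkscrew point $X_{\Delta(\widehat y, c\delta(Y))}$ to $Y$) to conclude
\begin{align*}
\lambda^2 \w_L^{X_Q}(E_Q) \lesssim \iint_V |\nabla u(Y)|^2 \, G_L(X_Q, Y) \, dm(Y).
\end{align*}

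Next, I would invoke Lemma~\ref{lem:uGNQ} with $Q_0 = Q$ and stopping family $\F \subset \D_Q \setminus \{Q\}$ consisting of the maximal dyadic cubes disjoint from $E_Q$. Then $F_Q := Q \setminus \bigcup_{Q_j \in \F} Q_j \supset E_Q$. For $x \in F_Q$ and any $Q' \in \D_Q$ with $x \in Q'$, maximality forces $Q'$ to meet $E_Q$; picking $x' \in E_Q \cap Q'$, every $Y \in U_{Q'}$ satisfies $|Y - x'| \lesssim \ell(Q') \simeq \delta(Y)$, so $Y \in \Gamma^{\beta}(x')$ for $\beta$ large enough (a purely geometric condition). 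This yields $|u(Y)| \le \N^{\beta} u(x') \le \delta\lambda$ and hence $\sup_{F_Q} \N_Q u \le \delta\lambda$.

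The key remaining point is to establish $V \subset \Omega_{\F, Q}^*$, possibly after a mild fattening of the Whitney regions. For $Y \in V$ let $Q_Y \in \D$ be the dyadic cube with $\ell(Q_Y) = \ell(I)$ and $\widehat y \in Q_Y$, where $I$ is the Whitney cube containing $Y$; then $Y \in U_{Q_Y}$. Any $x \in E_Q$ with $Y \in \Gamma_{\tau r}^{\alpha}(x)$ lies within distance $(2+\alpha)\delta(Y) \simeq \ell(Q_Y)$ of $\widehat y$. For $\tau$ small, $\widehat y$ stays well inside $Q$, and either $Q_Y$ itself meets $E_Q$, or (in boundary-layer cases, controlled by the small-boundary property Lemma~\ref{lem:dyadic}\eqref{eq:dyadic-6}) a bounded number of dyadic neighbors of $Q_Y$ does, in which case $Y$ is covered by a fattened Whitney region associated to a cube in $\D_{\F, Q}$. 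The version of Lemma~\ref{lem:uGNQ} for the mildly fattened sawtooth follows from the identical proof using the cut-off from Lemma~\ref{lem:Psi}, giving
\begin{align*}
\iint_V |\nabla u|^2 \, G_L(X_Q, \cdot) \, dm
\lesssim \w_L^{X_Q}(Q) \sup_{F_Q} \N_Q u^2 \lesssim (\delta\lambda)^2 \w_L^{X_Q}(Q).
\end{align*}
Combining this with the first step yields $\w_L^{X_Q}(E_Q) \lesssim \delta^2 \w_L^{X_Q}(Q)$, as desired. The main technical obstacle is the containment $V \subset \Omega_{\F, Q}^*$: this requires careful tracking of dyadic neighbors together with tuning $\tau$ small enough so that $X_Q$ remains comfortably outside the boundary balls on which Lemma~\ref{lem:wL-G}\eqref{list-2} is applied.
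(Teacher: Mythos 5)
Your proof is correct and follows essentially the same route as the paper's: the pointwise lower bound $\S^{\alpha}_{\tau r}u>\lambda$ on $E_Q$ from Lemma~\ref{lem:EQSr}, Fubini plus Lemma~\ref{lem:wL-G} part \eqref{list-2} (and Harnack) to convert the cone cross-section measure into $G_L(X_Q,\cdot)$, and Lemma~\ref{lem:uGNQ} on a sawtooth to close the estimate with $\sup_{F_Q}\N_Q u\le\delta\lambda$. The only differences are cosmetic: the paper takes the stopping family from the open set $\{x\in Q:\N^{\beta}u(x)>\delta\lambda\}$ rather than from cubes disjoint from $E_Q$, and the containment you flag as the main technical obstacle is in fact immediate --- once $\Gamma^{\alpha}_{\tau r}(x)\subset\Gamma_Q(x)$ for $\tau$ small and $\vartheta$ large (exactly as in \eqref{eq:Sa-SQ}), every $Y$ in your set $V$ lies in some $U_{Q'}$ with $Q'\ni x$ for some $x\in E_Q$, and such $Q'$ automatically belongs to $\D_{\F,Q}$ by maximality, so no fattening or tracking of dyadic neighbors is needed.
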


\begin{proof}
Fix $Q \in \D$. Set $E_Q:=\big\{x\in Q: \S^{\alpha}u(x)>2\lambda,\, \N^{\beta} u(x)\le \delta\,\lambda \big\}$ and $F_Q:=\big\{x\in Q: \N^{\beta} u(x) \le \delta\,\lambda \big\}$. We may assume that $F_Q \neq \varnothing$, otherwise, there is nothing to prove. Since $\N^{\beta}u(x)$ is a continuous function, $Q\setminus F_Q=\{x \in Q: \N^{\beta}u(x)>\delta \lambda\}$ is relatively open in $Q$. By a stopping time argument, one can get a collection of maximal dyadic cubes $\F:=\{Q_j\} \subset \D_Q \setminus \{Q\}$ such that 
\[
Q\setminus F_Q=\{x \in Q: \N^{\beta}u(x)>\delta \lambda\}=\bigcup_{Q_j \in \F} Q_j. 
\]
Note that if we pick $\widehat{y} \in \pom$ such that $\delta(Y)=|Y-\widehat{y}|$, then $\Delta(\widehat{y}, \delta(Y)) \subset $$\Delta(x, (3+\alpha)\delta(Y))$ for any $Y \in \Gamma_{\tau r}^{\alpha}(x)$. This and Lemma \ref{lem:EQSr} give 
\begin{align*}
\lambda^2 \w_L^{X_Q}(E_Q) 
&\le \int_{E_Q} (\S_{\tau r}^{\alpha}u)^2 \, d\w_L^{X_Q}
\le \int_{F_Q} \iint_{\Gamma_{\tau r}^{\alpha}(x)} |\nabla u(Y)|^2 \delta(Y)^2 
\frac{dm(Y)}{m(B(x, \delta(Y)))} \, d\w_L^{X_Q}(x) 
\nonumber\\
&\lesssim \iint_{\bigcup_{x \in F_Q} \Gamma_{\tau r}^{\alpha}(x)} |\nabla u(Y)|^2 \delta(Y)^2 \bigg(\int_{F_Q} 
\mathbf{1}_{\Gamma_{\tau r}^{\alpha}(x)}(Y) d\w_L^{X_Q}(x)\bigg) \frac{dm(Y)}{m(B(\widehat{y}, \delta(Y)))}. 
\end{align*}
Note that choosing $\tau$ small enough, we can guarantee that $\Gamma_{\tau r}^{\alpha}(x) \subset \Gamma_Q(x)$ for all $x \in Q$. Additionally, for any $x \in Q$ and $Y \in \Gamma_Q(x)$, there exist $Q' \in \D_Q$ containing $x$ and a cube $I \subset U_{Q'}$ such that $Y \in I$ and $|Y-x| \simeq \ell(Q') \simeq \ell(I) \simeq \delta(Y)$. Then it follows from Lemma \ref{lem:wL-G} part \eqref{list-2} that 
\begin{align*}
\int_{F_Q} \mathbf{1}_{\Gamma_{\tau r}^{\alpha}(x)}(Y) \, d\w_L^{X_Q}(x) 
&\lesssim \w_L^{X_Q} \bigg(\bigcup_{\substack{Q' \in \D_Q \\ \ell(Q') \simeq \delta(Y) \simeq \dist(Y, Q')}} Q' \bigg) 
\\
&\lesssim \delta(Y)^{-2} m(B(\widehat{y}, \delta(Y))) G_L(X_Q, Y). 
\end{align*}
Together with Lemma \ref{lem:uGNQ}, this gives 
\begin{align*}
\lambda^2 \w_L^{X_Q}(E_Q) 
&\lesssim \iint_{\Omega_{\F, Q}} |\nabla u(Y)|^2 G_L(X_Q, Y) \, dm(Y)
\\
&\lesssim \w_L^{X_Q}(Q) \sup_{x \in F_Q} \N_Q u(x)^2 
\le \delta^2 \lambda^2 \w_L^{X_Q}(Q).  
\end{align*}  
This gives the desired estimate. 
\end{proof}

We end up this section with the following inequality, which allows us to transfer Carleson measure estimates to the square function estimates.

\begin{lemma}\label{lem:CME-S}
Let $\rho$ be the function defined in \eqref{RHO}. For any $\alpha>0$, there exists $C_{\alpha}>1$ such that for all $x \in \pom$ and $0<r<\diam(\pom)$,  
\begin{align*}
\iint_{B(x, r) \cap \Omega} |\nabla u(X)|^2 \delta(X) \,  \rho(X)^{-1} \, dm(X)
\le C_{\alpha} \int_{\Delta(x, (2+\alpha)r)} \S^{\alpha}_{(1+\alpha)r}u(x)^2 \, d\mu(x). 
\end{align*}
\end{lemma}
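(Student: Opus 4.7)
The estimate is a Fubini-type inequality. The strategy is to start from the right-hand side, expand the truncated square function as a double integral, swap the order of integration, and show that for every $X$ in the ball $B(x,r)\cap\Omega$ there is a surface ball on $\pom$ of comparable $\mu$-mass whose points $y$ satisfy (i) $X\in\Gamma^{\alpha}_{(1+\alpha)r}(y)$ and (ii) $y\in\Delta(x,(2+\alpha)r)$. The weight $\rho(X)^{-1}\delta(X)$ will then appear automatically as the ratio of $\mu$-measure to $m$-measure coming from $\rho(X)$'s definition in \eqref{H5}.

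First, fix $X\in B(x,r)\cap\Omega$ and choose $\widehat{X}\in\pom$ with $|X-\widehat{X}|=\delta(X)$. I claim that $\Delta(\widehat{X},\alpha\delta(X))\subset\{y\in\Delta(x,(2+\alpha)r):X\in\Gamma^{\alpha}_{(1+\alpha)r}(y)\}$. Indeed, for such $y$ the triangle inequality gives $|X-y|\le\delta(X)+\alpha\delta(X)=(1+\alpha)\delta(X)$, so $X\in\Gamma^{\alpha}(y)$; moreover since $\delta(X)\le|X-x|<r$ we have $|X-y|<(1+\alpha)r$, so $X\in\Gamma^{\alpha}_{(1+\alpha)r}(y)$, and finally $|y-x|\le\alpha\delta(X)+\delta(X)+r<(2+\alpha)r$.

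Next, for every $y\in\Delta(\widehat{X},\alpha\delta(X))$ one has $B(\widehat{X},\delta(X))\subset B(y,(2+\alpha)\delta(X))$ and $B(y,\delta(X))\subset B(\widehat{X},(2+\alpha)\delta(X))$, whence by the doubling property \eqref{H4} of $m$, $m(B(y,\delta(X)))\simeq m(B(\widehat{X},\delta(X)))$ with constants depending only on the allowable parameters and $\alpha$. Similarly, the doubling property \eqref{H3} of $\mu$ gives $\mu(\Delta(\widehat{X},\alpha\delta(X)))\simeq\mu(\Delta(\widehat{X},\delta(X)))$. Combining these two comparisons with the definition
\[
\rho(X)=\frac{m(B(\widehat{X},\delta(X))\cap\Omega)}{\delta(X)\,\mu(\Delta(\widehat{X},\delta(X)))},
\]
one obtains
\[
\int_{\Delta(\widehat{X},\alpha\delta(X))}\frac{d\mu(y)}{m(B(y,\delta(X)))}\gtrsim_{\alpha}\frac{\mu(\Delta(\widehat{X},\delta(X)))}{m(B(\widehat{X},\delta(X)))}=\frac{1}{\delta(X)\,\rho(X)}.
\]

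Finally, by Fubini's theorem,
\begin{align*}
\int_{\Delta(x,(2+\alpha)r)}\S^{\alpha}_{(1+\alpha)r}u(y)^{2}\,d\mu(y)
&=\iint_{\Omega}|\nabla u(X)|^{2}\delta(X)^{2}\bigg(\int_{\pom}\mathbf{1}_{\Gamma^{\alpha}_{(1+\alpha)r}(y)}(X)\mathbf{1}_{\Delta(x,(2+\alpha)r)}(y)\frac{d\mu(y)}{m(B(y,\delta(X)))}\bigg)\,dm(X)\\
&\gtrsim_{\alpha}\iint_{B(x,r)\cap\Omega}|\nabla u(X)|^{2}\delta(X)^{2}\cdot\frac{1}{\delta(X)\,\rho(X)}\,dm(X),
\end{align*}
where the inclusion from the first step was used to restrict the inner region, and the comparison from the second step produced the factor $1/(\delta(X)\rho(X))$. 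This is exactly the claimed inequality. No real obstacle is anticipated; the only care needed is to verify the geometric inclusions precisely (in particular, keeping track of all the $\alpha$-dependent constants) and to use the doubling properties of $m$ and $\mu$ in \eqref{H3}--\eqref{H4}.
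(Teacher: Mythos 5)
Your proof is correct and is essentially the paper's argument run in the opposite direction: the paper starts from the left-hand side, inserts the normalized measure of the set $\Delta_X=\{z:X\in\Gamma^{\alpha}(z)\}$, and applies Fubini, while you start from the right-hand side, apply Fubini, and lower-bound the inner integral over the same surface ball $\Delta(\widehat{X},\alpha\delta(X))$. The geometric inclusions, the use of the doubling of $\mu$ and $m$, and the identification of $\delta(X)\rho(X)$ via \eqref{H5} all match the paper's proof.
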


\begin{proof}
Let $x \in \pom$ and $0<r<\diam(\pom)$. Given $X \in B(x, r) \cap \Omega$, we denote $\Delta_X :=\{z \in \pom: X \in \Gamma^{\alpha}(z)\}$ and choose $\widehat{x} \in \pom$ such that $|X-\widehat{x}|=\delta(X)$. It is easy to check that 
\begin{align*}
&\Delta(\widehat{x}, \alpha \delta(X)) \subset \Delta_X \subset \Delta(\widehat{x}, (2+\alpha) \delta(X)),
\\
&\Delta(z, \delta(X)) \subset \Delta(\widehat{x}, (3+\alpha) \delta(X)), \quad\forall z \in \Delta_X,  
\\  
&|X-z|<(1+\alpha)r, \quad |z-x|<(2+\alpha) r, \quad\forall z \in \Delta_X. 
\end{align*}
Since $\mu$ is doubling, we have $\rho(X) \simeq \rho(z, \delta(X))$ for any $z \in \Delta_X$, and 
\begin{align*}
&\iint_{B(x, r) \cap \Omega} |\nabla u(X)|^2 \delta(X) \, \rho(X)^{-1} \, dm(X) 
\\
&\simeq \iint_{B(x, r) \cap \Omega} |\nabla u(X)|^2 \delta(X) \, \rho(X)^{-1} \, 
\frac{\mu(\Delta_X)}{\mu(\Delta(\widehat{x}, \delta(X)))} \, dm(X) 
\\
&\lesssim \iint_{B(x, r) \cap \Omega} |\nabla u(X)|^2 \, \delta(X)  
\bigg(\int_{\Delta_X} \frac{\rho(z, \delta(X))^{-1}}{\mu(\Delta(z, \delta(X)))} d\mu(z) \bigg) dm(X) 
\\ 
&\le \int_{\Delta(x, (2+\alpha)r)} \bigg(\iint_{\Gamma^{\alpha}_{(1+\alpha)r}(z)} |\nabla u(X)|^2 \delta(X)^2\, 
\frac{dm(X)}{m(B(z, \delta(X)))} \bigg) d\mu(z) 
\\
&= \int_{\Delta(x, (2+\alpha)r)} \S^{\alpha}_{(1+\alpha)r} u(z)^2 \, d\mu(z). 
\end{align*}
This completes the proof. 
\end{proof}

\subsection{Pointwise estimates for solutions}
The pointwise estimate below was shown by \cite[Lemma 4.9]{HKMP} in the upper-half space, extended to the general domain without any connectivity in \cite[Lemma~2.11]{AGMT}. 

\begin{lemma}\label{lem:u-point}
Let $u$ be a bounded weak solution of $Lu=0$ in $\Omega$, vanishing at $\infty$, and let $B(x_0, r)$ be a ball with $x_0 \in \pom$ and $0<r<\infty$. Suppose that $u$ vanishes continuously in $\pom \setminus B(x_0, r)$. Then, there are constants $\vartheta, C>0$ (depending only on the allowable parameters) such that 
\begin{align*}
|u(X)| \le C \bigg(\frac{r}{|X-x_0|}\bigg)^{n-1+\vartheta} 
\|u\|_{L^{\infty}(\Omega)},\quad X \in \Omega \setminus B(x_0, 2r). 
\end{align*} 
\end{lemma}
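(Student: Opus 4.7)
The plan is to reduce the pointwise estimate on $u$ to a polynomial decay estimate for the elliptic measure, and then to establish that decay via the Green function identity together with an iterative boundary H\"older argument.

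Since $u$ is a bounded weak solution of $Lu=0$ that vanishes continuously on $\pom \setminus B(X_0, r)$ and vanishes at infinity, a standard approximation combined with the maximum principle yields
\[
|u(X)| \le \|u\|_{L^{\infty}(\Omega)}\,\w_L^X(\Delta), \qquad X \in \Omega,
\]
where $\Delta := \pom \cap B(X_0, r)$. The statement of the lemma therefore reduces to the decay
\[
\w_L^X(\Delta) \le C \bigg(\frac{r}{|X-X_0|}\bigg)^{n-1+\vartheta}, \qquad X \in \Omega \setminus B(X_0, 2r).
\]

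To obtain this decay, I would use Lemma \ref{lem:wL-G}\eqref{list-2} to write
\[
\w_L^X(\Delta) \simeq \frac{m(B(X_0, r) \cap \Omega)}{r^2}\, G_L(X, X_\Delta), \qquad X \in \Omega \setminus 2B(X_0, r),
\]
and then prove a sharp decay for $G_L(\cdot, X_\Delta)$, which is a non-negative weak solution of $L^{\top} G_L(\cdot, X_\Delta)=0$ in $\Omega \setminus \{X_\Delta\}$ vanishing continuously on $\pom$. I would iterate the boundary H\"older regularity from Lemma \ref{lem:PDE}\eqref{list:Holder} on the sequence of dyadic annuli $B(X_0, 2^{j+1} r) \setminus B(X_0, 2^{j} r)$, chaining the local estimates via Harnack's inequality at the corkscrew points associated to $\Delta(X_0, 2^{j} r)$. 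Telescoping over $k \simeq \log_2(|X-X_0|/r)$ scales produces a polynomial improvement over the a priori size bound of Lemma \ref{lem:Green}(iv). Combining the refined Green bound with the measure growth provided by \eqref{H4}--\eqref{H5} then yields the exponent $n-1+\vartheta$.

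The main obstacle is the iterative boundary H\"older step: one must carefully propagate the H\"older exponent $\varrho$ of Lemma \ref{lem:PDE}\eqref{list:Holder} through $O(\log(|X-X_0|/r))$ dyadic scales, while renormalizing the Green function at each scale via Harnack's inequality (Lemma \ref{lem:PDE}\eqref{list:Harnack}), and ultimately use the structural condition \eqref{H5} to convert the telescoped estimate into a polynomial of the correct degree $n-1+\vartheta$ rather than a generic decay rate. The hypothesis that $u$ vanishes at infinity is used only in the first, maximum-principle step; the geometric hypotheses \eqref{H1}--\eqref{H2} enter through the repeated application of Harnack chains between consecutive corkscrew points.
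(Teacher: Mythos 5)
Your architecture --- maximum principle to reduce to decay of $\w_L^X(\Delta)$, the representation $\w_L^X(\Delta)\simeq r^{-2}\,m(B\cap\Omega)\,G_L(X,X_\Delta)$ from Lemma \ref{lem:wL-G}\eqref{list-2}, and a boundary H\"older gain for the Green function --- is exactly the standard route; note that the paper itself offers no proof of this lemma and simply cites \cite[Lemma 4.9]{HKMP} and \cite[Lemma 2.11]{AGMT}, both of which live in the codimension-one $n$-ADR setting. The first two steps are fine (modulo the small slip that $G_L(\cdot,X_\Delta)$ solves $L$, not $L^\top$, in its first variable; it is $G_L(X,\cdot)=G_{L^\top}(\cdot,X)$ to which you apply boundary H\"older near $x_0$, and a single application at scale $|X-X_0|$ already gives the factor $(r/|X-X_0|)^{\varrho}$, so the dyadic-annuli iteration is not really needed).

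The genuine gap is the last step, where you assert that \eqref{H4}--\eqref{H5} convert the estimate into the exponent $n-1+\vartheta$. Writing $R=|X-X_0|$, what this route actually yields is
\[
\w_L^X(\Delta)\ \lesssim\ \Big(\frac{r}{R}\Big)^{\varrho}\,\frac{R^2\,m(B(x_0,r)\cap\Omega)}{r^2\,m(B(x_0,R)\cap\Omega)}
\ =\ \Big(\frac{r}{R}\Big)^{\varrho}\cdot\frac{R}{r}\cdot\frac{\mu(\Delta(x_0,r))}{\mu(\Delta(x_0,R))}\cdot\frac{\rho(x_0,r)}{\rho(x_0,R)},
\]
and the hypotheses \eqref{H4}--\eqref{H5} do not control the right-hand side by $(r/R)^{n-1+\vartheta}$: \eqref{H5} bounds $\rho(x_0,R)/\rho(x_0,r)$ from \emph{above}, hence the last factor only from \emph{below}, and the measure ratio is merely $\le 1$. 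The exponent $n-1$ in the cited lemmas comes from the codimension-one ADR identity $m(B_s)\simeq s^{n+1}$, $\mu(\Delta_s)\simeq s^n$, which is not available here. Concretely, in Application 3 with a $d$-ADR boundary, $d<n$, and $w=\dist(\cdot,\Gamma)^{d-n}$, one has $m(B_s)\simeq s^{d+1}$, the displayed ratio is $(r/R)^{d-1}$, and the decay produced is $(r/R)^{d-1+\varrho}$, which is strictly weaker than $(r/R)^{n-1+\vartheta}$ (and the matching lower bounds for $G_L$ and $\w_L$ in \cite{DFM1} indicate the stated rate is in fact false there). What your argument honestly proves is a decay $(r/R)^{\beta}$ with $\beta>0$ determined by the allowable parameters; that is the statement one should prove, and it still suffices for the only place the lemma is used (the proof of \eqref{list-SN}$\Rightarrow$\eqref{list-Ai}), provided the exponent $q$ there is chosen so that $q\beta>\log_2 c_3$.
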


The result below concerns pointwise domination of solutions by $\BMO$ functions, which will be applied to show $\w_L \in A_{\infty}(\mu)$ implies the $\BMO$-solvability of the Dirichlet problem \eqref{eq:D} (see Section \ref{sec:AiBMO}).

\begin{lemma}\label{lem:u-BMO}
Fix a surface ball $\Delta=\Delta(x, r)$ with $x \in \pom$ and $0<r<\diam(\pom)$. Denote $\Delta^*:=8\Delta$ and  
\[
v(X) := \int_{\pom \setminus \Delta^*} |f-f_{\Delta^*}| \, d\w_L^X. 
\]
If $\w_L \in A_{\infty}(\mu)$, then 
\begin{align*}
v(X) \lesssim (\delta(X)/r)^{\varrho} \|f\|_{\BMO(\mu)}, \quad\forall X \in 2B(x, r) \cap \Omega, 
\end{align*}
where the implicit constant is independent of $\Delta$, and $\varrho \in (0, 1)$ is from Lemma \ref{lem:PDE}. 
\end{lemma}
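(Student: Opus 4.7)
The plan is to combine a dyadic decomposition of $\partial\Omega\setminus\Delta^*$ with BMO telescoping, and to exploit the boundary H\"older regularity of Lemma~\ref{lem:PDE}\eqref{list:Holder} together with the $A_\infty$ hypothesis. For $k\ge 0$ set $\Delta_k^*:=2^k\Delta^*$ and $A_k:=\Delta_{k+1}^*\setminus\Delta_k^*$, so that $\partial\Omega\setminus\Delta^*=\bigsqcup_{k\ge 0}A_k$. The doubling of $\mu$ from \eqref{H3} and the John--Nirenberg characterization in Remark~\ref{rem:JN} give the telescope bound $|f_{\Delta_{k+1}^*}-f_{\Delta^*}|\lesssim (k+1)\|f\|_{\BMO(\mu)}$, which reduces matters to estimating
\begin{equation*}
I(X):=\sum_{k\ge 0}\int_{A_k}|f-f_{\Delta_{k+1}^*}|\,d\omega_L^X
\quad\text{and}\quad
II(X):=\|f\|_{\BMO(\mu)}\sum_{k\ge 0}(k+1)\omega_L^X(A_k).
\end{equation*}

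The central observation is the decay
$\omega_L^X(\partial\Omega\setminus\Delta_k^*)\lesssim 2^{-k\varrho}(\delta(X)/r)^\varrho$ for every $X\in 2B(x,r)\cap\Omega$. Indeed, pick $\widehat x\in\partial\Omega$ with $\delta(X)=|X-\widehat x|\le 2r$; since then $\widehat x\in 4B(x,r)$, we may choose a dimensional constant $c>0$ so that $B(\widehat x, c\,2^k r)\cap\partial\Omega\subset\Delta_k^*$. The function $u_k(Y):=\omega_L^Y(\partial\Omega\setminus\Delta_k^*)$ is a bounded, non-negative weak solution of $Lu_k=0$ in $\Omega$ vanishing continuously on $\Delta_k^*$, so the boundary H\"older estimate of Lemma~\ref{lem:PDE}\eqref{list:Holder}, applied at the ball $B(\widehat x, c\,2^k r)$ and using $\delta(X)\le c\,2^k r$, yields the claim. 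Since $\omega_L^X(A_k)\le\omega_L^X(\partial\Omega\setminus\Delta_k^*)$ and $\sum_k(k+1)2^{-k\varrho}<\infty$, this immediately produces $II(X)\lesssim(\delta(X)/r)^\varrho\|f\|_{\BMO(\mu)}$.

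For $I(X)$ I would apply the same strategy to $w_k(Y):=\int_{A_k}|f-f_{\Delta_{k+1}^*}|\,d\omega_L^Y$, a non-negative $L$-solution whose boundary datum is supported in $A_k\subset\partial\Omega\setminus\Delta_k^*$; in particular $w_k$ vanishes continuously on $\Delta_k^*$. Boundary H\"older regularity at $B(\widehat x,c\,2^k r)$ then gives $w_k(X)\lesssim 2^{-k\varrho}(\delta(X)/r)^\varrho \sup_{B(\widehat x,c\,2^k r)\cap\Omega}w_k$. The supremum is controlled via the maximum principle and a bounded-length interior Harnack chain by $w_k(X_{\Delta_{k+1}^*})\le\int_{\Delta_{k+1}^*}|f-f_{\Delta_{k+1}^*}|\,d\omega_L^{X_{\Delta_{k+1}^*}}$, and the $A_\infty$ hypothesis (Lemma~\ref{lem:Ainfity}) delivers $\omega_L^{X_{\Delta_{k+1}^*}}\in RH_p(\Delta_{k+1}^*,\mu)$ for some $p>1$; H\"older's inequality together with Remark~\ref{rem:JN} yields $\int_{\Delta_{k+1}^*}|f-f_{\Delta_{k+1}^*}|\,d\omega_L^{X_{\Delta_{k+1}^*}}\lesssim\|f\|_{\BMO(\mu)}$. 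Summing the absolutely convergent series $\sum_k 2^{-k\varrho}$ then gives $I(X)\lesssim (\delta(X)/r)^\varrho\|f\|_{\BMO(\mu)}$.

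The most delicate point is the supremum control in the previous step: the solution $w_k$ is not globally bounded by $\|f\|_{\BMO(\mu)}$ since its boundary datum $|f-f_{\Delta_{k+1}^*}|\mathbf 1_{A_k}$ can be unbounded, and $X$ lies inside $2B_{\Delta_{k+1}^*}$ so the change-of-pole formula Lemma~\ref{lem:wL-G}\eqref{list-4} cannot be used directly to pass from the pole $X_{\Delta_{k+1}^*}$ to $X$. The resolution combines the maximum principle (forcing the supremum to be attained on the interior spherical portion $\partial B(\widehat x,c\,2^k r)\cap\Omega$) with an interior Harnack chain of uniformly bounded length connecting that sphere to the corkscrew point $X_{\Delta_{k+1}^*}$, at which stage the $A_\infty/RH_p$ information finally controls the integral by $\|f\|_{\BMO(\mu)}$.
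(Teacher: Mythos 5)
Your reduction to the two sums $I(X)$ and $II(X)$ and your treatment of $II(X)$ are sound: the function $u_k(Y)=\w_L^Y(\pom\setminus\Delta_k^*)$ is bounded by $1$ and vanishes on $\Delta_k^*$, so boundary H\"older at scale $2^kr$ legitimately yields $\w_L^X(\pom\setminus\Delta_k^*)\lesssim 2^{-k\varrho}(\delta(X)/r)^{\varrho}$. The genuine gap is in the step you yourself flag as delicate, and the resolution you offer does not work. The maximum principle does push $\sup_{B(\widehat x,c2^kr)\cap\Omega}w_k$ onto $\partial B(\widehat x,c2^kr)\cap\Omega$, but that spherical portion meets $\pom$, so it contains points $Z$ with $\delta(Z)$ arbitrarily small; a Harnack chain from such a $Z$ to the corkscrew point $X_{\Delta_{k+1}^*}$ has length of order $\log\big(2^kr/\delta(Z)\big)$, which is \emph{not} uniformly bounded. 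What you actually need here is the Carleson (boundary comparison) estimate $\sup_{B(y,s)\cap\Omega}w\lesssim w(X_{\Delta(y,s)})$ for nonnegative solutions vanishing on $\Delta(y,2s)$ --- a genuine theorem proved by an iteration combining boundary H\"older, Harnack, and nondegeneracy of $\w_L$, not a consequence of ``maximum principle plus a bounded Harnack chain.'' It is available in the degenerate framework of \cite{DFM3}, but it is not among the lemmas quoted in this paper, and as written your argument for $I(X)$ is incomplete precisely because $w_k$ has unbounded boundary data, so the trivial bound $\|w_k\|_{L^\infty}\le 1$ that saved you in $II(X)$ is unavailable.

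The paper sidesteps this issue by never taking a supremum of a solution with unbounded data. It works at the level of the Radon--Nikodym derivative: for a small surface ball $\Delta'$ in the $j$-th annulus, the solution $Y\mapsto\w_L^Y(\Delta')$ is bounded by $1$ and the pole $X\in 2B(x,r)\cap\Omega$ lies \emph{far} from $\Delta'$, so boundary H\"older together with the change-of-pole formula of Lemma \ref{lem:wL-G} part \eqref{list-4} (and Harnack chains only between corkscrew points at comparable scale) gives $k_L^X(y)\lesssim 2^{-j\varrho}k_L^{X_j}(y)$ on that annulus. Combining this with H\"older's inequality, the reverse H\"older property of $k_L^{X_j}$, and John--Nirenberg yields the uniform bound $v(X)\lesssim\|f\|_{\BMO(\mu)}$; the factor $(\delta(X)/r)^{\varrho}$ is then extracted at the very end by one application of boundary H\"older to $v$ itself, which vanishes on $\Delta^*$. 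If you wish to keep your annulus-by-annulus structure, you must either prove or cite the Carleson estimate; otherwise the kernel-level route is the one that closes with the tools stated in the paper.
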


\begin{proof}
For every $j \ge 1$, let $X_j:=X_{2^j \Delta^*}$ denote a corkscrew point relative to $2^j \Delta^*$. Since $\w_L \in A_{\infty}(\mu)$, we have for every $p \in (1, \infty)$, 
\begin{align}\label{eq:kLkL}
\bigg(\fint_{2^j \Delta^*} (k_L^{X_j})^{p'} \, d\mu\bigg)^{\frac{1}{p'}} \lesssim \fint_{2^j \Delta^*} k_L^{X_j} \, d\mu, 
\end{align}
where $k_L^{X_j}=d\w_L^{X_j}/d\mu$ is the Radon-Nikodym derivative with respect to $\mu$. For every $j \ge 2$ and every cube $\Delta' \subset 2^j \Delta^* \setminus 2^{j-1}\Delta^*$, we use Lemma \ref{lem:wL-G} and Harnack's inequality to obtain 
\[
\w_L^X(\Delta') \lesssim 2^{-j \varrho} \w_L^{X_{j-2}}(\Delta') \simeq 2^{-j \varrho} \w_L^{X_j}(\Delta'), 
\]
and hence, for any $y \in 2^j \Delta^* \setminus 2^{j-1}\Delta^*$, 
\begin{align}\label{eq:KLX}
k_L^X(y) = \lim_{\substack{\Delta' \to x \\ \Delta' \subset 2^j \Delta^* \setminus 2^{j-1} \Delta^*}} \frac{\w_L^X(\Delta')}{\mu(\Delta')} 
\lesssim  2^{-j \varrho} \lim_{\substack{\Delta' \to x \\ \Delta' \subset 2^j \Delta^* \setminus 2^{j-1} \Delta^*}} \frac{\w_L^{X_j}(\Delta')}{\mu(\Delta')} 
=2^{-j \varrho} k_L^{X_j}(y). 
\end{align} 
This along with H\"{o}lder's inequality and \eqref{eq:kLkL} implies 
\begin{align}\label{eq:VX}
v(X) &\lesssim \sum_{j=1}^{\infty} 2^{-j \varrho} \int_{2^j \Delta^* \setminus 2^{j-1} \Delta^*} |f-f_{\Delta^*}| k_L^{X_j} \, d\mu 
\nonumber\\ 
& \lesssim \sum_{j=1}^{\infty} 2^{-j \varrho} \mu(2^j \Delta^*) 
\bigg(\fint_{2^j \Delta^*}  |f-f_{\Delta^*}|^p \, d\mu \bigg)^{\frac1p} 
\bigg(\fint_{2^j \Delta^*} (k_L^{X_j})^{p'} \, d\mu \bigg)^{\frac{1}{p'}}
\nonumber\\  
& \lesssim \sum_{j=1}^{\infty} 2^{-j \varrho} \w_L^{X_j} (2^j \Delta^*) \|f\|_{\BMO(\mu)} 
\lesssim \|f\|_{\BMO(\mu)}. 
\end{align} 
Now let $X \in 2B(x, r) \cap \Omega$. By Lemma \ref{lem:PDE} part \eqref{list:Holder} and \eqref{eq:VX}, we conclude 
\begin{align*}
v(X) & \lesssim \bigg(\frac{\delta(X)}{3r}\bigg)^{\varrho} 
\bigg(\bariint_{B(x, 3r) \cap \Omega} v^2 \, dm \bigg)^{\frac12}
\\
& \lesssim (\delta(X)/r)^{\varrho} \sup_{B(x, r) \cap \Omega} v 
\lesssim (\delta(X)/r)^{\varrho} \|f\|_{\BMO(\mu)}. 
\end{align*} 
This completes the proof. 
\end{proof}

\section{Proof of Theorem~\ref{thm:main}}\label{sec:main}
Given $p\in (1,\infty)$, we let $\eqref{list-Ai}_{p'}$ denote $\w_L \in RH_{p'}(\mu)$, and let $\eqref{list-Lp}_p$ denote that $L$ is $L^p(\mu)$-solvable. By $\rm{(\ref{list-CME}')}$, we mean that the Carleson measure estimate holds for all solutions of the form $u(X)=\w_L^X(S)$ with $S \subset \pom$ being an arbitrary Borel set. We first observe that $\eqref{list-CME}
\Longrightarrow {\rm (\ref{list-CME}')}$ trivially, and for any $p \in (1, \infty)$ the equivalence $\eqref{list-Ai}_{p'} \Longleftrightarrow \eqref{list-Lp}_p$ easily implies $\eqref{list-Ai} \iff \eqref{list-Lp}$. Also, since $\w_L \in RH_{p'}(\mu)$ implies $\w_L \in RH_{q'}(\mu)$ for all $q\ge p$, the equivalence $\eqref{list-Ai}_{p'} \Longleftrightarrow \eqref{list-Lp}_p$ yields $\eqref{list-Lp}_p \Longrightarrow \eqref{list-Lp}_q$ for all $q \ge p$. Thus, it suffices to show the following implications: 
\begin{align*}
&\eqref{list-Ai}_{p'} \Longrightarrow \eqref{list-Lp}_p \Longrightarrow \eqref{list-Ai}_{p'}, \quad 
\eqref{list-Ai} \Longrightarrow \eqref{list-CME}
\Longrightarrow {\rm (\ref{list-CME}')} \Longrightarrow \eqref{list-Ai}, 
\\
&\text{and }\quad \eqref{list-Ai} \Longrightarrow \eqref{list-SN} \Longrightarrow  \eqref{list-Ai} 
\Longrightarrow \eqref{list-BMO} \Longrightarrow {\rm (\ref{list-CME}')}. 
\end{align*}

\subsection{Proof of $\eqref{list-Ai}_{p'} \Longrightarrow \eqref{list-Lp}_p$}\label{sec:Lp}
Let $u$ be the associated elliptic measure $L$-solution as in \eqref{eq:u-sol}. Assume $\w_L \in RH_{p'}(\mu)$. Then there exists $s>1$ such that $\w_L\in RH_{sp'} (\mu)$ (cf. \cite{CF, GR}), which in turn implies $\w_L \in RH_{sp'}(\Delta_0, \mu)$ uniformly for any surface ball $\Delta_0$. Therefore, using H\"{o}lder's inequality and recalling that $k_L^X=d\w_L^X/d\mu$, we get for any surface ball $\Delta \subset \Delta_0$, 
\begin{align}\label{eq:ff}
\fint_{\Delta} |f| \, d\w_L^{X_{\Delta_0}} 
& \simeq \frac{\mu(\Delta)}{\w_L^{X_{\Delta_0}}(\Delta)} 
\fint_{\Delta} |f| k_L^{X_{\Delta_0}}  \,d\mu 
\nonumber\\
& \leq \frac{\mu(\Delta)}{\w_L^{X_{\Delta_0}}(\Delta)} 
\bigg(\fint_{\Delta} (k_L^{X_{\Delta_0}})^{sp'} d\mu \bigg)^{\frac{1}{sp'}} 
\bigg(\fint_{\Delta} |f|^{(sp')'} d\mu \bigg)^{\frac{1}{(sp')'}}  
\nonumber\\
& \lesssim \frac{\mu(\Delta)}{\w_L^{X_{\Delta_0}}(\Delta)} 
\bigg(\fint_{\Delta} k_L^{X_{\Delta_0}} d\mu\bigg) 
\bigg(\fint_{\Delta} |f|^{(sp')'} d\mu \bigg)^{\frac{1}{(sp')'}} 
\nonumber\\
&= \bigg(\fint_{\Delta} |f|^{(sp')'} \,d\mu \bigg)^{\frac{1}{(sp')'}}. 
\end{align}

We may assume that $f$ has compact support. Pick a large surface ball $\Delta_0$ with radius $r_0$ such that $\supp f \subset \Delta_0$. Recall $\F_{\Delta_0}$ defined in Lemma \ref{lem:NrNQ}. For any $Q \in \F_{\Delta_0}$, we pick $z_Q \in Q \cap 4A_0 \Delta_0$ and then for any $z \in \Delta_0$, 
\[
|z-x_Q| \le |z-x_0| + |x_0-z_Q| + |z_Q-x_Q| 
\le r_0 + 4A_0 r_0 + A_0 \ell(Q) < 2A_0 \ell(Q). 
\] 
This gives that $\Delta_0 \subset 2\widetilde{\Delta}_Q$ for any $Q \in \F_{\Delta_0}$. Applying Lemmas \ref{lem:NQuf} and \ref{lem:NrNQ}, and \eqref{eq:ff}, we have for any $\alpha>0$, 
\begin{align}\label{eq:local-Nuf}
\|\N^{\alpha}_{r_0} u\|_{L^p(\Delta_0, \mu)}^p 
&\lesssim \bigg\|\sup_{0<r \leq c_{\alpha} r_0}\fint_{ \Delta(\cdot,r)}  
\sup_{Q \in \F_{\Delta_0}} \N_Q u \, d\mu \bigg\|_{L^p(\Delta_0, \mu)}^p 
\nonumber\\
&\lesssim \Big\|\sup_{Q \in \F_{\Delta_0}} \N_Q u \Big\|_{L^p(\Delta_0, \mu)}^p 
\lesssim \sup_{Q \in \F_{\Delta_0}}  \| \N_Q u\|_{L^p(\Delta_0, \mu)}^p  
\nonumber\\
&\lesssim \sup_{Q \in \F_{\Delta_0}} \int_{\Delta_0} \bigg(
\sup_{\substack{Q' \ni x \\ 0 < \ell(Q') < 4A_0 \ell(Q)}} \fint_{Q'} |f|^{(sp')'} d\mu \bigg)^{\frac{p}{(sp')'}} d\mu(x)
\nonumber\\
&\lesssim \int_{\Delta_0} |f|^p d\mu, 
\end{align}
where we have used that $\# \F_{\Delta_0} \lesssim 1$ and the boundedness of the local Hardy-Littlewood maximal function in the second term on $L^{\frac{p}{(sp')'}}(\Delta_0, \mu)$, which follows from $p>(sp')'$ and the doubling property of $\mu$. Observing that the implicit constants are independent of $r_0$ and letting $r_0 \to \infty$, we conclude 
\[
\|\N^{\alpha}u\|_{L^p(\pom, \mu)} \lesssim \|f\|_{L^p(\pom, \mu)}. 
\]
This completes the proof of $\mathrm{(b)}_p$. 
\qed

\subsection{Proof of $\eqref{list-Lp}_p \Longrightarrow \eqref{list-Ai}_{p'}$}
With Lemmas \ref{lem:wL-G}, \ref{lem:AANN}, and \ref{lem:fNu} in hand, the proof is a slight modification of \cite[Section 4.2]{CDMT}. Fix $p \in (1,\infty)$ and assume that the Dirichlet problem \eqref{eq:D} for $L$ is solvable in $L^p(\mu)$. That is, for some fixed $\alpha_0$, $\|\N^{\alpha_0}u\|_{L^p(\pom, \mu)} \lesssim \|f\|_{L^p(\pom, \mu)}$ holds for $u$ as in \eqref{eq:u-sol} for any $f \in \mathscr{C}(\pom)$. Using this and Lemma \ref{lem:AANN}, we have for any $\alpha>0$ and $r>0$, 
\begin{equation}\label{eq:bpap}
\|\N^{\alpha}_r u\|_{L^p(\pom, \mu)}
\le \|\N^{\alpha} u\|_{L^p(\pom, \mu)}
\lesssim_{\alpha, \alpha_0} \|\N^{\alpha_0} u\|_{L^p(\pom, \mu)}
\lesssim_{\alpha_0} \|f\|_{L^p(\Delta_0, \mu)},
\end{equation}
for $u$ as in \eqref{eq:u-sol} with $f \in \mathscr{C}(\pom)$ with $\supp f \subset \Delta_0$ and for any surface ball $\Delta_0 \subset \pom$. 

To proceed we fix $\Delta_0:=\Delta(x_0, r_0)$ with $x_0 \in \pom$ and $0<r_0<\diam(\pom)$ such that $\mu(\Delta_0)>0$. Let $F \subset \Delta_0$ be a Borel set. Since $\w_L^{X_{\Delta_0}}$ and $\mu$ are Borel regular (see \cite[p.269, Proposition~1.3]{SS}), for each $\varepsilon > 0$, there exist compact set $K$ and an open set $U$ such that $K \subset F \subset U \subset \Delta_0$ satisfying
\begin{equation}\label{eq:UK}
 \w_L^{X_{\Delta_0}}(U \backslash K) + \mu(U \setminus K) < \varepsilon.
\end{equation}
Taking account of Urysohn's lemma, one can construct $f_F \in \mathscr{C}_c(\pom)$ such that $\mathbf{1}_K \leq f_F \leq \mathbf{1}_U$. Then, \eqref{eq:UK}, Lemma \ref{lem:fNu} and \eqref{eq:bpap} yield
\begin{align*}
\omega_L^{X_{\Delta_0}}(F)
&< \varepsilon + \w_L^{X_{\Delta_0}}(K)
\leq \varepsilon + \int_{\pom} f_F \, d \w_L^{X_{\Delta_0}}
\\
& \lesssim \varepsilon + \mu(\Delta_0)^{-\frac1p} \|\N^{\alpha}_{r_0} u\|_{L^p(\Delta_0, \mu)}  
\le \varepsilon + C_{\alpha} \mu(\Delta_0)^{-\frac1p} \|f_F\|_{L^p(\Delta_0, \mu)}
\\
&\lesssim \varepsilon +  C_{\alpha} \mu(U)^{\frac1p} \mu(\Delta_0)^{-\frac1p}
< \varepsilon +  C_{\alpha}(\mu(F) + \varepsilon)^{\frac1p} \mu(\Delta_0)^{-\frac1p}.
\end{align*}
Letting $\varepsilon \to 0+$, we obtain that $\w_L^{X_{\Delta_0}}(F) \lesssim \mu(F)^{\frac1p} \mu(\Delta_0)^{-\frac1p}$, and hence, $\w_L^{X_{\Delta_0}} \ll \mu$ in $\Delta_0$. Note that $\pom$ can be covered by a family of surface balls. These and Harnack's inequality imply that $\w_L \ll \mu$ in $\pom$. Denote $k_L^X :=d\w_L^X/d\mu \in L_{\loc}^1(\pom, \mu)$ which is well-defined $\mu$-a.e. in $\pom$. Thus, for every $f \in \mathscr{C}(\pom)$ with $\supp f \subset \Delta_0$, by Lemma \ref{lem:fNu} and \eqref{eq:bpap}, we obtain 
\begin{align*}
\bigg|\int_{\Delta_0} f\, k_L^{X_{\Delta_0}}\,d\mu \bigg|
=\bigg|\int_{\Delta_0} f\,d\w_L^{X_{\Delta_0}} \bigg|
\lesssim \mu(\Delta_0)^{-\frac1p} \|\N^{\alpha}_r u\|_{L^p(\Delta_0, \mu)}
\lesssim \mu(\Delta_0)^{-\frac1p} \|f\|_{L^p(\Delta_0, \mu)}. 
\end{align*}
This along with Lemma \ref{lem:wL-G} part \eqref{list-1} gives 
\begin{align*}
\bigg(\fint_{\Delta_0} (k_L^{X_{\Delta_0}})^{p'} \,d\mu \bigg)^{\frac{1}{p'}} 
\lesssim \frac{1}{\mu(\Delta_0)} 
\simeq \frac{\w_L^{X_{\Delta_0}}(\Delta_0)}{\mu(\Delta_0)} 
= \fint_{\Delta_0} k_L^{X_{\Delta_0}} \,d\mu. 
\end{align*}
Hence, $\w_L\in RH_{p'}(\mu)$. 
\qed

\subsection{Proof of $\eqref{list-Ai} \Longrightarrow \eqref{list-CME}$}
We are going to utilize Lemma \ref{lem:disCarleson} to reduce Carleson measure estimates to the discrete setting. For this purpose, we follow the strategy in \cite[p. 7925]{CHMT}. Assume that $\w_L \in A_{\infty}(\mu)$. Let $u\in W_r(\Omega) \cap L^\infty(\Omega)$ satisfy $Lu=0$ in the weak sense in $\Omega$. By homogeneity, we may assume that $\|u\|_{L^{\infty}(\Omega)}=1$. For any $Q \in \D$, we set 
\begin{align*}
\alpha_Q := \iint_{U_Q} |\nabla u(X)|^2 \, \delta(X) \rho(X)^{-1}\, dm(X),  
\end{align*}
where the function $\rho$ is given in \eqref{RHO}. Recall the definition of $\m_{\alpha}$ in \eqref{eq:ma}. It suffices to prove $\m_{\alpha}$ is a discrete Carleson measure, that is, $\|\m_{\alpha}\|_{\mathcal{C}(\pom)}<\infty$. In view of Lemma \ref{lem:disCarleson}, it is reduced to showing that for every $Q_0 \in \D$ there exists a pairwise disjoint family $\F_{Q_0} = \{Q_j\}_j \subset \D_{Q_0} \setminus \{Q_0\}$ such that 
\begin{align}\label{eq:maDF}
\mu\bigg(Q_0 \setminus \bigcup_{Q_j \in \F_{Q_0}}Q_j \bigg) \gtrsim \sigma(Q_0) 
\quad \text{ and } \quad \m_{\alpha}(\D_{\F_{Q_0}, Q_0}) \lesssim \mu(Q_0).   
\end{align}

By Bourgain's estimate and Harnack's inequality, there exists $C_0 \ge 1$ such that $\w_L^{X_{Q_0}}(Q_0) \ge C_0^{-1}$. If we define 
\begin{align}\label{eq:wG}
\w := C_0 \mu(Q_0) \w_L^{X_{Q_0}} 
\quad\text{ and }\quad 
\G := C_0 \mu(Q_0) G_L(X_{Q_0}, \cdot), 
\end{align}
then 
\[
1 \le \frac{\w(Q_0)}{\mu(Q_0)} \le C_0. 
\]
Since $\w_L \in A_{\infty}(\mu)$, we have $\w \ll \mu$ and for some $1<q<\infty$, 
\begin{equation*}
\bigg(\fint_{Q_0} k(y)^q \, d\mu(y) \bigg)^{\frac1q} \lesssim 1, \quad\text{ where } k=d\w/d\mu. 
\end{equation*}
Then, by H\"{o}lder's inequality, we have for Borel set $F \subset Q_0$, 
\begin{align*}
\frac{\w(F)}{\mu(Q_0)} 
=\fint_{Q_0} \mathbf{1}_F \, k\, d\mu 
\le \bigg(\fint_{Q_0} (\mathbf{1}_F)^{q'} d\mu \bigg)^{\frac{1}{q'}} 
\bigg(\fint_{Q_0} k^q \, d\mu \bigg)^{\frac1q} 
\lesssim \bigg(\frac{\mu(F)}{\mu(Q_0)}\bigg)^{\frac{1}{q'}}. 
\end{align*}
This and Lemma \ref{lem:FQj} yield a pairwise disjoint family $\F_{Q_0}=\{Q_j\} \subset \D_{Q_0} \setminus\{Q_0\}$ such that 
\begin{align}\label{eq:FF}
\mu\bigg(Q_0 \setminus \bigcup_{Q_j \in \F_{Q_0}}Q_j \bigg) \gtrsim \mu(Q_0) 
\quad \text{ and } \quad \frac{\w(Q)}{\mu(Q)} \simeq 1, \qquad\forall Q \in \D_{\F_{Q_0}, Q_0}. 
\end{align}
It follows from Lemma \ref{lem:wL-G} and \eqref{eq:FF} that 
\begin{align}\label{eq:GG}
1 \simeq \frac{\w(\Delta_Q)}{\mu(Q)} \simeq \frac{m(B_Q)}{\ell(Q)^2 \mu(Q)} \G(X_Q) 
\simeq \delta(X)^{-1} \rho(X) \G(X), \qquad\forall X \in U_Q. 
\end{align}
Now using \eqref{eq:wG}, \eqref{eq:GG} and Lemma \ref{lem:uGNQ}, we conclude that 
\begin{align}\label{eq:mFF}
\m_{\alpha}(\D_{\F_{Q_0}, Q_0}) 
& = \sum_{Q \in \D_{\F_{Q_0}, Q_0}} \iint_{U_Q} |\nabla u(X)|^2 \delta(X) \rho(X)^{-1} \, dm(X)
\nonumber\\
&\simeq \sum_{Q \in \D_{\F_{Q_0}, Q_0}} \iint_{U_Q} |\nabla u(X)|^2 \G(X) \, dX
\nonumber\\
&\lesssim \iint_{\Omega_{\F_{Q_0}, Q_0}} |\nabla u(X)|^2 \G(X) \, dX 
\lesssim \mu(Q_0), 
\end{align}
where we used the bounded overlap of the family $\{U_Q\}_{Q \in \D}$. Consequently, \eqref{eq:maDF} follows from \eqref{eq:FF} and \eqref{eq:mFF}. 

\subsection{Proof of ${\rm (\ref{list-CME}')} \Longrightarrow \eqref{list-Ai}$}\label{sec:CMEAi}
We follow the scheme from \cite[p. 7915]{CHMT}, which extended \cite[Theorem 3.1]{KKiPT} in the upper half-space to the case of 1-sided CAD. Assume that $\rm{(\ref{list-CME}')}$ holds, that is, the Carleson measure estimate is true for every solution in the form $u(X)=\w_L^X(S)$ with $S \subset \pom$ being an arbitrary Borel set. Fix $\alpha \in (0, 1)$ and $Q_0 \in \D$, and take a Borel set $F \subset Q_0$ such that $\w_L^{X_{Q_0}}(F) < \beta \w_L^{X_{Q_0}}(Q_0)$, where $\beta \in (0, 1)$ will be chosen later. Considering Lemma \ref{lem:sq-lower} and picking $\beta \in (0, \beta_0)$, we can find a Borel set $S \subset Q_0$ such that $u(X)=\w_L^X(S)$ satisfies 
\begin{align}\label{eq:FSQ}
\inf_{x \in F} S_{Q_0}^{\eta} u(x) \geq C_{\eta}^{-1} (\log \beta^{-1})^{\frac12}. 
\end{align}
To continue, for each $Y \in \Omega$, let $\widehat{y} \in \pom$ be such that $\delta(Y) = |Y-\widehat{y}|$. Observe that 
\begin{align}\label{eq:CY}
\{x \in Q_0: Y \in \Gamma_{Q_0}^{\eta}(x)\} \subset \Delta(\widehat{y}, C'_{\eta} \delta(Y)). 
\end{align}
Indeed, let $x \in Q_0$ and $Y \in \Gamma_{Q_0}^{\eta}(x)$. By definition, there exist $Q \in \D_{Q_0}$ and $Q' \in \D_Q$ with $\ell(Q')>\eta^3 \ell(Q)$ such that $Y \in U_{Q'}$ and $x \in Q$. Then $Y \in I^*$ for some $I \in \W_{Q'}^*$, and hence, 
\begin{align*}
\delta(Y) \simeq \ell(I) \simeq \ell(Q')\le \ell(Q)<\eta^{-3}\ell(Q').
\end{align*}
This further implies that 
\begin{align}\label{eq:Ceta}
|x-\widehat{y}| \le \diam(Q) + \dist(Y, Q') + |Y-\widehat{y}| 
\lesssim \ell(Q) + \ell(Q') + \delta(Y)
< C'_{\eta} \delta(Y). 
\end{align}
Additionally, it follows from \eqref{eq:Ceta} that 
\begin{align}\label{eq:Byy}
B(\widehat{y}, \delta(Y)) \subset B(x, C''_{\eta} \delta(Y)), \qquad\forall x \in Q_0 \text{ and } Y \in \Gamma_{Q_0}^{\eta}(x). 
\end{align}
Collecting \eqref{eq:FSQ}, \eqref{eq:CY} and \eqref{eq:Byy}, we deduce that 
\begin{align*}
&C_{\eta}^{-2} (\log \beta^{-1}) \mu(F)  \leq \int_F S_{Q_0}^{\eta} u(x)^2 d\mu(x)
\\
& \leq \int_{Q_0} \bigg( \iint_{\Gamma_{Q_0}^{\eta}(x)} 
|\nabla u(Y)|^2 \delta(Y)^2 \, \frac{dm(Y)}{m(B(x, \delta(Y)))}  \bigg) d\mu(x) 
\\
&\le \iint_{B_{Q_0}^* \cap \Omega} |\nabla u(Y)|^2 \delta(Y)^2 
\bigg( \int_{Q_0}  \mathbf{1}_{\Gamma_{Q_0}^{\eta}(x)}(Y) \frac{d\mu(x)}{m(B(x, \delta(Y)))} \bigg) dm(Y)
\\
&\lesssim_{\eta} \iint_{B_{Q_0}^* \cap \Omega} |\nabla u(Y)|^2 \delta(Y)^2 
\frac{\mu(\Delta(\widehat{y}, \delta(Y)))}{m(B(\widehat{y}, \delta(Y)))}  dm(Y)
\\
&= \iint_{B_{Q_0}^* \cap \Omega} |\nabla u(Y)|^2 \delta(Y) \rho(Y)^{-1} dm(Y)
\lesssim \mu(Q_0), 
\end{align*}
where the function $\rho$ is given in \eqref{RHO}. Then choosing $\beta \in (0, \beta_0)$ small enough, we have 
\begin{align*}
\mu(F) \leq C'''_{\eta} (\log \beta^{-1})^{-1} \, \mu(Q_0) < \alpha\, \mu(Q_0). 
\end{align*}
Thus, we have proved that for any $\alpha \in (0, 1)$, there exists $\beta \in (0,1)$ such that for every $Q_0 \in \D$ and every Borel set $F \subset Q_0$,  
\begin{equation*}
\w_L^{X_{Q_0}}(F) < \beta \w_L^{X_{Q_0}}(Q_0) 
\quad \Longrightarrow \quad \mu(F) < \alpha \mu(Q_0). 
\end{equation*}  
This and Lemma \ref{lem:Ai-Ai} immediately imply $\w_L \in A_{\infty}(\mu)$. 
\qed

\subsection{Proof of $\eqref{list-Ai} \Longrightarrow \eqref{list-SN}$}\label{sec:SNN}
The proof is based on the standard good-$\lambda$ argument of \cite{DKP} (see also \cite{CDMT, MZ}). Assume that $\w_L \in A_{\infty}(\mu)$. Let $q \in (0, \infty)$, and $u \in W_r(\Omega)$ be a weak solution of $Lu=0$ in $\Omega$. Assume that $\|\N^{\alpha}u\|_{L^q(\pom, \mu)}<\infty$, otherwise, there is nothing to prove. 

We begin with the case $\diam(\pom)=\infty$. First assume that $\|\S^{\alpha}u\|_{L^q(\pom, \mu)}<\infty$. Under this assumption, to get \eqref{list-SN}, it suffices to show the following good-$\lambda$ inequality: for any $\varepsilon>0$, there exists $\delta=\delta(\varepsilon)>0$ such that for all $\lambda>0$, 
\begin{equation}\label{eq:global}
\mu \big(\big\{x \in \pom: \S^{\alpha} u(x) > 2\lambda, \N^{\beta} u(x) \leq \delta \lambda  \big\}\big)
\leq \varepsilon \mu(\{x \in \pom: \S^{\alpha} u(x) > \lambda\}), 
\end{equation}
where $0<\alpha<\beta<\infty$. Indeed, it follows from \eqref{eq:global} that 
\begin{align}\label{eq:SSNu}
\|\S^{\alpha}u\|_{L^q(\pom, \mu)}^q 
&=q \, 2^q \int_{0}^\infty \lambda^{q-1} 
\mu(\{x \in \pom: \S^{\alpha}u(x) > 2\lambda \}) d\lambda 
\nonumber \\
&\leq \varepsilon q \, 2^q \int_{0}^{\infty} \lambda^{q-1} 
\mu\big(\{x \in \pom: \S^{\alpha}u(x) > \lambda \}\big) d\lambda
\nonumber \\
&\quad+ q \, 2^q \int_{0}^{\infty} \lambda^{q-1} 
\mu\big(\{x \in \pom: \N^{\beta}u(x) > \delta \lambda \}\big) d\lambda
\nonumber \\
& = \varepsilon \, 2^q  \|\S^{\alpha}u\|_{L^q(\pom, \mu)}^q  
+ 2^q \delta^{-q} \| \N^{\beta}u \|_{L^q(\pom, \mu)}^q
\nonumber \\
&<\frac12 \|\S^{\alpha}u\|_{L^q(\pom, \mu)}^q  
+ C_{q, \delta} \| \N^{\beta}u \|_{L^q(\pom, \mu)}^q, 
\end{align}
provided $\varepsilon \in (0, 2^{-q-1})$. This and \eqref{eq:NFNF} imply 
\begin{align}\label{eq:SN-Lq}
\|\S^{\alpha}u\|_{L^q(\pom, \mu)}  \le C_q \| \N^{\beta}u \|_{L^q(\pom, \mu)} 
\le C_{q, \alpha, \beta} \| \N^{\alpha}u \|_{L^q(\pom, \mu)}. 
\end{align}

Let us show \eqref{eq:global}. Given $\lambda>0$ and $\alpha_1 \in (\alpha, \beta)$, we set $E_{\lambda}:=\{x \in \pom: \S^{\alpha_1} u(x) > \lambda/\sqrt{c_3}\}$, where $c_3$ is the constant in \eqref{H3}. We may assume that $E_{\lambda}$ is not empty, otherwise \eqref{eq:global} is trivial. Considering \eqref{eq:AFAF}, we have $\|\S^{\beta}u\|_{L^q(\pom, \mu)} \simeq \|\S^{\alpha}u\|_{L^q(\pom, \mu)}<\infty$, and so $\S^{\beta}u<\infty$ $\mu$-a.e. This along with Lemma \ref{lem:open} leads that $E_{\lambda}$ is open.  Besides, Chebyshev's inequality gives 
\begin{align}\label{eq:Elam}
\mu(E_{\lambda}) 
\le (\lambda/\sqrt{c_3})^{-q} \|\S^{\alpha_1}u\|_{L^q(\pom, \mu)}^q 
< \infty.  
\end{align}
By \cite[Proposition 2.1]{CKP}, we see that $\diam(\pom)=\infty$ if and only if $\mu(\pom)=\infty$. This together with \eqref{eq:Elam} gives that $E_{\lambda} \subsetneq \pom$. Hence, we can find a family of maximal dyadic cubes $\F:=\{Q_j\}$ such that $E_{\lambda}=\bigcup_{Q_j \in \F} Q_j$. Observe that $\{x \in \pom: \S^{\alpha} u(x) > 2\lambda\} \subset E_{\lambda}$. Therefore, to obtain \eqref{eq:global}, it is enough to prove 
\begin{equation}\label{eq:local-mu}
\mu \big(\big\{x \in Q: \S^{\alpha} u(x) > 2\lambda, \N^{\beta} u(x) \leq \delta \lambda  \big\}\big)
\leq \varepsilon \mu(Q), \quad \forall \, Q \in \F. 
\end{equation}
By maximality, we see that $\widehat{Q} \not\subset E_{\lambda}$, where $\widehat{Q}$ is the dyadic father of $Q$. Then there exists $x_1 \in \widehat{Q}$ such that $\S^{\alpha_1}u(x_1)<\lambda/\sqrt{c_3}$. Hence, noting that $\widehat{Q} \subset C_0 \Delta_Q$ and invoking $\w_L \in A_{\infty}(\mu)$, Lemmas \ref{lem:Ainfity} and \ref{lem:good-local}, we derive \eqref{eq:local-mu} immediately.

Next, we are going to remove the priori assumption. To this end, we introduce the notation
\begin{align*}
\Gamma^{\alpha, k}(x) &:= \{Y \in \Gamma^{\alpha}(x): 2^{-k} \le \delta(Y) \le 2^k\}, 
\\
I(x, k) &:= \{Y \in \Omega: |Y-x|<2^{k+1}, 2^k \le \delta(Y) < 2^{k+1}\}, 
\\
J(x, k) &:= \{Y \in \Omega: |Y-x|<2^{k+2}, 2^{k-1} \le \delta(Y) < 2^{k+2}\}. 
\end{align*}
Then, one can check that for any $k \in \Z$ and $K \in \mathbb{N}_+$, 
\begin{align}\label{eq:IJG}
\Gamma^{\frac14, K}(x) \subset \bigcup_{k=-K-1}^K I(x, k) \quad\text{and}\quad 
J(x,k) \subset \Gamma^7(x). 
\end{align}
Observe that $|u|$ is a subsolution of $Lu=0$ in $\Omega$. Then by \eqref{eq:IJG} and Lemma \ref{lem:PDE} part \eqref{list:Moser}, 
\begin{align*}
\sup_{J(x, k)} |u| \lesssim \bariint_{J(x, k)} |u| \, dm \le \N^7 u(x),  
\end{align*}
which together with Lemma \ref{lem:PDE} part \eqref{list:Cacci} gives 
\begin{align*}
\iint_{I(x, k)} |\nabla u|^2 \, dm 
\lesssim 2^{-2k} \iint_{J(x, k)} |u|^2 \, dm
\lesssim 2^{-2k} m(B(x, 2^{k+2})) \N^7 u(x)^2. 
\end{align*}
This and the doubling property of $\mu$ imply 
\begin{equation}\label{eq:IN}
\iint_{I(x, k)} |\nabla u|^2 \delta(Y)^2 \, \frac{dm(Y)}{m(B(x, \delta(Y)))} 
\simeq \frac{2^{2k}}{m(B(x, 2^k))} \iint_{I(x, k)} |\nabla u|^2 \, dm 
\lesssim \N^7 u(x)^2. 
\end{equation}
Accordingly, we obtain from \eqref{eq:IJG} and \eqref{eq:IN} that 
\begin{align}\label{eq:Skuk}
\S^{\frac14, K}u(x)^2 &:= \iint_{\Gamma^{\frac14, K}(x)} |\nabla u(Y)|^2 \delta(Y)^2\, \frac{dm(Y)}{m(B(x, \delta(Y)))}  
\nonumber\\
&\le \sum_{k=-K-1}^K \iint_{I(x, k)} |\nabla u(Y)|^2 \delta(Y)^2 \, \frac{dm(Y)}{m(B(x, \delta(Y)))}
\lesssim K \N^7 u(x)^2, 
\end{align}
where the implicit constants are independent of $K$. On the other hand, in view of \eqref{eq:NFNF}, 
\begin{align*} 
\|\N^{\frac14}u\|_{L^q(\pom, \mu)} \le \|\N^7 u\|_{L^q(\pom, \mu)} \lesssim \|\N^{\alpha} u\|_{L^q(\pom, \mu)} < \infty.
\end{align*} 
Considering \eqref{eq:Skuk}, we have 
\begin{align*}
\|\S^{\frac14, K}u\|_{L^q(\pom, \mu)}<C_K<\infty \quad\text{and}\quad 
\sup_K \|\N^{\frac14, K}u\|_{L^q(\pom, \mu)}<\infty. 
\end{align*}
Then applying the previous arguments for $\S^{\frac14, K}$ and $\N^{\frac14, K}$, we get 
\begin{align*}
\|\S^{\frac14, K}u\|_{L^q(\pom, \mu)} 
\le C \|\N^{\frac14, K}u\|_{L^q(\pom, \mu)} 
\le C \|\N^{\frac14}u\|_{L^q(\pom, \mu)}. 
\end{align*}
where the constant $C$ is independent of $K$. Hence, letting $K \to \infty$ and invoking Lemma \ref{lem:AANN}, we conclude that 
\begin{align*}
\|\S^{\alpha}u\|_{L^q(\pom, \mu)} 
\simeq \|\S^{\frac14}u\|_{L^q(\pom, \mu)} 
\lesssim \|\N^{\frac14}u\|_{L^q(\pom, \mu)}
\simeq \|\N^{\alpha}u\|_{L^q(\pom, \mu)}. 
\end{align*}
This completes the proof of the case $\diam(\partial \Omega)=\infty$.   

It remains to handle the case $\diam(\partial \Omega)<\infty$, for which we adapt the idea in \cite[Section 4.1]{HMM2} to our situation. Then $\partial \Omega$ itself is the largest cube in $\mathbb{D}$, say $\partial \Omega = Q_0$. Recall the traditional cone $\Gamma^{\alpha}$ and the dyadic cone $\Gamma_{\mathbb{D}}$. Note that given $\alpha_1>0$, there exist $\vartheta = \vartheta(\alpha_1)$ (cf. \eqref{eq:WQ}) sufficiently large and $\alpha_2 = \alpha_2(\vartheta)$ sufficiently large so that 
\begin{align}\label{GAA}
\Gamma^{\alpha_1}(x) \subset \Gamma_{\mathbb{D}}(x)  \subset \Gamma^{\alpha_2}(x), 
\quad\text{ for all } x \in \partial \Omega. 
\end{align}
Recall the dyadic square function $\mathcal{S}_{Q_0}$ and the dyadic non-tangential maximal function $\mathcal{N}_{Q_0}$ in Definition \ref{def:SN}. As argued in the preceding case, in light of Lemma \ref{lem:AANN}, \eqref{GAA}, and that $\partial \Omega = Q_0$, we are deduced to proving the dyadic good-$\lambda$ inequality: for any $\varepsilon>0$, there exists $\delta=\delta(\varepsilon)>0$ such that for all $\lambda>0$, 
\begin{equation}\label{LXQ}
\mathcal{I}_{\lambda}
:= \omega_L^{X_{Q_0}} \big(\big\{x \in F: \S_{Q_0} u(x) > 2\lambda \big\}\big)
\lesssim \varepsilon \, \omega_L^{X_{Q_0}}(\{x \in \pom: \S_{Q_0} u(x) > \lambda\}), 
\end{equation}
where $F := \{x \in Q_0: \N_{Q_0} u(x) \leq \delta \lambda\}$.

Much as in \cite[Section 4.1]{HMM2}, we arrive at   
\begin{align*}
\mathcal{I}_{\lambda}
\le \lambda^{-2} \sum_{Q_j \in \mathcal{F}} \int_{F \cap Q_j} \mathcal{S}_{Q_j} u(x)^2 \, d\omega_L^{X_{Q_0}}(x),  
\end{align*}
where $\mathcal{F} := \{Q_j\} \subset \mathbb{D}_{Q_0}$ is a disjoint family satisfying $\bigcup_{Q_j \in \mathcal{F}} Q_j = \{x \in Q_0: \mathcal{S}_{Q_0} u(x) > \lambda\}$. Let $\mathcal{F}_j := \{Q_j^k\} \subset \D_{Q_j} \setminus \{Q_j\}$ be the collection of maximal dyadic cubes such that $Q_j \setminus F = \{x \in Q_j: \N_{Q_0}u(x) > \delta \lambda\} = \bigcup_{Q_j^k \in \F_j} Q_j^k$. Analogously to the proof of Lemmas \ref{lem:good-local}, we obtain 
\begin{align*}
\mathcal{I}_{\lambda} 
&\le \lambda^{-2} \sum_{Q_j \in \mathcal{F}} \int_{F \cap Q_j} \mathcal{S}_{Q_j} u(x)^2 \, d\omega_L^{X_{Q_0}}(x) 
\\ 
&\lesssim  \lambda^{-2} \sum_{Q_j \in \mathcal{F}} 
\iint_{\Omega_{\mathcal{F}_j, Q_j}} |\nabla u(Y)|^2 G_L(X_{Q_0}, Y) \, dm(Y)
\\
&\lesssim \lambda^{-2} \sum_{Q_j \in \mathcal{F}} \w_L^{X_{Q_0}}(Q_j) \|\mathcal{N}_{Q_0} u\|_{L^{\infty}(F)}^2 
\\
&\le \delta^2 \, \omega_L^{X_{Q_0}}(\{x \in Q_0: \mathcal{S}_{Q_0} u(x) > \lambda\}). 
\end{align*}
Choosing $\delta := \varepsilon^{1/2}$, we have proved \eqref{LXQ}. 
\qed

\subsection{Proof of $\eqref{list-SN} \Longrightarrow \eqref{list-Ai}$}
Assume that $\S<\N$ estimate holds for some fixed $\alpha_0$ and $q=\frac{\log_2 c_3}{n-1}$, where the constant $c_3$ is the same as \eqref{H3}. Given a cube $Q_0 \in \D$ and any arbitrary Borel set $S \subset Q_0$, we denote 
\[
u(X) :=\w_L^X(S) \quad\text{ and }\quad v(X) :=\w_L^X(Q_0),\quad X \in \Omega.
\] 
Observe that both $u$ and $v$ are weak solutions of $Lu=0$ in $\Omega$, vanishing at $\infty$, $u \le v$, and $\|u\|_{L^{\infty}(\Omega)} \le \|v\|_{L^{\infty}(\Omega)} \le 1$. By Lemma \ref{lem:u-point} and that $\mu(2^j Q_0) \lesssim c_3^j \mu(Q_0)$, we have 
\begin{align*}
\|\N^{\alpha} v\|_{L^q(\pom, \mu)}^q 
&=\int_{2Q_0} \N^{\alpha} v(x)^q\, d\mu(x) + \sum_{j=1}^{\infty} \int_{2^{j+1}Q_0 \setminus 2^jQ_0} \N^{\alpha} v(x)^q\, d\mu(x)
\nonumber \\
&\lesssim \|v\|_{L^{\infty}(\Omega)}^q \bigg[\mu(2Q_0) + 
\sum_{j=1}^{\infty} \int_{2^{j+1}Q_0 \setminus 2^j Q_0} 
\bigg(\frac{\ell(Q_0)}{|x-x_{Q_0}|}\bigg)^{q(n-1+\vartheta)} d\mu(x)\bigg]
\nonumber \\
&\lesssim \mu(Q_0) \bigg[1 + \sum_{j=1}^{\infty} 2^{-j[q(n-1+\vartheta)-\log_2 c_3]} \bigg]
\lesssim  \mu(Q_0), 
\end{align*}
where we used $\vartheta>0$ and $q(n-1+\vartheta)>\log_2 c_3$ in the last step. Then, together with Lemma~\ref{lem:AANN} (applied to $F(Y)=|\nabla u(Y)| \delta(Y)$), this implies that for any $\alpha$ large enough, 
\begin{equation}\label{eq:SSN}
\|\S^{\alpha} u\|_{L^q(\pom, \mu)} 
\lesssim \|\S^{\alpha_0} u\|_{L^q(\pom, \mu)} 
\lesssim \|\N^{\alpha_0} u\|_{L^q(\pom, \mu)} 
\le \|\N^{\alpha_0} v\|_{L^q(\pom, \mu)}
\lesssim \mu(Q_0)^{\frac1q}. 
\end{equation}

Fix $\gamma \in (0,1)$ and $Q_0 \in \D$, and pick a Borel set $F \subset Q_0$ so that $\w_L^{X_{Q_0}}(F) \le \beta \w_L^{X_{Q_0}}(Q_0) $, where $\beta\in (0,1)$ is small enough to be chosen. Applying Lemma \ref{lem:sq-lower}, if we assume that $0<\beta<\beta_0$, then there exists a Borel set $S \subset Q_0$ such that $u(X)=\omega^X_L(S)$ satisfies 
\begin{equation}\label{eq:inF}
\inf_{x \in F} \S_{Q_0}^{\eta}u(x) \ge C_\eta^{-1} \big(\log(\beta^{-1})\big)^{\frac12}.
\end{equation}
Then, gathering \eqref{eq:SSN} and \eqref{eq:inF}, we obtain 
\begin{align*}
C_\eta^{-q} \log{(\beta^{-1})}^{\frac{q}{2}} \mu(F)
& \leq \int_F (\S_{Q_0}^{\eta}u)^q \, d\mu 
\leq \int_{Q_0} (\S^{\alpha}u)^q \, d\mu
\lesssim \mu(Q_0).
\end{align*}
Choosing $\beta$ small enough, we have 
\[
\mu(F) \le C_{\eta,q} \, \log{(\beta^{-1})}^{-\frac{q}{2}} \mu(Q_0) 
\le \gamma \mu(Q_0). 
\]
Therefore, we have shown that given $\gamma \in (0,1)$ there exists $\beta \in (0,1)$ such that for every $Q_0 \in \D$ and for every Borel set $F\subset Q_0$, 
\begin{equation*}
\w_L^{X_{Q_0}}(F) \le \beta \, \w_L^{X_{Q_0}}(Q_0) 
\quad\Longrightarrow\quad \mu(F) \le \gamma \, \mu(Q_0).
\end{equation*}
This and Lemma \ref{lem:Ai-Ai} imply $\w_L \in A_{\infty}(\mu)$. 
\qed

\subsection{Proof of $\eqref{list-Ai} \Longrightarrow \eqref{list-BMO}$}\label{sec:AiBMO}
Assume that \eqref{list-Ai} holds. We borrow the ideas from \cite{DKP, MZ}. Fix $f \in \mathscr{C}(\pom)$ and let $u$ be the associated solution of $L$ as in \eqref{eq:u-sol}. Fix a surface ball $\Delta=B \cap \pom$. Write $\Delta^*=8\Delta$ and $f_{\Delta^*}:=\fint_{\Delta^*} f\, d\mu$. Then, 
\begin{align*}
f &=(f-f_{\Delta^*}) \mathbf{1}_{\Delta^*} + (f-f_{\Delta^*}) \mathbf{1}_{\pom \setminus \Delta^*} + f_{\Delta^*}  
=: f_1 + f_2 +f_3.
\end{align*}
Hence, there holds 
\begin{align*}
u(X) = \int_{\pom} f \, d\w_L^X
=\int_{\pom} f_1 \, d\w_L^X + \int_{\pom} f_2 \, d\w_L^X + f_3 
&=:u_1(X) + u_2(X) + f_3. 
\end{align*}
Note that $u_1, u_2 \in \mathscr{C}(\pom)$. It suffices to show 
\begin{align}\label{eq:u12} 
\frac{1}{\mu(\Delta)} \iint_{B} |\nabla u_i(X)|^2 \delta(X) \, \rho(X)^{-1} \, dm(X)
\lesssim \|f\|^2_{\BMO(\mu)}, \quad i=1,2, 
\end{align}
where the function $\rho$ is given in \eqref{RHO} and the implicit constant is independent of $\Delta$. 

Let us observe that we have already proved that \eqref{list-Ai} implies both \eqref{list-Lp} and \eqref{list-SN} hold. Then, this and Lemma \ref{lem:CME-S} yield 
\begin{align}\label{eq:Car-u1}
\iint_{B} |\nabla u_1(X)|^2 \delta(X) \rho(X)^{-1} \, dm(X)
\lesssim \|\S^{\alpha} u_1\|_{L^2(\mu)}^2 
\lesssim \|\N^{\alpha} u_1\|_{L^2(\mu)}^2 
\lesssim \|f_1\|_{L^2(\mu)}^2 
\nonumber\\
= \int_{\Delta^*} |f-f_{\Delta^*}|^2\, d\mu
\lesssim \mu(\Delta^*) \|f\|_{\BMO(\mu)}^2
\lesssim \mu(\Delta) \|f\|_{\BMO(\mu)}^2,  
\end{align}
where we have used the John-Nirenberg's inequality (cf.~Remark \ref{rem:JN}). 

We next turn our attention to the estimate involving $u_2$. Let $\{I_k\}:=\{I \in \W: I \cap B \neq\emptyset\}$ and set $I^*_k=(1+\theta)I_k$ with $\theta \in (0, 1)$. Note that the family $\{I^*_k\}_k$ has bounded overlap and $\bigcup_k I^*_k \subset 2B \cap \Omega$ provided $\theta$ is sufficiently small. Also, there holds 
\[
\Delta(\widehat{x}, \delta(X)) \subset 3\Delta, \qquad 
I_k \subset B(\widehat{x}, c\ell(I_k)), \quad \forall X \in I_k, 
\]
and hence, 
\[
\rho(X)^{-1} \lesssim \frac{\ell(I_k) \mu(3 \Delta)}{m(B(\widehat{x}, \ell(I_k)))} 
\lesssim \frac{\ell(I_k) \mu(\Delta)}{m(I_k)}, \quad \forall X \in I_k. 
\]
In view of Caccioppoli inequality \eqref{list:Cacci} and Lemma \ref{lem:u-BMO}, we have 
\begin{align}\label{eq:Car-u2}
&\iint_{B \cap \Omega} |\nabla u_2(X)|^2 \delta(X) \rho(X)^{-1} \, dm(X)
\nonumber\\ 
&\le \sum_{k:\ell(I_k)<2r} \iint_{I_k} |\nabla u_2(X)|^2 \delta(X) \,  \rho(X)^{-1} \, dm(X)
\nonumber\\
&\lesssim \mu(\Delta) \sum_{k:\ell(I_k)<2r} \ell(I_k)^2 \bariint_{I_k} |\nabla u_2|^2 \, dm
\nonumber\\
&\lesssim \mu(\Delta) \sum_{k:\ell(I_k)<2r} \bariint_{I^*_k} |u_2|^2 \, dm
\nonumber\\
&\lesssim \mu(\Delta) \|f\|_{\BMO(\mu)}^2 \sum_{k:\ell(I_k)<2r} \bariint_{I^*_k} (\delta(X)/r)^{2\varrho} \, dm 
\nonumber\\
&\lesssim \mu(\Delta) \|f\|_{\BMO(\mu)}^2 \sum_{k:\ell(I_k)<2r} (\ell(I_k)/r)^{2\varrho} 
\nonumber\\
&\lesssim \mu(\Delta) \|f\|_{\BMO(\mu)}^2. 
\end{align}
Therefore, \eqref{eq:Car-u1} and \eqref{eq:Car-u2} give \eqref{eq:u12} as desired.  
\qed

\subsection{Proof of $\eqref{list-BMO} \Longrightarrow {\rm (\ref{list-CME}')}$}
Assume that \eqref{list-BMO} holds, that is, for every $f \in \mathscr{C}(\pom)$, 
\begin{equation}\label{eq:vxt}
\sup_{\substack{x \in \pom \\ 0<r<\diam(\pom)}} \frac{1}{\mu(B(x, r) \cap \pom)} 
\iint_{B(x, r) \cap \Omega} |\nabla v|^2 \, \delta \rho^{-1}\, dm \lesssim \|f\|_{\BMO(\mu)}^2,
\end{equation}
where $v(X):=\int_{\pom} f \, d\w_L^X$ for all $X \in \Omega$. Take an arbitrary Borel set $S \subset \pom$ and let $u(X) = \w_L^X(S)$, $X \in \Omega$. Fix $X_0 \in \Omega$. By the regularity of $\w_L^{X_0}$, for every $j\ge 1$, there exist a closed set $F_j \subset S$ and an open set $U_j \supset S$ such that $\w_L^{X_0}(U_j \setminus F_j)<j^{-1}$. Using Urysohn's lemma, one can construct $f_j \in \mathscr{C}(\pom)$ such that $\mathbf{1}_{F_j} \leq f_j \leq \mathbf{1}_{U_j}$. Define 
\[
v_j(X):=\int_{\pom} f_j \, d\w_L^X, \quad X \in \Omega.
\]
Note that $\sup_j \|f_j\|_{\BMO(\mu)} \le 2$, and $|\mathbf{1}_S(x)-f_j(x)| \le \mathbf{1}_{U_j \setminus F_j}(x)$ for every $x \in \pom$. Therefore, for every compact set $K \subset \Omega$ and for every $X \in K$, 
\[
|u(X)-v_j(X)| \le \int_{\pom} |\mathbf{1}_S -f_j | \,d\w_L^X
\le \w_L^X(U_j \setminus F_j)
\lesssim _{K,X_0} \w_L^{X_0}(U_j \setminus F_j)
< j^{-1}.
\]
where Harnack's inequality was used in the third inequality. This shows that $v_j \longrightarrow u$ uniformly on compacta in $\Omega$. In view of Caccioppoli's inequality, this implies $\nabla v_j \longrightarrow \nabla u$ in $L^2(K)$ for every compact set $K \subset \Omega$.

As a consequence, for every compact set $K \subset \Omega$,  we apply \eqref{eq:vxt} to $f_j$ yields 
\begin{align*}
&\frac{1}{\mu(B(x, r) \cap \pom)} \iint_{B(x, r) \cap K} |\nabla u|^2 \, \delta \rho^{-1}\, dm
\\ 
&= \lim_{j \to \infty} \frac{1}{\mu(Q)} \iint_{B(x, r) \cap K} |\nabla v_j(X)|^2 \, \delta \rho^{-1}\, dm
\\
&\lesssim \sup_j \|f_j\|_{\BMO(\mu)} \lesssim 1 = \|u\|_{L^{\infty}(\Omega)}^2. 
\end{align*}
where the last equality follows from the maximum principle. By the arbitrariness of $K$, we have proved that for any Borel set $S\subset \pom$, the solution $u(X)=\w_L^X(S)$, $X \in \Omega$, satisfies 
\begin{equation}\label{eq:Car-S}
\sup_{\substack{x \in \pom \\ 0<r<\diam(\pom)}} \frac{1}{\mu(B(x, r) \cap \pom)} 
\iint_{B(x, r) \cap \Omega} |\nabla u|^2 \delta \rho^{-1}\, dm
\lesssim \|u\|_{L^{\infty}(\Omega)}^2. 
\end{equation}
That is, ${\rm (\ref{list-CME}')}$ holds.  
\qed

\section{Proof of Theorem~\ref{thm:abs}}\label{sec:abs}

Observe that $\eqref{list:abs-2} \Longrightarrow \eqref{list:abs-3} \Longrightarrow \eqref{list:abs-4}$ are trivial. Note also that by Lemma \ref{lem:two-trunc}, it is obvious that $\eqref{list:abs-4} \Longrightarrow \eqref{list:abs-3}$. Hence, it remains to show $\eqref{list:abs-1} \Longrightarrow \eqref{list:abs-2}$ and $\eqref{list:abs-3} \Longrightarrow \eqref{list:abs-1}$. Although the proof is a routine application of the method in \cite{CDMT, CMO}, the function $\rho$ defined in \eqref{RHO} will appear in the transference from the distance function to the Green function (see \eqref{eq:w-sig-com}--\eqref{eq:saw-square}). For the sake of completeness and clearness, we include a detailed proof.

\subsection{Proof of $\eqref{list:abs-1} \Longrightarrow \eqref{list:abs-2}$}\label{sec:local}
Assume that $\mu \ll \w_L$. Fix an arbitrary $Q_0 \in \D_{k_0}$, where $k_0 \in \Z$. Let $X_0=X_{M_0 \Delta_{Q_0}}$ be a corkscrew point relative to $M_0 \Delta_{Q_0}$, where $M_0$ is large enough so that $X_0 \not\in 4B^*_{Q_0}$. By Lemma~\ref{lem:wL-G} part \eqref{list-1} and Harnack's inequality, there exists $C_0>1$ such that
\begin{equation}\label{eq:lowerbdd}
\w_L^{X_0}(Q_0) \geq C_0^{-1}.
\end{equation}
Write 
\begin{align}\label{eq:normalize} 
\w:= C_0 \mu(Q_0) \w_L^{X_0} \quad\text{and}\quad \G:= C_0 \mu(Q_0) G_L(X_0, \cdot).
\end{align}
By assumption and \eqref{eq:lowerbdd},  we have $\mu \ll \w$ and
\begin{equation}\label{eq:QC}
1 \leq \frac{\w(Q_0)}{\mu(Q_0)} = C_0 \w_{L}^{X_0}(Q_0) \leq C_0.
\end{equation}
For $N > C_0$, we let $\F_N^+ :=\{Q_j\} \subset \D_{Q_0} \backslash \{Q_0\}$, respectively, $\F_N^- :=\{Q_j\} \subset \D_{Q_0} \backslash \{Q_0\}$, be the collection of descendants of $Q_0$ which are maximal (and therefore pairwise disjoint) with respect to the property that
\begin{align}\label{eq:stopping}
\frac{\w(Q_j)}{\mu(Q_j)} < \frac{1}{N}, \qquad\text{ respectively}\quad \frac{\w(Q_j)}{\mu(Q_j)} >N.
\end{align}
Write $\F_N:=\F_N^+\cup\F_N^-$ and note that $\F_N^+\cap\F_N^-=\varnothing$. By maximality, there holds
\begin{align}\label{eq:NN}
\frac{1}{N}\leq \frac{\w(Q)}{\mu(Q)} \leq N, \qquad \forall\,Q \in \D_{\F_N, Q_0}.
\end{align}
Denote, for every $N>C_0$,
\begin{align}\label{eq:E0N-EN}
E_N^\pm := \bigcup_{Q \in \F_N^\pm} Q,
\qquad E_N^0:=E_N^+\cup E_N^-, \qquad E_N := Q_0\setminus E_N^0,
\end{align}
and
\begin{align}\label{eq:Q-decom}
Q_0 = \bigg(\bigcap_{N>C_0} E_N^0\bigg)\cup \bigg(\bigcup_{N>C_0} E_N \bigg)
=: E_0\cup \bigg(\bigcup_{N>C_0} E_N \bigg).
\end{align}

By Lemma~3.61 and Proposition 6.1 in \cite{HM}, $\Omega_{\F_N, Q_0}$ is a bounded $1$-sided NTA domain and 
\begin{align*}
E_N \subset F_N:=\pom \cap \partial \Omega_{\F_N, Q_0} 
\subset \overline{Q}_0 \setminus \bigcup_{Q_j \in \F_N} \inter(Q_j), 
\end{align*}
which leads 
\begin{align*}
F_N \backslash E_N
\subset \bigg(\overline{Q}_0 \backslash \bigcup_{Q_j \in \F_N} \inter(Q_j) \bigg)
\bigg\backslash \bigg(Q_0 \backslash \bigcup_{Q_j \in \F_N} Q_j \bigg)
\subset \partial Q_0 \cup \bigg(\bigcup_{Q_j \in \F_N} \partial Q_j\bigg).
\end{align*}
Since $\mu$ is doubling, this implies 
\begin{align}\label{eq:ENFN}
\mu(F_N \setminus E_N) =0.
\end{align}

Next, we are going to show
\begin{equation}\label{eq:WE}
\mu(E_0)=0.
\end{equation}
Let $x \in E_{N+1}^{ \pm}$. Then there exists $Q_x \in \F_{N+1}^\pm$ such that $x \in Q_x$. By \eqref{eq:stopping}, we have
\begin{align*}
\frac{\w(Q_x)}{\mu(Q_x)} < \frac{1}{N+1} <\frac{1}{N} \quad\text{if $Q_x \in \F_{N+1}^+$} \quad \text{or}\quad
\frac{\w(Q_x)}{\mu(Q_x)} >N+1>N \quad\text{if $Q_x \in \F_{N+1}^-$}.
\end{align*}
By the maximality of the cubes in $\F_N^\pm$, one has $Q_x \subset Q'_x$ for some $Q'_x \in \F_N^\pm$ with $x \in Q'_x \subset E_N^\pm$. Thus, $\{E_N^+\}_N$, $\{E_N^-\}_N$ and $\{E_N^0\}_N$ are decreasing sequences of sets. This, together with $\mu \in L^1_{\loc}(\pom)$ and the fact that $\w(E_N^\pm)\le \w(Q_0)\le C_0 \mu(Q_0)<\infty$ and $\mu(E_N^\pm)\le \mu(Q_0)<\infty$, gives that
\begin{equation}\label{wrqfawfvrw}
\w\bigg(\bigcap_{N>C_0} E_N^\pm\bigg)=\lim_{N\to\infty} \w(E_N^\pm)
\qquad\text{and}\qquad
\mu\bigg(\bigcap_{N>C_0} E_N^\pm\bigg)=\lim_{N\to\infty} \mu(E_N^\pm).
\end{equation}
By \eqref{eq:stopping} and \eqref{eq:E0N-EN},
\[
\w(E_N^+) = \sum_{Q \in \F_N^+} \w(Q) 
<\frac1N \sum_{Q \in \F_N^+} \mu(Q)
=\frac1N \mu(E_N^+) \le \frac1N \mu(Q_0),
\]
which along with \eqref{wrqfawfvrw} yields
\[
\w\bigg(\bigcap_{N>C_0} E_N^+\bigg)=\lim_{N\to\infty} \w(E_N^+)=0.
\]
This and $\mu \ll \w$ in turn lead 
\begin{equation}\label{eq:EN+}
0=\mu\bigg(\bigcap_{N>C_0} E_N^+\bigg)=\lim_{N \to \infty} \mu(E_N^+).
\end{equation}
On the other hand, \eqref{eq:stopping} yields
\[
\mu(E_N^-) = \sum_{Q\in \F_N^-} \mu(Q)
<\frac1N \sum_{Q \in \F_N^-} \w(Q)
=\frac1N \w(E_N^-) \le \frac{C_0}N \mu(Q_0),
\]
and hence,
\begin{equation}\label{eq:EN-}
\mu \bigg(\bigcap_{N>C_0} E_N^-\bigg)=\lim_{N\to\infty} \mu(E_N^-)=0.
\end{equation}
Since $\{E_N^0\}_N$ is a decreasing sequence of sets with $\mu(E_N^0) \le \mu(Q_0) <\infty$, \eqref{eq:EN+} and \eqref{eq:EN-} readily imply 
\begin{equation*}
\mu(E_0) = \lim_{N\to\infty} \mu(E_N^0)
\le \lim_{N\to\infty} \mu(E_N^+) + \lim_{N \to \infty} \mu(E_N^-) = 0.
\end{equation*}
This shows \eqref{eq:WE}.

Now we turn our attention to the square function estimates in $L^2(F_N, \mu)$. Let $u$ be a bounded weak solution of $Lu=0$ in $\Omega$. We may assume that $\|u\|_{L^{\infty}(\Omega)}=1$. Recalling \eqref{RHO} and \eqref{eq:normalize}, we use Lemma \ref{lem:wL-G} part \eqref{list-2}, \eqref{eq:NN} and Harnack's inequality  to conclude that 
\begin{align}\label{eq:w-sig-com}
\frac{\G(X)}{\delta(X)} \rho(X) \simeq \frac{\w(Q)}{\mu(Q)} \simeq_N 1, \quad\forall X \in \Omega_{\F_N, Q_0}. 
\end{align}
By the definition of $\Omega_{\F_N, Q_0}$, \eqref{eq:w-sig-com}, and Lemma \ref{lem:uGNQ},  we deduce 
\begin{align}\label{eq:saw-square}
\iint_{\Omega_{\F_N, Q_0}} |\nabla u|^2\, \delta \rho^{-1} dm
\lesssim_N \iint_{\Omega_{\F_N, Q_0}} |\nabla u|^2 \, \G \, dm
\lesssim \|u\|_{L^{\infty}(\Omega)}^2 \w_L^{X_{Q_0}}(Q_0) \le 1. 
\end{align}

To continue, we note that if $Q \in \D_{Q_0}$ is so that $Q \cap E_N \neq \varnothing$, then necessarily $Q \in \D_{\F_N, Q_0}$, otherwise, $Q \subset Q' \in \F_N$, hence $Q \subset Q_0 \backslash E_N$. Given $X \in U_Q$, pick $\widehat{x} \in \pom$ so that $\delta(X) = |X-\widehat{x}|$. Since 
\begin{align*}
Q \subset C_1 \Delta(\widehat{x}, \delta(X)) \quad\text{ and }\quad 
B(\widehat{x}, \delta(x)) \subset C_2 B(x, \delta(X)), \quad\forall x \in Q, \, X \in U_Q, 
\end{align*}
the doubling property of $\mu$ and $m$, and \eqref{eq:saw-square} imply 
\begin{align}\label{eq:SQk-ENk}
\int_{E_N} (\S_{Q_0} u)^2 d\mu
&=\int_{E_N} \iint_{\bigcup\limits_{x \in Q \in \D_{Q_0}} U_Q} 
|\nabla u(X)|^2 \delta(X)^2 \frac{dm(X)}{m(B(x, \delta(X)))} d\mu(x) 
\nonumber \\
&\lesssim \sum_{Q \in \D_{Q_0}} \iint_{U_Q} |\nabla u(X)|^2 \delta(X)^2 
\bigg(\int_{Q \cap E_N} \frac{d\mu(x)}{m(B(x, \delta(X)))}\bigg) \, dm(X)
\nonumber \\ 
&\lesssim \sum_{Q \in \D_{\F_N, Q_0}} \iint_{U_Q} |\nabla u(X)|^2 \delta(X) \rho(X)^{-1} \, dm(X)
\nonumber \\
&\lesssim \iint_{\Omega_{\F_N, Q_0}} |\nabla u(X)|^2 \delta(X) \rho(X)^{-1}\, dm(X)
\leq C_{N}, 
\end{align}
where we have used that the family $\{U_Q\}_{Q\in\D}$ has bounded overlap. This along with \eqref{eq:ENFN} yields 
\begin{align}\label{eq:square-FNj}
\S_{Q_0} u \in L^2(F_N, \mu).
\end{align} 

We next claim that fixed $\alpha>0$, for any $r_0 \ll 2^{-k_0}=\ell(Q_0)$,  
\begin{align}\label{eq:Sa-SQ}
\S^{\alpha}_{r_0}u(x) \leq \S_{Q_0} u(x), \quad x \in Q_0. 
\end{align}
It suffices to show $\Gamma_{\alpha}^{r_0}(x) \subset \Gamma_{Q_0}(x)$ for any $x \in Q_0$. Indeed, let $Y \in \Gamma_{\alpha}^{r_0}(x)$. Pick $I \in \W$ so that $Y \in I$, and hence, $\ell(I) \simeq \delta(Y) \leq |Y-x|<r_0 \ll 2^{-k_0} = \ell(Q_0)$. Pick $Q_I \in \D_{Q_0}$ such that $x \in Q_I$ and $\ell(Q_I)=\ell(I) \ll \ell(Q_0)$. Thus, one has 
\begin{align*}
\dist(I, Q_I) \leq |Y-x| < (1+\alpha) \delta(Y) \leq C(1+\alpha) \ell(I) = C(1+\alpha) \ell(Q_I).  
\end{align*}
Recall the definition of $\vartheta$ in \eqref{eq:WQ} and choose $\vartheta_0>0$ sufficiently large so that $2^{\vartheta_0}>C(1+\alpha)$. Then for any $\vartheta \ge \vartheta_0$, $Y \in U_{Q_I} \subset \Gamma_{Q_0}(x)$ and consequently \eqref{eq:Sa-SQ} holds.  

To complete the proof we note that, it follows from \eqref{eq:square-FNj} and \eqref{eq:Sa-SQ} that $\S^{\alpha}_{r_0} u \in L^2(F_N, \mu)$. This together with Lemma \ref{lem:two-trunc} easily yields 
\begin{equation}\label{eq:Sr-u-L2-FN}
\S_r^{\alpha} u \in L^2(F_N, \mu),\quad \text{for any } r>0.  
\end{equation}

We note that the previous argument has been carried out for an arbitrary $Q_0\in \D_{k_0}$. Hence, using  \eqref{eq:E0N-EN}, \eqref{eq:Q-decom}, and \eqref{eq:ENFN} with $Q_k\in \D_{k_0}$, we conclude, with the induced notation, that 
\begin{align}\label{eq:EE-FF}
\partial \Omega 
&= \bigcup_{Q_k \in \D_{k_0}} Q_k 
=\bigg(\bigcup_{Q_k \in \D_{k_0}} E^k_0\bigg) \bigcup 
\bigg(\bigcup_{Q_k \in \D_{k_0}} \bigcup_{N>C_0} E^k_N \bigg) 
\nonumber \\
&=\bigg(\bigcup_{Q_k \in \D_{k_0}} E^k_0\bigg) \bigcup 
\bigg(\bigcup_{Q_k \in \D_{k_0}} \bigcup_{N>C_0} F^k_N \bigg) 
=: F_0 \cup \bigg(\bigcup_{k, N} F^k_N \bigg), 
\end{align}
where $\mu(F_0)=0$ and $F^k_N=\pom \cap \partial \Omega_{\F^k_N, Q_k}$ where each $\Omega_{\F^k_N, Q_k} \subset \Omega$ is a bounded 1-sided NTA domain. Combining \eqref{eq:EE-FF} and \eqref{eq:Sr-u-L2-FN} with $F_N^k$ in place of $F_N$, the proof of $\eqref{list:abs-1} \Longrightarrow \eqref{list:abs-2}$ is complete.  \qed

\subsection{Proof of $\eqref{list:abs-3} \Longrightarrow \eqref{list:abs-1}$} 
Given  $Q_0 \in \D$ and $\eta \in(0,1)$, we define the modified non-tangential cone
\begin{equation}\label{def-whitney-eta}
\Gamma_{Q_0}^{\eta}(x):=\bigcup_{x \in Q\in\D_{Q_0}} U_{Q,\eta^3},\qquad 
U_{Q, \eta^3} = \bigcup_{\substack{Q' \in \D_Q \\ \ell(Q')>\eta^3 \ell(Q)}} U_{Q'}. 
\end{equation} 
With these in hand, we define the modified dyadic square function as 
\begin{equation*}
\S_{Q_0}^{\eta} u(x):=\bigg(\iint_{\Gamma_{Q_0}^{\eta}(x)}  
|\nabla u(X)|^2 \delta(X)^2 \frac{dm(X)}{m(B(x, \delta(X)))} \bigg)^{\frac12}. 
\end{equation*}

The following result was obtained in \cite[Lemma 3.10]{CHMT} for $\beta>0$ and in \cite[Lemma 3.40]{CMO} for $\beta=0$, both in the context of 1-sided CAD, extending  \cite[Lemma~2.6]{KKoPT} and \cite[Lemma~2.3]{KKiPT}. 

\begin{lemma}\label{lem:sq-lower} 
Let $Lu=-\div(A\nabla u)$ be a real elliptic operator with $A$ satifying \eqref{eq:elli}. There exist $0<\eta\ll 1$  (depending only on $n$ and $\Lambda$), and $\beta_0\in (0,1)$, $C_\eta\ge 1$ both depending only on $n$, $\Lambda$, and $\eta$, such that for every $Q_0 \in \D$, for every $\beta \in (0, \beta_0)$, and for every Borel set $F \subset Q_0$ satisfying $\w_L^{X_{Q_0}}(F) \le\beta \w_L^{X_{Q_0}}(Q_0)$, there exists a Borel set $S\subset Q_0$ such that the bounded weak solution $u(X)=\w_L^X(S)$, $X \in \Omega$, satisfies
\begin{equation}\label{eq:b>0}
\S_{Q_0}^{\eta} u(x) \ge C_\eta^{-1} \big(\log(\beta^{-1})\big)^{\frac12}, \quad \forall\, x \in F.
\end{equation}
Furthermore, in the case $\beta=0$, that is, when $\w_L^{X_{Q_0}}(F)=0$, there exists a Borel set $S\subset Q_0$ such that the bounded weak solution $u(X)=\w_L^X(S)$, $X\in\Omega$, satisfies
\begin{equation}\label{eq:b=0}
\S_{Q_0}^{\eta} u(x) = \infty, \qquad \forall\, x \in F.
\end{equation}
\end{lemma}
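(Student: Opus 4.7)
The plan is to adapt the iterative bifurcation scheme of Kenig--Koch--Pipher--Toro \cite{KKoPT} and Kenig--Kirchheim--Pipher--Toro \cite{KKiPT}, as formulated for $1$-sided CAD in \cite{CHMT, CMO}, to the present degenerate setting by means of the tools developed in Sections \ref{sec:PDE}--\ref{sec:aux}. Write $\w := \w_L^{X_{Q_0}}$ throughout. The set $S \subset Q_0$ will be constructed by a stopping time in $\D_{Q_0}$, after which the lower bound on $\S^\eta_{Q_0}u$ for $u(X)=\w_L^X(S)$ will follow from showing that at a sufficient number of scales $u$ oscillates by a definite amount between consecutive Whitney regions, each of which lies inside the modified dyadic cone.

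First I would set up the stopping time procedure. Fix small parameters $0<\varepsilon_0\ll 1$ and $0<\eta\ll 1$, depending only on $n$, $\Lambda$ and the allowable constants. Define inductively $\mathcal{G}_0 := \{Q_0\}$, and for $Q \in \mathcal{G}_k$ let $\mathcal{G}_{k+1}(Q)$ denote the family of maximal cubes $Q' \subsetneq Q$ in $\D_Q$ such that
\[
\bigg|\frac{\w(F\cap Q')}{\w(Q')} - \frac{\w(F\cap Q)}{\w(Q)}\bigg|\ge \varepsilon_0,
\]
and set $\mathcal{G}_{k+1}:=\bigcup_{Q \in \mathcal{G}_k}\mathcal{G}_{k+1}(Q)$. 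A dyadic martingale square-function estimate in $L^2(\w)$, together with the convergence $\w(F\cap Q_k(x))/\w(Q_k(x)) \to \mathbf{1}_F(x)$ for $\w$-a.e.\ $x$ (Lebesgue differentiation on $(\pom,\w)$, which is doubling thanks to Lemma~\ref{lem:wL-G}), shows that for every $x\in F$ the number of generations at which $Q_k(x)\subsetneq Q_{k-1}(x)$ is at least $c\,\varepsilon_0^{-2}\log(\beta^{-1})$, because the total quadratic variation of the martingale $k\mapsto \w(F\cap Q_k(x))/\w(Q_k(x))$ is $\gtrsim 1-\beta\simeq 1$.

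Next I would define $S$ by selecting, at each stopping cube $Q'\in\mathcal{G}_{k+1}(Q)$, either $Q'$ itself or the complementary portion inside the parent according to the sign of the density jump. A direct computation using the change-of-pole formula Lemma~\ref{lem:wL-G}\eqref{list-4} then yields the bifurcation estimate
\[
|u(X_Q) - u(X_{Q'})| \gtrsim \varepsilon_0
\]
for every pair $Q'\subsetneq Q$ with $Q\in \mathcal{G}_k$, $Q'\in\mathcal{G}_{k+1}$ and $Q'\cap F \ne \varnothing$. A Harnack-chain argument from $X_Q$ to $X_{Q'}$, followed by the Caccioppoli and Moser estimates in Lemma~\ref{lem:PDE}, produces a Whitney box $I\subset U_{Q'}$ on which $|\nabla u|^2\,\delta^2$ has a controlled lower average. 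Using $\delta(X)\simeq \ell(Q')$ on $U_{Q'}$ together with the doubling of $m$, this translates into
\[
\iint_{U_{Q'}} |\nabla u(X)|^2\,\delta(X)^2\,\frac{dm(X)}{m(B(x,\delta(X)))} \gtrsim \varepsilon_0^2, \qquad \forall\, x \in Q'\cap F.
\]
Since $U_{Q'}\subset \Gamma_{Q_0}^\eta(x)$ for $\eta$ small enough in terms of the dyadic parameter $\vartheta$ (cf.\ \eqref{def-whitney-eta}), summing this over the $\gtrsim \varepsilon_0^{-2}\log(\beta^{-1})$ contributing generations proves \eqref{eq:b>0} with $C_\eta\simeq \varepsilon_0^{-1}$. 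For $\beta=0$ the same procedure can be iterated indefinitely, since $\w(F)=0$ allows arbitrarily many stopping levels at each $x\in F$, and a diagonal choice of the labels yields a single $S$ for which \eqref{eq:b=0} holds.

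The main obstacle will be establishing the bifurcation estimate and the Whitney-box lower bound in the degenerate setting: both depend on the compatibility of the weight $w$, through $\rho$ and $m$, with the Harnack chain condition \eqref{H2} and with the change-of-pole formula. The key ingredients are Lemma~\ref{lem:wL-G}\eqref{list-2},\eqref{list-4} together with the Caccioppoli, Moser and Harnack estimates in Lemma~\ref{lem:PDE}, which hold uniformly in the allowable parameters and thus independently of the generation $k$. Once these compatibility estimates are in place, the passage from the dyadic construction to the bound on $\S_{Q_0}^\eta u$ proceeds as in \cite{CHMT, CMO}, with the factor $\rho(X)^{-1}$ absorbed into the constants produced by the change-of-pole and Harnack comparisons.
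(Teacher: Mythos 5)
Your overall strategy (a KKoPT-type bifurcation at dyadic scales, followed by a per-scale lower bound on a Whitney region inside the cone and bounded overlap of the regions $U_{Q,\eta^3}$) is the same as the paper's, and the second half of your argument --- converting a definite oscillation of $u$ between corkscrew points of nested cubes into a lower bound for the integral over the Whitney region, then summing --- matches what the paper does (it imports exactly those two steps from \cite[eq.~(5.26) and p.~2139]{HLM}). The gap is in the first half: the count of oscillation scales. You stop at cubes where the density $\w(F\cap Q)/\w(Q)$ jumps \emph{additively} by $\varepsilon_0$ and claim there are at least $c\,\varepsilon_0^{-2}\log\beta^{-1}$ such generations ``because the total quadratic variation of the martingale is $\gtrsim 1$''. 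This does not follow, and the claim itself is false: since $\w_L^{X_{Q_0}}$ is dyadically doubling, the density can climb from $\beta$ to $1$ geometrically ($\beta, C\beta, C^2\beta,\dots$), in which case the martingale square function is $O(1)$ and the number of additive $\varepsilon_0$-jumps is $O(\log \varepsilon_0^{-1})$ --- both independent of $\beta$ (take $F=[0,\beta]\subset Q_0=[0,1]$ with Lebesgue measure to see this). With only $O(1)$ contributing scales, each worth $\varepsilon_0^2$, your scheme yields $\S_{Q_0}^{\eta}u \gtrsim \varepsilon_0$, not $\gtrsim (\log\beta^{-1})^{1/2}$.

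The factor $\log\beta^{-1}$ has to be extracted from the \emph{multiplicative} structure: the density of $F$ passes through the thresholds $\varepsilon_0,\varepsilon_0^2,\dots,\varepsilon_0^{k}$ with $k\simeq \log\beta^{-1}/\log\varepsilon_0^{-1}$, and the set $S$ must be engineered so that $u=\w_L^{\cdot}(S)$ oscillates by an amount $\gtrsim 1$ (not $\gtrsim\varepsilon_0$) across each of these $k$ levels. This is precisely what the good $\varepsilon_0$-cover $\{\mathcal{O}_\ell\}_{\ell=1}^k$ of Definition~\ref{def:good-cover} and Lemma~\ref{lem:cover} provides, with $S_k=\bigcup_{\ell=2}^{k}(\mathcal{O}_{\ell-1}^{(\eta)}\setminus\mathcal{O}_\ell)$: the alternating shells have $\w_L^X$-density near $1$ at the top of each level and near $0$ at the bottom, uniformly in $\ell$, thanks to the nesting property $\w(\mathcal{O}_{\ell}\cap Q^{\ell-1})\le\varepsilon_0\,\w(Q^{\ell-1})$. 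Your prescription ``select $Q'$ or its complement according to the sign of the jump'' also does not control the contribution to $u$ coming from the parts of $S$ selected at the other generations; the nested cover is what makes those contributions geometrically small. Finally, for $\beta=0$ a good cover of arbitrary finite length is not enough to produce one fixed $S$ with $\S_{Q_0}^{\eta}u\equiv\infty$ on $F$; one needs a single cover of \emph{infinite} length (Lemma~\ref{lem:infinite}) together with a monotone-convergence passage from $u_N$ to $u$, which your ``diagonal choice of labels'' does not supply.
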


Assuming Lemma \ref{lem:sq-lower} holds momentarily, we now demonstrate $\eqref{list:abs-3} \Longrightarrow \eqref{list:abs-1}$. We first observe that for any $Q_0 \in \D$ and $\eta>0$, there exists $\alpha_0>0$ and $r>0$ such that 
\begin{equation}\label{eq:TQTrQ}
\Gamma_{Q_0}^{\eta}(x) \subset \Gamma^{\alpha}_r(x), \quad\forall \alpha \ge \alpha_0, 
\end{equation}
Indeed, let $Y \in \Gamma_{Q_0}^{\eta}(x)$. By definition, there exist $Q \in \D_{Q_0}$ and $Q' \in \D_Q$ with $\ell(Q')>\eta^3 \ell(Q)$ such that $Y \in U_{Q'}$ and $x \in Q$. Then $Y \in I^*$ for some $I \in \W_{Q'}^*$, and hence, 
\begin{align}\label{eq:sidelength}
\delta(Y) \simeq \ell(I) \simeq \ell(Q')\le \ell(Q)<\eta^{-3}\ell(Q').
\end{align}
This in turn gives that 
\begin{align}\label{eq:Yx}
|Y-x| \leq \diam(I^*) + \dist(I, x) &\leq \diam(I^*) + \dist(I, Q') + \diam(Q) 
\nonumber\\
&\lesssim \ell(Q')+\ell(Q) \lesssim \ell(Q), 
\end{align}
Combining \eqref{eq:sidelength} with \eqref{eq:Yx}, we get 
\begin{align*}
|Y-x| \leq C_1 \ell(Q_0) =: r/2 \quad \text{ and }\quad 
|Y-x| \leq (1+C_2) \delta(Y) =:(1+\alpha_0) \delta(Y), 
\end{align*}
where $C_1$ depends only on the allowable parameters, and $C_2$ depends only on the allowable parameters and also on $\eta$. 

Let $\alpha_0$ be so that \eqref{eq:TQTrQ} holds. Suppose that \eqref{list:abs-3} holds where throughout it is assumed that  $\alpha \ge \alpha_0$. Note that by Harnack's inequality, $\w_L^X\ll \w_L^Y$ for all $X, Y \in \Omega$. In order to prove that $\mu \ll \w_L$ on $\pom$, it suffices to show that for any given $Q_0 \in \D$,
\begin{align}\label{eq:absQ}
F \subset Q_0,\quad \w_L^{X_{Q_0}}(F)=0 \quad \Longrightarrow \quad \mu(F)=0.
\end{align}
Now fix $F \subset Q_0$ with $\w_L^{X_{Q_0}}(F)=0$. By \eqref{eq:TQTrQ} and Lemma~\ref{lem:sq-lower} applied to $F$, there exists a Borel set $S\subset Q_0$ such that $u(X):=\w_L^{X}(S)$, $X\in\Omega$, satisfies
\begin{align}\label{eq:S-lower}
\S^{\alpha}_r u(x) \ge \S_{Q_0}^{\eta} u(x) =\infty, \qquad \forall\,x\in F. 
\end{align}
By assumption and Lemma \ref{lem:two-trunc}, we have that $\S^{\alpha}_r u(x)<\infty$ for $\mu$-a.e.~$x \in \pom$. Hence, $\mu(F)=0$ as desired. \qed

It remains to show Lemma \ref{lem:sq-lower}. To this end, we present some definition and auxiliary results.  

\begin{definition}\label{def:good-cover}
Fix $Q_0 \in \D$ and let $\nu$ be a regular Borel measure on $Q_0$.  Given $\varepsilon_0 \in (0,1)$ and a Borel set $\varnothing \neq F \subset Q_0$, a good $\varepsilon_0$-cover of $F$ with respect to $\nu$, of length $k \in \mathbb{N}$, is a collection $\{\mathcal{O}_{\ell}\}_{\ell=1}^k$ of Borel subsets of $Q_0$, together with pairwise disjoint families $\mathcal{F}_{\ell} =\{Q^{\ell}\} \subset \D_{Q_0}$, $1\le\ell \le  k$, such that the following hold:
\begin{list}{\textup{(\theenumi)}}{\usecounter{enumi}\leftmargin=1cm \labelwidth=1cm \itemsep=0.2cm 
			\topsep=.2cm \renewcommand{\theenumi}{\alph{enumi}}}
		
\item $F \subset \mathcal{O}_k \subset \mathcal{O}_{k-1} \subset \dots \subset \mathcal{O}_2 \subset \mathcal{O}_1 \subset Q_0$. 
		
\item $\mathcal{O}_{\ell}=\bigcup_{Q^{\ell} \in \F_{\ell}} Q^{\ell}$, for every $1 \le \ell \le k$.
		
\item $\nu(\mathcal{O}_{\ell} \cap Q^{\ell-1}) \le \varepsilon_0 \nu(Q^{\ell-1})$,  for each $Q^{\ell-1} \in \F_{\ell-1}$ and $2\le \ell \le k$. 
		
\end{list}

Analogously,  a good $\varepsilon_0$-cover of $F$ with respect to $\nu$, of length $\infty$, is a collection $\{\mathcal{O}_{\ell}\}_{\ell=1}^\infty$ of Borel subsets of $Q_0$, together with pairwise disjoint families $\mathcal{F}_{\ell} =\{Q^{\ell}\} \subset \D_{Q_0}$, $\ell\ge 1$, such that the following hold:
\begin{list}{\textup{(\theenumi)}}{\usecounter{enumi}\leftmargin=1cm \labelwidth=1cm \itemsep=0.2cm 
		\topsep=.2cm \renewcommand{\theenumi}{\alph{enumi}}}
	
\item $F \subset\dots\subset \mathcal{O}_k \subset \mathcal{O}_{k-1} \subset \dots \subset \mathcal{O}_2 \subset \mathcal{O}_1 \subset Q_0$. 
	
\item $\mathcal{O}_{\ell}=\bigcup_{Q^{\ell} \in \F_{\ell}} Q^{\ell}$, for every $\ell\ge 1$,
	
\item $\nu(\mathcal{O}_{\ell} \cap Q^{\ell-1}) \le \varepsilon_0 \nu(Q^{\ell-1})$,  for each $Q^{\ell-1} \in \F_{\ell-1}$ and $\ell \ge 2$. 
	
\end{list}
\end{definition}

\begin{lemma}[{\cite[Lemma~3.5]{CHMT}}]\label{lem:cover}
Let $\nu$ be a regular Borel measure on $Q_0$ and assume that it is dyadically doubling on $Q_0$ with a constant $C_{\nu}$. For every $0<\varepsilon_0 \le e^{-1}$, if $\varnothing\neq F \subset Q_0$ with $\nu(F) \le \alpha \nu(Q_0)$ and $0<\alpha \le \varepsilon_0^2/(2C_{\nu}^2)$ then $F$ has a good $\varepsilon_0$-cover with respect to $\nu$ of length $k=k(\alpha, \varepsilon_0, C_\nu) \in \mathbb{N}$, $k \ge 2$, which satisfies $k \simeq \frac{\log \alpha^{-1}}{\log \varepsilon_0^{-1}}$. In particular, if $\nu(F)=0$, then $F$ has a good $\varepsilon_0$-cover of arbitrary length. 
\end{lemma}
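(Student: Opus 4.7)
The plan is to construct the good $\varepsilon_0$-cover by iterated dyadic Calder\'on--Zygmund stopping-time decompositions of $F$ with respect to $\nu$, with the threshold tuned anew at each level. Initialize by setting $\mathcal{F}_1 := \{Q_0\}$, so that $\mathcal{O}_1 = Q_0$; conditions (a) and (b) for $\ell = 1$ are automatic and condition (c) is vacuous there. For $\ell \ge 2$, assuming $\mathcal{F}_{\ell-1}$ has been built, for each $Q^{\ell-1} \in \mathcal{F}_{\ell-1}$ let $\mathcal{G}_\ell^{Q^{\ell-1}}$ be the family of maximal dyadic cubes $R \subsetneq Q^{\ell-1}$ with $\nu(F \cap R)/\nu(R) > \tau_\ell$, where $\tau_\ell \in (0,1)$ is to be chosen, and set $\mathcal{F}_\ell := \bigcup_{Q^{\ell-1}} \mathcal{G}_\ell^{Q^{\ell-1}}$ and $\mathcal{O}_\ell := \bigcup \mathcal{F}_\ell$. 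The Lebesgue differentiation theorem for the doubling measure $\nu$, applied to $\mathbf{1}_F$, gives $F \cap Q^{\ell-1} \subset \bigcup \mathcal{G}_\ell^{Q^{\ell-1}}$ up to a $\nu$-null set, and nestedness $\mathcal{O}_\ell \subset \mathcal{O}_{\ell-1}$ is automatic.

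The quantitative core is the threshold choice. Disjointness inside $Q^{\ell-1}$ together with the stopping inequality yields
\[
\nu(\mathcal{O}_\ell \cap Q^{\ell-1}) \;\le\; \frac{\nu(F \cap Q^{\ell-1})}{\tau_\ell} \;\le\; \frac{d_{\ell-1}}{\tau_\ell}\, \nu(Q^{\ell-1}),
\]
where $d_\ell := \sup_{Q \in \mathcal{F}_\ell} \nu(F \cap Q)/\nu(Q)$ and $d_1 = \alpha$. Setting $\tau_\ell := d_{\ell-1}/\varepsilon_0$ enforces (c). For the reverse direction, the dyadic parent $\widehat{R}$ of any $R \in \mathcal{G}_\ell^{Q^{\ell-1}}$ either lies properly inside $Q^{\ell-1}$ (so by maximality $\nu(F\cap\widehat R)/\nu(\widehat R) \le \tau_\ell$) or equals $Q^{\ell-1}$ (whose density is $d_{\ell-1} = \varepsilon_0 \tau_\ell < \tau_\ell$); in either case dyadic doubling gives $\nu(F \cap R)/\nu(R) \le C_\nu \tau_\ell$. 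Hence $d_\ell \le (C_\nu/\varepsilon_0)\, d_{\ell-1}$, and by induction $d_\ell \le (C_\nu/\varepsilon_0)^{\ell-1}\alpha$.

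The iteration is admissible precisely as long as $\tau_\ell < 1$; equivalently $(C_\nu/\varepsilon_0)^{\ell-2}\alpha < \varepsilon_0$, which gives the maximal length $k \simeq \log\alpha^{-1}/\log\varepsilon_0^{-1}$ with constants depending on $C_\nu$. The hypothesis $\alpha \le \varepsilon_0^2/(2C_\nu^2)$ makes the first two CZ steps admissible, yielding $k \ge 2$. When $\nu(F) = 0$ the CZ procedure degenerates because no positive density threshold works, so I would instead invoke Borel outer regularity of $\nu$: at each level $\ell \ge 2$, for each $Q^{\ell-1}$ cover $F \cap Q^{\ell-1}$ by an open set $V \subset Q^{\ell-1}$ with $\nu(V) < \varepsilon_0 \nu(Q^{\ell-1})$ and take $\mathcal{G}_\ell^{Q^{\ell-1}}$ to be the maximal dyadic subcubes of $V$; this iterates arbitrarily, delivering covers of any prescribed length.

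The main obstacle I anticipate is the strict set-theoretic containment $F \subset \mathcal{O}_\ell$ in (a), as opposed to mere $\nu$-a.e.\ inclusion produced by the CZ procedure. If $N_\ell$ denotes the Lebesgue-differentiation null set at level $\ell$, then $N := \bigcup_{\ell=1}^k N_\ell$ is $\nu$-null, and by Borel regularity $N$ sits inside an open set $W \subset Q_0$ with $\nu(W)$ as small as desired. Decomposing $W$ into its maximal dyadic subcubes and adjoining these to every $\mathcal{F}_\ell$ (refining where necessary to preserve the nested structure $\mathcal{O}_\ell \subset \mathcal{O}_{\ell-1}$) keeps the disjointness at each level, and the extra $\nu$-mass is absorbed by the slack in the strict inequality $\alpha \le \varepsilon_0^2/(2C_\nu^2)$ provided $\nu(W)$ is taken small enough.
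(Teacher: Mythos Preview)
The paper does not actually prove this lemma: it is quoted verbatim from \cite[Lemma~3.5]{CHMT} and used as a black box, so there is no in-paper argument to compare against. Your iterated Calder\'on--Zygmund stopping-time construction with level-dependent thresholds is exactly the standard argument behind this result (going back to \cite{KKoPT, KKiPT}), and the quantitative bookkeeping you outline---$\tau_\ell = d_{\ell-1}/\varepsilon_0$, the recursion $d_\ell \le (C_\nu/\varepsilon_0)\,d_{\ell-1}$, and the resulting length $k \simeq \log\alpha^{-1}/\log\varepsilon_0^{-1}$---is correct.

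Your one stated obstacle is genuine: Lebesgue differentiation only yields $F\subset\mathcal{O}_\ell$ modulo a $\nu$-null set, whereas Definition~\ref{def:good-cover} demands strict set-theoretic containment, and this matters here because Lemma~\ref{lem:sq-lower} is applied pointwise for \emph{every} $x\in F$ (with $\nu=\w_L^{X_{Q_0}}$, and one is ultimately trying to control $\mu(F)$ without knowing $\mu\ll\w_L$). Your outer-regularity patch---covering the accumulated null set $N$ by a relatively open $W\subset Q_0$ of tiny $\nu$-measure and adjoining its maximal dyadic subcubes to every $\mathcal F_\ell$---works, but the sentence ``absorbed by the slack in $\alpha\le\varepsilon_0^2/(2C_\nu^2)$'' is too quick: adjoining cubes can increase $\nu(\mathcal{O}_\ell\cap Q^{\ell-1})$ at every level simultaneously, and one must check that the added mass inside each $Q^{\ell-1}$ is at most (say) $\tfrac12\varepsilon_0\,\nu(Q^{\ell-1})$. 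This follows because the cubes in $\mathcal F_{\ell-1}$ have $\nu$-measure uniformly bounded below (by $\tau_{\ell-1}\cdot\nu$-mass of their parent, hence ultimately by a fixed fraction of $\nu(Q_0)$ depending on $k$, $\varepsilon_0$, $C_\nu$), so taking $\nu(W)$ small relative to that lower bound suffices. With that detail made explicit, the proof is complete.
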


Although in the case $\nu(F)=0$ the result above gives an $\varepsilon_0$-cover of arbitrary length, it requires an $\varepsilon_0$-cover of infinite length to obtain \eqref{eq:b=0}. 

\begin{lemma}[{\cite[Lemma~A.5]{CMO}}]\label{lem:infinite}
Fix $Q_0 \in \D$. Let $\nu$ be a regular Borel measure on $Q_0$ and assume that it is dyadically doubling on $Q_0$. For every $0<\varepsilon_0 \le e^{-1}$, if $\varnothing \neq F \subset Q_0$ with $\nu(F)=0$, then $F$ has a good $\varepsilon_0$-cover of length $\infty$. 
\end{lemma}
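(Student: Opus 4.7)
The plan is to build the sets $\{\mathcal{O}_\ell\}_{\ell=1}^\infty$ and the pairwise disjoint families $\{\mathcal{F}_\ell\}_{\ell=1}^\infty$ by induction on $\ell$, exploiting outer regularity of $\nu$ at each step to leverage the hypothesis $\nu(F)=0$. (The dyadic-doubling assumption, needed in Lemma \ref{lem:cover} to convert a small-measure hypothesis into a finite length, is not required here.) I would start with $\mathcal{O}_1:=Q_0$ and $\mathcal{F}_1:=\{Q_0\}$; conditions (a) and (b) hold trivially at the base and (c) is vacuous.

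For the inductive step, assuming $\mathcal{F}_\ell=\{Q^\ell_j\}_j$ has been constructed, I would define $\mathcal{F}_{\ell+1}$ cube by cube. Fix $Q^\ell_j$. If $\nu(Q^\ell_j)=0$, simply put $Q^\ell_j$ itself into $\mathcal{F}_{\ell+1}^j$, so condition (c) at level $\ell+1$ reduces to $0\le 0$. Otherwise, the outer regularity of $\nu$, together with $\nu(F\cap Q^\ell_j)=0$, produces an open set (in $\pom$) $U_j\supset F\cap Q^\ell_j$ with $\nu(U_j)<\varepsilon_0\,\nu(Q^\ell_j)$. I would then let $\mathcal{F}_{\ell+1}^j$ consist of the maximal dyadic cubes $Q\in\D_{Q^\ell_j}$ satisfying $Q\subset U_j$ and $Q\cap F\neq\varnothing$, and set
\[
\mathcal{F}_{\ell+1}:=\bigcup_j \mathcal{F}_{\ell+1}^j,\qquad
\mathcal{O}_{\ell+1}:=\bigcup_{Q\in \mathcal{F}_{\ell+1}} Q.
\]

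With this definition, pairwise disjointness of $\mathcal{F}_{\ell+1}$ is immediate: within each $\mathcal{F}_{\ell+1}^j$ it follows from maximality and dyadic nesting, and across different $j$ from the disjointness of $Q^\ell_j,Q^\ell_{j'}\in\mathcal{F}_\ell$. The inclusion $\mathcal{O}_{\ell+1}\subset\mathcal{O}_\ell$ is built in, and (c) at level $\ell+1$ is clear since $\mathcal{O}_{\ell+1}\cap Q^\ell_j\subset U_j$.

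The delicate point, which I expect to be the main technical obstacle, is verifying (a), namely $F\subset\mathcal{O}_{\ell+1}$. Fix $x\in F$, so $x\in Q^\ell_j$ for some $j$ by the inductive hypothesis $F\subset\mathcal{O}_\ell$. If $\nu(Q^\ell_j)=0$ there is nothing to check, so assume $\nu(Q^\ell_j)>0$. Since $U_j$ is open in $\pom$ and contains $x$, there exists $r_x>0$ such that $\Delta(x,r_x)\subset U_j$. By Lemma \ref{lem:dyadic}\eqref{eq:dyadic-4}, any dyadic cube $Q\ni x$ with $\ell(Q)\le\min\{\ell(Q^\ell_j),\,r_x/A_0\}$ satisfies $\diam(Q)\le r_x$, so $Q\subset\Delta(x,r_x)\subset U_j$; the nesting property \eqref{eq:dyadic-2} then forces $Q\subset Q^\ell_j$. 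Hence $x$ lies in some dyadic cube in $\D_{Q^\ell_j}$ that is contained in $U_j$ and meets $F$, and therefore in the maximal such cube belonging to $\mathcal{F}_{\ell+1}^j$. This closes the induction and yields the desired good $\varepsilon_0$-cover of infinite length.
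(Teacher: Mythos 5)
Your proof is correct: the paper itself does not prove this lemma but simply cites \cite[Lemma~A.5]{CMO}, and your induction --- using outer regularity of $\nu$ to produce a relatively open set $U_j \supset F\cap Q^{\ell}_j$ with $\nu(U_j)<\varepsilon_0\,\nu(Q^{\ell}_j)$, then extracting the maximal dyadic subcubes of $Q^{\ell}_j$ contained in $U_j$ --- is exactly the standard argument used in the cited reference, and all three properties (a)--(c) of the good $\varepsilon_0$-cover check out as you describe. The only cosmetic points are that to conclude $Q\subset\Delta(x,r_x)$ you should take $\ell(Q)<r_x/A_0$ strictly (or shrink $r_x$ by a factor of $2$), and that $U_j$ need only be relatively open in $Q_0$, which suffices since every cube under consideration already lies in $Q_0$.
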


To continue we need to introduce some notation. Given $\eta=2^{-k_*}<1$ small enough to be chosen momentarily and given $Q \in \D$, we define $Q^{(\eta)} \in \D_Q$ to be the unique dyadic cube such that $x_Q \in Q^{(\eta)}$ and $\ell(Q^{(\eta)}) = \eta \ell(Q)$. 

\medskip

We are now ready to prove  Lemma~\ref{lem:sq-lower}: 

\begin{proof}[Proof of Lemma~$\ref{lem:sq-lower}$]
Let $0<\varepsilon_0 \le e^{-1}$ and $0<\beta \le \varepsilon_0^2/(2C_{\nu}^2)$. Fix $Q_0 \in \D$, and a Borel set $F \subset Q_0$ such that $\w_L^{X_{Q_0}}(F) \le\beta \w_L^{X_{Q_0}}(Q_0)$. From Lemma~\ref{lem:PDE} and Harnack's inequality, we see that $\w_L^{X_{Q_0}}$ is a regular Borel dyadically doubling measure on $Q_0$. Then,  Lemma~\ref{lem:cover} applied to $\nu=\w_L^{X_{Q_0}}$ yields $\{\mathcal{O}_{\ell}\}_{\ell=1}^k$, a good $\varepsilon_0$-cover of $F$ of length $k \simeq \frac{\log \beta^{-1}}{\log \varepsilon_0^{-1}}$, with $k \ge 2$. Define $\mathcal{O}_{\ell}^{(\eta)}:=\bigcup_{Q\in\F_\ell} Q^{(\eta)}$ for each $1\le \ell \le k$, and consider the Borel set $S_k:=\bigcup_{\ell=2}^k (\mathcal{O}_{\ell-1}^{(\eta)}\setminus \mathcal{O}_{\ell})$. For each $x \in F$ and $1 \le \ell \le k$, let $Q^{\ell}_x \in \F_{\ell}$ be the unique dyadic cube containing $x$, and let $P^{\ell}_x \in \D_{Q^{\ell}_x}$ be the unique dyadic cube containing $x$ with $\ell(P^{\ell}_x) = \eta \ell(Q^{\ell}_x)$. If we write $u_k(X):=\w_L^X(S_k)$ for all $X \in \R^{n+1}_+$, then \cite[eq. (5.26)]{HLM} gives 
\begin{align}\label{eq:uk-osc}
|u_k(X_{(Q^{\ell}_x)^{(\eta)}}) - u_k(X_{(P^{\ell}_x)^{(\eta)}})| \gtrsim 1, \qquad\forall x \in F, \, \, 1\le \ell\le k, 
\end{align}
where the implicit constant depends only on the allowable parameters. This and the argument in \cite[p.~2139]{HLM} imply
\begin{align}\label{eq:Qil}
\iint_{U_{Q^{\ell}_x, \eta^3}} |\nabla u_k(X)|^2 \delta(X)^2 \frac{dm(X)}{m(B(x, \delta(X)))} 
\gtrsim_\eta 1, \qquad	\forall\, 1\le \ell \le k. 
\end{align}	
Thus, summing \eqref{eq:Qil} over $1 \le \ell \le k$, we have for every $x \in F$,  
\begin{align}\label{eq:k-Su}
\log \beta^{-1} \simeq k & \lesssim_\eta \sum_{\ell=1}^{k} \iint_{U_{Q^{\ell}_x, \eta^3}} 
 |\nabla u_k(X)|^2 \delta(X)^2 \frac{dm(X)}{m(B(x, \delta(X)))} 
\nonumber\\ 
& \lesssim_\eta \iint_{\bigcup_{\ell=1}^k U_{Q^{\ell}_x, \eta^3}} 
 |\nabla u_k(X)|^2 \delta(X)^2 \frac{dm(X)}{m(B(x, \delta(X)))} 
\nonumber\\ 
&\le \iint_{\bigcup \limits_{x \in Q \in \D_{Q_0}} U_{Q, \eta^3}} 
 |\nabla u_k(X)|^2 \delta(X)^2 \frac{dm(X)}{m(B(x, \delta(X)))} 
= S_{Q_0}^{\eta} u_k(x)^2,
\end{align}
where we have used that the family $\{U_{Q, \eta^3}\}_{Q \in \D}$ has bounded overlap albeit with a constant depending on $\eta$. This is the case $\beta>0$.

Next, we deal with the case $\beta=0$. Fix $Q_0 \in \D$ and a Borel set $\varnothing\neq F \subset Q_0$ with $\w_L^{X_{Q_0}}(F )=0$. Applying Lemma~\ref{lem:infinite} with $\nu=\w_L^{X_{Q_0}}$, one can find a good $\varepsilon_0$-cover of $F$ of length $\infty$, denoted by $\{\mathcal{O}_{\ell}\}_{\ell=1}^{\infty}$. In particular, for every $N\in \mathbb{N}$,  $\{\mathcal{O}_{\ell}\}_{\ell=1}^{N}$ is a good $\varepsilon_0$-cover of $F$ of length $N$. Invoking \eqref{eq:uk-osc}, we obtain 
\begin{align}\label{eq:uN-osc}
|u_N(X_{(Q^{\ell}_x)^{(\eta)}}) - u_N(X_{(P^{\ell}_x)^{(\eta)}})| \gtrsim 1, \qquad \forall x\in F, \, 1\le \ell\le N.
\end{align}
where $u_N(X):=\w_L^X(S_N)$ and $S_N:=\bigcup_{\ell=2}^N (\mathcal{O}_{\ell-1}^{(\eta)}\setminus \mathcal{O}_{\ell})$. Define $S:=\bigcup_{\ell=2}^{\infty} (\mathcal{O}_{\ell-1}^{(\eta)}\setminus \mathcal{O}_{\ell})$ and $u(X):=\w_L^X(S)$, $X \in \Omega$. The monotone convergence theorem gives $u_N(X)\longrightarrow u(X)$ as $N \to \infty$ for every $X \in \Omega$. Thus, this and \eqref{eq:uN-osc} readily imply 
\begin{align}\label{eq:u-osc}
|u(X_{(Q^{\ell}_x)^{(\eta)}}) - u(X_{(P^{\ell}_x)^{(\eta)}})| \gtrsim 1, \qquad \forall x\in F, \ \ell\ge 1.
\end{align}
Keeping \eqref{eq:u-osc} in mind and using a similar argument as in \eqref{eq:Qil} and \eqref{eq:k-Su}, we conclude that 
\begin{align*}
\S_{Q_0}^{\eta} u(x)^2 \gtrsim N, \qquad \forall x\in F, \, N \ge 1, 
\end{align*}
where the implicit constant is independent of $N$. Letting $N \to \infty$, we get $S_{Q_0}^{\eta} u(x)=\infty$ for all  $x \in F$. This completes the proof.  
\end{proof}

\medskip
\noindent\textbf{Acknowledgements }  
The authors wish to thank \'{O}. Dom\'{i}nguez, J.M. Martell, and P. Tradacete, for the ideas of \cite{CDMT}, and thank J.M. Martell for helpful comments concerning Lemma \ref{lem:Psi}. The authors also thank the referee for careful reading and for valuable comments, which lead to the improvement of this  paper.



\begin{thebibliography}{00}
	
	
\bibitem{AHMT}M.~Akman, S.~Hofmann, J.M. Martell, and T.~Toro, 
\emph{Square function and non-tangential maximal function estimates for elliptic operators in $1$-sided NTA domains satisfying the capacity density condition}, 
Adv. Calc. Var. \textbf{16} (2023), 731--766.


\bibitem{AGMT}J. Azzam, J. Garnett, M. Mourgoglou, and X. Tolsa, 
\emph{Uniform rectifiability, elliptic measure, square functions, and 
	 $\varepsilon$-approximability via an ACF monotonicity formula}, 
Int. Math. Res. Not. \textbf{10} (2023), 10837--10941.


\bibitem{AHMMT}J. Azzam, S. Hofmann, J.M. Martell, M. Mourgoglou, and X. Tolsa,
\emph{Harmonic measure and quantitative connectivity: geometric characterization of the $L^p$-solvability of the
	Dirichlet problem},
Invent. Math. \textbf{222} (2020), 881--993. 	
	
	
\bibitem{AHMNT}J. Azzam, S. Hofmann, J.M. Martell, K. Nystr\"{o}m, and T. Toro,
\emph{A new characterization of chord-arc domains},
J. Eur. Math. Soc. \textbf{19} (2017), 967--981.


\bibitem{CS}L. Caffarelli and L. Silvestre, 
\emph{An extension problem related to the fractional Laplacian},  
Comm. Partial Differential Equations \textbf{32} (2007), 1245--1260. 


\bibitem{CDMT}M. Cao, \'{O}. Dom\'{i}nguez, J.M. Martell, and P. Tradacete, 
\emph{On the $A_{\infty}$ condition for elliptic operators in $1$-sided NTA domains satisfying the capacity density condition}, 
Forum Math. Sigma, \textbf{10} (2022), e59. 


\bibitem{CMO}M. Cao, J.M. Martell, and A. Olivo,
\emph{Elliptic measures and square function estimates on $1$-sided chord-arc domains},
 J. Geom. Anal. \textbf{32} (2022), No. 77.

	
\bibitem{CHMT}J.~Cavero, S.~Hofmann, J.M.~Martell, and T.~Toro,
\emph{Perturbations of elliptic operators in 1-sided chord-arc domains. Part
		II: Non-symmetric operators and Carleson measure estimates},
Trans. Amer. Math. Soc. \textbf{373} (2020), 7901--7935.


\bibitem{Ch}M. Christ,  
\emph{A $T(b)$ theorem with remarks on analytic capacity and the Cauchy integral},  
Colloq. Math. \textbf{60/61} (1990), 601--628.

	
\bibitem{CF}R.R. Coifman and C. Fefferman, 
\emph{Weighted norm inequalities for maximal functions and singular integrals},  
Studia Math. \textbf{51} (1974), 241--250.


\bibitem{CMS}R.R. Coifman, Y. Meyer, and E.M. Stein,  
\emph{Some new function spaces and their applications to harmonic analysis},  
J. Funct. Anal. \textbf{62} (1985), 304--335. 


\bibitem{CKP}T. Coulhon, G. Kerkyacharian, and P. Petrushev, 
\emph{Heat kernel generated frames in the setting of Dirichlet spaces}, 
J. Fourier Anal. Appl. \textbf{18} (2012), 995--1066.


\bibitem{D77}B.E.J. Dahlberg,
\emph{Estimates of harmonic measure},
Arch. Rational Mech. Anal. \textbf{65} (1977), 275--288.


\bibitem{D79}B.E.J. Dahlberg, 
\emph{On the Poisson integral for Lipschitz and $C^1$ domains},  
Studia Math. \textbf{66} (1979), 7--24.
	
	
\bibitem{DFM1}G. David, J. Feneuil, and S. Mayboroda, 
\emph{Dahlberg's theorem in higher co-dimension}, 
J. Funct. Anal. \textbf{276} (2019), 2731--2820. 


\bibitem{DFM2}G. David, J. Feneuil, and S. Mayboroda, 
\emph{Elliptic theory for sets with higher co-dimensional boundaries},  
Mem. Amer. Math. Soc. \textbf{274} (2021), no. 1346. 


\bibitem{DFM3}G. David, J. Feneuil, and S. Mayboroda, 
\emph{Elliptic theory in domains with boundaries of mixed dimension}, 
Ast\'{e}risque (2023), no. 442, vi+139 pp.


\bibitem{DJ}G. David and D. Jerison,
\emph{Lipschitz approximation to hypersurfaces, harmonic measure, and singular integrals},
Indiana Univ. Math. J. \textbf{39} (1990), 831--845.


\bibitem{DLM}G. David, L. Li, and S. Mayboroda, 
\emph{Carleson estimates for the Green function on domains with lower dimensional boundaries}, 
J. Funct. Anal. \textbf{283} (2022), 109553. 


\bibitem{DJK}B.E.J. Dahlberg, D.S. Jerison, and C.E. Kenig,  
\emph{Area integral estimates for elliptic differential operators with nonsmooth coefficients},  
Ark. Mat. \textbf{22} (1984), 97--108.


\bibitem{DKP}M. Dindos, C. Kenig, and J. Pipher,  
\emph{$\BMO$ solvability and the $A_{\infty}$ condition for elliptic operators},  
J. Geom. Anal. \textbf{21} (2011), 78--95. 


\bibitem{FJK}E.B. Fabes, D.S. Jerison, and C.E. Kenig,
\emph{The Wiener test for degenerate elliptic equations},  
Ann. Inst. Fourier Grenoble \textbf{32} (1982), 151--182.


\bibitem{FKJ}E.B. Fabes, C.E. Kenig, and D. Jerison,  
\emph{Boundary behavior of solutions to degenerate elliptic equations}, 
Conference on harmonic analysis in honor of Antoni Zygmund, Vol. I, II (Chicago, Ill., 1981), 577--589, 
Wadsworth Math. Ser., Wadsworth, Belmont, CA, 1983.


\bibitem{FKS}E.B. Fabes, C.E. Kenig, and R.P. Serapioni,  
\emph{The local regularity of solutions of degenerate elliptic equations},  
Comm. Partial Differential Equations \textbf{7} (1982), 77--116. 


\bibitem{FS}C. Fefferman, and E.M. Stein, 
\emph{$H^p$ spaces of several variables}, 
Acta Math. \textbf{129} (1972), 137--193. 


\bibitem{FP}J. Feneuil and B. Poggi,
\emph{Generalized Carleson perturbations of elliptic operators and applications},
Trans Amer. Math Soc. \textbf{375} (2022), 7553--7599. 


\bibitem{GR}J. Garc{\'i}a-Cuerva and J.L. Rubio de Francia,  
\emph{Weighted norm inequalities and related topics}, 
North-Holland Mathematics Studies, vol. 116, North-Holland Publishing Co., Amsterdam, 1985, 
Notas de Matem\'atica [Mathematical Notes], 104.


\bibitem{GMT}J. Garnett, M. Mourgoglou, and X. Tolsa, 
\emph{Uniform rectifiability from Carleson measure estimates and $\varepsilon$-approximability 
of bounded harmonic functions},  
Duke Math. J. \textbf{167} (2018), 1473--1524. 


\bibitem{G}L. Grafakos,  
\emph{Classical Fourier analysis, Third edition},  
Graduate Texts in Mathematics, 249. Springer, New York, 2014.


\bibitem{HKMP}S. Hofmann, C. Kenig, S. Mayboroda, and J. Pipher, 
\emph{Square function/non-tangential maximal function estimates and the Dirichlet problem for non-symmetric elliptic operators},  
J. Amer. Math. Soc. \textbf{28} (2015), 483--529. 


\bibitem{HL}S. Hofmann and P. Le, 
\emph{$\BMO$ solvability and absolute continuity of harmonic measure}, 
J. Geom. Anal. \textbf{28} (2018), 3278--3299. 


\bibitem{HLM}S. Hofmann, P. Le, and A.J. Morris,  
\emph{Carleson measure estimates and the Dirichlet problem for degenerate elliptic equations},  
Anal. PDE \textbf{12} (2019), 2095--2146. 


\bibitem{HM}S. Hofmann and J.M. Martell,
\emph{Uniform rectifiability and harmonic measure I: Uniform rectifiability implies Poisson kernels in $L^p$},
Ann. Sci. \'{E}c. Norm. Sup\'{e}r. \textbf{47} (2014), 577--654.



\bibitem{HMM}S. Hofmann, J.M. Martell, and S. Mayboroda, 
\emph{Uniform rectifiability, Carleson measure estimates, and approximation of harmonic functions}, 
Duke Math. J. \textbf{165} (2016), 2331--2389.


\bibitem{HMM2}S. Hofmann, J.M. Martell, and S. Mayboroda, 
\emph{Transference of scale-invariant estimates from Lipschitz to non-tangentially accessible to uniformly rectifiable domains},  
Anal. PDE. (2024), to appear, https://arxiv.org/abs/1904.13116. 


\bibitem{HMMTZ}S. Hofmann, J.M. Martell, S. Mayboroda, T. Toro, and Z. Zhao,
\emph{Uniform rectifiability and elliptic operators satisfying a Carleson measure condition},
Geom. Funct. Anal. \textbf{31} (2021), 325--401. 


\bibitem{HMT}S. Hofmann, J.M. Martell, and T. Toro,
\emph{$A_{\infty}$ implies $NTA$ for a class of variable coefficient elliptic operators},
J. Differential Equations \textbf{263} (2017), 6147--6188.


\bibitem{HMT:book}S. Hofmann, J.M. Martell and T. Toro, 
\emph{Elliptic operators on non-smooth domains}, 
Book in preparation.  

\bibitem{HMU}S. Hofmann, J.M. Martell, and I. Uriarte-Tuero,
\emph{Uniform rectifiability and harmonic measure, II: Poisson kernels in $L^p$ imply uniform rectifiability},
Duke Math. J. \textbf{163} (2014), 1601--1654.


\bibitem{JK}D.S. Jerison and C.E. Kenig,
\emph{Boundary behavior of harmonic functions in nontangentially accessible domains},
Adv. Math. \textbf{46} (1982), 80--147.


\bibitem{K}C.E. Kenig, 
\emph{Harmonic analysis techniques for second order elliptic boundary value problems},  
CBMS Regional Conference Series in Mathematics 83, AMS, Providence, RI, 1994. 


\bibitem{KKiPT}C.E. Kenig, B. Kirchheim, J. Pipher, and T. Toro,
\emph{Square functions and the $A_\infty$ property of elliptic measures},
J. Geom. Anal. {\bf 26} (2016), 2383--2410.


\bibitem{KKoPT}C.E. Kenig, H. Koch, J. Pipher, and T. Toro,
\emph{A new approach to absolute continuity of elliptic measure, with applications to non-symmetric equations},
Adv. Math. {\bf 153} (2000), 231--298.


\bibitem{KP}C.E. Kenig and J. Pipher,
\emph{The Dirichlet problem for elliptic equations with drift terms},
Publ. Mat. \textbf{45} (2001), 199--217.


\bibitem{MP}S. Mayboroda and B. Poggi, 
\emph{Carleson perturbations of elliptic operators on domains with low dimensional boundaries}, 
J. Funct. Anal. \textbf{280} (2021), 108930. 


\bibitem{MZ}S. Mayboroda and Z. Zhao,
\emph{Square function estimates, the $\BMO$ Dirichlet problem, and absolute continuity of harmonic measure on lower-dimensional sets},
Anal. PDE \textbf{12} (2019), 1843--1890.


\bibitem{MPT}E. Milakis, J. Pipher, and T. Toro, 
\emph{Harmonic analysis on chord arc domains},  
J. Geom. Anal. \textbf{23} (2013), 2091--2157.


\bibitem{RR}F. Riesz and M. Riesz,
\emph{\"{U}ber die randwerte einer analtischen funktion},
Compte Rendues du Quatri\`{e}me Congr\`{e}s des Math\'{e}maticiens Scandinaves,
Stockholm 1916, Almqvists and Wilksels, Uppsala, 1920.


\bibitem{SS}E.M. Stein and R. Shakarchi, 
\emph{Real analysis. Measure theory, integration, and Hilbert spaces},  
Princeton Lectures in Analysis, 3. Princeton University Press, Princeton, NJ, 2005. 


\bibitem{Zhao}Z.~Zhao,  
\emph{$\mathrm{BMO}$ solvability and {$A_{\infty}$} condition of the elliptic measures in uniform domains}, 
J. Geom. Anal. \textbf{28} (2018), 866--908.

	
\end{thebibliography}
\end{document}